\documentclass[reqno,letterpaper,11pt]{article}

\usepackage{hyperref}

\usepackage{palatino}

\usepackage{amsmath}
\usepackage{amsfonts,amssymb,amsmath,latexsym,wasysym,mathrsfs,bbm,stmaryrd}
\usepackage[all]{xy}
\usepackage[usenames]{color}
\usepackage{epsfig}

\usepackage{amsthm}
\usepackage{thmtools}
\usepackage{thm-restate}
\declaretheorem[numberwithin=section]{theorem}
\declaretheorem[sibling=theorem]{proposition}
\declaretheorem[sibling=theorem]{definition}
\declaretheorem[sibling=theorem]{corollary}
\declaretheorem[sibling=theorem]{lemma}

\declaretheorem[sibling=theorem]{notation}

\declaretheorem[sibling=theorem,style=remark]{remark}
\declaretheorem[sibling=theorem,style=remark]{example}

\numberwithin{equation}{section}

\usepackage[mathcal]{euscript}
\usepackage{times}

\def\R{\mathbb R}
\def\C{\mathbb C}
\def\Z{\mathbb Z}
\def\N{\mathbb N}
\def\E{\mathbb E}

\def\del{\partial}
\def\1{\mathbbm{1}}
\def\tensor{\otimes}

\def\U{\mathbb U}
\def\M{\mathbb M}
\def\GL{\mathbb{GL}}
\def\u{\mathfrak{u}}
\def\gl{\mathfrak{gl}}
\def\G{\mathscr{G}}
\def\H{\mathscr{H}}
\def\HH{\mathcal{H}}
\def\D{\mathcal{D}}
\def\L{\mathcal{L}}
\def\EX{\mathscr{E}}

\newcommand{\tr}{\mathrm{tr}}
\newcommand{\Ur}{\mathrm{Tr}\,}
\newcommand{\mx}[1]{\mathbf{#1}}

\long\def\symbolfootnote[#1]#2{\begingroup%
\def\thefootnote{\fnsymbol{footnote}}\footnote[#1]{#2}\endgroup}

\setlength{\textwidth}{6.8in}
\setlength{\textheight}{9in}
\oddsidemargin -0.15in
\evensidemargin -0.15in
\topmargin -0.75in
\pagestyle{plain}

\begin{document}

\title{The Large-$N$ Limit of the Segal--Bargmann Transform on $\U_N$}
\author{Bruce K. Driver\thanks{Supported in part by NSF Grant DMS-1106270} \\
Department of Mathematics \\
University of California, San Diego \\
La Jolla, CA 92093-0112 \\
\texttt{bdriver@math.ucsd.edu}
\and
Brian C. Hall\thanks{Supported in part by NSF Grant DMS-1001328} \\
Department of Mathematics \\
University of Notre Dame \\
Notre Dame, IN 46556 \\
\texttt{bhall@nd.edu}
\and
Todd Kemp\thanks{Supported in part by NSF Grant DMS-1001894} \\
Department of Mathematics \\
University of California, San Diego \\
La Jolla, CA 92093-0112 \\
\texttt{tkemp@math.ucsd.edu}
}

\date{\today} %\quad {\em File:} \jobname{.tex}}

\maketitle

\begin{abstract}
We study the (two-parameter) Segal--Bargmann transform $\mathbf{B}_{s,t}^N$ on the unitary group $\U_N$, for large $N$.  Acting on matrix valued functions that are equivariant under the adjoint action of the group, the transform has a meaningful limit $\G_{s,t}$ as $N\to\infty$, which can be identified as an operator on the space of complex Laurent polynomials.  We introduce the space of {\em trace polynomials}, and use it to give effective computational methods to determine the action of the heat operator, and thus the Segal--Bargmann transform.  We prove several concentration of measure and limit theorems, giving a direct connection from the finite-dimensional transform $\mathbf{B}_{s,t}^N$ to its limit $\G_{s,t}$.  We characterize the operator $\G_{s,t}$ through its inverse action on the standard polynomial basis.  Finally, we show that, in the case $s=t$, the limit transform $\G_{t,t}$ is the ``free Hall transform'' $\mathscr{G}^t$ introduced by Biane.
\end{abstract}

\tableofcontents

\section{Introduction\label{section Introduction}}

The {\em Segal--Bargmann transform} (also known in the physics literature as the {\em Bargmann transform} or {\em Coherent State transform}) is a unitary isomorphism from $L^2$ to holomorphic $L^2$.  It was originally introduced by Segal \cite{Segal1962,Segal1963,Segal1978} and Bargmann \cite{Bargmann1961,Bargmann1962}, as a map
\[ S_t\colon L^2(\R^N,\gamma^{N}_t)\to \mathcal{H}L^2(\C^N,\gamma^{2N}_{t/2}) \]
where $\gamma^N_t$ is the standard Gaussian heat kernel measure $(\frac{1}{4\pi t})^{N/2}\exp(-\frac{1}{4t}|\mx{x}|^2)\,d\mx{x}$ on $\R^N$, and $\mathcal{H}L^2$ denotes the subspace of square-integrable {\em holomorphic} functions.  The transform $S_t$ is given by convolution with the heat kernel, followed by analytic continuation.

In \cite{Hall1994}, the second author introduced an analog of the
Segal--Bargmann transform for any compact Lie group $K$.  Let $\Delta_{K}$ denote the Laplace operator over
$K$ (determined, up to scale, by the $\mathrm{Ad}$-invariant inner product on the Lie algebra $\mathfrak{k}$ of $K$), and denote by $e^{\frac{t}{2}\Delta_{K}}$ the corresponding heat operator. The generalized Segal--Bargmann transform $B_t$ maps functions on $K$ to holomorphic functions on the complexification
$K_{\mathbb{C}}$ of $K$, by application of the heat operator and analytic continuation. \label{SBT}

In this paper, we will work with the classical unitary groups $K=\U_N$, and identify a limit as $N\to\infty$ of the Segal--Bargmann transform on $\U_N$.

\subsection{Main Definitions and Theorems\label{section Main Theorems}}

Denote by $\M_N$ the algebra of $N\times N$ complex matrices, with unit $I_N$.  Let $\U_N$ denote the group of unitary matrices $\U_N=\{U\in\M_N\colon UU^\ast = I_N\}$, and let $\GL_N$ denote the group of all invertible matrices in $\M_N$; $\GL_N$ is the complexification of $\U_N$.  The Lie algebra of $\U_N$ is $\mathfrak{u}_N = \{X\in\M_N\colon X^\ast=-X\}$, while the Lie algebra of $\GL_N$ is $\gl_N = \M_N$.  To describe the Laplace operator $\Delta_{\U_N}$ explicitly, we fix the following notation.

\begin{notation} \label{n.new.1} For $Z\in\M_N$ let
\[ \mathrm{Tr}_N(Z) \equiv \sum_{n=1}^N Z_{nn} \qquad \text{and} \qquad \tr_N(Z)\equiv \frac1N\mathrm{Tr}_N(Z) = \frac1N\sum_{n=1}^N Z_{nn} \]
denote the trace and normalized trace of $Z$, respectively.  [We will usually drop the subscripts and write simply $\Ur$ and $\tr$, as the dimension will always be clear from context.]  We also define (scaled) Hilbert-Schmidt norms on $\u_N$ and on $\M_N$ by
\begin{align}
\label{scalingMetric} \|X\|^2_{\u_N}&\equiv N^2\tr(XX^\ast) = N\Ur(XX^\ast)=N\sum_{j,k=1}^{N}|X_{jk}|^2, \quad X\in\u_N, \quad \text{and} \\
\label{eq M_N metric} \|Z\|_{\M_N}^2 &\equiv \tr(ZZ^\ast) = \frac{1}{N}\Ur(ZZ^\ast) = \frac1N\sum_{j,k=1}^N |Z_{jk}|^2, \quad Z\in\M_N.
\end{align}
\end{notation}

\begin{definition} \label{definition left-invariant derivative} For $\xi\in\M_N$, let $\del_\xi$ denote the left-invariant vector field on $\GL_N$, whose action on smooth functions $f\colon\GL_N\to\C$ is given by
\begin{equation} \label{eq del_X} (\del_\xi f)(Z) = \left.\frac{d}{dt}\right|_{t=0} f(Ze^{t\xi}), \qquad Z\in \GL_N. \end{equation}
If $\xi=X\in\u_N$ then $\del_X$ is tangential to $\U_N$ and so restricts to a left-invariant vector field on $\U_N$ whose action on smooth functions $f\colon\U_N\to\C$ is still given by (\ref{eq del_X}).
\end{definition}

\begin{definition} \label{definition Ast and must} The {\bf Laplace operator} $\Delta_{\U_N}$ is the second order elliptic operator on $\U_N$ whose action on smooth functions $f\colon\U_N\to\C$ is given by
\begin{equation} \label{eq def Delta} \Delta_{\U_N}f = \sum_{X\in\beta_N} \del_X^2f \end{equation}
where $\beta_N$ is an orthonormal basis for $\u_N$ (with norm $\|\cdot\|_{\u_N}$ given in (\ref{scalingMetric})); the operator does not depend on which orthonormal basis is chosen.

Similarly, for $s,t>0$ with $s>t/2$, let $A^N_{s,t}$ be the second order elliptic operator on $\GL_N$ whose action on smooth functions $f\colon\GL_N\to\C$ is given by
\begin{equation} \label{eq Ast def} A^N_{s,t}f = \left(s-\frac{t}{2}\right)\sum_{X\in\beta_N} \del_X^2f + \frac{t}{2}\sum_{X\in\beta_N} \del_{iX}^2f. \end{equation}
\end{definition}
Let $C_{c}^{\infty}(\GL_N)$ denote the smooth compactly supported functions from $\GL_N$ to $\C$.
It is well known that the operators $\Delta_{\U_N}$ and $A_{s,t}^{N}|_{C_{c}^{\infty}(\GL_N)}
$ are non-positive and essentially self-adjoint on $L^2(\U_N)$ and $L^2(\GL_N)$ respectively,
where the measures on $\U_N$ and $\GL_N$ are taken to be any right invariant Haar measures. The
self adjoint closures of these operators induce (heat) semigroups $\left\{e^{\frac{\tau}{2}\Delta_{\U_N}}:\tau\geq0\right\}$ and $\left\{e^{\frac{\tau}{2}A_{s,t}}:\tau\geq0\right\}  $ on $L^{2}(\U_N)$ and $L^2(\GL_N)$ respectively. These semigroups then induce two (heat kernel) measures, $\rho_{t}^{N}$ and $\mu_{s,t}^{N},$ which satisfy
\begin{align} \label{eq rho t def} \int_{\U_N} f(U)\rho_t^N(dU) &= \left(e^{\frac{t}{2}\Delta_{\U_N}}f\right)(I_N), \qquad f\in C(\U_N), \\
\label{eq must def} \int_{\GL_N} f(Z)\,\mu^N_{s,t}(dZ) &= \left(e^{\frac12A_{s,t}}f\right)(I_N), \qquad f\in C_c(\GL_N).
\end{align}
We will sometimes write $\E_{\rho_t^N}(f)=\int_{\U_N} f(U)\,\rho_t^N(dU)$ and $\E_{\mu_{s,t}^N}(f)=\int_{\GL_N} f(Z)\,\mu_{s,t}(dZ)$.

\begin{remark} \label{r.Robinson} The test functions $f$ on $\GL_N$ we will use tend not to be compactly-supported (or bounded), but they do have sufficiently slow growth that (\ref{eq must def}) still holds true for such functions.  This follows from Langland's Theorem; cf.\ \cite[Theorem 2.1 (p.\ 152)]{Robinson1991}. \ref{section heat kernels} gives a concise sketch of the heat kernel results we need in this paper.  \end{remark}

Let $\HH L^2(\GL_N,\mu^N_{s,t})$ denote the Hilbert subspace of $L^2(\GL_N,\mu^N_{s,t})$ consisting of those $L^2$ functions which possess a holomorphic representative.  The following theorem with $s=t$ is a special case of a the Lie group analogue
of the Segal--Bargmann transform $B_t$ from page \pageref{SBT}.  The two parameter form of this transform which we use here was introduced by the first and second authors in \cite{Driver1999}; see also \cite{Driver1995,Hall1994,Hall2001b,Hall1998}.

\begin{theorem}[D, H, \cite{Driver1999}]
\label{t.new1.4}Fix $s,t>0$ with $s>t/2$. For each $f\in L^{2}(\U_N,\rho_{s}^{N})$,
the function $e^{\frac{t}{2}\Delta_{\U_N}}f$ has a representative which has a
unique analytic continuation to $\GL_N$; denote this analytic continuation by $B_{s,t}^{N}f$.
Then $B^N_{s,t}f \in\mathcal{H}L^{2}(\GL_N),\mu_{s,t}^{N})$, and the resulting transform
\[ B_{s,t}^{N}\colon L^{2}(\U_N,\rho_{s}^{N})\to\HH L^{2}(\GL_N,\mu_{s,t}^{N})
\]
is a unitary isomorphism.
\end{theorem}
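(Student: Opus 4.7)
The plan is to proceed in three stages: analytic continuation of $e^{\frac{t}{2}\Delta_{\U_N}}f$ to $\GL_N$, an isometry on a dense subspace, and surjectivity. For the analytic continuation, I would start from the Peter--Weyl expansion of the density of $\rho_t^N$ with respect to Haar measure,
\[
\rho_t^N(U) = \sum_{\pi} \dim(\pi)\, e^{-\frac{t}{2} c_\pi}\, \chi_\pi(U),
\]
summed over irreducible representations $\pi$ of $\U_N$, with $c_\pi\geq 0$ the corresponding Casimir eigenvalue. Each character $\chi_\pi$ is a polynomial in matrix entries and extends holomorphically to $\GL_N$; the decay $e^{-\frac{t}{2}c_\pi}$ dominates both $\dim(\pi)$ and the growth of $\chi_\pi$ on compact sets, so the series converges locally uniformly on $\GL_N$ to a holomorphic function $\tilde\rho_t^N$. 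Then for $f\in L^2(\U_N,\rho_s^N)\subset L^1(\U_N,dU)$, the convolution $(B_{s,t}^N f)(Z):=\int_{\U_N}\tilde\rho_t^N(V^{-1}Z)f(V)\,dV$ is holomorphic on $\GL_N$ and agrees with $e^{\frac{t}{2}\Delta_{\U_N}}f$ on $\U_N$.

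For the isometry I would work in the orthogonal Peter--Weyl basis $\{u_\pi^{ij}\}$ of matrix entries, which are eigenfunctions of $\Delta_{\U_N}$ with eigenvalue $-c_\pi$ and span a dense subspace of $L^2(\U_N,\rho_s^N)$. On $u_\pi^{ij}$ the heat operator $e^{\frac{t}{2}\Delta_{\U_N}}$ multiplies by $e^{-\frac{t}{2}c_\pi}$, so $B_{s,t}^N u_\pi^{ij}$ is the holomorphic polynomial $e^{-\frac{t}{2}c_\pi}\pi(Z)_{ij}$. Computing $\|B_{s,t}^N u_\pi^{ij}\|^2_{L^2(\mu_{s,t}^N)}$ reduces, by (\ref{eq must def}), to evaluating $e^{\frac12 A_{s,t}^N}$ at $I_N$ on the non-holomorphic polynomial $\overline{\pi(Z)_{ij}}\pi(Z)_{ij}$. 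Using the explicit splitting (\ref{eq Ast def}) of $A_{s,t}^N$ into its real and imaginary parts, together with the Cartan decomposition $\GL_N=\U_N\exp(i\u_N)$ to factor the heat flow, this becomes a finite-dimensional linear-algebra calculation on the $\pi$-isotypic block. One then verifies that the constants match $\|u_\pi^{ij}\|^2_{L^2(\rho_s^N)}$, and extension by density gives the isometry on all of $L^2(\U_N,\rho_s^N)$.

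For surjectivity, holomorphic polynomials in the matrix entries of $Z$ are dense in $\HH L^2(\GL_N,\mu_{s,t}^N)$ since $\mu_{s,t}^N$ has sufficient Gaussian decay, and every such polynomial lies in the image of $B_{s,t}^N$ because $e^{\frac{t}{2}\Delta_{\U_N}}$ acts invertibly on each irreducible block. Combined with the isometry, this yields the unitary isomorphism. The main obstacle is the isometry: because $A_{s,t}^N$ mixes real and imaginary directions in a two-parameter way (elliptic precisely when $s>t/2$), one must carefully track how the heat-kernel measure behaves under the Cartan decomposition and verify the exact cancellation of the resulting exponential factors against the weights $e^{-\frac{s}{2}c_\pi}$ built into $\rho_s^N$. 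Once this cancellation is established, the surrounding representation-theoretic scaffolding assembles the full unitary equivalence.
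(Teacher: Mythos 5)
First, note that the paper you are working from does not prove this statement at all: Theorem \ref{t.new1.4} is imported from Driver--Hall \cite{Driver1999}, so there is no internal proof to compare against, and your sketch has to stand on its own. As it stands, it does not. The heart of the theorem is the isometry, and your proposal defers exactly that step: you reduce $\|B^N_{s,t}u_\pi^{ij}\|^2_{L^2(\mu_{s,t}^N)}$ to evaluating $e^{\frac12 A^N_{s,t}}$ on $\overline{\pi(Z)_{ij}}\,\pi(Z)_{ij}$ and then say that ``one verifies that the constants match,'' explicitly calling this the main obstacle. That verification is precisely where the theorem lives -- it is where the constraint $s>t/2$ and the interplay between $\rho_s^N$ and $\mu_{s,t}^N$ enter -- and the route you indicate (factoring the heat flow through the Cartan decomposition $\GL_N=\U_N\exp(i\u_N)$) is itself nontrivial, since $\mu_{s,t}^N$ does not factor in any simple way under that decomposition; making this rigorous is a substantial piece of analysis, not a finite linear-algebra check.

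There is also a structural gap in how you propose to pass from matrix entries to all of $L^2$. The Peter--Weyl matrix entries $u_\pi^{ij}$ are orthogonal with respect to \emph{Haar} measure, but not with respect to $\rho_s^N$, and likewise the continued entries $\pi(Z)_{ij}$ are not orthogonal in $\HH L^2(\GL_N,\mu_{s,t}^N)$ (integrals of $u_\pi^{ij}\overline{u_\sigma^{kl}}$ against the heat-kernel density involve Clebsch--Gordan data and do not vanish for $\pi\neq\sigma$ in general). Consequently, matching \emph{norms} on this non-orthogonal spanning family and ``extending by density'' does not yield an isometry; you would have to verify preservation of all inner products $\langle B u_\pi^{ij},B u_\sigma^{kl}\rangle$, a much larger family of identities, which is one reason the proofs in the literature proceed by semigroup/stochastic or averaging arguments rather than termwise verification. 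Finally, the density of holomorphic (Laurent-type) polynomials in $\HH L^2(\GL_N,\mu_{s,t}^N)$, which your surjectivity argument needs, is a genuine theorem on a noncompact group and cannot be dismissed with ``sufficient Gaussian decay.'' Your Stage 1 (analytic continuation via the holomorphically continued heat kernel) is fine, but Stages 2 and 3 need real proofs before this counts as a proof of the theorem.
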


In this paper, we are interested in a slight extension of $B^N_{s,t}$ to matrix-valued functions.

\begin{definition}[Boosted Segal-Bargmann Transform]\label{d.new1.5} Given a $\M_N$-valued function $F$ on either $\U_N$ or $\GL_N$, denote by $\|F\|_{\M_N}$ the scalar-valued function $Z\mapsto \|F(Z)\|_{\M_N}$. Fix $s,t>0$ with $s>t/2$, and let
\begin{align*} L^2(\U_N,\rho^N_s;\M_N) &= \left\{F\colon\U_N\to\M_N\,;\,\|F\|_{\M_N}\in L^2(\U_N,\rho^N_s)\right\}, \quad \text{and} \\
L^2(\GL_N,\mu^N_{s,t};\M_N) &= \left\{F\colon\GL_N\to\M_N\,;\,\|F\|_{\M_N}\in L^2(\GL_N,\mu^N_{s,t})\right\}.
\end{align*}
Let $\HH L^2(\GL_N,\mu^N_{s,t};\M_N)\subset L^2(\GL_N,\mu^N_{s,t};\M_N)$ denote the subspace of (matrix-valued) holomorphic functions. These are Hilbert spaces in the norms
\begin{align} \label{e.L2norm1} \|F\|_{L^2(\U_N),\rho^N_s;\M_N)}^2 &\equiv \int_{\U_N} \|F(U)\|_{\M_N}^2\,\rho^N_s(dU) \\
\label{e.L2norm2} \|H\|_{L^2(\GL_N,\mu^N_{s,t};\M_N)}^2 &\equiv \int_{\GL_N} \|H(Z)\|_{\M_N}^2\,\mu^N_{s,t}(dZ).
\end{align}
The {\bf boosted Segal--Bargmann transform}
\[ \mathbf{B}_{s,t}^{N}:L^{2}(\U_N,\rho_{s}^{N};\M_{N})\rightarrow\HH L^{2}(\GL_N,\mu_{s,t}^{N};\M_{N}) \]
is the unitary isomorphism determined by applying $B^N_{s,t}$ componentwise; that is, it is determined by
\[ \mathbf{B}_{s,t}^{N}\left(f\cdot V\right)  =B_{s,t}^{N}f\cdot V \quad \text{ for } \quad f\in L^{2}(\U_N,\rho_{s}^{N}) \; \text{ and } \; V\in \M_{N}. \]
\end{definition}
\noindent The space $L^2(\U_N,\rho^N_s;\M_N)$ can be naturally identified with the Hilbert space tensor product $L^2(\U_N,\rho^N_s)\tensor_\C \M_N$; under this identification, $\mx{B}^N_{s,t}\cong B^N_{s,t}\tensor\mathrm{id}_{\M_N}$.  To understand its action, consider the matrix-valued function $F(U) = U^2$ on $\U_N$.  Then, as calculated in Example \ref{example heat U^2},
\begin{equation}
(\mx{B}^N_{s,t}F)(Z) =e^{-t}\cosh(t/N)Z^2 -Ne^{-t}\sinh(t/N)Z\cdot\tr(Z).
\label{BtU2} \end{equation} %\\
%&= e^{-t}[Z^2-tZ\tr Z] + O\left(\frac{1}{N^2}\right), \nonumber
%\end{align}
\noindent This highlights the fact that the Segal--Bargmann transform does not preserve the space of polynomial functions of a $\U_N$-variable; in general, it maps such functions to {\bf trace polynomials}.

\begin{definition} \label{d.Laurent.trace.new} Let $\C[u,u^{-1}]$ denote the algebra of {\bf Laurent polynomials} in a single variable $u$:
\begin{equation} \label{e.poly.fc} \C[u,u^{-1}] = \left\{\sum_{k\in\Z} a_ku^k \colon a_k\in\C, a_k=0\text{ for all but finitely-many }k\right\}, \end{equation}
with the usual polynomial multiplication.  The subalgebras $\C[u]$ and $\C[u^{-1}]$ denote polynomials in $u$ and $u^{-1}$ respectively.

We define the {\bf Laurent polynomial functional calculus} as follows: for $f\in\C[u,u^{-1}]$ as in (\ref{e.poly.fc}), the function $f_N\colon \GL_N\to\M_N$ is given by
\begin{equation} \label{e.L.fc} f_N(Z) = \sum_{k\in\Z} a_k Z^k, \end{equation}
where the $k=0$ term is interpreted as $a_0I_N$.

Let $\C[\mx{v}]$ denote the algebra of complex polynomials in infinitely-many commuting variables $\mx{v} = \{v_{\pm 1},v_{\pm 2},\ldots\}$, and let $\C[u,u^{-1};\mx{v}]$ denote the algebra of polynomials in the variables $u,u^{-1},v_{\pm 1},v_{\pm 2},\ldots$ (although we do note treat $u$ and $u^{-1}$ as independent in general).  Thus
\begin{equation} \label{e.poly.fc2} \C[u,u^{-1};\mx{v}] = \left\{\sum_{k\in\Z} u^k Q_k(\mx{v}) \colon Q_k(\mx{v})\in\C[\mx{v}], Q_k=0\text{ for all but finitely-many }k\right\}. \end{equation}
In other words, we can realize $\C[u,u^{-1};\mx{v}]$ as the algebra $\left(\C[\mx{v}]\right)[u,u^{-1}]$ of Laurent polynomials in $u$ with coefficients in the ring $\C[\mx{v}]$; equivalently, $\C[u,u^{-1};\mx{v}] \cong \C[u,u^{-1}]\tensor_{\C} \C[\mx{v}]$.  We denote elements of $\C[u,u^{-1};\mx{v}]$ by $P = P(u;\mx{v})$.

Define the {\bf trace polynomial functional calculus} as follows: for $P\in\C[u,u^{-1};\mx{v}]$, the function $P_N\colon \GL_N\to\M_N$ is given by
\[ P_N(Z) \equiv \left.P(u;\mx{v})\right|_{u=Z,v_k = \tr(Z^k), k\ne 0}. \]
Functions of the form $P_N$ for $P\in\C[u,u^{-1};\mx{v}]$ are called {\bf trace  polynomials}.  \end{definition}
\noindent It might be more accurate to call such functions trace {\em Laurent} polynomials, but we will simply use {\em trace polynomials} as it should cause no confusion.  For a concrete example: if $P(u;\mx{v}) = v_2v_{-4}^2u^5 + 8v_1^6v_{-3}$ then
\[ P_N(Z) = \tr(Z^2)\tr(Z^{-4})^2Z^5 + 8\tr(Z)^6\tr(Z^{-3})I_N. \]

\begin{remark} It is important to note that, for any finite $N$, there will be many {\em distinct} elements $P\in\C[u,u^{-1};\mx{v}]$ that induce the same trace polynomial, i.e.\ there will be $P\ne Q$ with $P_N = Q_N$.  Nevertheless, it is true that if $P_N=Q_N$ {\em for all sufficiently large} $N$, then $P=Q$; this is the statement of Theorem \ref{prop P-->P_N unique} below. \end{remark}

%\begin{theorem}
%\label{t.new1.8} Let $P\in \mathscr{P}[u,u^{-1}]$ as in Definition \ref%
%{d.Laurent.trace.new}, and let $N\in \mathbb{N}$ and $s,t>0$ with $s>t/2$.
%There exists an element $P_{t}^{N}\in \mathscr{P}[u,u^{-1}]$ such that
%\begin{equation}
%\mathbf{B}_{s,t}^{N}P_{N}=[P_{t}^{N}]_{N}.  \label{e.new1.3}
%\end{equation}%
%The polynomial $P_{t}^{N}$ can be computed as $P_{t}^{N}=e^{\frac{t}{2}%
%\mathcal{D}_{N}}P$ where $\mathcal{D}_{N}$ is a certain first order
%pseudodifferential operator on $\mathscr{P}[u,u^{-1}]$; cf.\ Theorem \ref%
%{t.intertwine.new} and Definition \ref{Def LN L} below. 
%
%
%
%\end{theorem}

\begin{theorem} \label{t.new1.8} Let $P\in\C[u,u^{-1};\mx{v}]$ as in Definition \ref{d.Laurent.trace.new}, and let $N\in\N$ and $s,t>0$ with $s>t/2$.  There exists an element $P^N_t\in\C[u,u^{-1};\mx{v}]$ such that
\begin{equation} \label{e.new1.3} \mx{B}^N_{s,t}P_N = [P^N_t]_N. \end{equation}
The polynomial $P^N_t$ can be computed as $P^N_t = e^{\frac{t}{2}\D_N}P$ where $\D_N$ is a certain pseudodifferential operator on $\C[u,u^{-1};\mx{v}]$; cf.\ Theorem \ref{t.intertwine.new} and Definition \ref{Def LN L} below. \end{theorem}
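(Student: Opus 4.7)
Since the boosted transform $\mx{B}^N_{s,t}$ is, by Theorem \ref{t.new1.4} applied componentwise, the holomorphic extension from $\U_N$ to $\GL_N$ of $e^{(t/2)\Delta_{\U_N}}$, the statement reduces to showing that $\Delta_{\U_N}$ preserves the space of trace polynomials in an algebraically controllable way. My plan is to: (a) construct a linear operator $\D_N$ on $\C[u,u^{-1};\mx{v}]$ satisfying $[\D_N P]_N = \Delta_{\U_N} P_N$ on $\U_N$ for every $P$; (b) produce a grading on the ring that $\D_N$ respects, so that $e^{(t/2)\D_N}$ is well defined graded-piece by graded-piece; and (c) identify $P^N_t := e^{(t/2)\D_N}P$ with the polynomial implementing the Segal--Bargmann transform via analytic continuation.

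For (a), I first compute $\del_\xi$ on the trace-polynomial generators using Definition \ref{definition left-invariant derivative}: for $k\geq 1$,
\[ \del_\xi Z^k = \sum_{j=1}^{k} Z^j \xi Z^{k-j}, \qquad \del_\xi \tr(Z^k) = k\,\tr(Z^k\xi), \]
with an analogous (signed) formula for $Z^{-k}$. The Hilbert--Schmidt rescaling (\ref{scalingMetric}) of $\u_N$ is chosen precisely so that $\sum_{X\in\beta_N} X\otimes X = -\frac{1}{N}P_{\mathrm{swap}}$ on $\C^N\otimes\C^N$, from which three ``magic'' contraction identities follow:
\[ \sum_{X\in\beta_N} XAX = -\tr(A)I_N,\quad \sum_{X\in\beta_N}\tr(XAXB) = -\tr(A)\tr(B),\quad \sum_{X\in\beta_N}\tr(XA)\tr(XB) = -\frac{1}{N^2}\tr(AB). \]
Applying these to the Leibniz expansion $\sum_X \del_X^2(fg) = (\sum_X\del_X^2 f)\,g + 2\sum_X (\del_X f)(\del_X g) + f\,(\sum_X\del_X^2 g)$ shows that $\Delta_{\U_N}$ sends each generator to an explicit trace polynomial, and, crucially, that the cross term $\sum_X(\del_X f)(\del_X g)$ is again a trace polynomial whose symbol depends only on the symbols of $f$ and $g$. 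Defining $\D_N$ on generators from the first piece and extending to products using the cross-term pairing then yields a well-defined linear endomorphism of $\C[u,u^{-1};\mx{v}]$ with $[\D_N P]_N = \Delta_{\U_N}P_N$ on $\U_N$.

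For (b) and (c), grade the ring by total $Z$-weight, setting $\deg(u^k) = \deg(v_k) = |k|$ and extending multiplicatively. Each graded piece $\C[u,u^{-1};\mx{v}]_n$ is finite-dimensional, and direct inspection of the magic-formula outputs shows that each contraction preserves total $Z$-weight, so $\D_N$ is graded. Consequently $e^{(t/2)\D_N}$ is an ordinary matrix exponential on each graded piece, and $P^N_t := e^{(t/2)\D_N}P$ is a well-defined element of $\C[u,u^{-1};\mx{v}]$. Inside the finite-dimensional $\D_N$-invariant subspace generated by $P$, the function $f_\tau := [e^{(\tau/2)\D_N}P]_N$ solves the heat equation $\dot f_\tau = \tfrac{1}{2}\Delta_{\U_N}f_\tau$ on $\U_N$ with $f_0 = P_N$, so $e^{(t/2)\Delta_{\U_N}}P_N = [P^N_t]_N$ on $\U_N$. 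Since $[P^N_t]_N$ is manifestly holomorphic on $\GL_N$ and $\mx{B}^N_{s,t}P_N$ is by construction the unique holomorphic extension of $e^{(t/2)\Delta_{\U_N}}P_N$ from $\U_N$ to $\GL_N$, the two agree on $\GL_N$, proving (\ref{e.new1.3}).

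\textbf{Main obstacle.} The substantive technical work is step (a): verifying that the cross-term contraction $\sum_X(\del_X f)(\del_X g)$ descends to a \emph{base-point-independent} bilinear operation at the level of symbols in $\C[u,u^{-1};\mx{v}]$, so that the operator $\D_N$ is well defined and not merely an assignment on functions (recall that distinct symbols can realise the same trace polynomial for fixed $N$). This requires systematic, recursive application of the three magic identities together with trace cyclicity, and is ultimately the content of the cited Theorem \ref{t.intertwine.new}.
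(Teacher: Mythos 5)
Your proposal is correct and follows essentially the same route as the paper: the magic contraction identities and derivative formulas yield an explicit operator $\D_N$ on $\C[u,u^{-1};\mx{v}]$ intertwining with $\Delta_{\U_N}$ (the content of Theorem \ref{t.intertwine.new}), the trace-degree grading gives finite-dimensional invariant subspaces on which $e^{\frac{t}{2}\D_N}$ is a matrix exponential (Corollary \ref{c.Pn-inv}), and analytic continuation of the resulting trace polynomial identifies it with $\mx{B}^N_{s,t}P_N$. The only remark worth making is that the ``well-definedness'' obstacle you flag largely evaporates in the paper's treatment: $\D_N$ is defined directly by explicit formulas on the ring $\C[u,u^{-1};\mx{v}]$ (where monomials form a basis), so the non-injectivity of $P\mapsto P_N$ never enters, and the substantive work is just the verification $[\D_N P]_N=\Delta_{\U_N}P_N$ on monomials.
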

\noindent The proof of Theorem \ref{t.new1.8} is on page \pageref{proof of t.new1.8}.

\begin{remark} \label{r.pseudo1} We call $\D_N$ a pseudodifferential operator because, if we identify $u$ as variable in the unit circle $\U$, then $\mathcal{D}_{N}$ acts as a first order differential operator composed with a linear combination of the identity operator and the Hilbert transform on the circle.  As explained in \cite{Melo1997}, the Hilbert transform on $\U$ is a pseudodifferential operator.  See Definition \ref{def A pm} and Remark \ref{r.pseudo2} for more details.  \end{remark}

For each $N>1$, the boosted Segal--Bargmann transform's range on Laurent polynomial calculus functions is contained in the larger space of trace polynomials.  But as $N\to\infty$, its image concentrates back on Laurent polynomials.  This is our main theorem.

\begin{theorem} \label{Main Theorem} Let $s,t>0$ with $s>\frac{t}{2}$.  For each $f\in\C[u,u^{-1}]$, there exist unique $g_{s,t},h_{s,t}\in\C[u,u^{-1}]$ such that
\begin{align} \label{eq SB limit 1} &\| \mx{B}^N_{s,t} f_N - [g_{s,t}]_N \|^2_{L^2(\GL_N,\mu_{s,t}^N;\M_N)} = O\left(\frac{1}{N^2}\right), \qquad \text{and} \\ \label{eq SB limit 2}
&\| (\mx{B}^N_{s,t})^{-1}f_N - [h_{s,t}]_N \|^2_{L^2(\U_N,\rho_s^N;\M_N)} = O\left(\frac{1}{N^2}\right). \end{align}
We denote that map $\G_{s,t}\colon\C[u,u^{-1}]\to\C[u,u^{-1}]$ given by $f\mapsto g_{s,t}$ as the {\bf free unitary Segal--Bargmann transform}, and we denote the map $\H_{s,t}\colon\C[u,u^{-1}]\to\C[u,u^{-1}]$ given by $f\mapsto h_{s,t}$ as the {\bf free unitary inverse Segal--Bargmann transform}.
\end{theorem}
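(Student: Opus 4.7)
The plan is to lift the Segal--Bargmann transform to the trace-polynomial level via Theorem \ref{t.new1.8}, identify a large-$N$ limit of the intertwining operator $\mathcal{D}_N$, and then use measure-concentration under $\mu^N_{s,t}$ to collapse any residual $\mathbf{v}$-dependence down to Laurent polynomials in $u$ alone. Concretely, given $f \in \C[u,u^{-1}]$, Theorem \ref{t.new1.8} produces $\mathbf{B}^N_{s,t} f_N = [f^N_t]_N$ with $f^N_t = e^{\frac{t}{2}\mathcal{D}_N} f \in \C[u,u^{-1};\mathbf{v}]$. The explicit formula \eqref{BtU2} is the tell-tale clue: since $\cosh(t/N) = 1 + O(1/N^2)$ and $N\sinh(t/N) = t + O(1/N^2)$, one expects a decomposition $\mathcal{D}_N = \mathcal{D}_\infty + N^{-2}\widetilde{\mathcal{D}}_N$, with both operators of controlled degree on $\C[u,u^{-1};\mathbf{v}]$. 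I would verify this decomposition by direct inspection of Definition \ref{Def LN L} / Theorem \ref{t.intertwine.new}, reading off $\mathcal{D}_\infty$ and $\widetilde{\mathcal{D}}_N$ from the explicit formula for $\mathcal{D}_N$.

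Duhamel's formula then yields
\[ f^N_t = F_t + \frac{1}{N^2} R^N_t, \qquad F_t := e^{\frac{t}{2}\mathcal{D}_\infty} f, \qquad R^N_t := \int_0^{t/2} e^{(\frac{t}{2}-\sigma)\mathcal{D}_N} \widetilde{\mathcal{D}}_N\, e^{\sigma \mathcal{D}_\infty} f\, d\sigma, \]
with $R^N_t$ of bounded degree uniformly in $N$. In general $F_t$ need not itself be $\mathbf{v}$-free, but in $L^2(\mu^N_{s,t};\M_N)$ the variables $v_k$ evaluated as the traces $\tr(Z^k)$ concentrate with variance $O(1/N^2)$, while their means $\nu^N_k := \E_{\mu^N_{s,t}}[\tr(Z^k)]$ converge to the corresponding moments $\nu_k$ of Biane's free multiplicative Brownian motion at rate $O(1/N^2)$. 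Defining $g_{s,t}(u)\in\C[u,u^{-1}]$ by the substitution $v_k \mapsto \nu_k$ in $F_t$, a direct $L^2$-expansion using these variance and moment-convergence estimates gives $\|[F_t]_N - [g_{s,t}]_N\|^2_{L^2(\mu^N_{s,t};\M_N)} = O(1/N^2)$. Combined with uniform-in-$N$ moment bounds of the form $\E_{\mu^N_{s,t}}[\|Z^k\|_{\M_N}^2 \prod_j |\tr(Z^{m_j})|^{2\alpha_j}] = O(1)$ to control the remainder $N^{-2}[R^N_t]_N$, this yields \eqref{eq SB limit 1}. Uniqueness of $g_{s,t}$ follows from Theorem \ref{prop P-->P_N unique}.

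For the inverse estimate \eqref{eq SB limit 2}, I would exploit the unitarity of $\mathbf{B}^N_{s,t}$: for any candidate $h \in \C[u,u^{-1}]$,
\[ \big\|(\mathbf{B}^N_{s,t})^{-1}f_N - [h]_N\big\|_{L^2(\rho^N_s;\M_N)} = \big\|f_N - \mathbf{B}^N_{s,t}[h]_N\big\|_{L^2(\mu^N_{s,t};\M_N)}, \]
which by the forward result equals $\|f_N - [\mathcal{G}_{s,t}h]_N\|_{L^2(\mu^N_{s,t};\M_N)} + O(1/N)$. Hence one sets $h_{s,t} := \mathcal{G}_{s,t}^{-1} f$, provided $\mathcal{G}_{s,t}$ acts bijectively on $\C[u,u^{-1}]$. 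The latter should follow from the fact that $\mathcal{D}_\infty$ is formally a first-order operator whose action on the basis $\{u^k\}$ respects a triangular filtration, making its exponential invertible basis-by-basis and simultaneously supplying the promised characterization of $\mathcal{G}_{s,t}^{-1}$ through its action on $\{u^k\}$.

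\textbf{Main obstacle.} The principal technical hurdles are twofold: (i) extracting the explicit $1/N^2$ expansion of $\mathcal{D}_N$ with tight control over degree growth under $\widetilde{\mathcal{D}}_N$, so that the Duhamel remainder $R^N_t$ has manageable polynomial degree uniformly in $N$; and (ii) upgrading the operator-level $O(1/N^2)$ estimate to the $L^2$-level $O(1/N^2)$ estimate, which requires both uniform-in-$N$ moment bounds on trace polynomials of $Z$ under the heat-kernel measure $\mu^N_{s,t}$ on $\GL_N$ and the $O(1/N^2)$ variance estimate for $\tr(Z^k)$. Proving these analytic estimates uniformly in the degree of $f$ and in the parameters $s,t$ (with $s > t/2$) is the main bulk of the work.
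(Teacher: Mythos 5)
Your existence argument for (\ref{eq SB limit 1}) is essentially the paper's: the decomposition $\D_N=\D_\infty+N^{-2}\widetilde{\D}_N$ you propose to "read off" is already the paper's $\D_N=\D-\frac{1}{N^2}\L$ from (\ref{e.DN.new}), your Duhamel step is the content of Lemma \ref{l.findim}, and your substitution $v_k\mapsto\nu_k$ (where the $\nu_k$ must be the limits $\nu_k(s-t)$ of $\E_{\mu^N_{s,t}}[\tr(Z^k)]$, Theorem \ref{t.new1.10}) is exactly the evaluation $\pi_{s-t}$, so your $g_{s,t}$ agrees with $\pi_{s-t}\circ e^{\frac{t}{2}\D}f$ as in (\ref{e.SBdef.new}). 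Two remarks. First, the concentration input you defer is somewhat stronger than "variance of $\tr(Z^k)$ is $O(1/N^2)$ plus uniform moment bounds": since $\|P_N(Z)-[\pi_{s-t}P]_N(Z)\|^2_{\M_N}$ involves traces of mixed words in $Z$ and $Z^\ast$, one needs $O(1/N^2)$ concentration for general word trace polynomials under $\mu^N_{s,t}$, which is precisely what the paper's second intertwining formula (Theorem \ref{t.inter2}), the homomorphism property of $e^{\widetilde{\mathcal{D}}_{s,t}}$, and Lemma \ref{l.findim} deliver; you flag this as the main obstacle, so it is an acknowledged hole rather than a wrong step. Second, your route to (\ref{eq SB limit 2}) genuinely differs from the paper's: instead of re-running the argument for $e^{-\frac{t}{2}\Delta_{\U_N}}$ with concentration under $\rho^N_s$ (via (\ref{e.conc1})), you use unitarity of $\mx{B}^N_{s,t}$ together with invertibility of $\G_{s,t}$ (the triangularity argument, i.e.\ Lemma \ref{lemma B H 1-1}) and set $h_{s,t}=\G_{s,t}^{-1}f$. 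This works, provided you note that $f_N$ lies in the range of $\mx{B}^N_{s,t}$ (Corollary \ref{c.Brange}), and it is close in spirit to how the paper later proves $\H_{s,t}=\G_{s,t}^{-1}$ (Theorem \ref{thm inverse is inverse}); what it buys is that the $\U_N$-side concentration estimate is not needed for existence of $h_{s,t}$.

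The genuine gap is uniqueness. Theorem \ref{prop P-->P_N unique} says that a nonzero $P\in\C[u,u^{-1};\mx{v}]$ has $P_N\not\equiv 0$ for all large $N$; it does \emph{not} rule out that a fixed nonzero Laurent polynomial $q=g_{s,t}-g_{s,t}'$ satisfies $\|q_N\|_{L^2(\GL_N,\mu^N_{s,t};\M_N)}\to 0$, which is all the triangle inequality leaves you with if two candidates both satisfy (\ref{eq SB limit 1}). The norms are taken with respect to a different measure for each $N$, so non-vanishing of $q_N$ as a function gives no lower bound on its $L^2$ norm; indeed the limiting seminorm does vanish on plenty of nonzero elements of $\C[u,u^{-1};\mx{v}]$ (e.g.\ $v_1-\nu_1(s-t)$), so some argument special to $\C[u,u^{-1}]$ is required. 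What is actually needed is that $\|q\|^{s,t}=\lim_N\|q_N\|_{L^2(\mu^N_{s,t})}$ (and likewise $\|q\|_s$) is a genuine norm on $\C[u,u^{-1}]$; this is the paper's Lemma \ref{lemma limit norms} and Corollary \ref{cor.SBunique}. On the $\U_N$ side it follows from $\|q_N\|_{L^2(\rho^N_s)}=\|q\|_{L^2(\U,\nu^N_s)}\to\|q\|_{L^2(\U,\nu_s)}$ together with the nontrivial fact that $\nu_s$ has infinite support; on the $\GL_N$ side there is no direct moment description, and the paper obtains it from the isometry of $\mx{B}^N_{s,t}$ combined with the forward estimate and the invertibility of $\G_{s,t}$. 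As written, your appeal to Theorem \ref{prop P-->P_N unique} does not establish uniqueness, so the uniqueness assertion of the theorem remains unproved in your outline; it should be replaced by an argument of the above type.
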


\begin{remark} A concurrent paper by G.\ C\'ebron has recently proven a similar theorem; in particular, the $s=t$ case of (\ref{eq SB limit 1}) is equivalent to \cite[Theorem 4.7]{Guillaume2013}.  C\'ebron's framework is somewhat different from ours, and should be consulted for a complementary approach.  See Remark \ref{remark Guillaume} for a detailed comparison.  \end{remark}

Theorem \ref{Main Theorem} is proved on page \pageref{proof of Main Theorem}.  The ``inverse'' terminology is justified by the following, whose proof is on page \pageref{proof of thm inverse is inverse}.
\begin{theorem} \label{thm inverse is inverse} For $s,t>0$ with $s>\frac{t}{2}$, the maps $\G_{s,t}$ and $\H_{s,t}$ are invertible linear operators on $\C[u,u^{-1}]$, and $\G_{s,t}^{-1} = \H_{s,t}$.
\end{theorem}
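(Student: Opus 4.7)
The plan is to derive both linearity and the mutual-inverse relation from three ingredients: linearity of $\mathbf{B}^N_{s,t}$, its unitarity (so it preserves $L^2$ distances exactly), and the uniqueness of the limiting Laurent polynomial built into Theorem \ref{Main Theorem}.

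First I would establish that $\G_{s,t}$ and $\H_{s,t}$ are linear. Given $f_1,f_2\in\C[u,u^{-1}]$ and scalars $\alpha,\beta\in\C$, set $f=\alpha f_1+\beta f_2$, so that $f_N=\alpha(f_1)_N+\beta(f_2)_N$. By linearity of $\mathbf{B}^N_{s,t}$ and the triangle inequality combined with (\ref{eq SB limit 1}) applied to $f_1$ and $f_2$,
\[ \|\mathbf{B}^N_{s,t} f_N - [\alpha\G_{s,t}(f_1)+\beta\G_{s,t}(f_2)]_N\|^2_{L^2(\GL_N,\mu^N_{s,t};\M_N)} = O(1/N^2). \]
The uniqueness of the limit in Theorem \ref{Main Theorem} forces $\G_{s,t}(\alpha f_1+\beta f_2)=\alpha\G_{s,t}(f_1)+\beta\G_{s,t}(f_2)$. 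The parallel argument using $(\mathbf{B}^N_{s,t})^{-1}$ and (\ref{eq SB limit 2}) gives linearity of $\H_{s,t}$.

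For the composition $\G_{s,t}\circ\H_{s,t}=\mathrm{id}$, fix $f\in\C[u,u^{-1}]$ and set $h=\H_{s,t}(f)\in\C[u,u^{-1}]$. By (\ref{eq SB limit 2}) applied to $f$,
\[ \|(\mathbf{B}^N_{s,t})^{-1}f_N - h_N\|^2_{L^2(\U_N,\rho^N_s;\M_N)} = O(1/N^2). \]
Because $\mathbf{B}^N_{s,t}$ is an isometric isomorphism, applying it to both arguments preserves the squared norm:
\[ \|f_N - \mathbf{B}^N_{s,t} h_N\|^2_{L^2(\GL_N,\mu^N_{s,t};\M_N)} = O(1/N^2). \]
On the other hand, (\ref{eq SB limit 1}) applied to $h$ gives
\[ \|\mathbf{B}^N_{s,t} h_N - [\G_{s,t}(h)]_N\|^2_{L^2(\GL_N,\mu^N_{s,t};\M_N)} = O(1/N^2). \]
Thus both $f$ and $\G_{s,t}(h) = \G_{s,t}(\H_{s,t}(f))$ serve as $O(1/N^2)$-approximants in $L^2(\GL_N,\mu^N_{s,t};\M_N)$ to the common sequence $\mathbf{B}^N_{s,t} h_N$, so by the uniqueness assertion of Theorem \ref{Main Theorem} (applied with input $h$), they must coincide, i.e.\ $\G_{s,t}(\H_{s,t}(f))=f$. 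The reverse identity $\H_{s,t}(\G_{s,t}(f))=f$ is proved symmetrically, starting from (\ref{eq SB limit 1}) applied to $f$, applying the isometry $(\mathbf{B}^N_{s,t})^{-1}$, and invoking (\ref{eq SB limit 2}) for $g=\G_{s,t}(f)$.

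The only real subtlety in this argument is the uniqueness step: one needs that a nonzero $k\in\C[u,u^{-1}]$ cannot satisfy $\|k_N\|^2_{L^2(\GL_N,\mu^N_{s,t};\M_N)}=O(1/N^2)$, together with its analogue on $\U_N$. This is exactly the uniqueness content of Theorem \ref{Main Theorem}, which has already been established, so no additional work is needed. Granted that, the entire proof is a two-step triangle inequality riding on the fact that unitaries preserve $L^2$ norms.
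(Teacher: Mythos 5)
Your argument is correct, and it reaches the conclusion by a slightly different route than the paper. The core mechanism is the same in both: the two-term triangle inequality in which one term is controlled by (\ref{eq SB limit 1}) (applied to $\H_{s,t}f$, respectively $\G_{s,t}f$) and the other is converted between $L^2(\mu^N_{s,t})$ and $L^2(\rho^N_s)$ using the unitarity of $\mx{B}^N_{s,t}$ and then controlled by (\ref{eq SB limit 2}); and the final identification step is the nondegeneracy of the limit, which in the paper appears as the statement that the limiting seminorm $\|\cdot\|^{s,t}$ is a norm on $\C[u,u^{-1}]$ (Lemma \ref{lemma limit norms}) and in your write-up appears as the uniqueness clause of Theorem \ref{Main Theorem} (Corollary \ref{cor.SBunique}) --- these are the same fact, and since the uniqueness is established before Theorem \ref{thm inverse is inverse} and independently of it, your reliance on it is not circular. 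Where you genuinely diverge is at the end: the paper proves only $\G_{s,t}\H_{s,t}=\mathrm{id}$ by this analytic argument and then concludes $\H_{s,t}=\G_{s,t}^{-1}$ by invoking the separately proved invertibility of both maps (Lemma \ref{lemma B H 1-1}, via the explicit formula (\ref{e.SBdef.new}) and the upper-triangular action on the monomial basis), whereas you prove both composition orders symmetrically and also extract linearity from uniqueness, so you never need Lemma \ref{lemma B H 1-1} or the explicit description of $\G_{s,t},\H_{s,t}$ at all (beyond what is already buried inside the paper's proof of uniqueness). What your route buys is economy at the level of this theorem --- everything follows formally from the statement of Theorem \ref{Main Theorem} plus unitarity of $\mx{B}^N_{s,t}$; what the paper's route buys is the explicit structural information (triangularity, eigenvalues $e^{\mp kt/2}$) that it reuses elsewhere. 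One small point worth making explicit in your version: when you ``apply $\mx{B}^N_{s,t}$ to both arguments'' you are using that $f_N$ (respectively $[\G_{s,t}f]_N$) lies in $\HH L^2(\GL_N,\mu^N_{s,t};\M_N)$, i.e.\ in the range of $\mx{B}^N_{s,t}$, so that $\mx{B}^N_{s,t}(\mx{B}^N_{s,t})^{-1}f_N=f_N$; this holds because these are holomorphic (trace) polynomials, and the paper records it as Corollary \ref{c.Brange}.
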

%\noindent The proof of Theorem \ref{thm inverse is inverse} can be found on page \pageref{proof of thm inverse is inverse}.

To explain how the concentration phenomenon of Theorem \ref{Main Theorem} occurs, we recall the following theorem of Biane.
\begin{theorem}
[{Biane, \cite[Lemma 11]{Biane1997b}}]\label{t.new1.9} For each $s>0$ and
$k\in\mathbb{Z}$,
\[ \lim_{N\rightarrow\infty}\int_{\U_N}\tr(U^k)\,\rho_s^N(dU) =\nu_k(s), \]
where $\nu_{0}(s)=1$ and, for $k\neq0$,
\begin{equation}
\nu_{k}(s) =e^{-\frac{|k|}{2}s}\sum_{j=0}^{|k| -1}\frac{(-s)^j}{j!}|k|^{j-1}\binom{|k|}{j+1}. \label{e.new1.4}%
\end{equation}
\end{theorem}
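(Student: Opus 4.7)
The plan is a moment method in three stages: (i) compute $\Delta_{\U_N}$ applied to $\tr(U^k)$ and to products of such trace functions; (ii) propagate through the heat semigroup, take $N\to\infty$, and show that mixed trace moments factorize in the limit to close the hierarchy; (iii) solve the resulting nonlinear ODE explicitly via a generating function and Lagrange inversion. Since $\rho_s^N$ is invariant under $U\mapsto U^*$, I have $\nu_{-k}^N(s)=\nu_k^N(s)$ and may restrict to $k>0$. The scaling (\ref{scalingMetric}) is designed precisely so that any $\|\cdot\|_{\u_N}$-orthonormal basis $\beta_N\subset\u_N$ obeys the ``magic'' identity $\sum_{X\in\beta_N}\mathrm{Tr}(AXBX) = -\frac{1}{N}\mathrm{Tr}(A)\mathrm{Tr}(B)$ for all $A,B\in\M_N$. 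Combining this with $\del_X\tr(U^k) = k\,\tr(U^kX)$ and $\del_X^2\tr(U^k) = \frac{k}{N}\sum_{j=1}^k\mathrm{Tr}(U^jXU^{k-j}X)$ (direct from Definition \ref{definition left-invariant derivative}) collapses the Laplacian sum to
\begin{equation}\label{proposal.1}
\Delta_{\U_N}\tr(U^k) = -k\,\tr(U^k) - k\sum_{j=1}^{k-1}\tr(U^j)\tr(U^{k-j}),
\end{equation}
with \emph{no} residual $N$-dependence. An analogous Leibniz computation gives $\Delta_{\U_N}[\tr(U^{k_1})\tr(U^{k_2})] = \tr(U^{k_2})\Delta_{\U_N}\tr(U^{k_1}) + \tr(U^{k_1})\Delta_{\U_N}\tr(U^{k_2}) - \frac{2k_1k_2}{N^2}\tr(U^{k_1+k_2})$, the cross term being $O(N^{-2})$; longer products behave similarly.

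Setting $M^N_{k_1,\dots,k_m}(s) := \E_{\rho_s^N}[\prod_i\tr(U^{k_i})]$ and using $\frac{d}{ds}\E_{\rho_s^N}[f] = \frac{1}{2}\E_{\rho_s^N}[\Delta_{\U_N}f]$ produces a triangular hierarchy of ODEs whose only cross-hierarchy coupling comes from Leibniz terms of size $O(N^{-2})$. An induction on $m$, together with Gronwall's inequality on any bounded $s$-interval, then yields the factorization bound
\begin{equation}\label{proposal.fac}
M^N_{k_1,\dots,k_m}(s) = \prod_{i=1}^m \nu_{k_i}^N(s) + O(N^{-2}).
\end{equation}
Passing to the limit $\nu_k(s) := \lim_N \nu_k^N(s)$, the system (\ref{proposal.1}) then closes to the autonomous nonlinear ODE
\begin{equation}\label{proposal.2}
\dot\nu_k(s) = -\tfrac{k}{2}\nu_k(s) - \tfrac{k}{2}\sum_{j=1}^{k-1}\nu_j(s)\nu_{k-j}(s), \qquad \nu_k(0) = 1,
\end{equation}
which is upper-triangular (the $k$-th equation involves only $\nu_1,\dots,\nu_k$) and hence uniquely solvable by Picard iteration.

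To extract the closed form, I would form the generating function $F(z,s) = \sum_{k\geq 1}\nu_k(s)z^k$. Multiplying (\ref{proposal.2}) by $z^k$ and summing turns it into the quasilinear PDE
\begin{equation*}
\partial_s F + \tfrac{1}{2}\,z(1+2F)\,\partial_z F = 0, \qquad F(z,0) = \frac{z}{1-z}.
\end{equation*}
The method of characteristics keeps $F$ constant along curves $z(s) = w\exp\!\bigl(\tfrac{s}{2}\tfrac{1+w}{1-w}\bigr)$ with $F = w/(1-w)$; eliminating $w$ in favor of $\psi := F$ gives the implicit relation $z\,e^{-s/2} = \phi(\psi)$, where $\phi(y) := \tfrac{y}{1+y}e^{sy}$. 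Since $\phi(0)=0$ and $\phi'(0)=1$, Lagrange inversion applies:
\begin{equation*}
[\zeta^k]\,\phi^{-1}(\zeta) = \tfrac{1}{k}\,[y^{k-1}]\!\left(\tfrac{y}{\phi(y)}\right)^{\!k} = \tfrac{1}{k}\,[y^{k-1}](1+y)^k e^{-ksy} = \sum_{j=0}^{k-1}\frac{(-s)^j\,k^{j-1}}{j!}\binom{k}{j+1}.
\end{equation*}
Substituting $\zeta = ze^{-s/2}$ identifies the coefficient as $e^{ks/2}\nu_k(s)$, producing Biane's formula (\ref{e.new1.4}).

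The hardest step is the uniform factorization estimate (\ref{proposal.fac}). Although the Leibniz cross correction in any single product is $O(N^{-2})$, one must show that this rate survives an induction on the number $m$ of trace factors without being amplified through the hierarchy. The cleanest route is to work with centered quantities $\tr(U^{k_i}) - \nu_{k_i}^N(s)$, set up coupled ODEs for the $m$-point cumulants, observe that each such ODE inherits an $O(N^{-2})$ source from the cross correction in (\ref{proposal.1}) and is otherwise linear in cumulants of order $\leq m$, and close the induction with one Gronwall step per level. Once (\ref{proposal.fac}) is in hand, (\ref{proposal.2}) is automatic and step~3 is a routine application of Lagrange inversion.
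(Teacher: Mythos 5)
Your proposal is essentially correct, but note at the outset that the paper does not prove Theorem \ref{t.new1.9} at all: it is imported from Biane, whose original argument runs through free stochastic calculus for free unitary Brownian motion. What you propose is a self-contained matrix-side proof, and its ingredients line up almost term-for-term with machinery the paper builds for other purposes: your formula for $\Delta_{\U_N}\tr(U^k)$ is (\ref{e.b2}) after symmetrizing the index $j\mapsto k-j$, your cross term is (\ref{e.b0c}), your limiting recursion is equivalent to (\ref{eq varrho recursion}) under $\varrho_k(s)=e^{ks/2}\nu_k(s)$, and your quasilinear PDE and its characteristics reproduce (\ref{eq PDE rho}) and Corollary \ref{cor char upsilon} after the change of variables $F(z,s)=\varrho(s,e^{-s/2}z)$; the Lagrange-inversion step is genuinely new relative to the paper, which stops at the implicit relation and simply quotes (\ref{e.new1.4}). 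Your route is also non-circular where reusing the paper's statements would not be: Theorem \ref{t.concentration} and Lemma \ref{t.intertwine.1.5.new} take Biane's formula as input (the evaluation map $\pi_s$ is defined through the $\nu_k(s)$), so an independent derivation of the factorization, as you give, is exactly what a standalone proof requires. On the step you flag as hardest: first, for the statement as given you only need convergence of the mixed moments, not an $O(N^{-2})$ rate; second, the cumulant-plus-Gronwall induction can be bypassed, since for fixed trace degree the expectations of trace monomials form a finite-dimensional vector satisfying a linear ODE system with generator $A+N^{-2}B$, $A,B$ independent of $N$, so $M^N(s)=e^{s(A+N^{-2}B)}\mathbf{1}\to e^{sA}\mathbf{1}$ at rate $O(N^{-2})$ by the perturbation bound of Lemma \ref{l.findim}, and because $A$ is (one half of) the restriction of $\D$ to $\C[\mx{v}]$, a first-order differential operator, $e^{sA}$ is an algebra homomorphism (Theorem \ref{cor product rule and homom}) and the limiting moments factor automatically. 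The remaining analytic points are easily supplied ($|\nu_k^N(s)|\le 1$ for $s>0$ gives convergence of the generating function on $|z|<1$, and $w\mapsto we^{\frac{s}{2}\frac{1+w}{1-w}}$ is invertible near $w=0$), and your inversion computation with $\phi(y)=\frac{y}{1+y}e^{sy}$, $y/\phi(y)=(1+y)e^{-sy}$, does reproduce (\ref{e.new1.4}), with $\nu_{-k}=\nu_k$ handled by the inversion-invariance of $\rho_s^N$ as you say.
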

From (\ref{e.new1.4}) it is clear that $\nu_{k}=\nu_{-k}$ for all $k\in\mathbb{N}$ and that each $\nu_{k}(\cdot)$ has an analytic
continuation to a holomorphic function on $\mathbb{C}$ which we still denote by $\nu_{k}$.  For each $s\in\R$, these constants are the moments of a probability measure $\nu_s$, supported on either the unit circle $\U$ (for $s\ge0$) or the positive real half-line $(0,\infty)$ (for $s\le0$).  For $s>0$, $\nu_k(s)$ are the moments of the {\em free unitary Brownian motion} $u_s$; see \cite[Prop.\ 10]{Biane1997b}.  These functions also encode the large-$N$ limits of the moments of the measures $\mu_{s,t}^{N}$.
\begin{theorem}
\label{t.new1.10} Let $s,t>0$ with $s>\frac{t}{2}$, and let $k\in\mathbb{Z}$; then
\[ \lim_{N\rightarrow\infty}\int_{\GL_N}\tr(Z^k)\,\mu_{s,t}^N(dZ) =\nu_k(s-t). \]
\end{theorem}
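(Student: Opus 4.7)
My plan is to prove the finite-$N$ identity
\[ \int_{\GL_N}\tr(Z^k)\,\mu^N_{s,t}(dZ) = \bigl(e^{\frac{s-t}{2}\Delta_{\U_N}}\tr(\cdot^k)\bigr)(I_N) \]
valid for all $s,t$ with $s>t/2$, and then pass to the $N\to\infty$ limit via Biane's Theorem \ref{t.new1.9} and an analytic continuation argument in $\tau = s-t$ carried out on a finite-dimensional invariant subspace of the heat operator.

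The first step is to show that $A^N_{s,t}$ acts on scalar trace polynomials as the scalar multiple $(s-t)\tilde\Delta$ of $\tilde\Delta := \sum_{X\in\beta_N}\partial_X^2$, the natural left-invariant extension of $\Delta_{\U_N}$ to $\GL_N$. The key observation is that any scalar trace polynomial $f$ is holomorphic on $\GL_N$, so its differential $df_Z\colon \M_N\to\C$ is complex-linear; consequently $\partial_{iX}f(Z) = df_Z(Z\cdot iX) = i\,df_Z(ZX) = i\,\partial_Xf(Z)$, and hence $\partial_{iX}^2f = -\partial_X^2f$. Substituting into Definition \ref{definition Ast and must} and summing over $X\in\beta_N$ gives $A^N_{s,t}f = (s-\tfrac{t}{2})\tilde\Delta f - \tfrac{t}{2}\tilde\Delta f = (s-t)\tilde\Delta f$. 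The standard magic formulas for an ONB of $\u_N$ with respect to (\ref{scalingMetric}),
\[ \sum_{X\in\beta_N}\tr(AXBX) = -\tr(A)\tr(B), \qquad \sum_{X\in\beta_N}\tr(AX)\tr(XB) = -\tfrac{1}{N^2}\tr(AB), \]
show that $\tilde\Delta$ preserves the finite-dimensional space $V_k$ of scalar trace polynomials of total $Z$-degree $k$, which contains $\tr(Z^k)$. Therefore on $V_k$ the heat operator $e^{\frac{1}{2}A^N_{s,t}}$ is the matrix exponential $e^{\frac{s-t}{2}\tilde\Delta}$, and applying (\ref{eq must def}) at $I_N\in\U_N$, together with the elementary compatibility $\tilde\Delta F(I_N) = \Delta_{\U_N}(F|_{\U_N})(I_N)$ for scalar trace polynomials $F$, yields the displayed identity.

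For $s>t$ the right-hand side is simply $\int_{\U_N}\tr(U^k)\,\rho^N_{s-t}(dU)$, which converges to $\nu_k(s-t)$ by Theorem \ref{t.new1.9}. The main obstacle is the case $s\le t$, where there is no direct probabilistic interpretation at ``negative time.'' I would handle it by observing that the matrix of $\Delta_{\U_N}|_{V_k}$ in a fixed basis (indexed by partitions of $k$) has entries rational in $1/N^2$ and converges entrywise as $N\to\infty$ to a limit matrix. Hence $\phi_N(\tau) := (e^{\frac{\tau}{2}\Delta_{\U_N}}\tr(\cdot^k))(I_N)$ is an entire function of $\tau\in\C$ and converges locally uniformly to an entire limit $\phi_\infty$. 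By Theorem \ref{t.new1.9}, $\phi_\infty(\tau) = \nu_k(\tau)$ for all $\tau>0$, and the equality propagates to all of $\C$ by analytic continuation; specializing to $\tau = s-t$ completes the proof.
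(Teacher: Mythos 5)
Your argument is correct, and its core is the same as the paper's: the observation that $\partial_{iX}f=i\,\partial_Xf$ for holomorphic $f$, so that $A^N_{s,t}$ acts as $(s-t)\sum_{X\in\beta_N}\partial_X^2$ on scalar trace polynomials, is exactly how Lemma \ref{lemma Linfty nu(s)} begins, and your finite-$N$ identity is the paper's (\ref{eq A to Delta}) evaluated at $I_N$. Where you genuinely diverge is the concluding limit: the paper routes it through the abstract intertwining spaces, writing the integral as $\bigl(e^{\widetilde{\mathcal{D}}_{s,t}+\frac{1}{N^2}\widetilde{\mathcal{L}}_{s,t}}\iota(v_k)\bigr)(\mx{1})$, using the perturbation bound of Lemma \ref{l.findim} to send $N\to\infty$, and identifying the limit with $\pi_{s-t}(v_k)=\nu_k(s-t)$ via Lemma \ref{t.intertwine.1.5.new} (which rests on the homomorphism property of $e^{\frac{s}{2}\mathcal{D}}$, Theorem \ref{cor product rule and homom}); you instead argue with entire functions of $\tau=s-t$, locally uniform convergence, and analytic continuation from $\tau>0$ (Theorem \ref{t.new1.9}) to all $\tau$. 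Your route is more elementary and has the virtue of making explicit the continuation to ``negative time'' $s\le t$, which the paper handles only implicitly (Lemma \ref{t.intertwine.1.5.new} is asserted for all real $s$, while Theorem \ref{t.new1.9} concerns $s>0$). Two small repairs you should make: (i) equation (\ref{eq must def}) is stated only for $f\in C_c(\GL_N)$, and $\tr(Z^k)$ is neither bounded nor compactly supported, so equating the $\mu^N_{s,t}$-integral with the power series of $e^{\frac12 A^N_{s,t}}$ at $I_N$ requires Remark \ref{r.Robinson}, i.e.\ Theorem \ref{t.heat} of the appendix; and (ii) for small $N$ the trace monomials indexed by partitions of $|k|$ need not be linearly independent as functions on $\U_N$ (Cayley--Hamilton relations, cf.\ Example \ref{example Cayley-Hamilton}), so ``the matrix of $\Delta_{\U_N}|_{V_k}$ in a fixed basis'' should either be restricted to $N\ge 2|k|$ (Lemma \ref{algIndep.lemma}, Theorem \ref{prop P-->P_N unique}), which suffices for the limit, or be replaced by the canonical lift $\mathcal{D}_N=\mathcal{D}-\frac{1}{N^2}\mathcal{L}$ acting on $\C_{|k|}[\mx{v}]$, which is precisely what the intertwining formula (\ref{eq intertwining formula 1}) supplies and whose matrix entries are indeed affine in $1/N^2$.
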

\noindent The proof of Theorem \ref{t.new1.10} is on page \pageref{proof of t.new1.10}.  See, also, the third author's concurrent paper \cite{Kemp2013} for several new convergence results for the empirical eigenvalues and singular values of random matrices sampled from $\rho^N_s$ and $\mu^N_{s,t}$.

Consider, again, the calculation of (\ref{BtU2}), which shows that, if $f(u)=u^2$, then the polynomial $f^N_t\in\C[u,u^{-1};\mx{v}]$ of Theorem \ref{t.new1.8} can be identified as
\[ f^N_t(u;\mx{v}) = e^{-t}\cosh(t/N)u^2 - Ne^{-t}\sinh(t/N)uv_1 = e^{-t}(u^2-tuv_1)+O\left(\frac{1}{N^2}\right). \]
The trace polynomial functional calculus evaluates $f_t^N(u;\mx{v})$ at $Z\in\GL_N$ by setting $u=Z$ and $v_1=\tr(Z)$; but as $N\to\infty$, $\tr(Z)\to \nu_1(s-t) = e^{-(s-t)/2}$ by Theorem \ref{t.new1.10}.  This illustrates the fact that, in this case,
\[ (\G_{s,t}f)(u) = e^{-t}(u^2-te^{-(s-t)/2}u). \]
In general, this is how $g_{s,t}$ in (\ref{eq SB limit 1}) is produced: by evaluating the traces in the trace polynomial $P^N_t$ in Theorem \ref{t.new1.8} at the moments $\nu_k(s-t)$ of Theorem \ref{t.new1.10}, and taking the large-$N$ limit of the resulting Laurent polynomial.  To fully justify this, we prove the following concentration theorem, which shows, in a strong way, that the trace random variables $Z\mapsto \tr(Z^k)$ over $\U_N$ and $\GL_N$ concentrate on their means as $N\to\infty$.

\begin{theorem} \label{t.concentration} For $s\in\R$, define the {\bf trace evaluation map} $\pi_s\colon\C[u,u^{-1};\mx{v}]\to\C[u,u^{-1}]$ by
\begin{equation} \label{e.pi_s.new} \left(\pi_sP\right)(u) = \left.P(u;\mx{v})\right|_{v_k = \nu_k(s), k\ne 0}.  \end{equation}
Let $s,t>0$, with $s>t/2$.  For any $P\in\C[u,u^{-1};\mx{v}]$,
\begin{align} \label{e.conc1}
 &\|P_N- [\pi_{s}P]_N\|^2_{L^2(\U_N,\rho_s^N;\M_N)} = O\left(\frac{1}{N^2}\right), \quad \text{and} \\
\label{e.conc2} &\|P_N-[\pi_{s-t}P]_N\|^2_{L^2(\GL_N,\mu_{s,t}^N;\M_N)} = O\left(\frac{1}{N^2}\right). \end{align}
\end{theorem}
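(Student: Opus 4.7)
My plan is to reduce to monomials by linearity, apply a telescoping identity to convert a product of traces into a sum of single-trace differences, and then invoke an $O(1/N^2)$ concentration estimate for individual traces.

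First, I would use linearity of $P \mapsto P_N - [\pi_s P]_N$ together with the triangle inequality to reduce the problem to the case where $P$ is a single monomial $P(u;\mx{v}) = u^k\, v_{j_1}\cdots v_{j_m}$ with $k\in\Z$ and $m\ge 0$. For such $P$,
\[
P_N(Z) - [\pi_{s}P]_N(Z) = Z^k\left(\prod_{i=1}^m \tr(Z^{j_i}) - \prod_{i=1}^m \nu_{j_i}(s)\right),
\]
and analogously with $\nu_{j_i}(s-t)$ in the $\GL_N$ statement; the $m=0$ case is trivial. I would then apply the standard product-difference telescoping identity
\[
\prod_{i=1}^m a_i - \prod_{i=1}^m b_i \;=\; \sum_{\ell=1}^m \Bigl(\prod_{i<\ell}a_i\Bigr)(a_\ell - b_\ell)\Bigl(\prod_{i>\ell}b_i\Bigr),
\]
with $a_i = \tr(Z^{j_i})$ and $b_i = \nu_{j_i}(s)$, to rewrite the product difference as a sum of $m$ terms each containing a single factor of the form $\tr(Z^{j_\ell}) - \nu_{j_\ell}(\cdot)$.

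At the heart of the proof is the single-trace variance bound: for every $k\in\Z$,
\[
\E_{\rho_s^N}\bigl|\tr(U^k) - \nu_k(s)\bigr|^2 = O(1/N^2), \qquad \E_{\mu_{s,t}^N}\bigl|\tr(Z^k) - \nu_k(s-t)\bigr|^2 = O(1/N^2).
\]
Decomposing each left-hand side as variance plus squared bias, the bias bound is a quantitative refinement of Theorem \ref{t.new1.10} (and of the Biane statement recalled in Theorem \ref{t.new1.9}), while the variance bound is the familiar $O(1/N^2)$ concentration for traces of heat-kernel distributed matrices. Both can be obtained by applying It\^o's formula to the Brownian motions on $\U_N$ and $\GL_N$ generated by $\Delta_{\U_N}$ and $A^N_{s,t}$, which produces a system of ODEs in $s$ (respectively in the pair $(s,t)$) for the joint moments $\E[\prod_i \tr(Z^{j_i})]$ that is closed modulo $O(1/N^2)$ terms; expanding order-by-order in $1/N^2$ yields both the limit values $\nu_k$ and the variance rate.

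To finish, on $\U_N$ one has $\|U^k\|_{\M_N}^2 = 1$ and $|\tr(U^j)|\le 1$, so the telescoped expansion combined with Cauchy--Schwarz immediately gives
\[
\|P_N - [\pi_s P]_N\|_{L^2(\U_N,\rho_s^N;\M_N)}^2 \;\le\; C_P \max_\ell \E_{\rho_s^N}\bigl|\tr(U^{j_\ell}) - \nu_{j_\ell}(s)\bigr|^2 \;=\; O(1/N^2).
\]
The $\GL_N$ case is analogous but must contend with $\|Z^k\|_{\M_N}$ and $|\tr(Z^j)|$ being unbounded; I would combine the telescoping sum with H\"older's inequality together with uniform-in-$N$ $L^p$ bounds for $\|Z^k\|_{\M_N}$ and $\tr(Z^j)$ under $\mu_{s,t}^N$, which follow from heat kernel estimates of the type invoked in Remark \ref{r.Robinson}. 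I expect the hardest step to be establishing the core variance estimate at the correct $1/N^2$ rate, since this requires careful bookkeeping of the $1/N^2$ expansion in the moment ODEs; once that concentration is in hand, the reduction to monomials and the H\"older/Cauchy--Schwarz estimates are routine.
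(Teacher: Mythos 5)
Your reduction to monomials, the telescoping identity, and the boundedness argument on $\U_N$ are fine, and your route is genuinely different from the paper's: the paper never estimates variances of individual traces. Instead it writes the squared $L^2$ distance \emph{exactly} as $\bigl(e^{\widetilde{\mathcal{D}}_{s,t}+\frac{1}{N^2}\widetilde{\mathcal{L}}_{s,t}}\,v_{\varepsilon(k,k)}\iota(R)\iota^\ast(R)\bigr)(\mx{1})$ with $R=Q-\pi_{s-t}Q$, using the $\GL_N$ intertwining formula (Theorem \ref{t.inter2}) and the heat-kernel expectation formula; the $\frac{1}{N^2}\widetilde{\mathcal{L}}_{s,t}$ term is discarded at cost $O(1/N^2)$ by the finite-dimensional perturbation Lemma \ref{l.findim}, and the leading term vanishes identically because $e^{\widetilde{\mathcal{D}}_{s,t}}$ is an algebra homomorphism and $\bigl(e^{\widetilde{\mathcal{D}}_{s,t}}\iota(R)\bigr)(\mx{1})=\pi_{s-t}R=0$ (Lemma \ref{lemma Linfty nu(s)}). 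That argument handles products of traces and the unboundedness on $\GL_N$ in one stroke, with no moment estimates, no telescoping, and no stochastic calculus; the $\U_N$ case then follows by setting $t=0$.

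There are, however, genuine gaps in your version. First, for $\GL_N$ the variance bound you single out as the heart of the proof is not enough: after telescoping, each term carries the unbounded factors $\|Z^k\|_{\M_N}$ and $|\tr(Z^{j_i})|$, so H\"older with exponents $(p,q)$, $q>1$, leaves you with $\E_{\mu_{s,t}^N}\bigl[|\tr(Z^{j_\ell})-\nu_{j_\ell}(s-t)|^{2q}\bigr]^{1/q}$, and to conclude $O(1/N^2)$ you need $2q$-th moment concentration at rate $O(1/N^{2q})$ (e.g.\ a fourth-moment bound $O(1/N^4)$), which the stated $L^2$ estimate does not give and which you neither state nor prove; such bounds are true but require exactly the kind of quantitative moment bookkeeping you defer. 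Second, the uniform-in-$N$ $L^p$ bounds you invoke do not follow from Remark \ref{r.Robinson}: Langlands' theorem gives integrability of slowly growing functions for each fixed $N$ (so the expectation formula applies), not uniformity in $N$; uniformity is itself a moment-convergence statement of the type this theorem provides, so as written the step is circular unless those bounds also come out of your moment ODEs. Finally, the core single-trace estimate (both the $O(1/N^2)$ variance and the quantitative bias refining Theorems \ref{t.new1.9} and \ref{t.new1.10}) is the actual substance of the theorem and is only sketched; if you want to avoid the stochastic machinery, note that it drops out in a few lines from $\D_N=\D-\frac{1}{N^2}\L$, Lemma \ref{l.findim}, and the homomorphism property of $e^{\frac{s}{2}\D}$ (Theorem \ref{cor product rule and homom}), as indicated at the start of Section \ref{Section Limit Theorems}.
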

\noindent  The proof of Theorem \ref{t.concentration} can be found on page \pageref{proof of t.concentration}. Combining it with Theorem \ref{t.new1.8}, we see that the limit Segal--Bargmann transform $\G_{s,t}f$ in (\ref{eq SB limit 1}) is given by $\G_{s,t}f  = \lim_{N\to\infty}\pi_{s-t}(f^N_t)$; see (\ref{e.SBdef.new}) below.

  %Based on these ideas,  Theorem \ref{t.intertwine.1.5.new} below gives a different perspective on the evaluation map $\pi_s$, in terms of the one-parameter group generated by a first-order differential operator on $\C[u,u^{-1};\mx{v}]$.

%Combining Theorems \ref{t.new1.8} -- \ref{t.concentration} and similar concentration results on $\U_N$ and $\GL_N$ leads to the following main theorems of this paper.

Finally, we explicitly describe the action of $\H_{s,t}$ via a generating function.
\begin{theorem} \label{thm Biane transform} Let $s,t>0$ with $s>t/2$. For $k\ge 1$, let $f_k(u) \equiv u^k$ and $p^{s,t}_k \equiv \H_{s,t}(f_k)$.  Then the generating function for $\{p^{s,t}_k\}$ is given by the power series
\[ \Pi(s,t,u,z) = \sum_{k\ge 1} p^{s,t}_k(u)z^k, \]
which converges for all sufficiently small $u,z\in\C$.  This generating function is determined by the implicit formula
\begin{equation} \label{eq formula for gen fn} \Pi(s,t,u,ze^{\frac12(s-t)\frac{1+z}{1-z}}) = \left(1-uze^{\frac{s}{2}\frac{1+z}{1-z}}\right)^{-1}-1. \end{equation}
\end{theorem}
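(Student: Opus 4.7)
The plan is to derive a first-order PDE for $\Pi(s,t,u,z)$ in $(t,z)$, with $s$ and $u$ as parameters, and solve it by the method of characteristics. By Theorems \ref{t.new1.8}, \ref{Main Theorem}, and \ref{t.concentration}, the map $\G_{s,t}$ can be realized as the composition $\pi_{s-t}\circ \EX^\infty_t$, where $\EX^\infty_t = \lim_{N\to\infty} e^{\frac{t}{2}\D_N}$ acts on (trace) polynomials; thus $\G_{s,t}$ evolves in $t$ like a heat semigroup modulated by the moving evaluation $\pi_{s-t}$. Differentiating the defining identity $\G_{s,t}(p^{s,t}_k) = u^k$ in $t$ and summing $\sum_{k\ge 1}(\cdots)z^k$ yields an evolution equation for $\Pi$, whose generator has two contributions: one from the heat generator $\tfrac12 \L$ (where $\L = \lim_N \D_N$), and one from $\partial_t \pi_{s-t}$, which differentiates the moments $\nu_k(s-t)$ of free unitary Brownian motion using the explicit formula (\ref{e.new1.4}).

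Next, I would identify the resulting equation as a quasilinear first-order PDE and apply the method of characteristics. The characteristics should be curves $t\mapsto z(t)$ along which the combination $z\,e^{\frac12(s-t)\frac{1+z}{1-z}}$ is conserved, which is precisely the substitution appearing on the left-hand side of (\ref{eq formula for gen fn}). This expectation is motivated by free probability: the map $z \mapsto z\,e^{\frac{\tau}{2}\frac{1+z}{1-z}}$ is the compositional inverse of the moment generating function $\psi_{u_\tau}$ of free unitary Brownian motion at time $\tau$ (Biane's $\chi$-transform), and it naturally governs the flow produced by $\EX^\infty_t$ after the $\pi_{s-t}$ evaluation. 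Along each characteristic the ODE for $\Pi$ should integrate to a relation of the form $\Pi(s,t,u,z) = F(s,u,\tilde z)$ where $\tilde z := z\,e^{\frac12(s-t)\frac{1+z}{1-z}}$ is the conserved variable.

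To pin down $F$, I would invoke the initial condition at $t=0$: since $\mx{B}^N_{s,0}$ is the identity (no heat evolution, just analytic continuation, which is trivial on polynomials), so is $\H_{s,0}$, and therefore $\Pi(s,0,u,z) = \sum_{k\ge 1}u^k z^k = (1-uz)^{-1}-1$. Substituting $t=0$ into the claimed identity and changing variables to $w = z\,e^{\frac{s}{2}\frac{1+z}{1-z}}$ gives $F(s,u,w) = (1-uw)^{-1}-1$, which is exactly (\ref{eq formula for gen fn}). Convergence of the power series for small $u,z$ would follow from a priori bounds on the coefficients of $p^{s,t}_k$: unitarity of $\mx{B}^N_{s,t}$ combined with the degree structure shows $p^{s,t}_k$ is a Laurent polynomial of bounded complexity with coefficients of at most exponential growth in $k$.

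The main obstacle will be a careful bookkeeping of the two $t$-derivatives (from the heat generator $\tfrac12\L$ and from the moving evaluation $\pi_{s-t}$) so as to produce a PDE whose characteristic ODE is exactly the Biane flow $\partial_t(z\,e^{\frac12(s-t)\frac{1+z}{1-z}}) = 0$; once that identification is made, the rest is routine ODE integration. A cross-check would be the $s=t$ specialization, in which the LHS substitution becomes trivial and the formula reduces to Biane's original generating function for $\mathscr{G}^t$, matching his \cite[Lemma 11]{Biane1997b}-type identity.
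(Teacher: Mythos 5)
Your overall skeleton does match the paper's: both pass to generating functions, derive a quasilinear transport equation in $(t,z)$, solve by characteristics, and close with the $t=0$ data $\Pi(s,0,u,z)=uz/(1-uz)$ (and the reduction of (\ref{eq formula for gen fn}) at $t=0$ to this initial condition is correct). But the proposal has genuine gaps precisely where the work lies. First, the PDE itself is never derived, and the mechanism you suggest would not deliver it: differentiating $\nu_k(s-t)$ termwise via the explicit formula (\ref{e.new1.4}) does not expose the quadratic convolution structure that drives everything. The paper instead derives the coefficient recursions of Proposition \ref{prop Recursions} for $b_k(s,t,u)=e^{-kt/2}p^{s,t}_k(u)$ and $c_k(s,t)=e^{-kt/2}\nu_k(s-t)$ from the algebra of $\mathcal{D}$ (the identity $\mathcal{Y}(u^k)=\sum_m m u^m v_{k-m}$, the partial product rule (\ref{eq partial product 2}), and $[\mathcal{T},\mathcal{D}]=0$). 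Moreover, the equation does not close on $\Pi$ alone: the transport coefficient is the auxiliary series $\psi^s(t,z)=\sum_k c_k(s,t)z^k$, which must be pinned down by its own PDE and, at $t=0$, by the inverse-function characterization of the moment generating function of $\nu_s$ (the paper's $\varrho$-flow, Corollary \ref{cor char upsilon}); you allude to this Biane fact but never set up the coupled system $(\psi^s,\phi^{s,u})$. Convergence also needs the genuine exponential (Catalan-type) bounds of Lemmas \ref{lemma estimate nu k} and \ref{lemma b k bound}, valid for negative time arguments of $\nu_k$; unitarity of $\mx{B}^N_{s,t}$ does not directly bound the coefficients of $p^{s,t}_k$.

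Second, your identification of the conserved quantity is wrong as stated. You claim $\Pi(s,t,u,z)=F(s,u,\tilde z)$ with $\tilde z=z\,e^{\frac12(s-t)\frac{1+z}{1-z}}$ evaluated at the running coordinate $z$. A first-order check rules this out: $p^{s,t}_1(u)=e^{t/2}u$ while $\tilde z=ze^{(s-t)/2}+O(z^2)$, so such an $F$ would need first coefficient $ue^{t-s/2}$, which depends on $t$. The correct statement, which is what (\ref{eq formula for gen fn}) encodes and what the paper proves in Lemma \ref{lemma phi char} together with Corollary \ref{cor char upsilon}, is that $\Pi$ is constant along the curves $t\mapsto w\,e^{\frac12(s-t)\frac{1+w}{1-w}}$ with the \emph{label} $w$ held fixed; the conserved quantity is $w$ (the preimage of the running point under the $t$-dependent Biane map), not the expression $z\,e^{\frac12(s-t)\frac{1+z}{1-z}}$ along the moving point, since that map is not its own inverse. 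Your $t=0$ verification quietly switches from your ansatz to the target identity, which is why it appears to confirm the formula; with the ansatz as written the integration along characteristics would produce the wrong functional form.
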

\noindent In the special case $s=t$, this yields the generating function corresponding to the transform $\mathscr{G}^t$ of \cite[Proposition 13]{Biane1997b}, which Biane called the {\em free Hall transform} (after the second author of this paper).  Thus, $\G_{t,t}=\mathscr{G}^t$, and the free unitary Segal--Bargmann transform is a generalization of the free Hall transform.  The proof of Theorem \ref{thm Biane transform} can be found on page \pageref{proof of thm Biane transform}.

\subsection{Intertwining Operators and Partial Product Rule\label{section Key Proof Ingredients}}

The key ingredient needed to prove all the main theorems of this paper is the following intertwining formula, which shows that the Laplace operator $\Delta_{\U_N}$ factors through a pseudodifferential operator on $\C[u,u^{-1};\mx{v}]$.

\begin{theorem}[Intertwining Formulas] \label{t.intertwine.new} Let $\C[u,u^{-1};\mx{v}]$ be the polynomial space of Definition \ref{d.Laurent.trace.new}, let $t\ge 0$, and let $N\in\N$. There exists a first order pseudodifferential operator $\D$ on $\C[u,u^{-1};\mx{v}]$ and a second order differential operator $\L$ on $\C[u,u^{-1};\mx{v}]$ (cf.\ (\ref{e.new1.7}) and (\ref{e.deq}) below) such that, setting
\begin{equation} \label{e.DN.new} \D_N = \D - \frac{1}{N^2}\L, \end{equation}
it follows that
\begin{equation} \label{eq intertwining formula 1} \Delta_{\U_N} P_N = [\D_N P]_N, \quad \text{for all }\;P\in\C[u,u^{-1};\mx{v}]. \end{equation} 
Moreover, the heat operator is given by
\begin{equation} \label{e.int2.new} e^{\frac{t}{2}\Delta_{\U_N}} P_N = [e^{\frac{t}{2}\D_N}P]_N, \quad \text{for all }\;P\in\C[u,u^{-1};\mx{v}]. \end{equation}
A similar intertwining formula holds for the operator $A_{s,t}$; cf.\ Theorem \ref{t.inter2} on page \pageref{t.inter2}.
\end{theorem}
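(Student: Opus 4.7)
The plan is to verify (\ref{eq intertwining formula 1}) by an explicit calculation on generators of $\C[u,u^{-1};\mx{v}]$, and then propagate it to arbitrary trace polynomials via the second-order product rule. The main computational tool I would use is the ``magic formula'' for the scaled orthonormal basis $\beta_N$ of $\u_N$,
\[
\sum_{X\in\beta_N} X_{ab}X_{cd}=-\tfrac{1}{N}\,\delta_{ad}\delta_{bc},
\]
which is a routine direct calculation in the standard skew-Hermitian basis. For any $A,B,C,D\in\M_N$ this specializes to
\[
\sum_X XAX=-\tr(A)\,I_N,\qquad \sum_X AXB\cdot CXD=-\tr(BC)\,AD,
\]
\[
\sum_X (AXB)\cdot\tr(CXD)=-\tfrac{1}{N^2}\,ADCB,\qquad \sum_X \tr(AXB)\cdot\tr(CXD)=-\tfrac{1}{N^2}\,\tr(ADCB).
\]
The first two carry no power of $N$ and will define $\D$; the last two carry an explicit $N^{-2}$ and will define $\L$. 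This dichotomy between ``matrix--matrix'' and ``matrix--trace or trace--trace'' cross terms is the mechanism behind the decomposition $\D_N=\D-N^{-2}\L$.

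First I would compute $\partial_X Z^n=\sum_{j=0}^{n-1}Z^{j+1}XZ^{n-j-1}$ (with the analogous signed formula for negative powers) and $\partial_X\tr(Z^k)=k\,\tr(Z^kX)$. Iterating $\sum_X\partial_X^2$ with the magic identities then produces the closed-form diagonal actions
\[
\Delta_{\U_N}Z^n=-n Z^n-2\sum_{k=1}^{n-1}(n-k)\tr(Z^k)Z^{n-k},\quad \Delta_{\U_N}\tr(Z^k)=-k\tr(Z^k)-k\sum_{m=1}^{k-1}\tr(Z^m)\tr(Z^{k-m}),
\]
and the three families of cross-term sums $\sum_X(\partial_X P_1)(\partial_X P_2)$ for $P_1,P_2\in\{Z^m,\tr(Z^\ell)\}$. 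Reading off these formulas gives the definitions of $\D$ and $\L$ on the generators $u^{\pm n},v_{\pm k}$, as well as the associated symmetric bilinear ``carr\'e du champ'' forms $\Gamma_\D,\Gamma_\L$ on pairs of generators (with $\Gamma_\D$ supported only on matrix--matrix pairs and $\Gamma_\L$ only on matrix--trace and trace--trace pairs). The operators $\D$ and $\L$ are then propagated to all of $\C[u,u^{-1};\mx{v}]$ via the second-order product rule
\[
\Delta_{\U_N}(FG)=(\Delta_{\U_N}F)G+2\sum_{X}(\partial_X F)(\partial_X G)+F\,\Delta_{\U_N}G,
\]
which, applied to both sides of (\ref{eq intertwining formula 1}), yields the desired identity on any monomial $u^{n_0}v_{k_1}\cdots v_{k_r}$ by induction on $r$.

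The hard part will be the combinatorial bookkeeping: since matrix factors do not commute, the magic identities depend on the cyclic ordering of their $A,B,C,D$ slots, and the iterated product rule generates $\binom{r+1}{2}$ cross terms that must each be resolved separately and then reassembled into a single trace polynomial. Placing the one matrix factor $u^{n_0}$ in canonical position while handling the commuting scalar factors $v_{k_j}$ by ordinary Leibniz keeps this tractable. Finally, for the heat operator statement (\ref{e.int2.new}): both $\D$ and $\L$ preserve the natural grading of $\C[u,u^{-1};\mx{v}]$ by total weight (with $u^{\pm k}$ and $v_{\pm k}$ each assigned weight $|k|$), so $e^{\frac{t}{2}\D_N}P$ is defined as a convergent power series in $t$ on each finite-dimensional graded piece; both $[e^{\frac{t}{2}\D_N}P]_N$ and $e^{\frac{t}{2}\Delta_{\U_N}}P_N$ then solve the same linear ODE in $t$ with initial data $P_N$, and coincide by uniqueness of the $\U_N$ heat semigroup (cf.\ Remark~\ref{r.Robinson}).
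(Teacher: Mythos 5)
Your proposal is correct and follows essentially the same route as the paper's proof: the same magic formulas for sums over the scaled orthonormal basis of $\u_N$, the same closed-form derivative formulas for $\Delta_{\U_N}U^n$, $\Delta_{\U_N}\tr(U^n)$ and the $O(1/N^2)$ matrix--trace and trace--trace cross terms, the same product/chain-rule assembly of these pieces into $\D-\frac{1}{N^2}\L$, and the same finite-dimensionality argument to make sense of $e^{\frac{t}{2}\D_N}$ and to identify it with the heat semigroup via uniqueness of solutions of the heat equation. One small correction: $\L$ (hence $\D_N$) respects only the \emph{filtration} by trace degree, not the homogeneous grading by weight --- e.g.\ the term $jk\,v_{j+k}\,\partial^2/\partial v_j\partial v_k$ with $j=-k$ produces $v_0=1$ and strictly lowers the weight --- but invariance of the finite-dimensional subspaces $\C_n[u,u^{-1};\mx{v}]$ is all your exponential/ODE-uniqueness argument actually needs, exactly as in the paper's Corollary \ref{c.Pn-inv}.
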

\noindent The proof of Theorem \ref{t.intertwine.new} is on page \pageref{proof of t.intertwine.new}.

%\noindent The proof of Theorem \ref{thm Intertwining 1} can be found on page \pageref{proof of thm Intertwining 1}.

\begin{remark} \begin{itemize} \item[(1)] We will see in Section \ref{section Intertwining I} below that the space $\C[u,u^{-1};\mx{v}]$ is the union of a family $\{\C_n[u,u^{-1};\mx{v}]\}_{n\in\N}$ of finite-dimensional subspaces, each of which is invariant under $\D_N$.  Hence, the exponential $e^{\frac{t}{2}\D_N}$ makes sense as an operator on $\C[u,u^{-1};\mx{v}]$, for all $t\in\R$.
\item[(2)] Our intertwining formula (\ref{eq intertwining formula 1}) is closely related to results due to E.\ M.\ Rains \cite{Rains1997} and A.\ N.\ Sengupta \cite{Sengupta2008}.  In both cases, the Laplacian $\Delta_{\U_N}$ was identified by a decomposition similar to (\ref{e.DN.new}) for some operators like our $\D$ and $\L$.  We show that the component operators $\D$ and $\L$ can be realized as pseudodifferential operators on a polynomial intertwining space, which simplifies much of our analysis.
\end{itemize}
\end{remark}

Since $\D_N = \D+O(1/N^2)$, it follows that $e^{\frac{t}{2}\D_N} = e^{\frac{t}{2}\D} + O(1/N^2)$; this is made precise in Lemma \ref{l.findim} below.  As such, we will show in the proof of Theorem \ref{Main Theorem} that that the free unitary Segal--Bargmann transform and its inverse are given by
\begin{equation} \label{e.SBdef.new} \G_{s,t} = \pi_{s-t}\circ e^{\frac{t}{2}\D}, \qquad \text{and} \qquad \H_{s,t} = \pi_s\circ e^{-\frac{t}{2}\D}. \end{equation}
\noindent  See Section \ref{section Main Theorem} for details. The two operators $e^{\frac{t}{2}\mathcal{D}}$ and $e^{-\frac{t}{2}\mathcal{D}}$ are, of course, inverse to each other; Theorem \ref{thm inverse is inverse} shows that this holds true even with the composed evaluations maps.

The operator $\D$ is a first order {\em pseudo}differential operator, but it is not a differential operator: it does not satisfy the Leibnitz product rule.  It does, however, satisfy the following {\bf partial product rule} which is of both computational and conceptual importance.

\begin{theorem}[Partial Product Rule] \label{cor product rule and homom} Let $P\in\C[u,u^{-1};\mx{v}]$ and $Q\in\C[\mx{v}]$.  Then
\begin{equation} \label{eq partial product 1} \mathcal{D}(PQ) = (\mathcal{D}P)Q + P(\mathcal{D}Q). \end{equation}
Thus, for any $t\in\R$,
\begin{equation} \label{eq partial product 2} e^{\frac{t}{2}\mathcal{D}}(PQ) = e^{\frac{t}{2}\mathcal{D}}P\cdot e^{\frac{t}{2}\mathcal{D}}Q. \end{equation}
\end{theorem}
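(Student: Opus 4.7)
My plan is to derive (\ref{eq partial product 1}) from the intertwining formula (\ref{eq intertwining formula 1}) combined with the ordinary Leibniz rule for $\Delta_{\U_N}$ acting entrywise on matrix-valued functions; a separate approach via direct substitution into the explicit formula for $\D$ promised in (\ref{e.new1.7}) should also succeed. The key structural observation is that when $Q\in\C[\mx{v}]$, the trace polynomial $Q_N(Z) = q(Z)\,I_N$ is a scalar function $q\colon\GL_N\to\C$ times the identity matrix, so $P_N Q_N = q\,P_N$ behaves like a simple scalar multiplication. Applying $\Delta_{\U_N} = \sum_{X\in\beta_N}\del_X^2$ entrywise via the product rule gives
\[
\Delta_{\U_N}(P_N Q_N) = (\Delta_{\U_N}P_N)Q_N + P_N(\Delta_{\U_N}Q_N) + 2\sum_{X\in\beta_N}(\del_X q)(\del_X P_N),
\]
and the intertwining formula rewrites the first three terms here as $[\D_N(PQ)]_N$, $[(\D_N P)Q]_N$, and $[P(\D_N Q)]_N$ respectively.

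The crucial step is to show that the ``carr\'e du champ'' cross term is $N^{-2}$ times a trace polynomial. Since $q$ is a polynomial in the traces $\tr(Z^k)$, the elementary computation $\del_X\tr(Z^k) = k\,\tr(Z^k X)$ contributes one factor $1/N$ from the normalization $\tr = \Ur/N$, and summation over $\beta_N$ contributes a second factor $1/N$ via the ``swap identity''
\[
\sum_{X\in\beta_N}\Ur(MX)\,X = -\frac{1}{N}\,M, \qquad M\in\M_N,
\]
which is a direct consequence of $\|X\|_{\u_N}^2 = N^2\tr(XX^*)$ defining $\beta_N$. An induction on the degree of $q$ in the $\mx{v}$-variables (applying the Leibniz rule for $\del_X$ once per trace factor, then summing) produces an $N$-independent bilinear map $\Psi\colon\C[u,u^{-1};\mx{v}]\times\C[\mx{v}]\to\C[u,u^{-1};\mx{v}]$ with $2\sum_{X\in\beta_N}(\del_X q)(\del_X P_N) = N^{-2}[\Psi(P,Q)]_N$.

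Substituting $\D_N = \D - N^{-2}\L$ and collecting terms reduces the partial product rule to the identity $[N^2 A - B]_N = 0$ valid for every $N$, where $A := \D(PQ) - (\D P)Q - P(\D Q)$ and $B := \L(PQ) - (\L P)Q - P(\L Q) - \Psi(P,Q)$ are fixed, $N$-independent elements of $\C[u,u^{-1};\mx{v}]$. For $N$ sufficiently large, the map $[\,\cdot\,]_N$ is injective on polynomials of bounded degree --- a finite-dimensional-chain consequence of the eventual injectivity asserted in Theorem \ref{prop P-->P_N unique} --- so $N^2 A - B = 0$ as elements of $\C[u,u^{-1};\mx{v}]$ for all large $N$; since $A$ and $B$ do not depend on $N$, this polynomial identity in the scalar parameter $N^2$ forces $A \equiv 0$, which is (\ref{eq partial product 1}).

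For the exponential identity (\ref{eq partial product 2}), I would first check that $\D$ preserves the subalgebra $\C[\mx{v}]$ --- either by inspection of (\ref{e.new1.7}), or by reusing (\ref{eq partial product 1}) with $P = 1$. Iterating (\ref{eq partial product 1}) then yields the binomial Leibniz expansion $\D^n(PQ) = \sum_{k=0}^n\binom{n}{k}(\D^kP)(\D^{n-k}Q)$; summing the series on the finite-dimensional $\D$-invariant subspaces $\C_n[u,u^{-1};\mx{v}]$ delivers $e^{(t/2)\D}(PQ) = e^{(t/2)\D}P\cdot e^{(t/2)\D}Q$. The main technical obstacle is the cross-term bookkeeping --- verifying that iterated applications of the swap identity yield an $N$-independent $\Psi$ exactly at order $N^{-2}$, with no residual $N$-dependence leaking into the leading identity. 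This is also precisely why $\D$ fails to be a derivation on all of $\C[u,u^{-1};\mx{v}]$: for $Q$ with genuine $u$-dependence, each $\del_X Q_N$ contains matrix factors $X$ without a compensating $1/N$ from a normalized trace, producing order-$N^{-1}$ cross terms that cannot be absorbed into the $N^{-2}\L$ correction.
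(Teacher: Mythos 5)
Your route is genuinely different from the paper's. The paper proves (\ref{eq partial product 1}) by pure algebra on the intertwining space: it splits $\D=-\mathcal{N}-2\mathcal{Z}-2\mathcal{Y}$ into the part $\mathcal{N}_0+2\mathcal{Z}$, which is an honest first-order differential operator in the $\mx{v}$-variables and hence a derivation, and the part $\mathcal{N}_1+2\mathcal{Y}$, which annihilates $\C[\mx{v}]$ and satisfies $T(PQ)=(TP)Q$ for $Q\in\C[\mx{v}]$ (cf.\ Example \ref{example Q}); adding the two gives the partial product rule in two lines, and (\ref{eq partial product 2}) follows by the power-series argument on the invariant subspaces of Corollary \ref{c.Pn-inv}. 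You instead push the Leibniz rule for $\Delta_{\U_N}$ through the intertwining formula and kill the carr\'e-du-champ cross term in the large-$N$ limit; your observation that $2\sum_{X\in\beta_N}(\del_X q)(\del_X P_N)$ is $N^{-2}$ times an $N$-independent trace polynomial is correct and is exactly the mechanism (formulas (\ref{e.b0b})--(\ref{e.b0c})) by which the $\frac{1}{N^2}\L$ correction arises in the paper's proof of Theorem \ref{t.intertwine.new}. What the paper's proof buys is brevity and independence from the asymptotic uniqueness machinery; what yours buys is a conceptual explanation of \emph{why} the product rule must hold (it is forced by the geometry plus the $O(1/N^2)$ size of the mixed term), and it parallels the paper's own trick for proving $[\mathcal{T},\L]=0$ in Lemma \ref{lemma degree and trace preserving}. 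Two cosmetic points: your ``$P=1$'' trick only shows $\D 1=0$; that $\D$ preserves $\C[\mx{v}]$ should be read off from (\ref{e.deq}), or deduced from $[\mathcal{T},\D]=0$, and it is genuinely needed to iterate the Leibniz expansion for (\ref{eq partial product 2}).

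One step does need repair. You extract $A=0$ from $[N^2A-B]_N\equiv 0$ by claiming that, for large $N$, the map $[\,\cdot\,]_N$ is injective on a fixed finite-dimensional degree-truncated subspace, and you attribute this to Theorem \ref{prop P-->P_N unique}. But the statement of that theorem is per-element: for each fixed nonzero $R$ there is an $N_0(R)$ beyond which $R_N\not\equiv 0$. That does not formally imply injectivity on a whole subspace for large $N$ --- the kernels of $[\,\cdot\,]_N$ could in principle rotate with $N$, each nonzero element lying in only finitely many of them (this is exactly what happens, e.g., for the subspaces $\mathrm{span}\{(1,N)\}\subset\C^2$). Your argument survives when $B$ is proportional to $A$ (then $[A]_N=0$ for all but one $N$ and the per-element theorem applies), but when $A$ and $B$ are independent the elements $N^2A-B$ change with $N$ and the per-element statement gives no contradiction. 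The fix is available but must be invoked explicitly: the proofs of Lemma \ref{scalarNonvanish.lem} and Theorem \ref{prop P-->P_N unique} show that the threshold $N_0$ depends only on the trace degree (roughly $N$ larger than twice the trace degree suffices), so $[\,\cdot\,]_N$ is indeed injective on $\C_n[u,u^{-1};\mx{v}]$ for all $N$ large relative to $n=\deg P+\deg Q$; with injectivity at two values of $N$ you get $N_1^2A-B=N_2^2A-B=0$, hence $A=0$. State and use this degree-uniform version rather than the theorem as quoted.
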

\noindent The proof of Theorem \ref{cor product rule and homom} can be found on page \pageref{proof of cor product rule and homom}.

\subsection{History and Discussion}

Since the classical Segal--Bargmann transform $S_t$ for Euclidean spaces admits an
infinite dimensional version \cite{Segal1978}, it is natural to attempt to
construct an infinite dimensional limit of the transform for compact Lie
groups. One successful approach to such a limit is found in the paper \cite{Hall1998}
of the second author and A. N. Sengupta, in which they develop a version of the 
Segal--Bargmann transform for the path group with values in a compact Lie
group $K$. The paper \cite{Hall1998} is an extension of the work of L. Gross and P.
Malliavin \cite{Gross1996} and reflects the origins of the generalized Segal--Bargmann
transform for compact Lie groups in the work of Gross \cite{Gross1993}.

A different approach to an infinite dimensional limit is to consider the
transform on a nested family of compact Lie groups, such as $\U_N$ for $N=1,2,3,\ldots$ The most obvious approach to the $N\rightarrow\infty$ limit would be to use on each $\u_N$ a fixed (i.e.\ $N$-independent)
multiple of the Hilbert--Schmidt norm $\|X\|_{\mathrm{HS}}^2 = \Ur(XX^\ast)$. Work of M. Gordina \cite{Gordina2000a,Gordina200b}, however,
showed that this approach does not work, because the target Hilbert space
becomes undefined in the limit. Indeed, Gordina showed that, with the metrics normalized  the this way,
in the large-$N$ limit all nonconstant holomorphic functions on $\GL_N$ have infinite norm
with respect to the heat kernel measure $\mu^N_{t,t}$.

In \cite{Biane1997b}, Biane proposed scaling the Hilbert-Schmidt norm with $N$ as in (\ref{scalingMetric}); he successfully carried out a large-$N$ limit of the {\em Lie algebra version} of the transform.  That is: taking the underlying space to
be the Lie algebra $\u_N$ rather than the group $\U_N$, he considered a version of the classical Euclidean Segal--Bargmann transform, $\mx{S}^N_t$ acting on functions from $\u_N$ to $\M_N$ given by polynomial functional calculus (cf.\ Definition \ref{d.Laurent.trace.new}).    If $f\in\C[u]$, the transformed functions $\mx{S}^N_t f_N$ have a limit (in a sense analogous to our Theorem \ref{Main Theorem}) which can be thought of as a polynomial $f_t\in\C[u]$.  This defines a unitary transformation $\mathscr{F}^t\colon f\mapsto f_t$ \cite[Theorem 3]{Biane1997b} on the limiting $L^2$ closure of polynomials with respect to the limit heat kernel measure---in this context Wigner's semicircle law.

\begin{remark} \label{remark functional calculus} The results of \cite[Section 1]{Biane1997b} are formulated in terms of  the large-$N$ limit of $\mx{S}_t^N$ on the space $\mathscr{X}_N = i\u_N$ of Hermitian $N\times N$ matrices, which is of course equivalent to the formulation above.  It also deals with a more general functional calculus on $\mathscr{X}_N$; cf.\ Section \ref{section Functional Calculus} below.  We have restricted our attention almost exclusively to the space of Laurent polynomial functions, for clarity of exposition.  Section \ref{section Equivariant} also discusses {\em equivariant functions}: an extension of the space of functional calculus functions which forms a natural domain for the Segal--Bargmann transform, and subsumes all other function spaces discussed in this paper.
\end{remark}

Biane proceeded in \cite{Biane1997b} to construct the free Hall transform transform $\mathscr{G}^t$ as a kind of large-$N$ limit $\U_N$ Segal--Bargmann, not by taking this limit directly as we have done, but instead developing a free probabilistic version of the Malliavin calculus techniques used by Gross and Malliavin \cite{Gross1996} to derive the properties of $B_t$ from an infinite dimensional version of $S_t$.  This laid the foundation for the modern theory of free Malliavin calculus and free stochastic differential equations, subsequently studied in \cite{Biane1998a,Biane2001,Kemp2012a} and many other papers, and was groundbreaking in many respects.  Biane conjectured that his transform $\mathscr{G}^t$ {\em is} the direct $N\to\infty$ limit of the Segal--Bargmann transforms $\mx{B}_{t,t}^N$ on $\U_N$, and suggested that this could be proved using the methods of stochastic analysis, but left the details of such an argument out of \cite{Biane1997b} (see the Remark on page 263).  One of the main motivations for the present paper is to prove (Theorems \ref{Main Theorem} and \ref{thm Biane transform}) that this connection indeed holds.  Our methods and ideas are very different from those Biane suggested, however; they are analytic and geometric, rather than probabilistic.  Moreover, we find the large-$N$ limit of the {\em two-parameter} Segal--Bargmann transform $\mx{B}^N_{s,t}$, and this generalization is essential to our proof that $\lim_{N\to\infty} \mx{B}^N_{t,t} = \mathscr{G}^t$.

\begin{remark} \label{remark Guillaume} As noted above, the complementary paper \cite{Guillaume2013} answers many of the same questions we do, using a somewhat different framework.  C\'ebron's paper uses the tools of free probability to construct a space of ``formal trace polynomials'' on which the limit Segal--Bargmann transform acts.  He also realizes the Laplace operator $\Delta_{\U_N}$ via an intertwining formula, in his case formulated in terms of free conditional expectation, and finds a crucial $O(1/N^2)$-decomposition analogous to our (\ref{e.DN.new}).  On the other hand, our method for connecting the large-$N$ limit of the Segal--Bargmann transform to the work of Biane (Theorem \ref{thm Biane transform})  is completely different from that of \cite{Guillaume2013}, using PDE methods to derive the polynomial generating function for the limiting transform; moreover, our methods extend naturally to the two-parameter transform.  A more complete understanding of the large-$N$ limit of the Segal--Bargmann transform on $\U_N$ is likely achieved by considering both our approach and C\'ebron's together.
\end{remark}

\section{Equivariant Functions and Trace Polynomials\label{section Equivariant}}

In this section, we consider function spaces over $\U_N$ and $\GL_N$ that are very natural domains for the Segal-Bargmann transform and its inverse.

\begin{definition} \label{definition equivariant} Let $G\subset \M_N$ be a matrix group.  A function $F\colon G\to \M_N$ is called {\bf equivariant} if $F(BAB^{-1}) = BF(A)B^{-1}$ for all $A,B\in G$ (it is equivariant under the adjoint action of $G$). \end{definition}
\noindent The set of equivariant functions is a $\C$-algebra.  If $P\in\C[u,u^{-1};\mx{v}]$, then the trace  polynomial $P_N$ is equivariant, as can be easily verified.  This shows that the {\bf equivariant subspaces} %of $L^2(\U_N,\rho_s^N;\M_N)$ and $\mathcal{H}L^2(\GL_N,\mu_{s,t}^N;\M_N)$, denoted
\[ L^2(\U_N,\rho_s^N;\M_N)_{\mathrm{eq}} \quad \text{and} \quad \mathcal{H}L^2(\GL_N,\mu_{s,t}^N;\M_N)_{\mathrm{eq}}, \]
are non-trivial.  The main results of this section, Theorem \ref{BstEquivariant.prop} and \ref{LaurentDense.prop }, show that $\mx{B}^N_{s,t}$ maps $L^2(\rho^N_s)_{\mathrm{eq}}$ onto $\mathcal{H}L^2(\mu_{s,t}^N)_{\mathrm{eq}}$ (extending Theorem \ref{t.new1.8}), and that trace  polynomials are dense in these equivariant $L^2$-spaces.  We conclude this section with Theorem \ref{prop P-->P_N unique}, showing that the map $\C[u,u^{-1};\mx{v}]\to L^2(\rho_s^N)_{\mathrm{eq}}$ given by $P\mapsto P_N$ is one-to-one when restricted to polynomials if a fixed maximal degree.

We begin with a brief discussion of {\bf functional calculus}, which featured prominently in \cite{Biane1997b}, and whose image is a (small) subspace of equivariant functions.

\subsection{Functional Calculus\label{section Functional Calculus}}

\begin{definition} \label{def functional calculus U(N)} Let $\U$ denote the unit circle in $\C$.
For every measurable function $f:\U\rightarrow\mathbb{C}$, let $f_{N}$ be
the unique function mapping $\U_N$ into $M_{N}(\mathbb{C})$ with the property
that%
\[
f_{N}\left(  V\left(
\begin{array}
[c]{ccc}%
\lambda_{1} &  & \\
& \ddots & \\
&  & \lambda_{N}%
\end{array}
\right)  V^{-1}\right)  =V\left(
\begin{array}
[c]{ccc}%
f(\lambda_{1}) &  & \\
& \ddots & \\
&  & f(\lambda_{N})
\end{array}
\right)  V^{-1}%
\]
for all $V\in \U_N$ and all $\lambda_{1},\ldots,\lambda_{N}\in \U$. The
function $f_N$ is called the {\bf functional calculus function}
associated to the function $f$. The space of those functional calculus
functions that are in $L^{2}(\U_N,\rho_{s}^N;M_{N}(\mathbb{C}))$ is called the
{\bf functional calculus subspace}.
\end{definition}

It is easy to check that $f_{N}(U)$ is well defined, independent of the choice
of diagonalization. If, for example, $f$ is the function given by
$f(\lambda)=e^{\lambda}$, then $f_{N}(U)=e^{U}$, computed by the usual power
series. If $f\in\C[u,u^{-1}]$, then $f_{N}$ is the function given in (\ref{e.L.fc}); thus
our notation $f_N$ for both is consistent.  (By comparison: in \cite{Biane1997b}, the functional
calculus function $f_N$ is denoted $\theta^N_f$.) Trace polynomials are {\em not}, in general, functional calculus functions.
For example, the function $F(U)=U\mathrm{tr}(U)$ is not a functional calculus
function on $\U_N$, except when $N=1$.  Indeed, if $N\ge 2$ and $\U_N\ni U = \mathrm{diag}(\lambda_1,\lambda_2)$, the $(1,1)$-entry of the diagonal matrix $U\tr U$ is $\frac12(\lambda_1+\lambda_2)\lambda_1$, which is not a function of $\lambda_1$ alone.  This violates Definition \ref{def functional calculus U(N)}.  Functional calculus functions are, however, equivariant.

Since $\Lambda(f) \equiv \int_{\U_N} \tr(f_N(U))\,\rho_s^N(dU)$ defines a positive linear functional on $C(\U)$ with $\Lambda(1)=1$, by the Riesz Representation Theorem \cite[Theorem 2.14]{RudinBook} there is a probability measure $\nu_s^N$ on $\U$ such that
\begin{equation} \label{eq nu_s^N} \int_{\U_N}\tr(f_N(U))\,\rho_s^N(dU) = \Lambda(f) = \int_{\U} f(\xi)\,\nu_s^N(d\xi), \qquad f\in C(\U). \end{equation}
(Theorem \ref{t.concentration} shows, in particular, that $\nu_s^N$ converges weakly to $\nu_s$; cf.\ Theorem \ref{t.new1.9}.)  For any function $f$ on $\U$, one can easily verify from Definition \ref{def functional calculus U(N)} that $[|f|^2]_N(U) = f_N(U)f_N(U)^\ast$; hence, by the density of $C(\U)$ in $L^2(\U,\nu_s^N)$, (\ref{eq nu_s^N}) shows that
\begin{equation} \label{eq L^2 nu_s^N}
\left\Vert f_{N}\right\Vert _{L^{2}(\U_N,\rho^N_{s};M_{N}(\mathbb{C}%
))}=\left\Vert f\right\Vert _{L^{2}(\U,\nu_{s}^{N})}, \qquad f\in L^2(\U,\nu^N_s).
\end{equation}
It follows that the functional calculus subspace is a closed subspace of $L^2(\rho_s^N)_{\mathrm{eq}}$, and contains the functions $\{f_1\colon f\in\C[u,u^{-1}]\}$ as a dense subspace.  That this density result extends to trace  polynomials in the full space $L^2(\rho_s^N)_{\mathrm{eq}}$ is Theorem \ref{LaurentDense.prop } below.

If $F$ is a holomorphic function on $\mathbb{C}^\ast$, there is a unique
holomorphic function $F_{N}$ from $\GL_N$ to $\M_N$
which satisfies%
\[
F_{N}\left(  A\left(
\begin{array}
[c]{ccc}%
\lambda_{1} &  & \\
& \ddots & \\
&  & \lambda_{N}%
\end{array}
\right)  A^{-1}\right)  =A\left(
\begin{array}
[c]{ccc}%
F(\lambda_{1}) &  & \\
& \ddots & \\
&  & F(\lambda_{N})
\end{array}
\right)  A^{-1}%
\]
for every $A\in \GL_N$ and all $\lambda_{1},\ldots,\lambda_{N}
\in\mathbb{C}^{\ast}$; indeed, $F_N$ is given by the same Laurent series expansion as $F$, applied to the matrix variable. %(There is something to prove to establish that $F_{N}$
%extends holomorphically to the set of nondiagonalizable matrices. See Biane for details.)
We call such a function a {\bf holomorphic functional calculus function} on $\GL_N$.
As (\ref{BtU2}) shows, the boosted Segal--Bargmann transform $\mathbf{B}^N_{s,t}$
{\em does not}, in general, map functional calculus functions on $\U_N$ to holomorphic
functional calculus functions on $\GL_N$. Nevertheless, \cite{Biane1997b} suggests that {\em in the large-$N$ limit}, $\mathbf{B}^N_{s,t}$ ought to map functional calculus functions to holomorphic functional calculus functions (at least in the $s=t$ case). Since single-variable Laurent polynomial functions are dense in the functional calculus subspace, Theorem \ref{Main Theorem} can be interpreted as a rigorous version of this idea.

\subsection{Results on Equivariant Functions}

\begin{theorem}
\label{BstEquivariant.prop}Let $s,t>0$ with $s>t/2$.  The Segal--Bargmann transform $\mathbf{B}^N_{s,t}$ maps the equivariant subspace $L^{2}(\U_N,\rho^N _{s};M_{N}(\mathbb{C}))_{\mathrm{eq}}$ isometrically onto $\mathcal{H}L^{2}(\GL_N,\mu^N _{s,t};M_{N}(\mathbb{C}))_{\mathrm{eq}}$.
\end{theorem}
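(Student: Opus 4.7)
Since $\mathbf{B}^N_{s,t}$ is already known from Definition~\ref{d.new1.5} to be a unitary isomorphism from $L^2(\U_N,\rho^N_s;\M_N)$ onto $\HH L^2(\GL_N,\mu^N_{s,t};\M_N)$, the isometry claim of the theorem is automatic, and only the bijection between the equivariant subspaces needs proof. The plan is to introduce, for each $B\in\U_N$, the conjugation action on matrix-valued functions
\[ (\alpha_B F)(A) \equiv B^{-1}F(BAB^{-1})B, \]
to observe that $F$ is equivariant exactly when $\alpha_B F = F$ for all $B\in\U_N$, and to show that $\mathbf{B}^N_{s,t}$ intertwines the $\alpha_B$-actions on its domain and codomain. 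Once this intertwining is established, the theorem follows immediately: if $F$ is $\alpha_B$-invariant on $\U_N$ then so is $\mathbf{B}^N_{s,t}F$ on $\GL_N$, and the same reasoning applied to the (unitary, hence well-defined) inverse handles the reverse direction.

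The heart of the argument is the intertwining $\alpha_B\circ\mathbf{B}^N_{s,t}=\mathbf{B}^N_{s,t}\circ\alpha_B$, which I would prove at the level of the Laplacians generating the heat semigroups. For $X\in\u_N$, $B\in\U_N$, and $C_B(A)=BAB^{-1}$, the chain rule applied to (\ref{eq del_X}) yields $\del_X(f\circ C_B) = (\del_{\mathrm{Ad}_B X}f)\circ C_B$. Because $\|\cdot\|_{\u_N}$ is $\mathrm{Ad}$-invariant, $\mathrm{Ad}_B$ permutes orthonormal bases of $\u_N$, so summing (\ref{eq def Delta}) over $\beta_N$ gives $\Delta_{\U_N}(f\circ C_B) = (\Delta_{\U_N}f)\circ C_B$. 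The analogous identity for $A^N_{s,t}$ follows by the same argument, using that $\mathrm{Ad}_B$ also preserves $i\u_N$ setwise. Exponentiating (equivalently, using that the heat kernel measures $\rho^N_s$ and $\mu^N_{s,t}$ are invariant under conjugation by elements of $\U_N$, which follows from the $C_B$-invariance of the generators together with $C_B(I_N)=I_N$) lifts this to the heat semigroups; conjugation on $\M_N$-values is an $\M_N$-linear operation and hence trivially commutes with the componentwise action of scalar operators. So $\alpha_B$ commutes with the heat operators on matrix-valued functions, and by the uniqueness of analytic continuation (Theorem~\ref{t.new1.4}) it also commutes with $\mathbf{B}^N_{s,t}$.

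One small additional point to address is that equivariance on $\GL_N$ demands $F(BZB^{-1})=BF(Z)B^{-1}$ for all $B\in\GL_N$, whereas the intertwining with $\alpha_B$ only directly gives this for $B\in\U_N$. This is harmless: for fixed $Z\in\GL_N$ and holomorphic $F$, both sides are holomorphic in $B\in\GL_N$ and agree on the totally real submanifold $\U_N$, so the identity propagates to all of $\GL_N$. The step I expect to require the most care is promoting the intertwining of $\Delta_{\U_N}$ (and of $A^N_{s,t}$) with $C_B^\ast$ from smooth functions to the full $L^2$ semigroup; this can be handled either by invoking essential self-adjointness and the functional calculus, or concretely via the representation $(e^{\frac{t}{2}\Delta_{\U_N}}f)(U)=\int f(UV)\rho^N_t(dV)$ combined with the ad-invariance of $\rho^N_t$.
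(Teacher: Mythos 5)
Your proposal is correct and follows essentially the same route as the paper: both establish the intertwining $C_B\circ\mathbf{B}^N_{s,t}=\mathbf{B}^N_{s,t}\circ C_B$ for $B\in\U_N$ via commutation of the conjugation action with the heat operator (the paper cites bi-invariance of $\Delta_{\U_N}$ where you spell out the chain-rule and $\mathrm{Ad}$-invariance computation), then upgrade invariance from $\U_N$ to $\GL_N$ by analytic continuation in the conjugating variable, and handle surjectivity by the same argument applied to the inverse transform. Your extra remarks on extending the intertwining to the $L^2$ semigroup and on the totally real submanifold argument are just finer-grained versions of steps the paper leaves implicit.
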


\noindent We begin with the following lemma.

\begin{lemma} Let $G\subset \M_N$ be a group.  For any function $F\colon G\to \M_N$, define
\begin{equation} \label{eq conj action}
C_{V}(F)(A)=V^{-1}F(VAV^{-1})V, \qquad V,A\in G.
\end{equation}
Let $s,t>0$ with $s>t/2$.  Then for all $F\in L^2(\U_N,\rho_s^N;\M_N)$ and $V\in \U_N$,
\begin{equation} \label{e.Bstconj} \mx{B}_{s,t}^N(C_VF) = C_V(\mx{B}_{s,t}^NF). \end{equation}
\end{lemma}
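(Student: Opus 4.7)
The approach is to decompose $C_V$ as a composition of two operations and verify that $\mx{B}^N_{s,t}$ commutes with each piece separately. Writing $(L_VF)(A):=F(VAV^{-1})= (F\circ\mathrm{Ad}_V)(A)$ (with $\mathrm{Ad}_V(A):=VAV^{-1}$) and $(R_VF)(A):=V^{-1}F(A)V$, the definition of $C_V$ gives $C_V=R_V\circ L_V$. Hence it suffices to prove the two separate identities $\mx{B}^N_{s,t}\circ R_V=R_V\circ\mx{B}^N_{s,t}$ and $\mx{B}^N_{s,t}\circ L_V=L_V\circ\mx{B}^N_{s,t}$.

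The first identity is immediate. Under the Hilbert space tensor identification $L^2(\U_N,\rho_s^N;\M_N)\cong L^2(\U_N,\rho_s^N)\tensor_\C\M_N$ remarked on after Definition \ref{d.new1.5}, $\mx{B}^N_{s,t}$ acts as $B^N_{s,t}\tensor\mathrm{id}_{\M_N}$, whereas $R_V$ acts as $\mathrm{id}\tensor (X\mapsto V^{-1}XV)$, and such a pair commutes automatically. Equivalently, if one expands $F=\sum_{ij}F_{ij}\cdot e_{ij}$ in the standard matrix units, the defining property $\mx{B}^N_{s,t}(f\cdot W)=(B^N_{s,t}f)\cdot W$ for constant $W\in\M_N$ yields $\mx{B}^N_{s,t}(R_VF)=\sum_{ij}(B^N_{s,t}F_{ij})\cdot V^{-1}e_{ij}V=R_V(\mx{B}^N_{s,t}F)$.

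For the $L_V$-identity, linearity in the matrix factor again reduces the claim to the scalar statement $B^N_{s,t}(f\circ\mathrm{Ad}_V)=(B^N_{s,t}f)\circ\mathrm{Ad}_V$ for $f\in L^2(\U_N,\rho_s^N)$, which I would establish in two steps. First, $\Delta_{\U_N}$ commutes with $L_V$: the identity $Ue^{tX}\mapsto V(Ue^{tX})V^{-1}=(VUV^{-1})e^{t\mathrm{Ad}_VX}$ together with Definition \ref{definition left-invariant derivative} gives $\del_X(f\circ\mathrm{Ad}_V)=(\del_{\mathrm{Ad}_VX}f)\circ\mathrm{Ad}_V$, and iterating yields $\del_X^2(f\circ\mathrm{Ad}_V)=(\del_{\mathrm{Ad}_VX}^2f)\circ\mathrm{Ad}_V$. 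Summing over $X\in\beta_N$ and using that $\mathrm{Ad}_V\beta_N$ is again orthonormal with respect to the Ad-invariant norm (\ref{scalingMetric}), the basis independence in Definition \ref{definition Ast and must} produces $\Delta_{\U_N}(f\circ\mathrm{Ad}_V)=(\Delta_{\U_N}f)\circ\mathrm{Ad}_V$. Standard semigroup considerations then promote this to $e^{\frac{t}{2}\Delta_{\U_N}}(f\circ\mathrm{Ad}_V)=(e^{\frac{t}{2}\Delta_{\U_N}}f)\circ\mathrm{Ad}_V$. Second, analytic continuation respects $L_V$: since $\mathrm{Ad}_V\colon\GL_N\to\GL_N$ is a holomorphic automorphism (polynomial in the matrix entries with constant coefficients), the function $(B^N_{s,t}f)\circ\mathrm{Ad}_V$ is holomorphic on $\GL_N$ and, by the first step, restricts on $\U_N$ to $e^{\frac{t}{2}\Delta_{\U_N}}(f\circ\mathrm{Ad}_V)$; the uniqueness of analytic continuation in Theorem \ref{t.new1.4} identifies it with $B^N_{s,t}(f\circ\mathrm{Ad}_V)$.

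The proof is essentially bookkeeping once the decomposition $C_V=R_V\circ L_V$ is in hand; the main substantive input is the Ad-invariance of the scaled Hilbert–Schmidt norm (\ref{scalingMetric}), which is precisely what makes the Laplacian $\Delta_{\U_N}$ intertwine with conjugation. The only point requiring mild care is keeping track of which domain $C_V$ is acting on: on the left-hand side of (\ref{e.Bstconj}), $C_V$ sends $L^2(\U_N,\rho_s^N;\M_N)$ to itself, while on the right it sends $\mathcal{H}L^2(\GL_N,\mu_{s,t}^N;\M_N)$ to itself, and both actions are isometric because both measures are Ad-invariant (being heat kernels of Ad-invariant operators with respect to Haar measure).
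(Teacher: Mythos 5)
Your proof is correct and follows essentially the same route as the paper: there, too, one commutes the adjoint action through the heat semigroup (via bi-invariance of $\Delta_{\U_N}$, which your computation with the $\mathrm{Ad}_V$-rotated orthonormal basis merely makes explicit), conjugates by $V^{-1}$ in the range, and concludes by uniqueness of analytic continuation. Your factorization $C_V=R_V\circ L_V$ and the tensor-product treatment of $R_V$ amount to more detailed bookkeeping of the same argument.
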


\begin{proof} Since $\Delta_{\U_N}$ is bi-invariant, it commutes with the left- and right-actions of the group; hence it, and therefore the semigroup $e^{\frac{t}{2}\Delta_{\U_N}}$, commutes with the adjoint action $\mathrm{Ad}_V(U) = VUV^{-1}$ on functions: for any $V\in \U_N$,
\begin{equation} \label{e.conj1} e^{\frac{t}{2}\Delta_{\U_N}} \left(F\circ(\mathrm{Ad}_V)\right) = \left(e^{\frac{t}{2}\Delta_{\U_N}}F\right)\circ\mathrm{Ad}_V. \end{equation}
Conjugating  both sides of (\ref{e.conj1}) by $V^{-1}$ in the range of $F$ (which commutes with the heat operator), it follows that
\begin{equation} \label{e.conj2} C_V(e^{\frac{t}{2}\Delta_{\U_N}}F) = e^{\frac{t}{2}\Delta_{\U_N}}(C_VF), \qquad V\in \U_N. \end{equation}
Uniqueness of analytic continuation now proves (\ref{e.Bstconj}) from (\ref{e.conj2}).
\end{proof}

Theorem \ref{BstEquivariant.prop} now follows by analytically continuing (\ref{e.Bstconj}) in the $V$ variable.

\begin{proof}[Proof of Theorem \ref{BstEquivariant.prop}] Let $F\in L^2(\U_N,\rho^N_s;\M_N)$ be equivariant; thus $C_VF = F$ for all $V\in \U_N$.  Then (\ref{e.Bstconj}) shows that $C_V(\mx{B}_{s,t}^N F) - \mx{B}_{s,t}^NF\equiv 0$ for each $V\in \U_N$.  Since $\mx{B}_{s,t}^N F$ is holomorphic, it follows by uniqueness of analytic continuation that the function $Z\mapsto C_Z(\mx{B}_{s,t}^N F) - \mx{B}_{s,t}^N F$ is $0$ for $Z\in \GL_N$; thus, $\mx{B}_{s,t}^N F$ is equivariant under $\GL_N$, as required. An entirely analogous argument applies to the inverse transform, establishing the theorem.
\end{proof}

Let us remark here on an intuitive approach to the concentration of measure results in Section \ref{Section Limit Theorems}.  If $U_t$ is a random matrix sampled from the distribution $\rho_t^N$ on $\U_N$, its (random) eigenvalues converge to their (deterministic) mean as $N\to\infty$.  To be precise: if $\lambda_1^N,\ldots,\lambda_N^N$ are the eigenvalues of $U_t$, the {\em empirical eigenvalue measure}
\[ \widetilde{\nu}^N_t = \frac1N\sum_{j=1}^N \delta_{\lambda_j^N} \]
converges weakly almost surely to $\nu_t$.  (The mean of the random measure $\widetilde{\nu}_t^N$ is the measure $\nu_t^N$ of (\ref{eq nu_s^N}) which converges weakly to $\nu_t$; cf.\ Theorem \ref{t.new1.9}.  The stronger statement that the convergence is almost sure, not just in expectation, was first proved in \cite{Rains1997}.  See \cite{Kemp2013} for the strongest known convergence results.)

The conjugacy classes in the group $\U_N$ are in one-to-one correspondence with the (symmetrized) list of eigenvalues.  Each such list is, in turn, determined by its empirical measure $\widetilde{\nu}_t^N$.  The convergence of the random eigenvalues of $U_t$ to a deterministic limit therefore suggests that the heat kernel measure $\rho_t^N$ concentrates its mass on a {\em single conjugacy class} as $N\to\infty$.  The following proposition therefore offers some insight into Theorem \ref{t.concentration} (that trace  polynomials concentrate on single-variable Laurent polynomials).  Indeed, on a fixed conjugacy class, {\em any equivariant function} is given by a polynomial.

%Theorem \ref{thm limit polynomial projection} asserts that any trace  polynomial function concentrates on a single-variable Laurent polynomial in the large-$N$ limit.  One possible explanation for this can be seen in the following algebraic result.

%As Example \ref{example heat U^2} shows, $\mx{B}^N_{s,t}$ does not map single-variable Laurent polynomials to single-variable Laurent polynomials; our main Theorem \ref{Main Theorem} may be interpreted as saying that, in the large-$N$ limit, $\mathbf{B}_{s,t}^{N}$ {\em does} preserve Laurent polynomials. There is a simple intuitive and more invariant explanation for this phenomenon. Our ``concentration of trace'' results (Theorem \ref{thm limit polynomial projection}) suggest that the heat kernel measures $\rho _{s}^{N}$ and $\mu^N_{s,t}$ each concentrate onto a single conjugacy class as $N\to\infty$.  In fact, any equivariant function is {\em equal to a single-variable polynomial on each conjugacy class}.

\begin{proposition}
\label{singleConjClass.prop}Let $G\subseteq \M_N$ be a group, and let $C$ be a conjugacy class in $G$. If $F\colon G\rightarrow \M_N$ is equivariant, then there exists a single-variable polynomial $P_C$ such that $F(A)=P_C(A)$ for all $A\in C$.
\end{proposition}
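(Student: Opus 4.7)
The plan is to reduce the statement to an application of the double centralizer (bicommutant) theorem for a single matrix, which asserts that for any $A_0\in\M_N$, the set of matrices in $\M_N$ commuting with everything in the commutant of $A_0$ is exactly the unital subalgebra $\C[A_0]$ generated by $A_0$. This is a purely linear algebra fact about a single operator on $\C^N$.

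First I would fix $A_0\in C$ and consider the centralizer $Z_G(A_0)=\{B\in G: BA_0=A_0B\}$. For any $B\in Z_G(A_0)$, the equivariance condition $F(BAB^{-1})=BF(A)B^{-1}$ applied at $A=A_0$ gives $F(A_0)=BF(A_0)B^{-1}$, so $F(A_0)$ commutes with every element of $Z_G(A_0)$.

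Next I would show that the $\C$-linear span of $Z_G(A_0)$ inside $\M_N$ is the full matrix centralizer $Z_{\M_N}(A_0)=\{X\in\M_N: XA_0=A_0X\}$. This is where the ambient group $G$ enters, and it is the one step that is not purely formal. In the two cases used in this paper: for $G=\GL_N$, $Z_{\M_N}(A_0)$ is a unital $\C$-subalgebra and $Z_{\GL_N}(A_0)$ is Zariski dense in it (for any $X\in Z_{\M_N}(A_0)$ the matrix $X+\lambda I$ is invertible for all but finitely many $\lambda\in\C$, and $X+\lambda I\in Z_G(A_0)$); for $G=\U_N$, one diagonalizes $A_0=VDV^{-1}$ with $V\in\U_N$, so $Z_{\M_N}(A_0)=VZ_{\M_N}(D)V^{-1}$ is a block-diagonal matrix algebra, and the block-unitary matrices inside it span the block-diagonal algebra (each block $\M_{N_j}$ is spanned by its unitaries). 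Granting this spanning property, $F(A_0)$ commutes with all of $Z_{\M_N}(A_0)$, i.e., $F(A_0)$ lies in the bicommutant, hence in $\C[A_0]$ by the double centralizer theorem. So there is a polynomial $p\in\C[x]$ with $F(A_0)=p(A_0)$.

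Finally, I would set $P_C:=p$ and check the conclusion on all of $C$. Any $A\in C$ has the form $A=BA_0B^{-1}$ for some $B\in G$, and conjugation by $B$ is a $\C$-algebra homomorphism on $\M_N$, so
\[
F(A)=BF(A_0)B^{-1}=Bp(A_0)B^{-1}=p(BA_0B^{-1})=p(A)=P_C(A),
\]
completing the proof. The main obstacle is the spanning step for $Z_G(A_0)$ inside $Z_{\M_N}(A_0)$; the rest of the argument is formal. (Strictly speaking the statement as written presumes $G$ is large enough for this spanning to hold — the relevant examples for the paper are $\U_N$ and $\GL_N$, where it does.)
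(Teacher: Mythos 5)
Your proof is correct and follows essentially the same route as the paper's: show $F(A_0)$ lies in the bicommutant $\{A_0\}''$, invoke the classical fact that $\{A_0\}''=\C[A_0]$ to get a polynomial, and propagate it over the conjugacy class using that polynomial functional calculus commutes with conjugation. The one place you diverge is actually an improvement in rigor: the paper passes directly from ``$F(A_0)$ commutes with every $A_1$ (in $G$) commuting with $A_0$'' to ``$F(A_0)\in\{A_0\}''$,'' which tacitly requires the group centralizer $Z_G(A_0)$ to have the same commutant in $\M_N$ as the full matrix centralizer; your spanning arguments for $G=\GL_N$ (via $X+\lambda I$) and $G=\U_N$ (block-diagonal structure, unitaries spanning each block) supply exactly this missing step. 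Your parenthetical caveat is also well taken: for an arbitrary group $G\subseteq\M_N$ the statement as literally written can fail (e.g.\ $G=\{I_N\}$ with $F(I_N)$ a non-scalar matrix is equivariant but not a polynomial in $I_N$), so the proposition should really be read in the cases the paper uses it for, namely $\U_N$ and $\GL_N$, where your spanning step holds.
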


\begin{proof} Fix a point $A_{0}$ in $C$, and let $A_1$ commute with $A_0$.  Then since $F$ is equivariant,
\[ A_1^{-1}F(A_0)A_1 = F(A_1^{-1}A_0A_1) = F(A_0), \]
which shows that $F(A_0)$ commutes with any such $A_1$: that is, $F(A_0)\in\{A_0\}''$ is in the double commutant of $A_0$.  A classical theorem in linear algebra (see, for example, \cite{Lagerstrom1945} for a
short proof) then asserts that there is a single-variable polynomial $P_{A_0}$
such that $F(A_{0})=P(A_{0})$. Every other point in the conjugacy class $C$
is of the form $A=BA_{0}B^{-1}$ for some $B\in G$. Since applying a
polynomial function to a matrix commutes with conjugation, we have
\begin{equation*}
F(A) = F(BA_{0}B^{-1})=BF(A_{0})B^{-1}=BP_{A_0}(A_{0})B^{-1}=P_{A_0}(BA_{0}B^{-1}) = P_{A_0}(A)
\end{equation*}%
which shows that the map $A_0\mapsto P_{A_0}$ is constant for $A_0\in C$, so relabel $P_{A_0} = P_C$.  Thus, the identity $F(A)=P_C(A)$ holds for all $A\in C$.
\end{proof}

\begin{remark} Proposition \ref{singleConjClass.prop} has the at-first-surprising consequence that the equivariant function $F(A) = A^{-1}$ is equal to a polynomial (not a Laurent polynomial) on any given conjugacy class.  This can be seen as a consequence of the Cayley-Hamilton Theorem; cf.\ Section \ref{section P-->P_N unique}.  Indeed, let $p_A(\lambda) = \det(\lambda I_N-A)$ be the characteristic polynomial of $A$; then $p_A(A)=0$.  This shows there are coefficients $c_k$ (determined by $A$) so that $\sum_{k=0}^N c_k A^k =0$.  Since $c_0 = (-1)^N\det(A)$, if $A$ is invertible we can therefore factor out $A$ from the $k\ge 1$ terms and solve for $A^{-1}$ as a polynomial in $A$.  The above proof shows that this $A$-dependent polynomial is, in fact, uniform over the whole conjugacy class.  \end{remark}

\subsection{Density of Trace Polynomials}

Conceptually, equivariant functions are a natural arena for the Segal--Bargmann transform in the large-$N$ limit.  Computationally, it will be convenient to work on the subclass of trace  polynomials.  In fact, trace  polynomials are dense in $L^2(\U_N,\rho^N_s;\M_N)_{\mathrm{eq}}$.  Thus, understanding the action of $\mx{B}_{s,t}^N$ on this class tells the full story.

\begin{theorem}
\label{LaurentDense.prop }For $s>0$, the space of trace  polynomials is dense in the equivariant space $L^{2}(\U_N,\rho^N_{s};M_{N}(\mathbb{C}))_{\mathrm{eq}}$.
\end{theorem}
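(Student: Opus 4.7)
The plan is a two-step reduction: first, smooth the given equivariant $F$ using the heat semigroup to reduce to the smooth case; then, decompose a smooth equivariant function via Lagrange interpolation at the eigenvalues and approximate the resulting coefficient class functions by polynomials in traces.

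First, I would take an arbitrary $F\in L^{2}(\U_N,\rho^N_{s};\M_N)_{\mathrm{eq}}$ and smooth it componentwise by the heat semigroup $e^{r\Delta_{\U_N}/2}$ for small $r>0$.  Because $\Delta_{\U_N}$ is bi-invariant and in particular commutes with the adjoint conjugation action on functions---exactly as exploited in the proof of Theorem \ref{BstEquivariant.prop}---the smoothing preserves equivariance.  Standard heat-kernel theory on the compact group $\U_N$ then gives that $e^{r\Delta_{\U_N}/2}F$ is smooth and converges to $F$ in $L^{2}(\U_N,\rho^N_s;\M_N)$ as $r\downarrow 0$ (using that $\rho^N_s$ has a smooth, strictly positive density relative to Haar measure).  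This reduces the theorem to approximating \emph{smooth} equivariant functions.

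Next, given a smooth equivariant $F$, I would invoke Proposition \ref{singleConjClass.prop}: on the open dense regular set $\U_N^{\mathrm{reg}}$ of unitaries with $N$ distinct eigenvalues, $F(U)\in\{U\}''$, and Lagrange interpolation at the eigenvalues yields the unique expansion
\[ F(U) = \sum_{k=0}^{N-1} a_k(U)\,U^k, \]
with coefficients $a_k$ that are smooth class functions on $\U_N^{\mathrm{reg}}$.  Smoothness of $F$ together with equivariance should force the $a_k$ to extend to bounded class functions on all of $\U_N$ (the nominal Vandermonde singularities at coinciding eigenvalues being removed by divided-difference-type cancellations), so each $a_k\in L^{2}(\U_N,\rho^N_s)$.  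I would then approximate the $a_k$ by polynomials in traces: the power sums $\{\tr(U^k)\}_{k=1}^{N}$ separate conjugacy classes in $\U_N$ (they determine the elementary symmetric functions of the eigenvalues by Newton's identities, hence the characteristic polynomial), and together with $\{\tr(U^{-k})\}_{k=1}^{N}$ (which give $\det(U)^{-1}$) they generate by Stone--Weierstrass a dense subalgebra of the class-function subspace of $C(\U_N)$, hence also of the class-function subspace of $L^{2}(\U_N,\rho^N_s)$.  Approximating each $a_k$ in $L^2$ by some $Q_k(\tr(U^{j_1}),\ldots,\tr(U^{j_m}))$ and using the uniform bound $\|U^k\|_{\M_N}\equiv 1$ on $\U_N$, we assemble $\sum_{k=0}^{N-1}Q_k(\tr(U^{j_1}),\ldots)\,U^k$---a trace polynomial in the sense of Definition \ref{d.Laurent.trace.new}---into an $L^2$-approximation of $F$ in $L^{2}(\U_N,\rho^N_s;\M_N)_{\mathrm{eq}}$.

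The hardest step will be rigorously establishing that the interpolation coefficients $a_k$ extend to $L^{2}$ (indeed bounded) class functions across the measure-zero locus where eigenvalues coincide, despite the Vandermonde determinant vanishing there.  The smoothness and equivariance of $F$ should force the coefficients to behave like higher-order divided differences at these degenerate points, providing the needed regularity via a symmetric-function/Chevalley-type argument.  If that direct approach becomes technically awkward, an alternative is to bypass the Lagrange decomposition entirely by applying an abstract Peter--Weyl decomposition to the $\U_N$-module $L^{2}(\U_N;\M_N)$ under the adjoint conjugation action, and identifying each isotypic summand of the invariant subspace explicitly as a span of trace-polynomial building blocks coming from matrix coefficients of tensor products of the standard representation and its dual.
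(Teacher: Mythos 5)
Your route is genuinely different from the paper's, and it is viable, but as written it has a gap at exactly the point you flag. The paper first uses Stone--Weierstrass on the matrix entries together with averaging over the conjugation action (Lemma \ref{polyEq.lem}) to reduce to equivariant functions whose entries are polynomials in the entries of $U$ and their conjugates, and then shows by a purely algebraic argument---restriction to the diagonal torus, where the double-commutant argument of Proposition \ref{singleConjClass.prop} forces diagonality, then permutation equivariance, clearing denominators with $\det(U^\ast)^k$, and Newton's power-sum theorem---that every such function \emph{is} a trace polynomial, exactly. No spectral interpolation coefficients ever appear, so no regularity question at the eigenvalue-coincidence locus arises. Your plan (heat smoothing, then the expansion $F(U)=\sum_{k=0}^{N-1}a_k(U)U^k$ on the regular set with class-function coefficients, then Stone--Weierstrass for class functions in the variables $\tr(U^{\pm k})$) trades that algebra for analysis; the smoothing step, the separation of conjugacy classes by traces, and the final assembly using $\|U^k\|_{\M_N}=1$ are all fine.

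The gap is the assertion that smoothness ``should force'' the Cramer's-rule coefficients $a_k$ to extend boundedly across the locus of coinciding eigenvalues. That statement is true, but it rests on a genuine multivariable division theorem (an alternating smooth function is divisible by the Vandermonde with smooth quotient), not on a routine cancellation, and you give no argument for it. Fortunately the gap is closable, and more cheaply than you propose: you do not need boundedness of $a_k$, only $a_k\in L^2$, because trace polynomials are uniformly dense in continuous class functions and continuous class functions are dense in the $L^2$ class functions. And $L^2$-membership is essentially automatic: writing $a_k=A_k/\Delta$ by Cramer's rule on the torus, the numerator $A_k$ is a determinant in the bounded quantities $\lambda_j$ and the diagonal entries of $F$, hence bounded, while $\Delta$ is the Vandermonde; since the density of $\rho^N_s$ with respect to Haar measure is bounded, the Weyl integration formula (whose Jacobian is $|\Delta(\lambda)|^2$) gives $\int |a_k|^2\,|\Delta|^2\,d\lambda=\int |A_k|^2\,d\lambda<\infty$. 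With that substitution---continuity of $F$ already suffices, so even the heat-smoothing step can be lightened---your argument goes through and yields an alternative, more analytic proof of the density theorem, at the cost of losing the paper's stronger structural conclusion that equivariant functions with polynomial entries are themselves trace polynomials.
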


\noindent We begin by proving that equivariant functions whose entries are polynomials in $U$ and $U^\ast$ are dense.

\begin{lemma}
\label{polyEq.lem}Every equivariant function $F\in L^{2}(\U_N,\rho^N _{s};M_{N}(\mathbb{C}))_{\mathrm{eq}}$ can be approximated by a sequence of equivariant matrix-valued functions $F_{n}$, where each entry of $F_{n}(U)$ is a polynomial in
the entries of $U$ and their conjugates.
\end{lemma}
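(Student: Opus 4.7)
The plan is a two-step argument: first approximate $F$ in $L^2$ by a matrix-valued function whose entries are polynomials in the entries of $U$ and $\bar U$ (ignoring equivariance), then symmetrize to recover equivariance while preserving the polynomial structure. The crucial point will be to verify that the symmetrization step (i) does not destroy the property of being polynomial in the entries and conjugate-entries of $U$, and (ii) is a contraction in the $L^2$ norm.

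For the first step, I would invoke the Stone--Weierstrass theorem on the compact space $\U_N$. The algebra generated by the coordinate functions $U\mapsto U_{ij}$ together with their conjugates $U\mapsto \overline{U_{ij}}$ (which coincide on $\U_N$ with the entries of $U^{-1}$) contains the constants, separates points, and is closed under complex conjugation, hence is uniformly dense in $C(\U_N)$. Since $\rho^N_s$ is a Borel probability measure on a compact space, $C(\U_N)$ is $L^2(\rho_s^N)$-dense, so I can approximate each entry $F_{ij}$ in $L^2(\rho_s^N)$ by a polynomial $g^{(n)}_{ij}$ in the entries and conjugate-entries of $U$. Setting $G_n=(g^{(n)}_{ij})$ and summing entrywise yields $G_n\to F$ in $L^2(\U_N,\rho^N_s;\M_N)$.

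For the second step, I would define the conjugation-averaged function
\[
F_n(U) \;=\; \int_{\U_N} V^{-1}\, G_n(VUV^{-1})\, V\, dV,
\]
where $dV$ is normalized Haar measure on $\U_N$; equivariance of $F_n$ then follows by a change of variable $V\mapsto WV$ using left-invariance of Haar measure, exactly as in (\ref{eq conj action}). To see that $F_n$ still has polynomial entries in the entries of $U$ and $\bar U$, note that the entries of $VUV^{-1}=VUV^*$ are polynomials in the entries of $U,V,V^*$, and the entries of $(VUV^{-1})^*$ are polynomials in the entries of $\bar U,V,V^*$. Since $G_n$ has polynomial entries, the integrand is a finite sum of monomials of the form $m(U_{ij},\overline{U_{ij}})\cdot h(V_{k\ell},\overline{V_{k\ell}})$; Haar integration over $V$ replaces each $h$ by a numerical constant, leaving a polynomial in the entries of $U$ and $\bar U$.

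Finally, for convergence, I would use that $\rho^N_s$ is invariant under conjugation $U\mapsto VUV^{-1}$ (because the bi-invariant Laplacian $\Delta_{\U_N}$ commutes with conjugation and the base-point $I_N$ is fixed), and that conjugation by a unitary preserves the Hilbert--Schmidt norm on $\M_N$. Hence the operator $C_V$ of (\ref{eq conj action}) is an isometry of $L^2(\U_N,\rho^N_s;\M_N)$. Since $F$ itself is equivariant, $C_VF=F$ for all $V\in\U_N$, so
\[
F - F_n \;=\; \int_{\U_N} C_V(F - G_n)\, dV.
\]
By Jensen's (or Minkowski's integral) inequality and the isometry property,
\[
\|F-F_n\|_{L^2} \;\le\; \int_{\U_N} \|C_V(F-G_n)\|_{L^2}\, dV \;=\; \|F - G_n\|_{L^2} \;\longrightarrow\; 0.
\]
The main technical care is in tracking the polynomial structure through the averaging in step two; once that is set up, the contraction estimate is immediate from unitary invariance.
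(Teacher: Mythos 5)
Your proof is correct and follows essentially the same route as the paper: entrywise Stone--Weierstrass approximation in $L^2(\rho_s^N)$, followed by averaging the approximants over the conjugation action $C_V$ and using Minkowski's integral inequality to control the error. The only difference is cosmetic: you justify the final estimate via the conjugation-invariance of $\rho_s^N$ (so that $C_V$ is an $L^2$-isometry), which is a slightly more explicit version of the paper's appeal to Minkowski plus dominated convergence.
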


\begin{proof}
By the Stone--Weierstrass Theorem and the density of continuous functions in $L^{2}$, any $f\in L^2(\rho^N_s)$ can be approximated by scalar-valued polynomial functions of the entries of the $\U_N$ variable and their conjugates.  Applying this result to the components of the matrix-valued function $F$, we see that there is a sequence $P_n$ of polynomials in the entries of $U$ and their conjugates such that
\begin{equation} \label{eq poly approx 0} \lim_{n\to\infty}\|P_n-F\|_{L^2(\U_N,\rho^N_s;\M_N)}= 0. \end{equation}
Now, consider again the conjugation action $C_V$ of (\ref{eq conj action}).  It is easy to verify that
this action preserves the space of homogeneous polynomials of degree $m$ in
the entries $U_{jk}$ and their conjugates.  Thus, the averaged function
\[ F_n(U) = \int_{\U_N} C_V(P_n)(U)\,dV \]
is still a polynomial in the entries of $U$ and their conjugates; and $F_n$ is evidently equivariant.  Therefore $C_V(F) = F$ for each $V\in \U_N$, and so
\[ F_n(U)-F(U) =  \int_{\U_N} C_V(P_n)(U)\,dV - F(U) = \int_{\U_N} [C_V(P_n)-C_V(F)](U)\,dV. \]
It follows from (\ref{eq poly approx 0}) (with an application of Minkowski's inequality and the dominated convergence theorem) that $F_n$ approximates $F$ in $L^2(\U_N,\rho^N_s;\M_N)$ as claimed.
\end{proof}

\begin{proof}[Proof of Theorem \ref{LaurentDense.prop }]
We will show that each of the functions $F_{n}$ in Lemma \ref{polyEq.lem} is
actually a trace  polynomial. Suppose, then, that $F$ is equivariant
and that each entry of $F(U)$ is a polynomial in the entries of $U$ and their
conjugates.  Let $T(N)\subset \U_N$ denote the diagonal subgroup.  By the spectral theorem, any $U\in \U_N$
has a unitary diagonalization $U = V\Lambda V^{-1}$ for some $\Lambda\in T(N)$.  The equivariance of $F$ then gives that
$F(U) = F(V\Lambda V^{-1}) = VF(\Lambda)V^{-1}$.  In particular, any equivariant function $F$ is completely determined by its restriction $\left.F\right|_{T(N)}$ to the diagonal subgroup.

Because $F$ is equivariant, by the same argument used in the proof of Proposition \ref{singleConjClass.prop},
$F(U)\in \{U\}''$ for each $U$.  Let $U\in T(N)$ be in the dense subset of matrices with all eigenvalues distinct; then $\{U\}'$ is the set of all diagonal matrices, and so $F(U)$ commutes with all diagonal matrices, meaning that $F(U)$ is diagonal.  By the initial assumption on $F$, all entries of $F(U)$ are polynomials in the entries and their conjugates; hence, since the off-diagonal entries are $0$ on a dense set, $F(U)$ is diagonal for all $U\in T(N)$, and its diagonal entries are polynomials in the diagonal entries $\lambda_1,\ldots,\lambda_N$ of $U$ and their conjugates. Of course, for $U\in T(N)$, the diagonal entries of $U$ satisfy $\bar{\lambda}_{j}=1/\lambda _{j}$. Thus, each of the diagonal entries of $\left.F\right|_{T(N)}(U)$ is a Laurent polynomial $q(\lambda_1,\ldots,\lambda_N)$ in the $\lambda _{j}$'s.  The symmetric group $\Sigma_N$ is a subgroup of $\U_N$, so since $\left.F\right|_{T(N)}$ is equivariant under $\U_N$, it is also equivariant under $\Sigma_N$.  Hence each of the (matrix-valued) polynomials $q$ is equivariant under the action of $\Sigma_N$ on the diagonal entries.

Taking $k$ to be larger than the largest negative degree of any variable in $q$, and setting $r(\lambda_1,\ldots,\lambda_N) = (\lambda_1\cdots \lambda_N)^k q(\lambda_1,\ldots,\lambda_N)$, $r$ is also equivariant under the action of $\Sigma_N$.  We can then express
\[ \left.F\right|_{T(N)}(U) = (\lambda_1\cdots\lambda_N)^{-k} r(\lambda_1,\ldots,\lambda_N) = \det(U^\ast)^k r(\lambda_1,\ldots,\lambda_N). \]
Since the diagonal entries of $r(\lambda _{1},\ldots \lambda _{N})$ are equivariant under permutations, the first entry of $r$ must be invariant under permutations of
the remaining $N-1$ variables. This means that the first entry of $r$ is a
linear combination of terms of the form $\lambda _{1}^{\ell}s_{\ell}(\lambda_{2},\ldots ,\lambda _{N})$, where $\ell$ ranges from $0$ up to the degree $d$ of $r$ and $s_{\ell}$ is a symmetric polynomial in $N-1$
variables. By equivariance under $\Sigma_N$, it now follows that, for $1\le j\le N$, the $j$th diagonal component of $r$ itself must be a linear combination of terms of the form
\begin{equation*}
\left\{\lambda_{j}^{\ell}s_{\ell}(\lambda _{1},\ldots ,\widehat{\lambda _{j}},\ldots \lambda_{N})\colon 0\le \ell\le d\right\}.
\end{equation*}
It is well-known that every symmetric polynomial in $N-1$ variables $\lambda_{1},\ldots,\lambda_{N-1}$ is a polynomial in power-sums $p_\ell(\lambda_1,\ldots,\lambda_{N-1})$ with $0\le\ell\le N-1$, where, for any integer $\ell$, 
\begin{equation} \label{eq power sums}
p_\ell(\lambda_1,\ldots,\lambda_{N-1}) = \lambda_{1}^{\ell}+\lambda_{2}^{\ell}+\cdots +\lambda_{N-1}^{\ell}.
\end{equation}
(This result was known at least to Newton.  For a proof, see \cite[Theorem 4.3.7]{SaganBook}.)
 Furthermore, any power sum in $N-1$ variables can be written as a linear combination of power sums of $N$ variables along with the monomials $\lambda_j^\ell$; for example
\begin{equation*}
\sum_{j=2}^{N}\lambda _{j}^{\ell}=\left( \sum_{j=1}^{N}\lambda _{j}^{\ell}\right)
-\lambda _{1}^{\ell}.
\end{equation*}%
Thus, the first entry of $r$ is actually a polynomial in power-sums of all $N$ variables and in $\lambda _{1}$ with the remaining
entries of $r$ then being determined by equivariance with respect to
permutations.

Suppose now that $r$ is the permutation-equivariant polynomial whose $j$th
entry is 
\begin{equation*}
\lambda _{j}^{\ell_{0}}\left( \lambda _{1}^{k_{1}}+\cdots +\lambda
_{N}^{k_{1}}\right) ^{\ell_{1}}\cdots \left( \lambda _{1}^{k_{M}}+\cdots
+\lambda _{N}^{k_{M}}\right) ^{\ell_{M}}.
\end{equation*}%
Then $r$ is nothing but the restriction to $T(N)$ of the trace polynomial 
\begin{equation*}
R(U) = U^{\ell_{0}}\mathrm{Tr}(U^{k_{1}})^{\ell_{1}}\cdots \mathrm{Tr}(U^{k_{M}})^{\ell_{M}}.
\end{equation*}%
Meanwhile, by the above-quoted result, the
symmetric polynomial $(\lambda _{1}\lambda _{2}\cdots \lambda _{N})^{k}$ can
be expressed as a polynomial in the power-sums of the $\lambda _{j}$s.
Taking the complex-conjugate of this result, we see that $\det (U^{\ast})^{k}$ can be expressed as a scalar trace polynomial in $U^{\ast}$; thus $U\mapsto (\det U^\ast)^kR(U)$ is a trace  polynomial.
Hence $\left.F\right|_{T(N)}$ is the restriction of the trace  polynomial function $U\mapsto (\det U^\ast)^k R(U)$, and the result
follows since $F$ is determined by $\left.F\right|_{T(N)}$.
\end{proof}

\subsection{Asymptotic Uniqueness of Trace Polynomial Representations} \label{section P-->P_N unique}

The Cayley--Hamilton theorem asserts that, for any matrix $A\in \M_N$, it follows that $p_A(A) = 0$ where $p_A(\lambda) = \det(\lambda I_N-A)$ is the characteristic polynomial of $A$.  In fact, the coefficients of the characteristic polynomial $p_A$ are all scalar trace polynomial functions of $A$: this follows from the Newton identities.  Using the operators $\mathcal{M}_{(\cdot)}$ and $\mathcal{A}_+$ of Definition \ref{Def LN L} below, there is an explicit formula for $p_A$.  Let
\[ h_A(\lambda) = \exp\left(-\sum_{m=1}^\infty \frac{1}{m\lambda^m} \Ur(A^m)\right). \]
Then for $A\in \M_N$, $p_A(\lambda) = (\mathcal{A}_+\mathcal{M}_{\lambda^N} h_A)(\lambda)$.  (See the Wikipedia entry for the Cayley-Hamilton theorem.)  Thus, the expression $p_A(A)$ is a(n $N$-dependent) trace polynomial in $A$, and the Cayley--Hamilton theorem asserts that this trace polynomial function vanishes identically on $\M_N$.  We illustrate this result in the case $N=2$.

\begin{example}
\label{example Cayley-Hamilton} For all $A\in M_{2}(\mathbb{C})$, the
Cayley--Hamilton Theorem asserts that%
\begin{equation}
A^{2}-\mathrm{Tr}(A)A+\det (A)I_2=0.  \label{Cayley}
\end{equation}%
In the $2\times 2$ case, however, it is easily seen that%
\begin{equation}
\det (A)=\frac{1}{2}(\mathrm{Tr}(A)^{2}-\mathrm{Tr}(A^{2})).  \label{detID}
\end{equation}%
Substituting (\ref{detID}) into (\ref{Cayley}) and expressing things in
terms of the normalized trace gives%
\begin{equation*}
A^{2}-2A\mathrm{tr}(A)+2\mathrm{tr}(A)^{2}I_2-\mathrm{tr}(A^{2})I_2=0
\end{equation*}%
for all $A\in M_{2}(\mathbb{C})$. In particular, if $P\in\C[u,u^{-1};\mx{v}]$ denotes the nonzero
polynomial $P(u;\mx{v}) =u^{2}-2uv_{1}+2v_{1}^{2}-v_{2}$, then $P_{2}\colon \U_2\to\M_2$ is the zero function.  Note, however, that $P_{N}$ is not the zero function on $\U_N$ for $N>2$, since the minimal polynomial of a generic element of $\U_N$ has degree $N$. This demonstrates the following theorem.
\end{example}

\begin{theorem} \label{prop P-->P_N unique}
Let $P$ be a nonzero element of $\C[u,u^{-1};\mx{v}]$.  Then, for all sufficiently large $N$, the trace  polynomial function $P_N$ is not identically zero on $\U_N$.  In particular, if $P,Q\in\C[u,u^{-1};\mx{v}]$ are such that $P_N=Q_N$ for all sufficiently large $N$, then $P=Q$.
\end{theorem}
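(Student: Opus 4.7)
The second statement follows from the first applied to $P-Q$, so I focus on: if $P \neq 0$ in $\C[u,u^{-1};\mx{v}]$, then $P_N \not\equiv 0$ on $\U_N$ for all sufficiently large $N$. My plan is to restrict $P_N$ to the diagonal subgroup $T(N) \subset \U_N$; since $P_N$ is equivariant, by the argument in the proof of Theorem \ref{LaurentDense.prop } it is determined by its values on $T(N)$. Expand $P = \sum_{k=-K}^{K} u^k Q_k(v_{\pm 1},\ldots,v_{\pm M})$ with some $Q_k \neq 0$, where $M$ is chosen to contain every variable occurring in $P$. For $U = \mathrm{diag}(\lambda_1,\ldots,\lambda_N)$, the matrix $P_N(U)$ is diagonal with $(1,1)$-entry
\[
g_N(\lambda_1,\ldots,\lambda_N) := \sum_{k=-K}^{K} \lambda_1^k \, Q_k\!\left(\tfrac{p_1}{N},\tfrac{p_{-1}}{N},\ldots,\tfrac{p_M}{N},\tfrac{p_{-M}}{N}\right), \qquad p_m := \sum_{i=1}^N \lambda_i^m.
\]
If $P_N$ vanishes on $\U_N$, then $g_N$ vanishes on the torus $\U^N$, which (by uniqueness of Fourier series on $\U^N$) forces $g_N = 0$ in the Laurent polynomial ring $\C[\lambda_1^{\pm 1},\ldots,\lambda_N^{\pm 1}]$.

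The heart of the proof is to show that, for $N \geq 2M+1$, the substitution $u \mapsto \lambda_1$, $v_m \mapsto p_m/N$ ($|m| \le M$) defines an injective $\C$-algebra map
\[
\Psi_N \colon \C[u^{\pm 1}; v_{\pm 1},\ldots,v_{\pm M}] \hookrightarrow \C[\lambda_1^{\pm 1},\ldots,\lambda_N^{\pm 1}].
\]
Granting this, $g_N = \Psi_N(P) \neq 0$, contradicting the vanishing assumption and proving the theorem. Injectivity of $\Psi_N$ is equivalent to the algebraic independence over $\C$ of the $2M+1$ elements $\lambda_1, p_{\pm 1},\ldots,p_{\pm M}$ inside $\C[\lambda_1^{\pm 1},\ldots,\lambda_N^{\pm 1}]$, which I would prove via the Jacobian criterion: exhibit a point $\Lambda \in (\C^\ast)^N$ at which the differential of $\Phi\colon\Lambda \mapsto (\lambda_1, p_1, p_{-1},\ldots,p_M, p_{-M})$ has full rank $2M+1$. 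Once this is done, the rank theorem makes $\Phi$ an open map near $\Lambda$, so the image of $\Phi$ contains a nonempty open subset of $\C^{2M+1}$, on which no nonzero polynomial can vanish.

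The main obstacle is this Jacobian computation. The first row of the $(2M+1)\times N$ matrix $d\Phi$ is the standard basis vector $e_1$, so expanding the $(2M+1)\times(2M+1)$ minor built from columns $1,\ldots,2M+1$ along this row leaves a $2M \times 2M$ determinant with $(m,j)$-entry $m\lambda_j^{m-1}$, for $m \in \{\pm 1,\ldots,\pm M\}$ and $j \in \{2,\ldots,2M+1\}$. Factoring $m$ out of each row and $\lambda_j^{-M-1}$ out of each column reduces this to the ``gapped'' Vandermonde determinant $\det(\lambda_j^{\nu})$ with $\nu$ ranging over $\{0,1,\ldots,M-1,M+1,\ldots,2M\}$; by the bialternant (Jacobi--Trudi) formula this equals, up to sign, $e_M(\lambda_2,\ldots,\lambda_{2M+1}) \cdot \prod_{2 \le i < j \le 2M+1}(\lambda_j - \lambda_i)$, which is a nonzero polynomial. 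Hence $d\Phi$ has maximal rank for generic $\Lambda$, yielding the desired algebraic independence.
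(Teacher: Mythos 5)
Your proof is correct, but it takes a somewhat different route from the paper's. Both arguments restrict to diagonal matrices and ultimately rest on algebraic independence of power sums, but they diverge in how the untraced powers of $u$ are handled. The paper first proves independence of the power sums $p_{\pm 1},\ldots,p_{\pm n}$ alone (Lemma \ref{algIndep.lemma}, via elementary symmetric polynomials and the identity $e_j(\lambda^{-1})=e_{N-j}(\lambda)/e_N(\lambda)$), deduces the scalar case (Lemma \ref{scalarNonvanish.lem}), and then treats the matrix part separately: it multiplies $P_N(U)$ by $U^k$ to clear negative powers and evaluates at a $U_0$ whose minimal polynomial has degree $N$ exceeding the top untraced power, so that $I,U_0,\ldots,U_0^\ell$ are linearly independent. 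You instead collapse everything to one scalar statement --- vanishing of the $(1,1)$-entry of $P_N$ on the diagonal torus, which by Fourier uniqueness kills the Laurent polynomial $g_N=\Psi_N(P)$ --- and prove the \emph{joint} algebraic independence of $\lambda_1$ together with $p_{\pm 1},\ldots,p_{\pm M}$ for $N\ge 2M+1$, via the Jacobian/open-mapping argument and the gapped Vandermonde evaluation $\det(\lambda_j^{\nu})=\pm\, e_M\cdot\prod_{i<j}(\lambda_j-\lambda_i)$ (your determinant computation checks out: the exponent set $\{0,\ldots,2M\}\setminus\{M\}$ corresponds to the partition $(1^M)$, whose Schur polynomial is $e_M$; calling this ``Jacobi--Trudi'' is a slight misnomer for the bialternant formula, but the identity you use is right). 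What each approach buys: yours gives a single unified independence lemma, an explicit threshold $N\ge 2M+1$ that depends only on which trace variables occur (not on the $u$-degree), and no need for the minimal-polynomial step; the paper's stays entirely within classical symmetric-function manipulations, avoids the analytic openness/rank-theorem input, and its two-step structure produces the standalone scalar Lemma \ref{scalarNonvanish.lem}, which it reuses in the final argument. One cosmetic remark: you invoke equivariance and Theorem \ref{LaurentDense.prop } to say $P_N$ is determined by its restriction to $T(N)$, but your contradiction only uses the trivial direction (vanishing on $\U_N$ implies vanishing on the torus), so that citation is unnecessary.
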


In order to prove Theorem \ref{prop P-->P_N unique}, the following lemma (from the theory of symmetric functions) is useful.  The corresponding statement for symmetric polynomials (rather than Laurent polynomials) is a standard result.  The Laurent polynomial case must be known, but is well hidden in the literature.

\begin{lemma}
\label{algIndep.lemma}If $N\ge 2n$, then the power sums $p_{k}(\lambda
_{1},\ldots ,\lambda _{N})$ (cf.\ (\ref{eq power sums})) with $0<\left\vert k\right\vert \leq n$ are
algebraically independent elements of the ring of rational function in $N$
variables. 
\end{lemma}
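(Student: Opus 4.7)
The plan is to invoke the standard Jacobian criterion: a family $f_1,\ldots,f_m\in\C(\lambda_1,\ldots,\lambda_N)$ is algebraically independent over $\C$ if and only if the $m\times N$ Jacobian matrix $(\partial f_i/\partial\lambda_j)$ has rank $m$. Since $N\ge 2n$, it suffices to exhibit a single $2n\times 2n$ minor of the Jacobian of the $2n$ functions $\{p_k : 0<|k|\le n\}$ that is a nonzero rational function.

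Using $\partial p_k/\partial\lambda_j = k\lambda_j^{k-1}$, I would consider the minor spanned by the first $2n$ columns. Factoring $k$ out of row $k$ contributes the nonzero overall scalar $\prod_{0<|k|\le n}k = (-1)^n(n!)^2$, and multiplying column $j$ by the unit $\lambda_j^{n+1}$ transforms the minor, up to a nonzero rational factor, into $\det(\lambda_j^{e_k})_{k,j}$, where the shifted exponents $e_k := k+n$ run over $E = \{0,1,\ldots,2n\}\setminus\{n\}$ as $k$ runs over $\{\pm 1,\ldots,\pm n\}$.

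To finish, I would show $\det(\lambda_j^{e_k})$ is nonzero as a polynomial directly from the Leibniz expansion: since the $2n$ exponents in $E$ are pairwise distinct, distinct permutations $\sigma\in S_{2n}$ produce distinct monomials $\prod_j\lambda_j^{e_{\sigma^{-1}(j)}}$, so no cancellation can occur and the determinant is a signed sum of $(2n)!$ distinct monomials, each with coefficient $\pm 1$. (Equivalently, the bialternant formula identifies this with the generalized Vandermonde $\pm s_{(1^n)}\cdot V = \pm e_n\cdot V$, where $V$ is the ordinary Vandermonde determinant, but the explicit product form is not needed here.) Consequently the chosen $2n\times 2n$ minor is a nonzero element of $\C(\lambda_1,\ldots,\lambda_N)$, and the algebraic independence follows. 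The only real care needed is the index bookkeeping—tracking the allowed exponent set $E$ and the scalar factors extracted along the way—there is no genuine conceptual obstacle.
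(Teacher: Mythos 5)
Your proof is correct, and it takes a genuinely different route from the paper's. The paper argues entirely inside symmetric function theory: it passes from the power sums to the elementary symmetric polynomials $e_1,\ldots,e_n$ (and their counterparts in the inverted variables), uses the identity $e_j(\lambda_1^{-1},\ldots,\lambda_N^{-1})=e_{N-j}(\lambda)/e_N(\lambda)$ to turn the negative-index power sums into ratios of ordinary $e_j$'s, and then splits into the cases $N=2n$ and $N>2n$, in each case reducing to the classical algebraic independence of $e_1,\ldots,e_N$ (clearing the $e_N$ from denominators in the second case). You instead invoke the characteristic-zero Jacobian criterion and exhibit a nonzero $2n\times 2n$ minor: after pulling out the nonzero row factors $\prod_{0<|k|\le n}k=(-1)^n(n!)^2$ and rescaling the columns by $\lambda_j^{n+1}$, the minor becomes the generalized Vandermonde determinant $\det(\lambda_j^{e_k})$ with the $2n$ distinct exponents $\{0,\ldots,2n\}\setminus\{n\}$, which is nonzero because the Leibniz expansion produces $(2n)!$ pairwise distinct monomials with coefficients $\pm1$ (note that only the easy direction of the criterion---full rank implies independence---is actually used, and it is available since all the $p_k$ are rational functions over $\C$). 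Your argument is uniform in $N\ge 2n$ with no case distinction and is self-contained modulo the Jacobian criterion, at the price of a little exponent bookkeeping; the paper's argument avoids any differential criterion and stays within standard facts about symmetric polynomials (cited to Sagan), at the price of the $e_{N-j}/e_N$ manipulation and the two-case analysis. Both are complete proofs of the lemma.
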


\begin{proof}
Let $e_{j}$ denote the $j$th elementary symmetric polynomial in $N$ variables;
that is, $e_j$ the sum of all products of exactly $j$ of the $N$ variables. Then the power sums $p_{1},\ldots ,p_{n}$
can be expressed as linear combinations of the functions $e_{1},\ldots
,e_{n}$. Thus, it suffices to prove the independence of the functions $%
e_{j}(\lambda _{1},\ldots ,\lambda _{N})$ and $e_{j}(\lambda
_{1}^{-1},\ldots ,\lambda _{N}^{-1})$ for $1\leq j\leq n$. We may easily
see, however, that%
\begin{equation*}
e_{j}(\lambda _{1}^{-1},\ldots ,\lambda _{N}^{-1})=\frac{e_{N-j}(\lambda
_{1},\ldots ,\lambda _{N})}{e_{N}(\lambda _{1},\ldots ,\lambda _{N})}.
\end{equation*}

In the case $N=2n$, we need to establish the independence of the functions $%
e_{1},\ldots ,e_{N/2}$ and $e_{N/2}/e_{N},\ldots ,e_{N-1}/e_{N}$, which
follows easily from the known independence of $e_{1},\ldots ,e_{n}$;
cf.\ \cite[Theorem 4.3.7]{SaganBook}. In the case $N>2n$, if we had an algebraic relation
among the functions $e_{j}(\lambda _{1},\ldots ,\lambda _{N})$ and $%
e_{j}(\lambda _{1}^{-1},\ldots ,\lambda _{N}^{-1})$ for $1\leq j\leq n$, we
could clear $e_{N}$ from the denominator to obtain an algebraic relation
among the functions $e_{1},\ldots ,e_{n}$, $e_{N-1},\ldots ,e_{N-n}$ and $%
e_{N}$, which is impossible. 
\end{proof}

We now proceed with the scalar version of Theorem \ref{prop P-->P_N unique}.

\begin{lemma}
\label{scalarNonvanish.lem} Let $Q\in\C[\mx{v}]$.  Let $N\ge 2n$.  Then $Q_N$ is not identically zero on $\U_N$.
\end{lemma}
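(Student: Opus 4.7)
The plan is to reduce the lemma directly to the algebraic independence statement of Lemma \ref{algIndep.lemma} via restriction to diagonal unitary matrices. Implicitly, $n$ should be taken to be the largest $|k|$ such that $v_k$ appears in $Q$, so that $Q$ lives in the finitely-generated subring $\C[v_{\pm 1}, \ldots, v_{\pm n}]$.

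First I would observe that for any $N$-tuple $(\zeta_1,\ldots,\zeta_N) \in \U^N$, the diagonal matrix $U = \mathrm{diag}(\zeta_1,\ldots,\zeta_N)$ lies in $\U_N$, and satisfies $\tr(U^k) = \frac{1}{N} p_k(\zeta_1,\ldots,\zeta_N)$ for every $k\in\Z$. Consequently, $Q_N$ restricted to diagonal unitary matrices is exactly the rational function
\[
R(\lambda_1,\ldots,\lambda_N) := Q\bigl(N^{-1}p_1, N^{-1}p_{-1}, \ldots, N^{-1}p_n, N^{-1}p_{-n}\bigr)
\]
evaluated at $(\zeta_1,\ldots,\zeta_N)$. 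It therefore suffices to exhibit a point of $\U^N$ at which $R$ does not vanish.

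Next, because $N \geq 2n$, Lemma \ref{algIndep.lemma} asserts that the collection $\{p_k : 0<|k|\leq n\}$ is algebraically independent in the field $\C(\lambda_1,\ldots,\lambda_N)$ of rational functions. Rescaling by $1/N$ does not destroy algebraic independence, so the hypothesis $Q \neq 0$ forces $R$ to be a nonzero element of $\C(\lambda_1,\ldots,\lambda_N)$. Since the negative power sums $p_{-k}$ have denominator a power of $e_N = \lambda_1\cdots\lambda_N$, after clearing denominators $R$ becomes a nonzero polynomial on $(\C^*)^N$.

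Finally, the unit circle $\U$ is Zariski-dense in $\C^\ast$ (being an infinite subset of the one-dimensional algebraic variety $\C^\ast$), so $\U^N$ is Zariski-dense in $(\C^\ast)^N$. A nonzero polynomial on $(\C^\ast)^N$ cannot vanish on a Zariski-dense set, hence there exists $(\zeta_1,\ldots,\zeta_N) \in \U^N$ with $R(\zeta_1,\ldots,\zeta_N) \neq 0$. The corresponding diagonal matrix $U = \mathrm{diag}(\zeta_1,\ldots,\zeta_N) \in \U_N$ then satisfies $Q_N(U) \neq 0$, as desired. The main subtlety I anticipate is the bookkeeping between polynomial identities in the $v_k$ variables and rational identities in the $\lambda_j$ variables, but this is handled cleanly by the algebraic-independence hypothesis; no analytic input is required beyond the density of $\U$ in $\C^\ast$ in the Zariski topology.
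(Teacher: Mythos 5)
Your proof is correct, and it rests on the same two pillars as the paper's: restriction to diagonal matrices, where the trace variables become (normalized) power sums, and the algebraic independence of $p_{\pm1},\ldots,p_{\pm n}$ for $N\ge 2n$ from Lemma \ref{algIndep.lemma}. The difference is in how you get from ``nonvanishing as a rational function of $\lambda_1,\ldots,\lambda_N$'' back to a point of $\U_N$. The paper goes the other way around: it first notes that $Q_N$ is holomorphic on $\GL_N$, so by uniqueness of analytic continuation $Q_N\equiv 0$ on $\U_N$ would force $Q_N\equiv 0$ on $\GL_N$, and it then exhibits a diagonal matrix in $\GL_N$ (entries in $\C^\ast$, not required to be unimodular) where $Q_N\ne 0$. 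You instead stay inside $\U_N$ throughout: after clearing the powers of $e_N=\lambda_1\cdots\lambda_N$ from the denominators you invoke the polynomial identity theorem, i.e.\ that a nonzero polynomial cannot vanish on $\U^N$ since $\U$ is infinite (this ``product of infinite sets'' step deserves the one-line induction, but it is standard). Your route is slightly more elementary --- no complex-analytic input --- and you are also more careful than the paper about the harmless $1/N$ normalization in $\tr(U^k)=\tfrac1N p_k$; the paper's route has the mild advantage of recording along the way that $Q_N$ is in fact nonvanishing somewhere on every copy of $\GL_N$ with $N\ge 2n$, which is the form in which the fact gets reused (via holomorphy) elsewhere in Section \ref{section P-->P_N unique}. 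Either argument proves the lemma.
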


\begin{proof} Since $Q_N$ is a trace  polynomial, it also defines a holomorphic function on $\GL_N$.  By uniqueness of analytic continuation, if $Q_N\equiv 0$ on $\U_N$, then $Q_N\equiv 0$ on $\GL_N$.  To prove the lemma, it therefore suffices to find $A\in \GL_N$ with $Q_{N}(A)\neq 0$.  Actually, we will find a diagonal matrix $A\in \GL_N$ with $Q_{N}(A)\neq 0$.

For clarity, we write out the polynomial $Q$ in terms of its coefficients:
\[ Q(v_1,v_{-1},\ldots,v_n,v_{-n}) = \sum_{i_1,\ldots, i_n\atop j_1,\ldots,j_n} a_{i_1,\ldots,i_n}^{j_1,\ldots,j_n} \cdot v_1^{i_1} v_{-1}^{j_1}\cdots v_n^{i_n} v_{-n}^{j_n}. \]
Consider any diagonal matrix $\mathrm{diag}(\lambda_1,\ldots,\lambda_N)$ in $\GL_N$; for convenience, denote $\lambda = (\lambda_1,\ldots,\lambda_N)$.  Then $\tr(A^k) = p_k(\lambda)$ (the power sum of (\ref{eq power sums})), and so
\begin{equation} \label{eq independence 0} Q_N(\mathrm{diag}(\lambda)) = \sum_{i_1,\ldots, i_n\atop j_1,\ldots,j_n} a_{i_1,\ldots,i_n}^{j_1,\ldots,j_n} \cdot p_1(\lambda)^{i_1}p_{-1}(\lambda)^{j_1}\cdots p_n(\lambda)^{i_n}p_{-n}(\lambda)^{j_n}. \end{equation}
By Lemma \ref{algIndep.lemma}, the power sums $p_1(\lambda),p_{-1}(\lambda),\ldots,p_n(\lambda),p_{-n}(\lambda)$ are algebraically independent since $\lambda=(\lambda_1,\ldots,\lambda_N)$ and $N\ge 2n$.  Since $Q\ne 0$, some of the coefficients $a_{i_1,\ldots,i_n}^{j_1,\ldots,j_n}$ in (\ref{eq independence 0}) are $\ne 0$.  It follows that $Q_N(\mathrm{diag}(\lambda))$ is not identically $0$, as desired.  \end{proof}

This finally brings us to Theorem \ref{prop P-->P_N unique}.

\begin{proof}[Proof of Theorem \ref{prop P-->P_N unique}] Let $P(u;\mx{v})= \sum_\ell Q_\ell(\mx{v})u^\ell$ with $Q_\ell\in\C[\mx{v}]$; then at least one $Q_\ell\ne 0$. Let us multiply $P_{N}(U)$ by $U^{k}$ for some large $k$, so that all the
untraced powers of $U$ in $U^{k}P_{N}(U)$ are non-negative. Let $\ell$ be the
highest untraced power of $U$ occurring in the expression for $U^{k}P_{N}(U).
$ Choose $N$ large enough so that $N>\ell$ and so that (Lemma \ref{scalarNonvanish.lem})
the coefficient $Q_{\ell}$ of $U^{\ell}$ in $P_{N}(U)$ is not identically zero. Then $Q_\ell$ is nonzero on a nonempty open subset of $\U_N$.
This set contains a matrix $U_{0}$ whose minimal polynomial has degree $N>\ell$.
When we evaluate $P_{N}(U_{0})$, the result will be a linear combination of
powers of $U_{0}$ with the coefficient of $U_{0}^{\ell}$ being nonzero. Since
the minimal polynomial of $U_{0}$ has degree $N>\ell$, the value of $P_{N}(U_{0})$ is not zero.
\end{proof}

\section{The Laplacian and Heat Operator on Trace Polynomials\label{section computing the heat operator}}

This section is devoted to a complete description of the action of the Laplacian $\Delta_{\U_N}$ on trace polynomial functions, and its corresponding lift to $\mathcal{D}_N$ on the space $\C[u,u^{-1};\mx{v}]$; cf.\ Theorem \ref{t.intertwine.new}.  We begin by proving ``magic formulas'' expressing certain quadratic matrix sums in simple forms.  We use these to give derivative formulas that allow for the routine computation of $\Delta_{\U_N} P_N$ for any $P\in\C[u,u^{-1};\mx{v}]$, and we then use these to prove the intertwining formula of Theorem \ref{t.intertwine.new}.  We conclude by proving a more general intertwining formula (Theorem \ref{t.inter2}) for the action of $A_{s,t}^N$ on trace polynomial functions over $\GL_N$; in this latter case, we deal more generally with trace  polynomials in $Z$ and $Z^\ast$ as this will be of use in Section \ref{Section Limit Theorems}.

\subsection{Magic Formulas\label{section Magic Formulas}}

We define an inner-product on $\M_N$ by
\begin{equation}
\langle X,Y\rangle=N\mathrm{Tr}\,(Y^{\ast }X)=N^{2}\tr(Y^{\ast }X).  \label{eq scaled inner-product}
\end{equation}
Restricted to the Lie algebra $\u_N$ (consisting of all skew-Hermitian matrices in $\M_N$), $\langle\cdot,\cdot\rangle$ is real-valued; it is the polarized inner product corresponding to the norm $\Vert\cdot\Vert _{\mathfrak{u}_N}$ of (\ref{scalingMetric}).  (This is not to be confused with the polarized inner-product corresponding to the norm $\|\cdot\|_{\M_N}$ of (\ref{eq M_N metric}).)

The main result of this section, which underlies all computations throughout
this paper, is the following list of \textquotedblleft magic
formulas\textquotedblright .

\begin{proposition}
\label{p.magica2} Let $\beta_N$ be any
orthonormal basis for $\u_N$ with respect to the
inner-product in (\ref{eq scaled inner-product}). Then we have the following
\textquotedblleft magic\textquotedblright\ formulas: for any $A,B\in \M_N$,
\begin{align}
\sum_{X\in \beta _{N}}X^{2}& =-I_{N}  \label{e.m1a}, \\
\sum_{X\in \beta _{N}}XAX& =-\tr(A)I_{N}  \label{e.m2a}, \\
\sum_{X\in \beta _{N}}\tr(XA)X& =-\frac{1}{N^{2}}A  \label{e.m3a}, \\
\sum_{X\in \beta _{N}}\tr(XA)\tr(XB)& =-\frac{1}{%
N^{2}}\tr(AB).  \label{e.m4a}
\end{align}
\end{proposition}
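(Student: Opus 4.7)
My plan is to derive all four ``magic formulas'' from a single invocation of the orthonormal-basis completeness relation on $\u_N$, in the logical order $(\ref{e.m3a}) \Rightarrow (\ref{e.m4a})$, followed by $(\ref{e.m3a}) \Rightarrow (\ref{e.m2a}) \Rightarrow (\ref{e.m1a})$.

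I would first establish $(\ref{e.m3a})$.  For $Y \in \u_N$, orthonormal-basis expansion gives $Y = \sum_{X \in \beta_N} \langle Y, X\rangle X$; using $\langle Y, X\rangle = N^2 \tr(X^* Y)$ together with $X^* = -X$ for $X \in \u_N$, this simplifies to $\sum_X \tr(YX)\,X = -Y/N^2$.  Both sides are $\C$-linear in the matrix variable, and $\M_N = \u_N \oplus i\u_N$ as a real vector space, so the identity extends to $\sum_X \tr(XA)\,X = -A/N^2$ for any $A \in \M_N$, which is $(\ref{e.m3a})$.  Multiplying this equation on the left by an arbitrary $B \in \M_N$ and applying $\tr$ yields $\sum_X \tr(XA)\tr(BX) = -\tr(BA)/N^2 = -\tr(AB)/N^2$, i.e.\ $(\ref{e.m4a})$.

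For $(\ref{e.m2a})$, I would extract from $(\ref{e.m3a})$ the entrywise ``propagator'' formula by evaluating the $(i,l)$ matrix entry of both sides with $A = E_{qp}$, which gives
\[
\sum_{X \in \beta_N} X_{il}\,X_{pq} \;=\; -\frac{1}{N}\,\delta_{iq}\,\delta_{lp}.
\]
Substituting this into
\[
\Bigl(\sum_{X \in \beta_N} X A X\Bigr)_{ik} = \sum_{l,m} A_{lm}\sum_{X} X_{il}\,X_{mk} = -\frac{1}{N}\sum_{l,m} A_{lm}\,\delta_{ik}\,\delta_{lm} = -\delta_{ik}\,\tr(A)
\]
gives $(\ref{e.m2a})$, and $(\ref{e.m1a})$ is then the special case $A = I_N$.

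Conceptually, the entire proposition is the basis-free identity $\sum_X X \otimes X = -\tfrac{1}{N} P \in \M_N \otimes \M_N$ (where $P$ is the flip tensor) contracted in four different ways.  There is essentially no obstacle: the only care required is bookkeeping the $N$- and $N^2$-factors built into the scaled inner product $\langle\cdot,\cdot\rangle$ and the normalized trace $\tr = \frac{1}{N}\Ur$; no specific choice of basis, and no representation-theoretic input, is needed.
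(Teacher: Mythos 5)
Your proof is correct, but your route to (\ref{e.m2a}) differs from the paper's. You derive everything from a single use of the completeness relation: orthonormal expansion on $\u_N$, extended $\C$-linearly to $\M_N$, gives (\ref{e.m3a}); then (\ref{e.m4a}) by multiplying by $B$ and tracing (exactly as in the paper); and then (\ref{e.m2a}) by contracting the entrywise ``propagator'' identity $\sum_{X}X_{il}X_{pq}=-\tfrac1N\delta_{iq}\delta_{lp}$, with (\ref{e.m1a}) as the case $A=I_N$. The paper instead proves (\ref{e.m2a}) independently: it observes that $\Phi(A)=\sum_{X\in\tilde\beta_N}X^{\ast}AX$ is independent of the choice of complex orthonormal basis $\tilde\beta_N$ of $\M_N$, computes $\Phi(A)=\tr(A)I_N$ in the convenient basis $\{E_{jk}/\sqrt N\}$, and then specializes to $\beta_N$ using $X^{\ast}=-X$; its proof of (\ref{e.m3a}) is then the same orthonormal-expansion argument you use, and (\ref{e.m1a}), (\ref{e.m4a}) follow as in your write-up. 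The trade-off: your argument is logically leaner (one completeness relation plus index bookkeeping, no auxiliary basis of $\M_N$ and no basis-independence lemma), while the paper's $\Phi$-map argument isolates the conceptual fact that $A\mapsto\sum_X X^{\ast}AX$ is the normalized-trace projection and lets one compute in the matrix-unit basis without tracking the $1/N$ factors through index contractions. Both are complete; your $\C$-linearity extension from $\u_N$ to $\M_N=\u_N\oplus i\u_N$ and the $N$-power bookkeeping are handled correctly.
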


\begin{remark} \begin{itemize}
\item[(1)] Eq.\ (\ref{e.m1a}) is the $A=I_{N}$ special-case of (\ref{e.m2a});
similarly, (\ref{e.m4a}) follows from (\ref{e.m3a}) by
multiplying by $B$ and taking $\tr$. We separate them out as
distinct formulas for convenience in repeated use below.
\item[(2)] These and related ``magic'' formulas appeared in \cite[Lemma 4.1]{Sengupta2008}.
\end{itemize}
\end{remark}

\begin{proof}
If $\beta_N$ is a basis for the real vector space $\u_N$, it is also
a basis for the complex vector space $\M_{N}=\u_N\oplus i\u_N$.
Furthermore, if $\beta_N$ is (real) orthonormal in $\u_N$ with respect to
the (restricted real) inner product in (\ref{eq scaled inner-product}), then $\beta_N$ is (complex) orthonormal in $\M_N$ with respect to the (complex) inner-product in (\ref{eq scaled inner-product}).

Thus, let $\tilde{\beta}_N$ be any orthonormal basis for $\M_N$ with respect to (\ref{eq scaled inner-product}), and consider the linear map $\Phi\colon \M_N\rightarrow \M_N$ given by 
\begin{equation*}
\Phi (A)=\sum_{X\in \tilde{\beta}_N}X^\ast AX.
\end{equation*}
A routine calculation shows that $\Phi $ is independent of the choice of
orthonormal basis. We compute $\Phi $ by using the basis 
\begin{equation}
\tilde{\beta}_N\equiv \left\{ \frac{1}{\sqrt{N}}E_{jk}\right\} _{j,k=1}^{N} \label{mNbasis}
\end{equation}%
where $E_{jk}$ is the $N\times N$ matrix with a 1 in the $(j,k)$-entry and
zeros elsewhere.  Writing things out in terms of indices shows that, for any $A\in\M_N$, we have%
\begin{equation*}
N\cdot [\Phi(A)]_{\ell m} = \left[ \sum_{j,k=1}^{N}E_{kj}AE_{jk}\right] _{\ell m}=\sum_{j,k,n,o=1}^{N}\delta
_{k\ell}\delta _{jn}A_{no}\delta _{jo}\delta _{km} = \sum_{o}A_{oo}\delta _{\ell m},
\end{equation*}%
which says that%
\begin{equation*}
\Phi (A)=\frac{1}{N}\mathrm{Tr}(A)I_N=\tr(A)I_N.
\end{equation*}%
The basis-independence of $\Phi$ allows us to replace (\ref{mNbasis}) by any real 
orthonormal basis $\beta_N$ of $\u_N$ (which, as noted above, is also a complex orthonormal basis for $\M_N$).
The elements $X\in\beta_N$ are skew-Hermitian, and thus we obtain
\begin{equation*}
\sum_{X\in\beta_N}XAX=-\Phi (A)=-\tr(A)I_N,
\end{equation*}%
which is (\ref{e.m2a}).

Meanwhile, if we multiply both sides of (\ref{e.m3a}) by $-N^{2}$ and recall
that each $X$ is skew, we see that (\ref{e.m3a}) is equivalent to
the assertion that%
\begin{equation*}
A = \sum_{X\in\beta_N}N^{2}\tr(X^\ast A)X= \sum_{X\in\beta_N} \langle A,X\rangle X.
\end{equation*}%
But this identity is just the expansion of $A$ in the orthonormal basis $\beta_N$ for $\M_N$. Finally, as we have already remarked, (\ref{e.m1a}) and (\ref{e.m4a}) follow from (\ref{e.m2a}) and (\ref{e.m3a}), respectively.
\end{proof}

\subsection{Derivative Formulas\label{section Derivative Formulas}}

%In this section we will make frequent use of the notation in Definition \ref{def P}.  %We also introduce the following notation.% i.e. $\P=\mathbb{C}[u;v_{1},v_{2},\ldots]$, $\deg\left(
%u^{k_{0}}v_{1}^{k_{1}}\dots v_{n}^{k_{n}}\right)  =k_{0}+\sum_{j=1}^{n}jk_{j}$, $\P_{n}=\{P\in\P\colon\deg P\leq n\}$, and$P_{N}(A)=P(A;\mathrm{tr}(A),\mathrm{tr}(A^{2}),\ldots)I_N$ for all $A\in M_{N}\left(  \mathbb{C}\right)$. Sometimes we will write $\mathbf{v}$ for $\left(  v_{1},v_{2},v_{3},\dots\right)  $ and hence write $P\left(u;\mathbf{v}\right)  $ for an element of $\P$.

\begin{theorem}\label{t.derivatives} Let $m,n\in\mathbb{N}$. Let $\beta_N$ denote an orthonormal basis for $\u_N$, and let $X\in\beta_N$.  The following hold true:

\begin{align} \label{e.b0} \partial_{X}U^n  &=\sum_{j=1}^{n}U^{j}XU^{n-j}, \qquad n\ge 0 \\
\label{e.b0-}\del_X U^n &= -\sum_{j=n+1}^0 U^j X U^{n-j}, \qquad n<0
\end{align}

\begin{equation}
\partial_{X}\tr(U^n) =n\cdot\operatorname{tr}\left(  XU^{n}\right), \qquad n\in\Z \label{e.b0a}%
\end{equation}

\begin{align} \label{e.b1}
\Delta_{\U_N}U^n  &=-nU^n-2\1_{n\ge 2}\sum_{j=1}^{n-1} j U^j\tr(U^{n-j}), \qquad n\ge 0  \\
\label{e.b1-} \Delta_{\U_N}U^n &= nU^n+2\1_{n\le -2}\sum_{j=n+1}^{-1} jU^j\tr(U^{n-j}), \qquad n<0
\end{align}

\begin{align}  \label{e.b2}
\Delta_{\U_N}\tr(U^n)  &=-n\tr(U^n)-2\1_{n\ge 2}\sum_{j=1}^{n-1} j \tr(U^j)\tr(U^{n-j}), \qquad n\ge 0  \\
\label{e.b2-} \Delta_{\U_N}\tr(U^n) &= n\tr(U^n)+2\1_{n\le -2}\sum_{j=n+1}^{-1} j\tr(U^j)\tr(U^{n-j}), \qquad n<0
\end{align}
\begin{align} \label{e.b0b}
\sum_{X\in\beta_N}\partial_{X}U^m\cdot\partial_{X}\tr(U^n) &=-\frac{mn}{N^{2}}U^{n+m}, \qquad m,n\in\Z  \\
 \label{e.b0c} \sum_{X\in\beta_N}\partial_{X}\tr(U^m)\cdot\partial_{X}\tr(U^n)  &=-\frac{mn}{N^{2}}\tr(U^{n+m}), \qquad m,n\in\Z.
\end{align}
These formulas are valid for all matrices $U\in \M_N$; we will normally use them for $U\in \U_N$.
\end{theorem}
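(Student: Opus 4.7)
The plan is to derive the first-order formulas (\ref{e.b0})--(\ref{e.b0a}) directly from the definition of $\del_X$, and then bootstrap up to the Laplacian formulas (\ref{e.b1})--(\ref{e.b2-}) and the cross-derivative formulas (\ref{e.b0b})--(\ref{e.b0c}) by iterating the product rule and absorbing the sums over $\beta_N$ via the ``magic formulas'' of Proposition \ref{p.magica2}.

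First, (\ref{e.b0}) follows immediately from the Leibniz rule applied to $(Ue^{tX})^n = Ue^{tX}\cdot Ue^{tX}\cdots Ue^{tX}$: differentiating each factor in turn at $t=0$ and reindexing gives $\del_X U^n=\sum_{j=1}^n U^j X U^{n-j}$. For $n<0$, I differentiate the identity $U^n U^{-n}=I_N$ to obtain $\del_X U^n = -U^n(\del_X U^{-n})U^n$, substitute the already-proven formula for $\del_X U^{-n}$, and change variables $k=n+j$ to reach (\ref{e.b0-}). Formula (\ref{e.b0a}) is then immediate: apply $\tr$ to (\ref{e.b0}) or (\ref{e.b0-}) and use cyclicity to collapse each of the $|n|$ summands to $\tr(X U^n)$.

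Next, for the Laplacian formulas I use $\Delta_{\U_N} = \sum_{X\in\beta_N} \del_X^2$ and apply $\del_X$ a second time to (\ref{e.b0}). By the Leibniz rule (with $X$ treated as a constant),
\[
\del_X(U^j X U^{n-j}) = (\del_X U^j)\,X\,U^{n-j} + U^j X\,(\del_X U^{n-j}),
\]
and substituting (\ref{e.b0}) into each piece produces two triple sums containing blocks of the form $U^a X U^b X U^c$. Summing over $X\in\beta_N$, the magic formula (\ref{e.m2a}), $\sum_{X} X A X = -\tr(A) I_N$, replaces each such block by $-\tr(U^b)U^{a+c}$. After a careful reindexing (the key combinatorial point: in the first family, setting $k=j-i$ makes the summand depend only on $k$, so the outer sum over $j$ collapses to the multiplicity $n-k$; the second family is symmetric), the two triple sums coalesce into a single sum $-2\sum_{k=1}^{n-1}(n-k)\tr(U^k)U^{n-k}$ plus the special $k=0$ term $-n\,U^n$ coming from $\tr(I_N)=1$. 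Reindexing $j=n-k$ yields (\ref{e.b1}); the case $n<0$ is handled identically starting from (\ref{e.b0-}), and (\ref{e.b2})--(\ref{e.b2-}) follow by applying $\tr$ (which commutes with $\Delta_{\U_N}$ since $\Delta_{\U_N}$ is a sum of second derivatives in directions $X$ and $\tr$ is linear and position-independent).

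Finally, for (\ref{e.b0b})--(\ref{e.b0c}) I combine (\ref{e.b0})/(\ref{e.b0-}) with (\ref{e.b0a}) and pull the sum over $\beta_N$ inside. For (\ref{e.b0b}) with $m\ge 0$,
\[
\sum_{X\in\beta_N}\del_X U^m\,\del_X\tr(U^n) = n\sum_{j=1}^m U^j\!\left(\sum_{X\in\beta_N} \tr(X U^n)X\right)\!U^{m-j},
\]
and the magic formula (\ref{e.m3a}) replaces the parenthesized sum by $-\frac{1}{N^2}U^n$, giving $m$ identical terms of $-\frac{1}{N^2}U^{m+n}$; the $m<0$ case is analogous. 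For (\ref{e.b0c}), (\ref{e.b0a}) produces a factor $mn$ and (\ref{e.m4a}) yields the claimed $-\frac{mn}{N^2}\tr(U^{m+n})$.

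The main obstacle is bookkeeping in the reindexing that collapses the double sum from $\del_X^2 U^n$ to the single sum in (\ref{e.b1}); the structure of the calculation is forced by the magic formulas, but one must keep track of the boundary case $k=0$ (which produces the linear term $-nU^n$) and verify that the two symmetric triple sums combine to give exactly the factor of $2$. Once this is done for $n\ge 0$, the cases $n<0$ require no new ideas, and all remaining formulas follow by either differentiating or tracing what is already established.
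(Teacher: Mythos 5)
Your proposal is correct and follows essentially the same route as the paper: first-order formulas from the definition of $\del_X$, then the magic formulas (\ref{e.m1a})--(\ref{e.m4a}) to collapse the sums over $\beta_N$, with the same index bookkeeping producing the $-nU^n$ term and the factor of $2$ in (\ref{e.b1}). The only cosmetic deviations are that you obtain (\ref{e.b0-}) by differentiating $U^nU^{-n}=I_N$ rather than differentiating $(e^{-tX}U^{-1})^{-n}$ directly, and you get (\ref{e.b0c}) from (\ref{e.m4a}) rather than by tracing (\ref{e.b0b}); neither changes the substance of the argument.
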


\begin{proof}
By the product rule, for $n\ge 0$
\[
\partial_{X}U^n =\left.\frac{d}{dt}\right|_{t=0}\left(
Ue^{tX}\right)^{n}=\sum_{j=1}^{n}U^{j}XU^{n-j}
\]
which proves (\ref{e.b0}).  Similarly, for $m>0$
\[ \del_X U^{-m} = \left.\frac{d}{dt}\right|_{t=0}\left(
e^{-tX}U^{-1}\right)^{m} = -\sum_{k=0}^{m-1} U^{-k}XU^{-(m-k)} \]
and letting $n=-m$ and reindexing $j=-k$ proves (\ref{e.b0-}). Taking traces of (\ref{e.b0}) and (\ref{e.b0-}) then gives
(\ref{e.b0a}) after using $\tr(AB)=\tr(BA)$ repeatedly. Making use of magic
formulas (\ref{e.m1a}) and (\ref{e.m2a}), we then have, for $n\ge 0$,
\begin{align*}
\Delta_{\U_N}U^n 
&=2\1_{n\ge 2}\sum_{1\le j<k\le n}\sum_{X\in\beta_N}U\dots\overset{j}{\overbrace{UX}}\dots\overset
{k}{\overbrace{UX}}\dots U+\sum_{j=1}^n \sum_{X\in\beta_N}U\dots\overset{j}{\overbrace
{UX^{2}}}\dots\dots U\\
&  =-2\1_{n\ge 2}\sum_{1\le j<k\le n}U^{n-(k-j)}\tr(U^{k-j})-nU^{n}.
\end{align*}
A little index gymnastics then reduces this last expression to the result in (\ref{e.b1}).  An entirely analogous computation proves (\ref{e.b1-}). Equations (\ref{e.b2}) and (\ref{e.b2-}) result from taking traces of (\ref{e.b1}) and (\ref{e.b1-}), since the linear functional $\tr$ commutes with $\Delta_{\U_N}$.  Finally, from (\ref{e.b0}) and (\ref{e.b0a}), when $m\ge 0$,
\begin{align*}
\sum_{X\in\beta_N}(\partial_{X}U^{m})  \tr(\partial_{X}U^n)   &  = n\sum_{X\in\beta_N}\sum_{j=1}^{m}U^{j}XU^{m-j}\tr(XU^{n}) \\
&  = n\sum_{X\in\beta_N} \sum_{j=1}^{m}U^{j}\tr(XU^{n})XU^{m-j} \\
&=-\frac{n}{N^{2}} \sum_{j=1}^{m}U^{j}U^{n}U^{m-j}
  =-\frac{mn}{N^{2}}U^{m+n}.
\end{align*}
An analogous computation for $m<0$ yields the same result,  proving (\ref{e.b0b}); and taking the trace of this formula gives (\ref{e.b0c}).
%If, on the other hand, $m<0$, then from (\ref{e.b0-})
%\begin{align*}
%\sum_{X\in\beta_N}(\partial_{X}U^{m})  \tr(\partial_{X}U^n)   &  =-n\sum_{X\in\beta_N}\sum_{j=m+1}^{0}U^{j}XU^{m-j}\tr(XU^{n}) \\
%&  =-n\sum_{X\in\beta_N} \sum_{j=m+1}^{0}U^{j}\tr(XU^{n})XU^{m-j} \\
%&=\frac{n}{N^{2}} \sum_{j=m+1}^{0}U^{j}U^{n}U^{m-j}
%  =\frac{(-m)n}{N^{2}}U^{m+n}.
%\end{align*}
%This proves (\ref{e.b0b}), and taking the trace of this formula gives (\ref{e.b0c}).
\end{proof}

%\iffalse Omitted details in the proof of Theorem \ref{t.derivatives}. Here are
%the omitted details for the proof of Eq. (\ref{e.b1}).%
%
%\begin{align*}
%\Delta_{\U_N}g^{n}  &  =\sum_{X}\sum_{i,j}g\dots\overset{i}{\overbrace{Xg}}%
%\dots\overset{j}{\overbrace{Xg}}\dots g\\
%&  =\sum_{i\neq j}\sum_{X}g\dots\overset{i}{\overbrace{Xg}}\dots\overset
%{j}{\overbrace{Xg}}\dots g+\sum_{i}\sum_{X}g\dots\overset{i}{\overbrace
%{X^{2}g}}\dots\dots g\\
%&  =-\sum_{i\neq j}g^{n-\left(  \left\vert j-i\right\vert \right)
%}\operatorname{tr}\left(  g^{\left\vert j-i\right\vert }\right)  -ng^{n}\\
%&  =-ng^{n}+2\sum_{i<j}g^{n-\left(  j-i\right)  }\operatorname{tr}\left(
%g^{j-i}\right)  \text{ [let }k=j-i\text{ or }j=i+k\text{ ]}\\
%&  =-ng^{n}-2\sum_{i=1}^{n-1}\sum_{k=1}^{n-i}g^{n-k}\operatorname{tr}\left(  g^{k}\right) \\
%&  =-ng^{n}-2\sum_{k=1}^{n-1}\sum_{i=1}^{n-k}g^{n-k}\operatorname{tr}\left(  g^{k}\right) \\
%&  =-ng^{n}-2\sum_{k=1}^{n-1}\left(  n-k\right)  g^{n-k}\operatorname{tr}\left(  g^{k}\right)  \text{ [let }l=n-k\text{ ]}\\
%&  =-ng^{n}-2\sum_{l=1}^{n-1}lg^{l}\operatorname{tr}\left(  g^{n-l}%
%\right)
%\end{align*}
%wherein we have used in going from line 4 to 5 the relations
%\[
%1\leq i\leq n-1\text{ and }1\leq k\leq n-i\iff1\leq k\leq n-1\text{ and }1\leq
%i\leq n-k
%\]
%which was used to interchange the order of the sums.
%
%\fi

\begin{remark} \label{remark eigen scaling} Eq.\ (\ref{e.b1}) shows that the identity function $\mathrm{id}(U)=U$ on $\U_N$ satisfies $\Delta_{\U_N}\mathrm{id} = -\mathrm{id}$.  It follows, for example, that all of the coordinate functions $U\mapsto U_{jk}$ are eigenfunctions of $\Delta_{\U_N}$ with eigenvalue $-1$, {\em independent of $N$}.  This independence suggests that we are, in fact, using the ``correct" scaling of the metric on $\U_N$, which in turn determines the scaling of $\Delta_{\U_N}$. If we used the unscaled Hilbert-Schmidt norm on $\u_N$, the function $\mathrm{id}$ would be an eigenvector for the Laplacian with eigenvalue $-N$; that scaling would not bode well for an infinite dimensional limit of any quantities involving the Laplacian.
\end{remark}

To illustrate how Theorem \ref{t.derivatives} may be used, we proceed to determine the action of the heat operator $e^{\frac{t}{2}\Delta_{\U_N}}$ on the polynomial $P_N(U) = U^2$.

\begin{example} \label{example heat U^2} Eq.\ (\ref{e.b1}) shows that $\Delta_{\U_N}U^2 = -2U^2-2U\tr U$.  In order to calculate $\Delta_{\U_N}(U\tr U)$, we use the definition (\ref{eq def Delta}) of $\Delta_{\U_N}$ and the product rule twice.  For each $X\in\u_N$,
\begin{equation*} \del_X^2(U\tr U) = \del_X\left[(\del_X U)\cdot \tr U + U\cdot (\del_X \tr U)\right] = (\del_X^2 U)\cdot\tr U + 2(\del_X U)(\del_X \tr U) + U\cdot\del_X^2\tr U. \end{equation*}
Summing over $X\in\u_N$ and using (\ref{e.b1}), (\ref{e.b2}), and (\ref{e.b0b}) then shows that
\[ \Delta_{\U_N} (U\tr U) = (-U)\cdot\tr U - \frac{2}{N^2} U^2 +U\cdot(-\tr U) = -\frac{2}{N^2} U^2 - 2 U\tr U. \]
Thus, setting $P_N(U)=U^2$ and $Q_N(U) = U\tr(U)$, we have
\begin{align}
\Delta _{\U_N}P_N &=-2P_N-2Q_N, \label{eq subspace 1}\\
\Delta _{\U_N}Q_N &=-\frac{2}{N^{2}}P_N-2Q_N. \label{eq subspace 2}
\end{align}
When $N>1$, the span of the two functions $P_N,Q_N$ is a $2$-dimensional subspace of $C^\infty(\U_N)$ (when $N=1$, $P_N=Q_N$). Equations (\ref{eq subspace 1}) -- (\ref{eq subspace 2}) show that this subspace is invariant under the action of $\Delta_{\U_N}$, which is represented there by the matrix
\[ D_N=\left[\begin{array}{cc}
-2 & -2/N^{2} \\ 
-2 & -2
\end{array}%
\right] .
\]
The exponentiated matrix $e^{\frac{t}{2}D_N}$ is easily computed (cf.\ \cite[Chapter 2, Exercises 6,7]{HallLieBook})
as
\begin{equation*}
e^{\frac{t}{2}D_N}=e^{-t}\left[ 
\begin{array}{cc}
\cosh (t/N) & -1/N\sinh (t/N) \\ 
-N\sinh (t/N) & \cosh (t/N)%
\end{array}%
\right] .
\end{equation*}%
It follows immediately (reading off from the first column of this matrix) that
\begin{equation*}
e^{\frac{t}{2}\Delta _{\U_N}} P_N =e^{-t}\cosh \left( t/N\right)
P_N-e^{-t}N\sinh (t/N)Q_N
\end{equation*}%
as claimed in (\ref{BtU2}).
\end{example}

Any trace polynomial function $P_N$ on $\U_N$ is contained in a finite-dimensional subspace of matrix-valued functions that is invariant under $\Delta _{\U_N}$; this follows from Theorem \ref{t.intertwine.new} and Corollary \ref{c.Pn-inv} below.  Thus, the computation of $e^{\frac{t}{2}\Delta _{\U_N}}P_N$ for any trace polynomial $P_N$ reduces to exponentiating a matrix of finite size.

\subsection{Intertwining Formulas I \label{section Intertwining I}}

We now explore how operations on trace polynomials are reflected in their
intertwining space $\C[u,u^{-1};\mx{v}]$. The
derivative formulas of Theorem \ref{t.derivatives} show that $\Delta
_{\mathbb{U}_{N}}$ preserves the space of trace polynomials with only positive
powers of $U$, and also preserves the space of trace polynomials with only
negative powers of $U$. This motivates the following projection operators on
$\C[u,u^{-1};\mx{v}]$.

\begin{definition}
\label{def A pm} Let $\mathcal{A}_{\pm}$ denote the \emph{positive }and
\emph{negative projection} operators
\[
\mathcal{A}_{+}\colon\C[u,u^{-1};\mx{v}]
\rightarrow\C[u;\mx{v}] \qquad\text{and}%
\qquad\mathcal{A}_{-}\colon\C[u,u^{-1};\mx{v}]
\rightarrow\C[u^{-1};\mx{v}]
\]
given by
\begin{equation}
\mathcal{A}_{+}\left(  \sum_{k=-\infty}^{\infty}u^{k}q_{k}(\mathbf{v})\right)
=\sum_{k=0}^{\infty}u^{k}q_{k}(\mathbf{v}),\quad\mathcal{A}_{-}\left(
\sum_{k=-\infty}^{\infty}u^{k}q_{k}(\mathbf{v})\right)  =\sum_{k=-\infty}%
^{-1}u^{k}q_{k}(\mathbf{v}).\label{eq def A pm}%
\end{equation}
Note that $\mathcal{A}_{+}+\mathcal{A}_{-}=\mathrm{id}_{\C[u,u^{-1};\mx{v}]}$, while $\mathcal{A}_{+}-\mathcal{A}%
_{-}=\mathrm{sgn}$ is the signum operator, where $\mathrm{sgn}(u^{n}%
)=\mathrm{sgn}(n)u^{n}$, and $\mathrm{sgn}(n)=n/|n|$ when $n\neq0$ and
$\mathrm{sgn}(0)=1$.
\end{definition}

\begin{remark}
\label{r.pseudo2} The Fourier transform conjugates the Hilbert transform with
the signum multiplier; in this sense, the operators $\mathcal{A}_{\pm}$ are
linear combinations of the identity and the Hilbert transform.
\end{remark}

\begin{notation}
\label{n.new1.19}For any $k\in\mathbb{Z},$ let $\mathcal{M}_{u^{k}}$ denote
the multiplication operator, $\mathcal{M}_{u^{k}}P(u;\mx{v})
=u^{k}P(u;\mx{v})  .$ Let $\mathcal{L}$ be the second order
linear differential operator on $\C[u,u^{-1};\mx{v}]
$ defined by
\begin{equation}
\mathcal{L}=\sum_{|j|,|k|\geq1}jkv_{k+j}\frac{\partial^{2}}{\partial
v_{j}\partial v_{k}}+2\sum_{|k|\geq1}ku^{k+1}\frac{\partial^{2}}{\partial
v_{k}\partial u}\label{e.new1.7}%
\end{equation}
where, for convenience, $v_{0}=1$; and let $\mathcal{D}$ be the first-order \emph{pseudo}differential operator on
$\C[u,u^{-1};\mx{v}]  $ defined by%
\begin{align}
\mathcal{D} &  =-\sum_{|k|\geq1}|k|v_{k}\frac{\partial}{\partial v_{k}}%
-u\frac{\partial}{\partial u}(\mathcal{A}_{+}-\mathcal{A}_{-})\nonumber\\
&  -2\sum_{k=2}^{\infty}\left[  \left(  \sum_{j=1}^{k-1}jv_{j}v_{k-j}\right)
\frac{\partial}{\partial v_{k}}+\left(  \sum_{j=1}^{k-1}jv_{-j}v_{-k+j}%
\right)  \frac{\partial}{\partial v_{-k}}\right]  \nonumber\\
&  -2\sum_{k=1}^{\infty}\left[  v_{k}u\mathcal{A}_{+}\frac{\partial}{\partial
u}\mathcal{M}_{u^{-k}}\mathcal{A}_{+}+v_{-k}u\mathcal{A}_{-}\frac{\partial
}{\partial u}\mathcal{M}_{u^{k}}\mathcal{A}_{-}\right]  .\label{e.deq}%
\end{align}
It is also convenient to define%
\begin{equation}
\mathcal{D}_{N}=\mathcal{D}-\frac{1}{N^{2}}\mathcal{L}.\label{def D}%
\end{equation}

\end{notation}

For the proof of Theorem \ref{t.intertwine.new} it is useful to decompose
$\mathcal{D}$ and $\mathcal{D}_{N}$ as%
\begin{align} 
\mathcal{D} &  =-\mathcal{N}-2\mathcal{Z}-2\mathcal{Y}\label{def DN}\\ \label{d.DN.new}
\mathcal{D}_{N} &  =-\mathcal{N}-2\mathcal{Z}-2\mathcal{Y}-\frac{1}{N^{2}%
}\mathcal{L}=\mathcal{D}-\frac{1}{N^{2}}\mathcal{L}
\end{align}
where $\mathcal{N}$, $\mathcal{Y}$, and $\mathcal{Z}$ are defined as follows.

\begin{definition}
\label{Def LN L} Define the following operators on $\C[u,u^{-1};\mx{v}]$.
\begin{align}
\mathcal{N}_{1} &  =u\frac{\partial}{\partial u}(\mathcal{A}_{+}%
-\mathcal{A}_{-}),\qquad\mathcal{N}_{0}=\sum_{|k|\geq1}|k|v_{k}\frac{\partial
}{\partial v_{k}},\qquad\mathcal{N}=\mathcal{N}_{0}+\mathcal{N}_{1}%
,\label{eq N1}\\
\mathcal{Y} &  =\mathcal{Y}_{+}-\mathcal{Y}_{-}=\sum_{k=1}^{\infty}%
v_{k}u\mathcal{A}_{+}\frac{\partial}{\partial u}\mathcal{M}_{u^{-k}%
}\mathcal{A}_{+}-\sum_{k=-\infty}^{-1}v_{k}u\mathcal{A}_{-}\frac{\partial
}{\partial u}\mathcal{M}_{u^{-k}}\mathcal{A}_{-},\\ \label{eq Z}
\mathcal{Z} &  =\mathcal{Z}_{+}-\mathcal{Z}_{-}=\sum_{k=2}^{\infty}\left(
\sum_{j=1}^{k-1}jv_{j}v_{k-j}\right)  \frac{\partial}{\partial v_{k}}%
-\sum_{k=-\infty}^{-2}\left(  \sum_{j=k+1}^{-1}jv_{j}v_{k-j}\right)
\frac{\partial}{\partial v_{k}}.
\end{align}

%\begin{align}
%\mathcal{N}_1 &= u\frac{\partial}{\partial u}, \qquad  \mathcal{N}_0  = \sum_{k=1}^{\infty}k v_{k}\frac{\partial}{\partial v_{k}}, \label{eq N1} \qquad \mathcal{N}=\mathcal{N}_1+\mathcal{N}_0, \\
%\mathcal{Z}  &= \sum_{k=2}^{\infty}\left(\sum_{\ell=1}^{k-1}\ell v_{\ell}v_{k-\ell}\right)
%\frac{\partial}{\partial v_{k}},\text{ and} \label{eq Z} \\
%\mathcal{L}  &= \sum_{j,k=1}^{\infty}jkv_{j+k}\frac{\partial^{2}}{\partial
%v_{j}\partial v_{k}}+2\sum_{k=1}^{\infty}ku^{k+1}\frac{\partial^{2}}{\partial
%v_{k}\partial u}. \label{eq L}
%\end{align}
%For $k\in\N$, let $M_{u^{-k}}\colon\P^+\to \C[u,u^{-1}]\tensor \P^{0,+}$ be multiplication by the monomial $u^{-k}$, and let $\mathcal{A}\colon \C[u,u^{-1}]\tensor \P^{0,+}\to \P^+$ be the projection given by $\mathcal{A}(u^kp(\mx{v}_+)) = \delta_{k\ge 0} u^kp(\mx{v}_+)$.  Define the operator $\mathcal{Y}:\P^+\to\P^+$ by
%\begin{equation} \label{eq Q}
%\mathcal{Y}=\sum_{k=1}^{\infty}v_{k}u\mathcal{A}\frac{\del}{\del u}M_{u^{-k}}.
%\end{equation}
\end{definition}

\iffalse%

\begin{align*}
\mathcal{N}  &  =\mathcal{N}_{0}+\mathcal{N}_{1},\quad\mathcal{Y}%
=\mathcal{Y}_{+}-\mathcal{Y}_{-},\quad\mathcal{Z}=\mathcal{Z}_{+}%
-\mathcal{Z}_{-}\\
\mathcal{N}_{1}  &  =u\frac{\partial}{\partial u}(\mathcal{A}_{+}%
-\mathcal{A}_{-}),\qquad\mathcal{N}_{0}=\sum_{|k|\geq1}|k|v_{k}\frac{\partial
}{\partial v_{k}},\\
\mathcal{Y}_{+}  &  =\sum_{k=1}^{\infty}v_{k}u\mathcal{A}_{+}\frac{\partial
}{\partial u}\mathcal{M}_{u^{-k}}\mathcal{A}_{+},\qquad-\mathcal{Y}_{-}%
=\sum_{k=-\infty}^{-1}v_{k}u\mathcal{A}_{-}\frac{\partial}{\partial
u}\mathcal{M}_{u^{-k}}\mathcal{A}_{-},\\
\mathcal{Z}_{+}  &  =\sum_{k=2}^{\infty}\left(  \sum_{j=1}^{k-1}jv_{j}%
v_{k-j}\right)  \frac{\partial}{\partial v_{k}},\qquad\mathcal{Z}_{-}%
=\sum_{k=-\infty}^{-2}\left(  \sum_{j=k+1}^{-1}jv_{j}v_{k-j}\right)
\frac{\partial}{\partial v_{k}}.
\end{align*}

\fi

\begin{example}
\label{example Q} The first order pseudodifferential operator $\mathcal{Y}$
appears somewhat mysterious; we illustrate its action here.

\begin{itemize}
\item $\mathcal{Y}$ annihilates $\C[\mx{v}]  $; more
generally, for $P\in\C[u,u^{-1};\mx{v}]  $ and
$Q\in\C[\mx{v}]  $, $\mathcal{Y}(PQ)=\mathcal{Y}%
(P)\cdot Q$. It therefore suffices to understand the action of $\mathcal{Y}$
on $\mathbb{C}[u,u^{-1}]$.

\item $\mathcal{Y}_{-}$ annihilates $\mathbb{C}[u]$ and $\mathcal{Y}_{+}$
annihilates $\mathbb{C}[u^{-1}]$. The reader can calculate that
\begin{align*}
\mathcal{Y}(u^{n})=\mathcal{Y}_{+}(u^{n})  &  =\sum_{k=1}^{n-1}(n-k)v_{k}%
u^{n-k},\qquad n\geq0\\
-\mathcal{Y}(u^{n})=\mathcal{Y}_{-}(u^{n})  &  =\sum_{k=n+1}^{-1}%
(n-k)v_{k}u^{n-k},\qquad n<0.
\end{align*}

\end{itemize}
\end{example}

\begin{notation}
\label{n.trace} For $n\in\mathbb{Z}$ and $Z\in\mathbb{M}_{N}$ let
$W_{n}(Z)=Z^{n}$, $V_{n}(Z)=\mathrm{tr}(Z^{n})$, and $\mathbf{V}(Z)=\left\{
V_{n}(A)\right\}  _{|n|\geq1}$. (Technically we should write $V_{n}^{N}$ for
$V_{n}$ and $W_{n}^{N}$ for $W_{n}$, but we omit this extra index since the
meaning should be clear from the context.) With this notation we have
$P_{N}(U)=P(U;\mathbf{V}(U))$ for $P\in\C[u,u^{-1};\mx{v}]  $.
\end{notation}

\begin{proof}
[Proof of Theorem \ref{t.intertwine.new}]\label{proof of t.intertwine.new}%
Given the notation introduced above our goal is to show that
\begin{equation}
\Delta_{\mathbb{U}_{N}}P_{N}=\left[  \mathcal{D}_{N}P\right]  _{N}=\left[
\left(  -\mathcal{N}-2\mathcal{Z}-2\mathcal{Y}-\frac{1}{N^{2}}\mathcal{L}%
\right)  P\right]  _{N}.\label{intertwining formula repeated}%
\end{equation}
Fix $n\in\mathbb{Z}\setminus\{0\}$, and let $P(u;\mathbf{v})=u^{n}%
q(\mathbf{v})$ where $q\in\C[\mx{v}]  $; thus
$P_{N}=W_{n}\cdot q(\mathbf{V})$. For $X\in\mathfrak{u}_{N}$, by the product
rule we have
\[
\partial_{X}P_{N}=\partial_{X}\left[  W_{n}\cdot q(\mathbf{V})\right]
=\partial_{X}W_{n}\cdot q(\mathbf{V})+W_{n}\cdot\partial_{X}q(\mathbf{V})
\]
and therefore
\begin{align}
\Delta_{\mathbb{U}_{N}}P_{N} &  =\sum_{X\in\beta_{N}}\partial_{X}^{2}%
P_{N}\nonumber\\
&  =\sum_{X\in\beta_{N}}\left[  \partial_{X}^{2}W_{n}\cdot q(\mathbf{V}%
)+2\partial_{X}W_{n}\cdot\partial_{X}q(\mathbf{V})+W_{n}\cdot\partial_{X}%
^{2}q(\mathbf{V})\right]  \nonumber\\
&  =\left(  \Delta_{\mathbb{U}_{N}}W_{n}\right)  \cdot q(\mathbf{V}%
)+2\sum_{X\in\beta_{N}}\partial_{X}W_{n}\cdot\partial_{X}q(\mathbf{V}%
)+W_{n}\cdot\left(  \Delta_{\mathbb{U}_{N}}q(\mathbf{V})\right)
.\label{e.pnn}%
\end{align}

Using (\ref{e.b0b}) and the chain rule, the middle term in (\ref{e.pnn}) can
be written as
\begin{align}
\label{e.pnn2}\sum_{X\in\beta_{N}}\partial_{X}W_{n}\cdot\partial
_{X}q(\mathbf{V})  &  =\sum_{X\in\beta_{N}}\partial_{X}W_{n}\cdot\sum
_{|k|\ge1}\left(  \frac{\partial}{\partial v_{k}}q\right)  (\mathbf{V}%
)\cdot\partial_{X}V_{k}\nonumber\\
&  =\sum_{|k|\ge1}\left(  \sum_{X\in\beta_{N}}\partial_{X}W_{n}\cdot
\partial_{X}V_{k}\right)  \left(  \frac{\partial}{\partial v_{k}}q\right)
(\mathbf{V})\nonumber\\
&  =\sum_{|k|\ge1}\left(  -\frac{nk}{N^{2}}W_{n+k}\right)  \left(
\frac{\partial}{\partial v_{k}}q\right)  (\mathbf{V})\nonumber\\
&  =-\frac{1}{N^{2}}\sum_{|k|\ge1}nkW_{n+k}\left(  \frac{\partial}{\partial
v_{k}}q\right)  (\mathbf{V}).
\end{align}
Notice that $nW_{n+k} = W_{k+1}\cdot nW_{n-1} = W_{k+1}\left[  \frac{\partial
}{\partial u} u^{n}\right]  _{N}$, and so (\ref{e.pnn2}) may be written in the
form
\begin{equation}
\label{e.int1}\sum_{X\in\beta_{N}}\partial_{X}W_{n}\cdot\partial
_{X}q(\mathbf{V}) = -\frac{1}{N^{2}}\left[  \sum_{|k|\ge1} ku^{k+1}%
\frac{\partial^{2}}{\partial u\partial v_{k}} P\right]  _{N}.
\end{equation}

For the last term in (\ref{e.pnn}), we again use the chain and product rules
repeatedly to find
\begin{align}
\partial_{X}^{2}q(\mathbf{V})  &  =\partial_{X}\left(  \sum_{|k|\geq1}\left(
\frac{\partial}{\partial v_{k}}q\right)  (\mathbf{V})\cdot\partial_{X}%
V_{k}\right) \nonumber\label{e.pnn2.5}\\
&  =\sum_{|k|\geq1}\left(  \frac{\partial}{\partial v_{k}}q\right)
(\mathbf{V})\cdot\partial_{X}^{2}V_{k}+\sum_{|j|,|k|\geq1}\left(
\frac{\partial^{2}}{\partial v_{j}\partial v_{k}}q\right)  (\mathbf{V}%
)\cdot(\partial_{X}V_{j})(\partial_{X}V_{k}).
\end{align}
Summing this equation on $X\in\beta_{N}$, (\ref{e.b0c}) shows that the the
second sum in (\ref{e.pnn2.5}) simplifies to
\begin{equation}
\sum_{X\in\beta_{N}}\sum_{|j|,|k|\geq1}\left(  \frac{\partial^{2}}{\partial
v_{j}\partial v_{k}}q\right)  (\mathbf{V})\cdot(\partial_{X}V_{j}%
)(\partial_{X}V_{k})=-\frac{1}{N^{2}}\sum_{|j|,|k|\geq1}jkV_{j+k}\cdot\left(
\frac{\partial^{2}}{\partial v_{j}\partial v_{k}}q\right)  (\mathbf{V}).
\label{e.pnn3}%
\end{equation}
For the first sum in (\ref{e.pnn2.5}), we break up the sum over positive and
negative terms, and use (\ref{e.b2}) and (\ref{e.b2-}) to see that
\begin{align*}
\sum_{X\in\beta_{N}}\sum_{|k|\geq1}\left(  \frac{\partial}{\partial v_{k}%
}q\right)  (\mathbf{V})\cdot\partial_{X}^{2}V_{k}  &  =\sum_{k=1}^{\infty
}\left(  \frac{\partial}{\partial v_{k}}q\right)  (\mathbf{V})\left(
-kV_{k}-2\mathbbm{1}_{k\geq2}\sum_{j=1}^{k-1}jV_{j}V_{k-j}\right) \\
&  \quad+\sum_{k=-\infty}^{-1}\left(  \frac{\partial}{\partial v_{k}}q\right)
(\mathbf{V})\left(  kV_{k}+2\mathbbm{1}_{k\leq-2}\sum_{j=k+1}^{-1}%
jV_{j}V_{k-j}\right)
\end{align*}
which is equal to
\begin{align}
&  -\sum_{|k|\geq1}|k|V_{k}\left(  \frac{\partial}{\partial v_{k}}q\right)
(\mathbf{V})\nonumber\\
&  \qquad-2\sum_{k=2}^{\infty}\left(  \sum_{j=1}^{k-1}jV_{j}V_{k-j}\right)
\left(  \frac{\partial}{\partial v_{k}}q\right)  (\mathbf{V})+2\sum
_{k=-\infty}^{-1}\left(  \sum_{j=k+1}^{-1}jV_{j}V_{k-j}\right)  \left(
\frac{\partial}{\partial v_{k}}q\right)  (\mathbf{V}). \label{e.pnn4}%
\end{align}
Combining (\ref{e.pnn2.5}) -- (\ref{e.pnn4}) we see that the final term in
(\ref{e.pnn}) is
\[
W_{n}\cdot\Delta_{\mathbb{U}_{N}}q(\mathbf{V})=-[\mathcal{N}_{0}%
P]_{N}-2[\mathcal{Z}P]_{N}-\frac{1}{N^{2}}\left[  \sum_{|j|,|k|\geq1}%
jkv_{j+k}\frac{\partial^{2}}{\partial v_{j}\partial v_{k}}P\right]  _{N}%
\]
and combining this with (\ref{e.pnn}) and (\ref{e.int1}) gives
\begin{equation}
\Delta_{\mathbb{U}_{N}}P_{N}=\left(  \Delta_{\mathbb{U}_{N}}W_{n}\right)
\cdot q(\mathbf{V})-\left[  \left(  \mathcal{N}_{0}+2\mathcal{Z}+\frac
{1}{N^{2}}\mathcal{L}\right)  P\right]  _{N}, \label{e.int2}%
\end{equation}
where (\ref{e.int1}) and (\ref{e.pnn3}) are the terms responsible for
$\mathcal{L}$. To address the first term in (\ref{e.int2}), we treat the cases
$n\geq0$ and $n<0$ separately. When $n\geq0$, (\ref{e.b1}) gives
\[
\left(  \Delta_{\mathbb{U}_{N}}W_{n}\right)  \cdot q(\mathbf{V})=-nW_{n}\cdot
q(\mathbf{V})-2\mathbbm{1}_{n\geq2}\sum_{j=1}^{n-1}jW_{j}V_{n-j}%
q(\mathbf{V}).
\]
The first term is $-\left[  u\frac{\partial}{\partial u}u^{n}q(\mathbf{v}%
)\right]  _{N}$, and the second is (reindexing $k=n-j$)
\[
-2\left[  \sum_{k=1}^{n-1}v_{k}u^{n-k}q(\mathbf{v})\right]  _{N}%
=-2[\mathcal{Y}_{+}P]_{N}%
\]
from Example \ref{example Q}. An analogous computation in the case $n<0$,
using (\ref{e.b1-}), shows that in this case
\[
\Delta_{\mathbb{U}_{N}}W_{n}\cdot q(\mathbf{V})=\left[  u\frac{\partial
}{\partial u}P\right]  _{N}+2[\mathcal{Y}_{-}P]_{N}.
\]
Combining these with (\ref{e.int2}) concludes the proof of
(\ref{eq intertwining formula 1}); (\ref{e.int2.new}) follows immediately,
with the help of Corollary \ref{c.Pn-inv}.
\end{proof}

\begin{definition}
\label{def scalars and tracing} The \textbf{tracing map} $\mathcal{T}%
\colon\C[u,u^{-1};\mx{v}]  \rightarrow\mathbb{C}%
\left[  \mathbf{v}\right]  $ is the linear operator given as follows: if
$p\in\C[\mx{v}]$ and $k\in\mathbb{Z}\setminus
\{0\}$, then
\begin{equation}
\mathcal{T}(u^{k}p(\mathbf{v}))=v_{k}p(\mathbf{v}).\label{eq tracing map}%
\end{equation}
Regarding $\C[\mx{v}]$ as a subalgebra of $\C[u,u^{-1};\mx{v}]$, note that an element $P\in\C[u,u^{-1};\mx{v}]$ is in $\C[\mx{v}]  $ if
and only if $\mathcal{T}(P)=P$.
\end{definition}

The following intertwining formula is elementary to verify.

\begin{lemma}
\label{l.tracing} For $P\in\C[u,u^{-1};\mx{v}]  $ and
$N\in\mathbb{N}$,
\begin{equation}
\lbrack\mathcal{T}(P)]_{N}=\mathrm{tr}\circ P_{N}.\label{eq trace intertwiner}%
\end{equation}

\end{lemma}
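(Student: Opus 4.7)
The plan is to verify the identity $[\mathcal{T}(P)]_N = \mathrm{tr} \circ P_N$ by linearity, reducing to monomials and then performing a direct computation from the definitions.

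First I would note that both sides of the claimed identity are $\mathbb{C}$-linear in $P \in \mathbb{C}[u,u^{-1};\mathbf{v}]$: the right side because $\mathrm{tr}$ is linear and the trace polynomial functional calculus $P \mapsto P_N$ is linear by Definition \ref{d.Laurent.trace.new}, and the left side because $\mathcal{T}$ is linear by definition and again the functional calculus is linear. Since $\mathbb{C}[u,u^{-1};\mathbf{v}]$ is spanned (as in (\ref{e.poly.fc2})) by elements of the form $u^k q(\mathbf{v})$ with $k \in \mathbb{Z}$ and $q \in \mathbb{C}[\mathbf{v}]$, it suffices to verify the identity for $P = u^k q(\mathbf{v})$.

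Next I would split into two cases according to whether $k = 0$ or $k \ne 0$. In the case $k = 0$, we have $P = q(\mathbf{v}) \in \mathbb{C}[\mathbf{v}]$, and the final sentence of Definition \ref{def scalars and tracing} gives $\mathcal{T}(P) = P$. Using Notation \ref{n.trace}, the functional calculus evaluates to $P_N(Z) = q(\mathbf{V}(Z)) I_N$, so $\mathrm{tr}(P_N(Z)) = q(\mathbf{V}(Z))$, which matches $[\mathcal{T}(P)]_N(Z) = q(\mathbf{V}(Z)) I_N$ under the standard identification of a scalar with its multiple of $I_N$ (this is exactly the convention already used in Definition \ref{d.Laurent.trace.new} for the $k=0$ term). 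In the case $k \ne 0$, the definition (\ref{eq tracing map}) gives $\mathcal{T}(u^k q(\mathbf{v})) = v_k q(\mathbf{v})$, so by the trace polynomial functional calculus, $[\mathcal{T}(P)]_N(Z) = V_k(Z)\, q(\mathbf{V}(Z))\, I_N$. On the other hand, $P_N(Z) = Z^k\, q(\mathbf{V}(Z))$, and since $q(\mathbf{V}(Z))$ is a scalar, taking the normalized trace yields $\mathrm{tr}(P_N(Z)) = q(\mathbf{V}(Z))\,\mathrm{tr}(Z^k) = q(\mathbf{V}(Z))\, V_k(Z)$, again agreeing with $[\mathcal{T}(P)]_N(Z)$.

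There is no real obstacle here beyond keeping the bookkeeping straight between the scalar-valued output of $\mathrm{tr}\circ P_N$ and the matrix-valued (scalar-times-$I_N$) output of $[\mathcal{T}(P)]_N$; the identification is the one built into Definition \ref{d.Laurent.trace.new}. Combining the two cases and invoking linearity completes the proof.
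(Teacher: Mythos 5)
Your verification is correct: reducing by linearity to monomials $u^k q(\mathbf{v})$ and checking the $k=0$ and $k\neq 0$ cases directly from the definitions of $\mathcal{T}$ and the trace polynomial functional calculus (with the scalar-versus-$q(\mathbf{V}(Z))I_N$ identification handled as you note) is exactly the elementary computation the paper has in mind, since it states the lemma without proof as ``elementary to verify.'' Nothing is missing.
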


In order to proceed further it is useful to know that $\C[u,u^{-1};\mx{v}]$ completely decomposes into the finite
dimensional eigenspaces of the operator $\mathcal{N}$. Indeed, the space
$\C[u,u^{-1};\mx{v}]$ (Definition \ref{d.Laurent.trace.new}) is the span of monomials
\[
\C[u,u^{-1};\mx{v}]  =\mathrm{span}_{\mathbb{C}%
}\left\{  u^{k_{0}}v_{1}^{k_{1}}v_{-1}^{k_{-1}}\cdots v_{n}^{k_{n}}%
v_{-n}^{k_{-n}}\colon n\geq0,\;\;k_{0}\in\mathbb{Z},\;\;k_{j}\in
\mathbb{N}\;\text{ for }\;j\in\mathbb{Z}\setminus\{0\}\right\}
\]
where each monomial is an eigenvector of $\mathcal{N}$ as the next example shows.

\begin{example}
\label{example N}The monomial, $P(u;\mathbf{v})=u^{k_{0}}v_{1}^{k_{1}}%
v_{-1}^{k_{-1}}\cdots v_{n}^{k_{n}}v_{-n}^{k_{-n}},$ is an eigenvectors of
$\mathcal{N}$, with
\begin{equation}
\mathcal{N}(P)=\left(  |k_{0}|+\sum_{1\leq|j|\leq n}|j|k_{j}\right)  P.
\label{e.trdeg0}%
\end{equation}
We will define this eigenvalue to be the {\em trace degree} of $P$.
\end{example}

\begin{definition}
\label{def P}
%Denote by $\P^{1,+} = \C[u]$ the polynomials in $u$, and $\P^{1,-} = \C[u^{-1}]\ominus\C$ the polynomials in $u^{-1}$ with constant term $0$, so that $\C[u,u^{-1}] = \P^{1,+}\oplus\P^{1,-}$.
The {\bf trace degree} of a monomial in $\C[u,u^{-1};\mx{v}]  $ is
\begin{equation}
\deg\left(  u^{k_{0}}v_{1}^{k_{1}}v_{-1}^{k_{-1}}\cdots v_{n}^{k_{n}}%
v_{-n}^{k_{-n}}\right)  =|k_{0}|+\sum_{1\leq|j|\leq n}|j|k_{j}%
.\label{eq trace degree}%
\end{equation}
More generally, the trace degree of any element of $\C[u,u^{-1};\mx{v}]$ is the maximum of the trace degrees of its
monomial terms. For $n\geq0$, denote by $\C_n[u,u^{-1};\mx{v}]  \subset\C[u,u^{-1};\mx{v}]  $ the
subspace of polynomials of trace degree $\leq n$:
\begin{equation}
\C_n[u,u^{-1};\mx{v}]  =\{P\in\C[u,u^{-1};\mx{v}]\colon\deg P\leq n\}.\label{eq Pn}%
\end{equation}
Note that $\C_n[u,u^{-1};\mx{v}]  $ is finite
dimensional; indeed, it is contained in $\C[u,u^{-1};v_{\pm1},\ldots,v_{\pm n}]$. Moreover, $\C[u,u^{-1};\mx{v}]
=\bigcup_{n\geq0}\C_n[u,u^{-1};\mx{v}]  $. Define
$\C_n[u;\mx{v}]$, $\C_n[u^{-1};\mx{v}]$, $\C_n[\mx{v}]$,
$\C_{n}[u,u^{-1}]$, $\C_{n}[u]$, and $\C_{n}[u^{-1}]$ similarly.
\end{definition}

\begin{remark}
%\begin{itemize}
%\item[(1)] The notation $\C_n[u,u^{-1};\mx{v}]$
%does not mean \textquotedblleft Laurent polynomials with coefficients in
%$\C_n[\mx{v}]$\textquotedblright; indeed,
%$\C_n[\mx{v}]$ is not a ring, but only a vector space.

%\item[(2)] 
The trace degree reflects the nature of the variables $v_{\pm1},v_{\pm2},\ldots$ in $\C[\mx{v}]$ as stand-ins
for traces of powers of a matrix variable. Informally, the trace degree of
$P\in\C[u,u^{-1};\mx{v}]  $ is the total degree of
$P_{N}(Z)$, counting all instances of $Z$ inside and outside traces, where the
degree of $Z^{k}$ is defined to be $|k|$.
%\end{itemize}
\end{remark}

\begin{lemma}
[$\mathcal{D}$ and $\mathcal{D}_{N}$ commute with $\mathcal{T}$]%
\label{lemma degree and trace preserving} Let $\mathcal{L},\mathcal{D}%
,\mathcal{D}_{N}\colon\C[u,u^{-1};\mx{v}]
\rightarrow\C[u,u^{-1};\mx{v}]  $ be given as in
Definition \ref{Def LN L} and (\ref{def DN}). The operators $\mathcal{D}%
_{N}$, $\mathcal{D}$, and $\mathcal{L}$ preserve trace degree
(\ref{eq trace degree}), and commute with the tracing map $\mathcal{T}$
(\ref{eq tracing map}).
\end{lemma}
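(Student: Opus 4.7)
The proof splits into two parts. I would attack the trace-degree claim first, reducing via $\mathcal{D}_N = -\mathcal{N} - 2\mathcal{Z} - 2\mathcal{Y} - \frac{1}{N^2}\mathcal{L}$ to showing that each of the operators $\mathcal{N}, \mathcal{Y}, \mathcal{Z}, \mathcal{L}$ carries the filtration subspace $\C_n[u,u^{-1};\mx{v}]$ into itself; since each $\C_n$ is spanned by monomials of trace degree $\le n$, it is enough to track the action on a single monomial. Example \ref{example N} already shows $\mathcal{N}$ is diagonal with eigenvalue equal to trace degree. Example \ref{example Q} shows $\mathcal{Y}(u^n)$ is a sum of monomials $v_k u^{n-k}$ each of degree $|k|+|n-k|=|n|$, and combined with the $\C[\mx{v}]$-linearity $\mathcal{Y}(PQ)=\mathcal{Y}(P)Q$ this handles all monomials. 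Since $\mathcal{Z}$ is a derivation in the $\mx{v}$-variables, it suffices to observe that $\mathcal{Z}_{\pm}(v_{\pm k}) = \pm\sum_{j=1}^{k-1} j\, v_{\pm j}v_{\pm(k-j)}$ preserves degree via $j+(k-j)=k$. For $\mathcal{L}$ I would handle its two summation pieces separately: the first replaces $v_j v_k$ (degree $|j|+|k|$) with $v_{j+k}$ (degree $|j+k|\le|j|+|k|$, using the convention $v_0=1$ of degree $0$), and the second replaces $v_k u^n$ (degree $|k|+|n|$) with $kn\,u^{k+n}$ (degree $|k+n|\le|k|+|n|$). Both preserve the filtration, even though $\mathcal{L}$ may strictly decrease trace degree (as in $\mathcal{L}(v_1 v_{-1}) = -2\in\C_0$).

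For the commutation with $\mathcal{T}$, the plan is to exploit the fact that the scalar trace commutes with $\Delta_{\U_N}$ on matrix-valued functions: for any $F\in C^2(\U_N;\M_N)$, one has $\Delta_{\U_N}(\tr\circ F) = \tr\circ(\Delta_{\U_N} F)$, since $\tr$ is linear and the Laplacian is defined componentwise on matrix entries. Combining this with Lemma \ref{l.tracing} ($[\mathcal{T}P]_N = \tr\circ P_N$) and the intertwining formula (\ref{eq intertwining formula 1}), one obtains for every $P\in\C[u,u^{-1};\mx{v}]$ and every $N\in\N$ the chain
\begin{equation*}
[\mathcal{T}\mathcal{D}_N P]_N \,=\, \tr\circ[\mathcal{D}_N P]_N \,=\, \tr\circ(\Delta_{\U_N}P_N) \,=\, \Delta_{\U_N}(\tr\circ P_N) \,=\, \Delta_{\U_N}[\mathcal{T}P]_N \,=\, [\mathcal{D}_N\mathcal{T}P]_N.
\end{equation*}
Invoking Theorem \ref{prop P-->P_N unique} I would conclude $\mathcal{T}\mathcal{D}_N P = \mathcal{D}_N \mathcal{T}P$ in $\C[u,u^{-1};\mx{v}]$ for every sufficiently large $N$. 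Substituting $\mathcal{D}_N = \mathcal{D} - \frac{1}{N^2}\mathcal{L}$ then yields $[\mathcal{T},\mathcal{D}] = N^{-2}[\mathcal{T},\mathcal{L}]$ for infinitely many $N$, and since the left side is $N$-independent, both commutators must vanish identically. This simultaneously proves $[\mathcal{T},\mathcal{D}] = 0$ and $[\mathcal{T},\mathcal{L}] = 0$.

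The main subtlety is conceptual: the convention $v_0=1$ is crucial to make $\mathcal{L}$ well-defined on $j+k=0$ summands, and ``preserves trace degree'' in the statement must be read as ``preserves the filtration $\{\C_n\}$'' rather than the grading, since $\mathcal{L}$ genuinely drops degree when $j,k$ have opposite signs. A more hands-on alternative to the intertwining argument is available: compute $[\mathcal{T},\mathcal{N}]$, $[\mathcal{T},\mathcal{Y}]$, $[\mathcal{T},\mathcal{Z}]$, and $[\mathcal{T},\mathcal{L}]$ separately on a monomial $u^n q(\mx{v})$. One finds that $[\mathcal{T},\mathcal{N}]$ vanishes outright (with $\mathcal{N}_0$ and $\mathcal{N}_1$ contributing $\mp|n|v_n q$), and that $[\mathcal{T},\mathcal{Y}]$ and $[\mathcal{T},\mathcal{Z}]$ do not vanish individually but cancel after the reindexing $k \leftrightarrow n-k$ in $\sum_{k=1}^{n-1}(n-k) v_k v_{n-k}$; the commutator of $\mathcal{T}$ with the two parts of $\mathcal{L}$ cancels by the same $v_0=1$ convention that made it well-defined. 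This direct route provides a good sanity check but is more computational than the intertwining route.
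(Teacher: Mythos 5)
Your proposal is, in substance, the paper's own argument: the degree claim is checked operator-by-operator on monomials using the decomposition $\mathcal{D}_N=-\mathcal{N}-2\mathcal{Z}-2\mathcal{Y}-\frac{1}{N^2}\mathcal{L}$, and the commutation with $\mathcal{T}$ is extracted from the intertwining identity $[\mathcal{T}P]_N=\tr\circ P_N$ (Lemma \ref{l.tracing}), the fact that $\tr$ commutes with $\Delta_{\U_N}$, and Theorem \ref{prop P-->P_N unique}. Your observation that $\mathcal{L}$ (hence $\mathcal{D}_N$) only preserves the filtration $\{\C_n[u,u^{-1};\mx{v}]\}$ rather than the grading --- e.g.\ $\mathcal{L}(v_1v_{-1})=-2$ --- is correct, and is the reading the paper actually needs for Corollary \ref{c.Pn-inv}, so that part of your write-up is if anything more careful than the paper's.

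The one place your primary route outruns the stated tools is the step where you pass from $([\mathcal{T},\mathcal{D}_N]P)_N\equiv 0$ to the polynomial identity $[\mathcal{T},\mathcal{D}_N]P=0$ for all large $N$: the polynomial $[\mathcal{T},\mathcal{D}_N]P=[\mathcal{T},\mathcal{D}]P-\frac{1}{N^2}[\mathcal{T},\mathcal{L}]P$ varies with $N$, whereas Theorem \ref{prop P-->P_N unique} is stated for a single fixed polynomial whose evaluations vanish for all sufficiently large $N$. To close this you need the uniform version --- that $Q\mapsto Q_N$ is injective on the finite-dimensional space $\C_n[u,u^{-1};\mx{v}]$ once $N$ exceeds a threshold depending only on $n$ --- which is true and extractable from the proofs of Lemma \ref{scalarNonvanish.lem} and Theorem \ref{prop P-->P_N unique}, but is not the literal statement you cite; alternatively, reorganize as the paper does: first establish $[\mathcal{T},\mathcal{D}]=0$ by the direct computation you sketch at the end ($[\mathcal{T},\mathcal{N}]=0$, $\mathcal{Y}_\pm\mathcal{T}=0$, $\mathcal{Z}_\pm\mathcal{T}=\mathcal{T}[\mathcal{Z}_\pm+\mathcal{Y}_\pm]$, with exactly the cancellation after reindexing that you describe), and then run the intertwining argument only for the $N$-independent polynomial $[\mathcal{T},\mathcal{L}]P$, to which the theorem applies verbatim. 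With either repair your proof is complete; indeed the paper's proof is precisely your ``hands-on alternative'' for the $\mathcal{D}$ part combined with your intertwining argument for the $\mathcal{L}$ part.
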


\begin{proof}
\label{proof of lemma degree and trace preserving}Let $\mathcal{N}%
,\mathcal{Y}_{\pm},\mathcal{Z}_{\pm}$ be as in be given as in Definition
\ref{Def LN L}. The reader may readily verify that $\mathcal{N}$,
$\mathcal{Y}_{\pm}$, $\mathcal{Z}_{\pm}$, and $\mathcal{L}$ all preserve trace
degree. What's more, it is elementary to calculate that $[\mathcal{T}%
,\mathcal{N}]=0$, while
\[
\mathcal{Z}_{\pm}\mathcal{T}=\mathcal{T}[\mathcal{Z}_{\pm}+\mathcal{Y}_{\pm
}],\quad\mathcal{Y}_{\pm}\mathcal{T}=0.
\]
Hence, it follows that $\mathcal{D}=-\mathcal{N}-2(\mathcal{Z}+\mathcal{Y}%
)=-\mathcal{N}-2(\mathcal{Z}_{+}+\mathcal{Y}_{+})+2(\mathcal{Z}_{-}%
+\mathcal{Y}_{-})$ commutes with $\mathcal{T}$. Since $\mathcal{D}%
_{N}=\mathcal{D}-\frac{1}{N^{2}}\mathcal{L}$ (cf.\ (\ref{def D})), we are left
only to prove that $[\mathcal{T},\mathcal{L}]=0$. This is also straightforward
to compute; instead, we offer an alternative proof. From
(\ref{eq trace intertwiner}), we see that, for any $P\in\C[u,u^{-1};\mx{v}]$,
\[
\lbrack\mathcal{T}\mathcal{D}_{N}(P)]_{N}=\mathrm{tr}(\Delta_{\mathbb{U}_{N}%
}P_{N})=\Delta_{\mathbb{U}_{N}}\mathrm{tr}(P_{N})=[\mathcal{D}_{N}%
\mathcal{T}(P)]_{N}.
\]
That is: $([\mathcal{T},\mathcal{D}_{N}]P)_{N}\equiv0$. It follows, using the
fact that $[\mathcal{T},\mathcal{D}]=0$, that
\begin{equation}
\left(  \lbrack\mathcal{T},\mathcal{L}]P\right)  _{N}=\left(  [\mathcal{T}%
,N^{2}(\mathcal{D}_{N}-\mathcal{D})]P\right)  _{N}=N^{2}\left(  [\mathcal{T}%
,\mathcal{D}_{N}]P\right)  _{N}\equiv0,\quad\text{for all }%
\;\;N.\label{eq [T,L_N]}%
\end{equation}
Theorem \ref{prop P-->P_N unique} now proves that $[\mathcal{T},\mathcal{L}%
]P=0$. Since this holds true for any $P\in\C[u,u^{-1};\mx{v}]  $, the result is proved.
\end{proof}

We now prove Theorem \ref{cor product rule and homom}.

\begin{proof}
[Proof of Theorem \ref{cor product rule and homom}]%
\label{proof of cor product rule and homom} For convenience, we restate
(\ref{eq partial product 1}): the desired property is
\[
\mathcal{D}(PQ)=(\mathcal{D}P)Q+P(\mathcal{D}Q),\qquad P\in\C[u,u^{-1};\mx{v}],\;Q\in\C[\mx{v}]  .
\]
Recall from Definition \ref{Def LN L} and (\ref{def DN}) that $\mathcal{D}=-\mathcal{N}%
-2\mathcal{Y}-2\mathcal{Z}=-(\mathcal{N}_{0}+2\mathcal{Y})-(\mathcal{N}%
_{1}+2\mathcal{Z})$, where $\mathcal{N}_{1}$ and $\mathcal{Z}$ are first order
differential operators on $\C[\mx{v}]  $, while
$\mathcal{N}_{0}$ and $\mathcal{Y}$ annihilate $\C[\mx{v}]$ and satisfy
\[
\mathcal{N}_{0}(PQ)=(\mathcal{N}_{0}P)Q,\qquad\mathcal{Y}(PQ)=(\mathcal{Y}%
P)Q,\qquad P\in\C[u,u^{-1};\mx{v}],\;Q\in
\C[\mx{v}].
\]
Hence
\[
(\mathcal{N}_{0}+2\mathcal{Y})(PQ)=[(\mathcal{N}_{0}+2\mathcal{Y}%
)P]Q=[(\mathcal{N}_{0}+2\mathcal{Y})P]Q+P[(\mathcal{N}_{0}+2\mathcal{Y})Q].
\]
Since $\mathcal{N}_{1}+2\mathcal{Z}$ satisfies the product rule on
$\C[u,u^{-1};\mx{v}]$ in general, this proves
(\ref{eq partial product 1}); (\ref{eq partial product 2}) follows thence from
the standard power series argument.
\end{proof}

\begin{remark}
\label{r.alt.int} We could alternatively describe the intertwining operator
$\mathcal{D}$ as the unique operator on $\C[u,u^{-1};\mx{v}]  $ which satisfies the partial product rule
(\ref{eq partial product 1}), commutes with the tracing map $\mathcal{T}$, and
satisfies
\[
\mathcal{D}(u^{k})=-|k|u^{k}-2\mathbbm{1}_{k\geq2}\sum_{\ell=1}^{k-1}%
(k-\ell)v_{\ell}u^{k-\ell}+2\mathbbm{1}_{k\leq-2}\sum_{\ell=k+1}^{-1}%
(k-\ell)v_{\ell}u^{k-\ell}.
\]

\end{remark}

The next corollary follows immediately from the first statement of Lemma
\ref{lemma degree and trace preserving}.

\begin{corollary}
\label{c.Pn-inv} For $n,N\in\mathbb{N}$, the finite dimensional subspace
$\C_n[u,u^{-1};\mx{v}]  \subset\C[u,u^{-1};\mx{v}]$ is invariant under $\mathcal{D}_{N}$ and
$\mathcal{D}$. Hence, for $t\in\mathbb{R}$, $e^{\frac{t}{2}\mathcal{D}_{N}}$
and $e^{\frac{t}{2}\mathcal{D}}$ are well-defined operators on $\C[u,u^{-1};\mx{v}]$ that leave $\C_n[u,u^{-1};\mx{v}]$ invariant.
\end{corollary}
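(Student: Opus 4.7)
The proof is essentially a direct consequence of the first assertion of Lemma \ref{lemma degree and trace preserving}, together with the observation that each $\C_n[u,u^{-1};\mx{v}]$ is finite-dimensional (being contained in $\C[u,u^{-1};v_{\pm 1},\ldots,v_{\pm n}]$, cf.\ Definition \ref{def P}).

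For the first claim, I would simply recall that Lemma \ref{lemma degree and trace preserving} asserts that each of $\mathcal{D}$, $\mathcal{L}$, and (hence) $\mathcal{D}_N = \mathcal{D} - \frac{1}{N^2}\mathcal{L}$ preserves trace degree. By the definition $\C_n[u,u^{-1};\mx{v}] = \{P : \deg P \le n\}$, invariance under any trace-degree-preserving operator is immediate.

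For the second claim, since $\C_n[u,u^{-1};\mx{v}]$ is finite-dimensional and invariant under $\mathcal{D}_N$, the restriction $\mathcal{D}_N|_{\C_n[u,u^{-1};\mx{v}]}$ is a linear endomorphism of a finite-dimensional space, so $e^{\frac{t}{2}\mathcal{D}_N}|_{\C_n[u,u^{-1};\mx{v}]}$ is defined unambiguously by the usual (finite-dimensional, absolutely convergent) matrix exponential series, for every $t \in \R$. Because $\C_n \subset \C_{n+1}$ and the trace-degree-preserving property ensures the exponential restricted to $\C_{n+1}$ extends the one on $\C_n$, the collection $\{e^{\frac{t}{2}\mathcal{D}_N}|_{\C_n[u,u^{-1};\mx{v}]}\}_{n \ge 0}$ patches together to give a well-defined operator $e^{\frac{t}{2}\mathcal{D}_N}$ on the union $\C[u,u^{-1};\mx{v}] = \bigcup_{n \ge 0} \C_n[u,u^{-1};\mx{v}]$, which by construction leaves each $\C_n[u,u^{-1};\mx{v}]$ invariant. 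The identical argument applies to $\mathcal{D}$.

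There is no real obstacle here: the hard work was done in Lemma \ref{lemma degree and trace preserving}. The only mild point worth flagging is that, even though $\mathcal{D}$ is a pseudodifferential (not differential) operator on the full infinite-dimensional space $\C[u,u^{-1};\mx{v}]$, one cannot in general exponentiate it directly; the filtration by the finite-dimensional $\mathcal{N}$-eigenspace sums $\C_n[u,u^{-1};\mx{v}]$ provides precisely the mechanism needed to make sense of $e^{\frac{t}{2}\mathcal{D}}$ pointwise on polynomials. This is also the reason we stated the intertwining formula (\ref{e.int2.new}) using this corollary.
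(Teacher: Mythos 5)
Your argument is exactly the paper's: the corollary is deduced immediately from the first statement of Lemma \ref{lemma degree and trace preserving} (trace-degree preservation), with the finite-dimensionality of $\C_n[u,u^{-1};\mx{v}]$ then making the exponential series unproblematic. Your additional remarks about patching the exponentials over the filtration $\C[u,u^{-1};\mx{v}]=\bigcup_n \C_n[u,u^{-1};\mx{v}]$ are a correct elaboration of details the paper leaves implicit.
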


This brings us to the proof of Theorem \ref{t.new1.8}.

\begin{proof}
[Proof of Theorem \ref{t.new1.8}]\label{proof of t.new1.8} For convenience, we
restate the desired property (\ref{e.new1.3}): we will show that, for any
$P\in\C[u,u^{-1};\mx{v}]$, $N\in\mathbb{N}$, and
$t>0$, there exists $P_{t}^{N}\in\C[u,u^{-1};\mx{v}]
$ with
\[
\mathbf{B}_{s,t}^{N}P_{N}=[P_{t}^{N}]_{N}.
\]
Indeed, let $\mathcal{D}_{N}$ be as in (\ref{def DN}), and define $P_{t}%
^{N}=e^{\frac{t}{2}\mathcal{D}_{N}}P\in\C[u,u^{-1};\mx{v}]  $. By (\ref{e.int2.new}) of Theorem \ref{t.intertwine.new}%
, we then have $[P_{t}^{N}]_{N}=e^{\frac{t}{2}\Delta_{\mathbb{U}_{N}}}P_{N}$.
Since $[P_{t}^{N}]_{N}$ is a trace polynomial, the entries of $[P_{t}^{N}%
]_{N}(U)$ are (holomorphic) polynomials in the entries of $U$. Thus,
$[P_{t}^{N}]_{N}$ has an analytic continuation to an entire function on
$\mathbb{GL}_{N}$, whose entries are the very same polynomials. It follows
that $[P_{t}^{N}]_{N}$, interpreted as a function on $\mathbb{GL}_{N}$, is the
analytic continuation of $e^{\frac{t}{2}\Delta_{\mathbb{U}_{N}}}P_{N}$, which
is, by Definition \ref{d.new1.5}, equal to $\mathbf{B}_{s,t}^{N}P_{N}$.
\end{proof}

We conclude with the following Corollary to the proof of Theorem
\ref{t.new1.8}, characterizing the range of $\mathbf{B}^{N}_{s,t}$ on trace polynomials.

\begin{corollary}
\label{c.Brange} Let $s,t>0$ with $s>t/2$, and let $N\in\mathbb{N}$. If
$P\in\C[u,u^{-1};\mx{v}]  $, there exists
$Q\in\C[u,u^{-1};\mx{v}]  $ such that $\mathbf{B}%
_{s,t}^{N}Q_{N}=P_{N}$. Thus, $\mathbf{B}_{s,t}^{N}$ maps the space
$\left[\C[u,u^{-1};\mx{v}]\right]_{N}$ of trace polynomials
onto itself.
\end{corollary}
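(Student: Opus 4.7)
The plan is to invert the intertwining formula from the proof of Theorem \ref{t.new1.8}. Recall that theorem shows $\mathbf{B}^N_{s,t} P_N = [e^{\frac{t}{2}\mathcal{D}_N}P]_N$ for any $P \in \C[u,u^{-1};\mx{v}]$. So to realize the given trace polynomial $P_N$ as an image under $\mathbf{B}^N_{s,t}$, it suffices to produce $Q \in \C[u,u^{-1};\mx{v}]$ satisfying $e^{\frac{t}{2}\mathcal{D}_N}Q = P$, and then set $Q_N$ as the required preimage.

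The existence of such a $Q$ follows from the fact that $e^{\frac{t}{2}\mathcal{D}_N}$ is an invertible linear map on $\C[u,u^{-1};\mx{v}]$. By Corollary \ref{c.Pn-inv}, the operator $\mathcal{D}_N$ leaves each finite-dimensional subspace $\C_n[u,u^{-1};\mx{v}]$ invariant, so $e^{\frac{t}{2}\mathcal{D}_N}$ is a well-defined linear operator on $\C[u,u^{-1};\mx{v}]$ whose restriction to each $\C_n[u,u^{-1};\mx{v}]$ is the exponential of a finite matrix, hence invertible with inverse $e^{-\frac{t}{2}\mathcal{D}_N}$ restricted to the same subspace. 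Taking unions gives a global two-sided inverse $e^{-\frac{t}{2}\mathcal{D}_N}$ on all of $\C[u,u^{-1};\mx{v}]$.

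Given $P$, define $Q \equiv e^{-\frac{t}{2}\mathcal{D}_N} P \in \C[u,u^{-1};\mx{v}]$. Then $e^{\frac{t}{2}\mathcal{D}_N} Q = P$, and applying Theorem \ref{t.new1.8} yields
\[
\mathbf{B}^N_{s,t} Q_N = [e^{\frac{t}{2}\mathcal{D}_N} Q]_N = P_N,
\]
which proves the first assertion. The second assertion (that $\mathbf{B}^N_{s,t}$ maps trace polynomials onto themselves) is an immediate restatement, combining this with the forward direction $\mathbf{B}^N_{s,t} Q_N = [e^{\frac{t}{2}\mathcal{D}_N}Q]_N$ already shown. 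There is essentially no obstacle here; the only subtle point is noting that the invariance of the finite-dimensional subspaces $\C_n[u,u^{-1};\mx{v}]$ from Corollary \ref{c.Pn-inv} is what makes $e^{\pm\frac{t}{2}\mathcal{D}_N}$ well-defined as operators on the full space $\C[u,u^{-1};\mx{v}]$, a point already settled earlier in the section.
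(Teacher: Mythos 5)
Your proof is correct and follows essentially the same route as the paper's: the paper also sets $Q=e^{-\frac{t}{2}\mathcal{D}_N}P$ and invokes the intertwining formula (\ref{e.int2.new}) to conclude $\mathbf{B}^N_{s,t}Q_N=[e^{\frac{t}{2}\mathcal{D}_N}Q]_N=P_N$. Your extra remarks on why $e^{\pm\frac{t}{2}\mathcal{D}_N}$ are well-defined mutually inverse operators via Corollary \ref{c.Pn-inv} simply make explicit what the paper leaves implicit.
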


\begin{proof}
Set $Q=e^{-\frac{t}{2}\mathcal{D}_{N}}P$. Then the intertwining formula
(\ref{e.int2.new}), combined with the above discussion, shows that
\[
\mathbf{B}_{s,t}^{N}Q_{N}=[e^{\frac{t}{2}\mathcal{D}_{N}}Q]_{N}=P_{N}%
\]
as claimed.
\end{proof}

\subsection{Intertwining Formulas II}

\label{section Intertwining Formula II}

This section is devoted to proving an intertwining formula for $\GL_N$ (cf.\ Theorem \ref{t.inter2}) which is analogous to the intertwining formula for $\U_N$ in Theorem \ref{t.intertwine.new}. This result is only needed in order to prove concentration of measures on $\GL_N$ (Eq.\ (\ref{e.conc2}) of Theorem \ref{t.concentration}) and hence we do not need as much detailed information about the operators involved. On the other hand, we will now have to consider scalar trace polynomials in both $Z$ and $Z^\ast$, which complicates the notation somewhat.

\begin{notation}
\label{n.omega} For $n\in\N$, let $\EX_n$ denote the set of functions (words) $\varepsilon\colon\{1,\ldots,n\}\rightarrow\left\{\pm1,\pm\ast\right\}$.  For $\varepsilon\in\EX_n$, we denote $|\varepsilon|=n$.  Set $\EX = \bigcup_n \EX_n$.  We define the {\bf word polynomial space} $\mathscr{W}$ as
\[ \mathscr{W} =\mathbb{C}\left[  \left\{v_{\varepsilon}\right\}_{\varepsilon\in\EX}\right]
\]
the space of polynomials in the indeterminates $\left\{ v_{\varepsilon
}\right\}_{\varepsilon\in\EX}$.  Of frequent use will be the words
\begin{equation}
\varepsilon(j,k)  =(  \overset{|j|\text{ times}}{\overbrace{\pm1,\dots,\pm1}},\overset{|k|\text{ times}}{\overbrace{\pm\ast,\dots,\pm\ast}})\in\EX_{j+k},\label{e.emn}
\end{equation}
where we use $+1$ in the first slots if $j>0$ and $-1$ if $j<0$, and similarly we use $+\ast$ in the last slots if $k>0$ and $-\ast$ if $k<0$.
\end{notation}

\begin{notation} For $\varepsilon\in\EX_n$ and $Z\in \GL_N$ we define $Z^\varepsilon = Z^{\varepsilon_1}Z^{\varepsilon_2}\cdots Z^{\varepsilon_n}$, where $Z^{+\ast}\equiv Z^\ast$ and $Z^{-\ast} \equiv (Z^\ast)^{-1} = (Z^{-1})^\ast$. Given $P\in\mathscr{W}$, we let $P_{N}\colon \GL_N\rightarrow\mathbb{C}$ be the function
\[ P_{N}(Z)=P\left(\mx{V}(Z)\right) \]
where
\[ \mx{V}(Z)=\left\{V_{\varepsilon}(Z)\colon \varepsilon\in\EX\right\} \]
and
\[ V_{\varepsilon}(Z)=\tr(Z^\varepsilon) = \tr\left(Z^{\varepsilon_1}Z^{\varepsilon_2}\cdots Z^{\varepsilon_n}\right). \]
\end{notation}
\noindent The notation $\mx{V}$ here collides with Notation \ref{n.trace}, but there should be no confusion as to which is being used.  As in that case, we should technically write $V_\varepsilon = V_\varepsilon^N$ and $\mx{V} = \mx{V}^N$, but we suppress the $N$ throughout.  Also, in terms of Notation \ref{n.trace}, note that $V_{\varepsilon(k,0)}(Z) = \tr(Z^k) = V_k(Z)$, while $V_{\varepsilon(0,k)}(Z) = \tr((Z^{\ast})^k) = V_k(Z^\ast)$.  It is therefore natural to think of $\C[\mx{v}]$ as included in $\mathscr{W}$, in the following way.

\begin{notation} \label{notation iota} We can identify $\C[\mx{v}]$ as a subalgebra of $\mathscr{W}$ in two ways: $\iota,\iota^\ast\colon\C[\mx{v}]\hookrightarrow\mathscr{W}$, with $\iota$ linear and $\iota^\ast$ conjugate linear, are determined by
\begin{equation} \label{eq P inclusion W} \iota(v_k) = v_{\varepsilon(k,0)} \qquad \iota^\ast(v_k) = v_{\varepsilon(0,k)}.
\end{equation}
The inclusions $\iota$ and $\iota^\ast$ intertwine with the evaluation maps as follows: for $Q\in\C[\mx{v}]$,
\begin{equation} \label{eq iota intertwiner} [\iota(Q)]_N(Z) = Q_N(Z) \qquad [\iota^\ast(Q)]_N(Z) = Q_N(Z)^\ast. \end{equation}
\end{notation}

The trace degree on $\C[\mx{v}]$ extends consistently to the larger space $\mathscr{W}$.

\begin{definition} The {\bf trace degree} of a monomial $\prod_{i=1}^{m}v_{\varepsilon_{j}}^{k_{j}}\in\mathscr{W}$ is given by
\[ \deg\left(\prod_{j=1}^{m}v_{\varepsilon_{j}}^{k_{j}}\right)=\sum_{j=1}^{m}|k_{j}||\varepsilon_{j}|, \]
and the trace degree of any element in $\mathscr{W}$ is the highest trace degree of any of its monomial terms.  Since $|\varepsilon(k,0)|=|\varepsilon(0,k)|=k$, we have
\begin{equation} \label{eq iota degree} \deg \iota(Q) = \deg \iota^\ast(Q) = \deg Q \end{equation}
for $Q\in\C[\mx{v}]$.  Note, moreover, that $\mathrm{deg}(RS) = \mathrm{deg}(R) + \mathrm{deg}(S)$ for $R,S\in\mathscr{W}$ not identically $0$.  Finally, for $n\in\N$ we set
\[
\mathscr{W}_{n}=\left\{  P\in\mathscr{W}:\deg\left(  P\right)  \leq n\right\}.
\]
Note that $\mathscr{W}_n$ is finite dimensional, $\mathscr{W}_n\subset \C[\{v_\varepsilon\}_{|\varepsilon|\le n}]$, and $\mathscr{W} = \bigcup_n \mathscr{W}_n$.
\end{definition}

We now proceed to describe the action of $A_{s,t}^N$ on functions on $\U_N$ or $\GL_N$ of the form $R_N$ for some $R\in\mathscr{W}$; recall from (\ref{eq Ast def}) that
\[ A_{s,t}^{N} \equiv \left(  s-\frac{t}{2}\right)  \sum_{X\in\beta_N}\partial_{X}^{2}+\frac{t}{2}\sum_{X\in\beta_N}\partial_{iX}^{2}, \]
where $\beta_{N}$ is an orthonormal basis for $\u_N$.

\begin{theorem}
\label{t.inter1} Fix $s,t\in\R$.    There are collections $\left\{Q^{s,t}_\varepsilon\colon \varepsilon\in\EX \right\}$ and $\left\{R^{s,t}_{\varepsilon,\delta}\colon \varepsilon,\delta\in\EX\right\}$ in $\mathscr{W}$ with the following properties:
\begin{enumerate}
\item[(1)] for each $\varepsilon \in\EX$, $Q_{\varepsilon}^{s,t}$ is a certain finite sum of monomials of trace degree $|\varepsilon|$ such that
\begin{equation} \label{eq q formula} A_{s,t}^{N}V_{\varepsilon}=[Q^{s,t}_{\varepsilon}]_N = Q^{s,t}_{\varepsilon}(\mx{V}), \end{equation}
\item[(2)] for $\varepsilon,\delta\in\EX$, $R^{s,t}_{\varepsilon,\delta}$ is a certain finite sum of monomials of trace degree
$|\varepsilon|+|\delta|$ such that
\begin{equation} \label{eq r formula}
\left(s-\frac{t}{2}\right)  \sum_{X\in\beta_N}\left(  \partial_{X}V_{\varepsilon}\right)  \left(\partial_{X}V_{\delta}\right)  +\frac{t}{2}\sum_{X\in \beta_N}\left(  \partial_{iX}V_{\varepsilon}\right) 
\left(\partial_{iX}V_{\delta}\right)  = \frac{1}{N^2}[R_{\varepsilon,\delta}^{s,t}]_N = \frac{1}{N^{2}}R^{s,t}_{\varepsilon,\delta}(\mx{V}). \end{equation}
\end{enumerate}
\end{theorem}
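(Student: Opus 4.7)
The plan is to prove both claims by a direct calculation that reduces, via the magic formulas of Proposition~\ref{p.magica2}, to elementary bookkeeping in the spirit of Theorem~\ref{t.derivatives}. First I would record the one-letter derivatives: for $\xi \in \M_N$, $\partial_\xi Z = Z\xi$, $\partial_\xi Z^{-1} = -\xi Z^{-1}$, $\partial_\xi Z^\ast = \xi^\ast Z^\ast$, and $\partial_\xi Z^{-\ast} = -Z^{-\ast}\xi^\ast$. Specializing to $\xi = X \in \u_N$ (where $X^\ast = -X$) and to $\xi = iX$ (where $(iX)^\ast = iX$) yields the key identity $\partial_{iX} Z^{\varepsilon_j} = i\,\alpha(\varepsilon_j)\,\partial_X Z^{\varepsilon_j}$, with $\alpha(\varepsilon_j) = +1$ for $\varepsilon_j \in \{\pm 1\}$ and $\alpha(\varepsilon_j) = -1$ for $\varepsilon_j \in \{\pm \ast\}$. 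Every calculation below then reduces to keeping track of the factor $\alpha$ together with a sign $s_j^\varepsilon = \pm 1$ recording whether the one-letter derivative inserts $X$ on the left or the right of $Z^{\varepsilon_j}$.

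For part~(2), the product rule and cyclicity of $\tr$ give $\partial_X V_\varepsilon = \sum_j s_j^\varepsilon\, \tr(X\, W_j^\varepsilon(Z))$, where $W_j^\varepsilon(Z)$ is the cyclic rotation of the length-$|\varepsilon|$ word $Z^\varepsilon$ that brings position $j$ to the outside. Expanding $(\partial_X V_\varepsilon)(\partial_X V_\delta)$ and applying the magic formula \eqref{e.m4a} converts the sum over $X$ into $-\tfrac{1}{N^2}\sum_{j,k} s_j^\varepsilon s_k^\delta\, \tr(W_j^\varepsilon(Z)\, W_k^\delta(Z))$, a finite sum of single traces of length-$(|\varepsilon|+|\delta|)$ words. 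The analogous $iX$ computation picks up an extra factor $i^2\alpha(\varepsilon_j)\alpha(\delta_k) = -\alpha(\varepsilon_j)\alpha(\delta_k)$; weighting by $(s-t/2)$ and $t/2$ and collecting terms produces coefficients $-(s-t)$ when $\alpha(\varepsilon_j)\alpha(\delta_k) = +1$ and $-s$ when $\alpha(\varepsilon_j)\alpha(\delta_k) = -1$. This exhibits the left side of \eqref{eq r formula} as $\tfrac{1}{N^2}R_{\varepsilon,\delta}^{s,t}(\mx{V})$, with each monomial of $R_{\varepsilon,\delta}^{s,t}$ a single $v_\eta$ of trace degree $|\varepsilon| + |\delta|$.

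For part~(1), I would compute $A_{s,t}^N V_\varepsilon = \tr(A_{s,t}^N Z^\varepsilon)$ and split $\partial_X^2 Z^\varepsilon$ via the product rule into diagonal ($j=k$) contributions $\partial_X^2 Z^{\varepsilon_j}$ and off-diagonal ($j\ne k$) double-insertion terms of the form $A_j X B_{jk} X C_k$, where $|A_j|+|B_{jk}|+|C_k| = |\varepsilon|$. The magic formula \eqref{e.m1a} yields $\sum_X \partial_X^2 Z^{\varepsilon_j} = -Z^{\varepsilon_j}$ for every letter type, so the diagonal sums contribute $-|\varepsilon|\,V_\varepsilon$; magic formula \eqref{e.m2a} gives $\sum_X \tr(A_j X B_{jk} X C_k) = -\tr(B_{jk})\tr(A_j C_k)$, a product of two traces of total length $|\varepsilon|$ (a factor $\tr(I_N) = 1$ absorbs the case $|B_{jk}|=0$). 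Combining with the $\partial_{iX}^2$ contributions, which as above introduce the factor $-\alpha(\varepsilon_j)\alpha(\varepsilon_k)$ off-diagonally and $-1$ diagonally, and weighting appropriately, assembles the result into $Q_\varepsilon^{s,t}(\mx{V})$, a finite sum of monomials of trace degree exactly $|\varepsilon|$.

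The substantive step here is entirely contained in Proposition~\ref{p.magica2}; the only genuine obstacle is a careful enumeration of the four insertion patterns for $\varepsilon_j \in \{\pm 1, \pm\ast\}$ and the resulting cyclic words $W_j^\varepsilon$ and three-part decompositions $(A_j, B_{jk}, C_k)$, together with the accompanying signs $s_j^\varepsilon$. Once this combinatorial bookkeeping is in place, no further analytic input is required, and the explicit formulas for $Q_\varepsilon^{s,t}$ and $R_{\varepsilon,\delta}^{s,t}$ can be read off.
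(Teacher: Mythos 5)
Your proposal is correct and follows essentially the same route as the paper's proof: expand $\partial_\xi^2 V_\varepsilon$ and $(\partial_\xi V_\varepsilon)(\partial_\xi V_\delta)$ by the product rule and cyclicity of $\tr$, apply the magic formulas (\ref{e.m1a}), (\ref{e.m2a}), (\ref{e.m4a}) to the sums over the orthonormal basis, and observe that every resulting term is a product of traces of words of total length $|\varepsilon|$ (resp.\ $|\varepsilon|+|\delta|$), hence a monomial in $\mathscr{W}$ of the required trace degree with coefficients independent of $N$. The only difference is presentational: you track the signs and the $\partial_{iX}=i\alpha\,\partial_X$ factors explicitly (yielding the $-(s-t)$ versus $-s$ coefficients), whereas the paper sums over $\beta_\pm$ and deliberately leaves these signs as unspecified $\pm$.
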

\noindent Please note that the polynomials $Q_\varepsilon^{s,t}$ and $R_{\varepsilon,\delta}^{s,t}$ {\em do not depend on $N$}.  The $1/N^2$ in (\ref{eq r formula}) comes from the magic formula (\ref{e.m3a}), as we will see in the proof.

\begin{proof}%[Proof of Theorem \ref{t.inter1}]
Fix $\varepsilon\in\EX$, and let $n=|\varepsilon|$.  Let $\beta_N$ denote an orthonormal basis for $\u_N$, and let $\beta_+=\beta_N$ while $\beta_-=i\beta_N$.  For any $\xi \in \u_N\oplus i\u_N = \gl_N = \M_N$ and $Z\in\GL_N$, we make the following conventions (for this proof only):
\begin{equation} \label{e.intIIp1} (Z\xi)^1 \equiv Z\xi, \qquad (Z\xi)^{-1} \equiv -\xi Z^{-1}, \qquad (Z\xi)^{\ast} \equiv \xi^\ast Z^\ast, \qquad (Z\xi)^{-\ast}\equiv -Z^\ast\xi^\ast. \end{equation}
Note that, for $\xi\in\beta_\pm$, $\xi^\ast = \mp \xi$.  In the proof to follow, we do not precisely track all of the signs, and so $\pm$ denotes a sign that may be different in different terms and on different sides of an equation.  Thus, we have
\[ \left(  \partial_{\xi}V_{\varepsilon}\right)(Z) =\sum_{j=1}^{n}\tr\left(Z^{\varepsilon_{1}}Z^{\varepsilon_{2}}\dots\left(Z\xi\right)^{\varepsilon_{j}}\dots Z^{\varepsilon_n}\right) \]
and so
\begin{align} \label{eq intertwine II 1}
\left(  \partial_{\xi}^{2}V_{\varepsilon}\right)(Z)   &
=\sum_{j=1}^{n}\tr\left(  Z^{\varepsilon_{1}}Z^{\varepsilon_{2}}\dots\left(Z\xi^2\right)^{\varepsilon_{j}}\dots
Z^{\varepsilon_n}\right) \\
\label{eq intertwine II 2} &  +2\sum_{1\le j< k\le n}\tr\left(Z^{\varepsilon_{1}}Z^{\varepsilon_{2}}\dots\left(  Z\xi\right)^{\varepsilon_{j}}\dots\left(Z\xi\right)^{\varepsilon_{k}}\dots Z^{\varepsilon_{n}}\right).
\end{align}
We must now sum over $\xi\in\beta_\pm$.   It follows from magic formula (\ref{e.m1a}) and convention (\ref{e.intIIp1}) that each term in (\ref{eq intertwine II 1}) simplifies to
\[ \sum_{\xi\in\beta_\pm}\tr\left(  Z^{\varepsilon_{1}}Z^{\varepsilon_{2}}\dots\left(Z\xi^2\right)^{\varepsilon_{j}}\dots Z^{\varepsilon_n}\right)  =\pm \tr\left(  Z^{\varepsilon_{1}}Z^{\varepsilon_{2}}\dots Z^{\varepsilon_{j}}\dots Z^{\varepsilon_{n}}\right)  =\pm V_\varepsilon(Z). \]
To be clear: the $\pm$ on the right varies with $j$ and whether the sum is over $\beta_+$ or $\beta_-$.  Summing each of these terms over $1\le j\le n$ shows that (\ref{eq intertwine II 1}) summed over $\beta_\pm$ is
\begin{equation} \label{e.intIIp2.5} \sum_{\xi\in\beta_\pm} \sum_{j=1}^n \tr\left(  Z^{\varepsilon_{1}}Z^{\varepsilon_{2}}\dots\left(Z\xi^2\right)^{\varepsilon_{j}}\dots Z^{\varepsilon_n}\right) =  n_\pm(\varepsilon) V_\varepsilon(Z) \end{equation}
for some $n_\pm(\varepsilon)\in\Z$ with $|n_\pm(\varepsilon)|\le |\varepsilon|$.  For the terms in (\ref{eq intertwine II 2}), applying (\ref{e.intIIp1}) shows that
\begin{equation} \label{e.intIIp2} \tr\left(Z^{\varepsilon_{1}}Z^{\varepsilon_{2}}\dots\left(Z\xi\right)^{\varepsilon_{j}}\dots\left(Z\xi\right)  ^{\varepsilon_{k}}\dots Z^{\varepsilon_{n}}\right) = \pm \tr(Z^{\varepsilon^0_{j,k}}\xi Z^{\varepsilon^1_{j,k}}\xi Z^{\varepsilon^2_{j,k}}) \end{equation}
where $\{\varepsilon^\ell_{j,k}\}_{\ell=0,1,2}$ are certain substrings of $\varepsilon$, whose concatenation is all of $\varepsilon$: $\varepsilon_{j,k}^0\varepsilon_{j,k}^1\varepsilon_{j,k}^2 = \varepsilon$.  Applying magic formula (\ref{e.m2a}) to (\ref{e.intIIp2}) gives
\[ \sum_{\xi\in\beta_\pm} \tr(Z^{\varepsilon^0_{j,k}}\xi Z^{\varepsilon^1_{j,k}}\xi Z^{\varepsilon^2_{j,k}}) = \pm\tr(Z^{\varepsilon^0_{j,k}}Z^{\varepsilon^2_{j,k}})\tr(Z^{\varepsilon^1_{j,k}}) = \pm\tr(Z^{\varepsilon_{j,k}})\tr(Z^{\varepsilon^1_{j,k}}) \]
where $\varepsilon_{j,k} = \varepsilon^0_{j,k}\varepsilon^2_{j,k}$.  Note that $|\varepsilon_{j,k}| + |\varepsilon^1_{j,k}| = |\varepsilon|$.  Hence, the sum in (\ref{eq intertwine II 2}) summed over $\beta_{\pm}$ is equal to
\begin{equation} \label{e.intIIp3} \sum_{1\le j<k\le n}\pm\tr(Z^{\varepsilon_{j,k}})\tr(Z^{\varepsilon^1_{j,k}}) =  \sum_{1\le j<k\le n} \pm V_{\varepsilon_{j,k}}(Z)V_{\varepsilon^1_{j,k}}(Z).
\end{equation}
Hence, if we define
\begin{equation} \label{e.intIIp4} Q^{\pm}_\varepsilon = n_\pm(\varepsilon)v_\varepsilon + 2\sum_{1\le j<k\le |\varepsilon|} \pm v_{\varepsilon_{j,k}}v_{\varepsilon^1_{j,k}}, \end{equation}
which have homogeneous trace degree $|\varepsilon|$, then (\ref{eq intertwine II 1}) -- (\ref{e.intIIp3}) show that
\[ Q^{s,t}_\varepsilon = \left(s-\frac{t}{2}\right)Q^+_\varepsilon +\frac{t}{2}Q^-_\varepsilon \]
satisfies (\ref{eq q formula}), proving item (1) of the theorem.

For item (2), fix $\delta\in\EX$ and let $m=|\delta|$.   We calculate for each $\xi\in \M_N$
\[ (\del_\xi V_\varepsilon)(Z)(\del_\xi V_\delta)(Z) = \sum_{j=1}^n\sum_{k=1}^m \tr(Z^{\varepsilon_1}Z^{\varepsilon_2}\cdots (Z\xi)^{\varepsilon_j}\cdots Z^{\varepsilon_n}) \cdot \tr(Z^{\delta_1}Z^{\delta_2}\cdots (Z\xi)^{\delta_k}\cdots Z^{\delta_m}), \]
again making use of convention (\ref{e.intIIp1}).  Using the cyclic property of the trace, we can write the terms in this sum in the form
\[ \pm\tr(\xi Z^{\varepsilon^{(j)}})\tr(\xi Z^{\delta^{(k)}}) \]
where $\varepsilon^{(j)}$ is a certain cyclic permutation of $\varepsilon$, and $\delta^{(k)}$ is a certain cyclic permutation of $\delta$.  Summing over $\xi\in\beta_\pm$ and using magic formula (\ref{e.m4a}), we then have
\begin{equation} \label{e.intIIp5} \sum_{\xi\in\beta_\pm} (\del_\xi V_\varepsilon)(Z)(\del_\xi V_\delta)(Z) = \frac{1}{N^2}\sum_{j=1}^n\sum_{k=1}^m \pm \tr(Z^{\varepsilon^{(j)}}Z^{\delta^{(k)}}) = \frac{1}{N^2}\sum_{j=1}^n\sum_{k=1}^m \pm V_{\varepsilon^{(j)}\delta^{(k)}}(Z). \end{equation}
Since $\varepsilon^{(j)}\delta^{(k)}$ has length $|\varepsilon|+|\delta|$, the $\mathscr{W}$ elements
\begin{equation} \label{eq intertwiner II 5} 
R^{\pm}_{\varepsilon,\delta} = \sum_{j=1}^{|\varepsilon|}\sum_{k=1}^{|\delta|} \pm v_{\varepsilon^{(j)}\delta^{(k)}} \end{equation}
have homogeneous trace degree $|\varepsilon|+|\delta|$, and (\ref{e.intIIp5}) therefore shows that
\begin{equation} \label{e.intIIp6} r^{s,t}_{\varepsilon,\delta} = \left(s-\frac{t}{2}\right)R^+_{\varepsilon,\delta} + \frac{t}{2}R^-_{\varepsilon,\delta} \end{equation}
satisfies (\ref{eq r formula}), proving item (2) of the theorem.
\end{proof}

\begin{theorem}[Intertwining Formula II]\label{t.inter2}Fix $s,t\in\R$. Let $\left\{Q^{s,t}_{\varepsilon}\colon \varepsilon
\in\EX\right\}$ and $\left\{R^{s,t}_{\varepsilon,\delta}\colon \varepsilon,\delta\in\EX\right\}$ be the polynomials from Theorem \ref{t.inter1}, and define
\begin{equation}
\widetilde{\mathcal{D}}_{s,t}=\frac12\sum_{\varepsilon\in\EX}Q^{s,t}_{\varepsilon}%
\frac{\partial}{\partial v_{\varepsilon}}\qquad\text{ and }\qquad
\widetilde{\mathcal{L}}_{s,t}=\frac12\sum_{\varepsilon,\delta\in\EX}R^{s,t}_{\varepsilon
,\delta}\frac{\partial^{2}}{\partial v_{\varepsilon}\partial v_{\delta}%
}\label{e.zl}%
\end{equation}
which are first and second order differential operators on $\mathscr{W}$
which preserve trace degree. Then, for all $N\in\mathbb{N}$ and $P\in\mathscr{W}$,
\begin{equation}
\frac{1}{2}A_{s,t}^{N}P_{N}=\left[\widetilde{\mathcal{D}}_{s,t}P+\frac{1}{N^{2}%
}\widetilde{\mathcal{L}}_{s,t}P\right]_{N}.\label{e.apn}%
\end{equation}
\end{theorem}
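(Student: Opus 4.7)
The plan is to reduce the statement to an application of the chain rule and product rule on functions of the form $P_N = P(\mathbf{V})$, using Theorem \ref{t.inter1} to replace the resulting sums of derivatives. By linearity it suffices to prove (\ref{e.apn}) for a single monomial $P\in\mathscr{W}$; then for $P\in \mathscr{W}_n$ only finitely many partial derivatives $\partial_{v_\varepsilon} P$ and $\partial_{v_\varepsilon}\partial_{v_\delta} P$ are nonzero, so the formal infinite sums defining $\widetilde{\mathcal{D}}_{s,t}$ and $\widetilde{\mathcal{L}}_{s,t}$ reduce to finite ones.

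First I would apply the chain rule to get, for any $\xi\in\mathfrak{gl}_N$,
\begin{equation*}
\partial_\xi [P(\mathbf{V})] = \sum_{\varepsilon\in\mathscr{E}} (\partial_{v_\varepsilon} P)(\mathbf{V}) \cdot \partial_\xi V_\varepsilon,
\end{equation*}
and then apply chain and product rules once more to get
\begin{equation*}
\partial_\xi^2[P(\mathbf{V})] = \sum_{\varepsilon} (\partial_{v_\varepsilon} P)(\mathbf{V})\cdot \partial_\xi^2 V_\varepsilon + \sum_{\varepsilon,\delta} (\partial_{v_\varepsilon}\partial_{v_\delta} P)(\mathbf{V})\cdot (\partial_\xi V_\varepsilon)(\partial_\xi V_\delta).
\end{equation*}
Next I would take the weighted sum defining $A_{s,t}^N$: summing $(s-t/2)\partial_X^2 + (t/2)\partial_{iX}^2$ over $X\in\beta_N$ and exchanging the finite sums (over $\varepsilon,\delta$) with the sum over $\beta_N$. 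On the single-derivative terms, part (1) of Theorem \ref{t.inter1} gives
\begin{equation*}
\left(s-\tfrac{t}{2}\right)\sum_{X\in\beta_N}\partial_X^2 V_\varepsilon + \tfrac{t}{2}\sum_{X\in\beta_N}\partial_{iX}^2 V_\varepsilon = A_{s,t}^N V_\varepsilon = [Q_\varepsilon^{s,t}]_N,
\end{equation*}
and on the bilinear terms, part (2) of Theorem \ref{t.inter1} gives the analogous sum equal to $\frac{1}{N^2}[R_{\varepsilon,\delta}^{s,t}]_N$. Substituting and recognizing that, for any $F\in\mathscr{W}$, $(F\cdot G(\mathbf{V}))(Z) = [FG]_N(Z)$ when both factors are interpreted correctly, one gets
\begin{equation*}
A_{s,t}^N P_N = \left[\sum_{\varepsilon} Q_\varepsilon^{s,t}\, \partial_{v_\varepsilon} P + \tfrac{1}{N^2}\sum_{\varepsilon,\delta} R_{\varepsilon,\delta}^{s,t}\, \partial_{v_\varepsilon}\partial_{v_\delta} P\right]_N,
\end{equation*}
which is exactly $2[\widetilde{\mathcal{D}}_{s,t}P + \frac{1}{N^2}\widetilde{\mathcal{L}}_{s,t}P]_N$ after dividing by $2$.

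Finally, I would verify that $\widetilde{\mathcal{D}}_{s,t}$ and $\widetilde{\mathcal{L}}_{s,t}$ preserve trace degree and are well defined on all of $\mathscr{W}$. Since $\partial/\partial v_\varepsilon$ lowers trace degree by $|\varepsilon|$ and $Q_\varepsilon^{s,t}$ has homogeneous trace degree $|\varepsilon|$, each summand of $\widetilde{\mathcal{D}}_{s,t}$ preserves trace degree; similarly $R_{\varepsilon,\delta}^{s,t}$ has homogeneous trace degree $|\varepsilon|+|\delta|$ while $\partial_{v_\varepsilon}\partial_{v_\delta}$ drops the trace degree by that same amount, so $\widetilde{\mathcal{L}}_{s,t}$ preserves trace degree as well. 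In particular each $\mathscr{W}_n$ is invariant, and the sums are genuinely finite on any given $P$.

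The main potential obstacle is bookkeeping: careful handling of the sum-exchange between $\beta_N$ and the (formally infinite) sums over $\varepsilon,\delta\in\mathscr{E}$, and making sure the factor of $1/2$ and the factor of $1/N^2$ appear precisely where claimed. Because each monomial in $P$ involves only finitely many $v_\varepsilon$ and these partial-derivative sums collapse to finite ones, the interchange is elementary; the proof is then essentially an application of the chain rule combined with Theorem \ref{t.inter1}, and I expect no serious analytic difficulty.
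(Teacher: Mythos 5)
Your proposal is correct and follows essentially the same route as the paper's proof: expand $\partial_\xi^2 P_N$ via the chain and product rules, sum the weighted combination defining $A_{s,t}^N$ over the basis, and substitute the identities of Theorem \ref{t.inter1} for the resulting first-order and bilinear terms, with the trace-degree preservation following from the homogeneity of $Q_\varepsilon^{s,t}$ and $R_{\varepsilon,\delta}^{s,t}$. No gaps.
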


\begin{remark} Definition \ref{definition Ast and must} of $A_{s,t}^N$ is stated for $s,t>0$ and $s>t/2$; it is only in this regime that the operator $A_{s,t}^N$ is negative-definite and the tools of heat kernel analysis apply.  The operator itself is well-defined-for any $s,t\in\R$, however, and it will be convenient to utilize this in some of what follows.
\end{remark}

\begin{proof} By the chain rule, if $\xi \in \M_N$ then
\begin{align*}
\partial_{\xi}^{2}P_{N} &  = \sum_{\varepsilon\in\EX} \partial_{\xi}\left[  \left(  \frac{\partial
P}{\partial v_{\varepsilon}}\right)(\mx{V})\cdot  \partial_{\xi}V_{\varepsilon}\right]  \\
&  = \sum_{\varepsilon\in\EX}\left(\frac{\partial P}{\partial v_{\varepsilon}}\right)(\mx{V})\cdot  \partial_{\xi}^{2}V_{\varepsilon}+ \sum_{\varepsilon,\delta\in\EX} \left(\frac{\partial^{2}P}{\partial v_{\varepsilon}\partial v_{\delta}
}\right)(\mx{V})\cdot \left(\partial_{\xi}V_{\varepsilon}\right)\left(\partial_{\xi}V_{\delta}\right)
\end{align*}
from which it follows that
\begin{align*}
A_{s,t}^{N}P_{N} &  = \sum_{\varepsilon\in\EX} \left(  \frac{\partial P}{\partial v_{\varepsilon}}\right)(\mx{V})\cdot  A_{s,t}^{N}V_{\varepsilon}\\
&  +\sum_{\varepsilon,\delta\in\EX} \left(\frac{\partial^{2}P}{\partial v_{\varepsilon}\partial v_{\delta}}\right)  (\mx{V})  \left[  \left(  s-\frac{t}{2}\right)
\sum_{\xi\in\beta}\partial_{\xi}V_{\varepsilon}\cdot\partial_{\xi}V_{\delta}+\frac{t}{2}%
\sum_{\xi\in i\beta} \left(  \partial_{\xi}V_{\varepsilon}\right)  \left(  \partial
_{\xi}V_{\delta}\right)  \right].
\end{align*}
Combining this equation with the results of Theorem \ref{t.inter1} completes
the proof.
\end{proof}

\noindent We record one further intertwining formula that will be useful in the proofs of Theorems \ref{Main Theorem} and  \ref{t.concentration}.

\begin{lemma}
\label{l.sesqui}There exists a sequilinear form (conjugate linear in the second variable)
\[ \mathcal{B}:\C[u,u^{-1};\mx{v}]\times\C[u,u^{-1};\mx{v}]\rightarrow \mathscr{W} \]
such that, for all $P,Q\in\C[u,u^{-1};\mx{v}]$, we have $\deg\left(\mathcal{B}(P,Q)\right)=\deg(P)+\deg(Q)$
and
\[ [\mathcal{B}(P,Q)]_{N}(Z)=\tr[P_{N}(Z)Q_{N}(Z)^\ast] \qquad \text{ for all }Z\in \GL_N. \]
\end{lemma}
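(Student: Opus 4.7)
The plan is to construct $\mathcal{B}$ explicitly by expanding its arguments as Laurent polynomials in $u$ and transporting coefficients into $\mathscr{W}$ via the embeddings $\iota,\iota^*$ of Notation \ref{notation iota}, while recording the matrix powers $Z^k(Z^*)^j$ by the trace generator $v_{\varepsilon(k,j)}$. Given $P,Q\in\C[u,u^{-1};\mx{v}]$, write $P(u;\mx{v})=\sum_{k\in\Z} u^k q_k(\mx{v})$ and $Q(u;\mx{v})=\sum_{j\in\Z} u^j r_j(\mx{v})$ with $q_k,r_j\in\C[\mx{v}]$ finitely supported, and define
\[
\mathcal{B}(P,Q) := \sum_{k,j\in\Z} W_{k,j}\cdot\iota(q_k)\cdot\iota^*(r_j),
\]
where $W_{k,j}:=v_{\varepsilon(k,j)}$ for $(k,j)\ne(0,0)$ and $W_{0,0}:=1$ (matching $\tr(I_N)=1$ under the normalized trace). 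Sesquilinearity is then immediate from the linearity of $\iota$ and the conjugate linearity of $\iota^*$.

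For the evaluation identity, I would use (\ref{eq iota intertwiner}) to record $[\iota(q_k)]_N(Z)=q_k(\mx{V}(Z))$ and $[\iota^*(r_j)]_N(Z)=\overline{r_j(\mx{V}(Z))}$, together with $[W_{k,j}]_N(Z)=\tr(Z^k(Z^*)^j)$ (which collapses to $\tr(I_N)=1$ when $k=j=0$). Since the scalars commute with matrices and pull through the trace, evaluating term by term and regrouping gives
\[
[\mathcal{B}(P,Q)]_N(Z)=\tr\Bigl(\sum_{k} Z^k q_k(\mx{V}(Z))\cdot \sum_{j} (Z^*)^j\,\overline{r_j(\mx{V}(Z))}\Bigr)=\tr\bigl(P_N(Z)Q_N(Z)^*\bigr),
\]
using $Q_N(Z)^*=\sum_j(Z^j)^*\overline{r_j(\mx{V}(Z))}$ together with the identity $(Z^j)^*=(Z^*)^j$ valid for every $j\in\Z$ (for $j=-m<0$ this follows from $(Z^{-1})^*=(Z^*)^{-1}$).

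For the degree, each summand $W_{k,j}\iota(q_k)\iota^*(r_j)$ has trace degree $|\varepsilon(k,j)|+\deg(q_k)+\deg(r_j)=|k|+|j|+\deg(q_k)+\deg(r_j)$ by (\ref{eq iota degree}) and the multiplicativity of $\deg$ on nonzero products in $\mathscr{W}$. Maximizing over $(k,j)$ and invoking $\deg(P)=\max_k(|k|+\deg(q_k))$ with the analogous formula for $Q$ immediately gives $\deg(\mathcal{B}(P,Q))\le\deg(P)+\deg(Q)$. I expect the main obstacle to be upgrading this bound to equality: one must verify that the top-degree contributions from different $(k,j)$-summands do not cancel. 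The natural approach is to isolate a pair $(k_0,j_0)$ achieving the maximum and exploit that, when $(k_0,j_0)\ne(0,0)$, the distinguishing factor $v_{\varepsilon(k_0,j_0)}$ appears in no other summand's $W_{k',j'}$, while $\iota(q_{k_0})$ and $\iota^*(r_{j_0})$ land in the disjoint free sub-algebras generated by $\{v_{\varepsilon(\ell,0)}\}_{\ell\ne 0}$ and $\{v_{\varepsilon(0,m)}\}_{m\ne 0}$; the only delicate case is cancellation between a $(k_0,0)$- or $(0,j_0)$-summand and the $(0,0)$-summand, which must be handled by additionally tracking the degree of the $v_{\varepsilon(k_0,0)}$ (respectively $v_{\varepsilon(0,j_0)}$) factor.
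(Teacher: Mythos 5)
Your construction is exactly the paper's: the paper defines $\mathcal{B}$ as the unique sesquilinear form with $\mathcal{B}(u^kp,u^\ell q)=v_{\varepsilon(k,\ell)}\,\iota(p)\iota^\ast(q)$ for $p,q\in\C[\mx{v}]$, and verifies the evaluation identity by the same computation $\tr[Z^kp_N(Z)\,(Z^\ell)^{\ast} q_N(Z)^\ast]=\tr(Z^kZ^{\ast\ell})\,p_N(Z)q_N(Z)^\ast$ via (\ref{eq iota intertwiner}); your term-by-term regrouping over $(k,j)$ is just this sesquilinear extension written out, and your treatment of $(Z^j)^\ast=(Z^\ast)^j$ and of the $(0,0)$ term (interpreting $v_{\varepsilon(0,0)}=1$, matching $\tr(I_N)=1$) is fine.

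The one place you go beyond the paper --- trying to upgrade $\deg\mathcal{B}(P,Q)\le\deg P+\deg Q$ to equality by ruling out top-degree cancellation --- cannot be carried out, and the ``delicate case'' you flagged is fatal rather than fixable. Take $P(u;\mx{v})=u^k-v_k$ (nonzero, trace degree $|k|$) and $Q=1$; then
\[
\tr[P_N(Z)Q_N(Z)^\ast]=\tr(Z^k)-\tr(Z^k)\,\tr(I_N)=0
\]
because the trace is normalized, so $\mathcal{B}(P,Q)=v_{\varepsilon(k,0)}-v_{\varepsilon(k,0)}=0$, and no sesquilinear $\mathcal{B}$ satisfying the evaluation identity can have degree exactly $\deg P+\deg Q$ on this pair. (Likewise $P=u^kv_\ell-u^\ell v_k$, $Q=1$ shows that even two summands $(k,0)$ and $(\ell,0)$ with distinct $v_{\varepsilon}$-prefactors can cancel, since $v_{\varepsilon(k,0)}$ also occurs through $\iota$ in the other summand; so your ``disjoint subalgebras'' argument does not isolate the top degree.) The degree claim should therefore be read as exact additivity on the spanning monomials $u^kp$, $u^\ell q$ --- which is all the paper's one-line appeal to (\ref{eq iota degree}) actually establishes --- or, for general $P,Q$, as the inequality $\deg\mathcal{B}(P,Q)\le\deg P+\deg Q$; the inequality is all that is used later, where $\mathcal{B}$ only needs to map $\C_n[u,u^{-1};\mx{v}]\times\C_n[u,u^{-1};\mx{v}]$ into $\mathscr{W}_{2n}$ in the proofs of Theorems \ref{Main Theorem} and \ref{t.concentration}. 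So keep your construction and your bound, and abandon the attempt at equality.
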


\begin{proof}
By sesquilinearity, it suffices to define $\mathcal{B}$ on $P,Q\in\C[u,u^{-1};\mx{v}]$ of the form $P(u;\mx{v}) = u^kp(\mx{v})$ and $Q(u;\mx{v}) = u^\ell q(\mx{v})$ for $k,\ell\in\Z$ and $p,q\in \C[\mx{v}]$.  We compute, for $Z\in \GL_N$, that
\begin{align*} \tr[P_N(Z)Q_N(Z)^\ast] = \tr[Z^k p_N(Z) Z^{\ast \ell} q_N(Z)^\ast] &= \tr(Z^kZ^{\ast \ell}) p_N(Z)q_N(Z)^\ast \\
&= [v_{\varepsilon(k,\ell)}]_N(Z) [\iota(p)]_N(Z)[\iota^\ast(q)]_N(Z) \end{align*}
by (\ref{eq iota intertwiner}), where $\varepsilon(k,\ell)$ is defined in (\ref{e.emn}).  Thus, we take $\mathscr{B}\colon\C[u,u^{-1};\mx{v}]\times\C[u,u^{-1};\mx{v}]\to\mathscr{W}$ to be the unique sesquilinear form such that, for $p,q\in\C[\mx{v}]$,
\[ \mathcal{B}(u^kp,u^\ell q) = v_{\varepsilon(k,\ell)} \iota(p)\iota^\ast(q). \]
This is trace degree additive by (\ref{eq iota degree}).  This concludes the proof.
\end{proof}

\section{Limit Theorems \label{Section Limit Theorems}}

In this section, we prove that the heat kernel measures $\rho^N_{s}$ on $\U_N$
and $\mu^N_{s,t}$ on $\GL_N$ each concentrate all their mass in such a way that the space of trace polynomials $\left[\C[u,u^{-1};\mx{v}]\right]_N$ collapses onto the space of Laurent polynomials $[\C[u,u^{-1}]]_N$ as $N\to\infty$.  To motivate this, consider the scalar-valued case: if $Q\in \C[\mx{v}]$, then Theorem \ref{t.intertwine.new} shows
that
\begin{equation} \label{e.limintro1} e^{\frac{s}{2}\Delta_{\U_N}}(Q_N) = \left[e^{\frac{s}{2}(\mathcal{D}-\frac{1}{N^2}\mathcal{L})}Q\right]_N = \left[e^{\frac{s}{2}\mathcal{D}}Q\right]_N + O\left(\frac{1}{N^2}\right), \end{equation}
where the second equality will be made precise in Lemma \ref{l.findim} below.  Evaluating (\ref{e.limintro1}) at $I_N$ and using (\ref{eq rho t def}) shows that
\begin{equation} \label{e.4.2*} \E_{\rho^N_s}(Q_N) = \left(e^{\frac{s}{2}\Delta_{\U_N}}Q_N\right)(I_N) = \left(e^{\frac{s}{2}\mathcal{D}}Q\right)(\mx{1}) + O\left(\frac{1}{N^2}\right), \end{equation}
where $Q(\mx{1}) = \left.Q(\mx{v})\right|_{\mx{v}=\mx{1}}$ is the evaluation of $Q$ at all variables $v_k=1$.  Theorem \ref{cor product rule and homom} show that $e^{\frac{s}{2}\mathcal{D}}$ is an algebra homomorphism on $\C[\mx{v}]$, and so
\begin{equation} \label{e.limintro2} \left[e^{\frac{s}{2}\mathcal{D}}Q^2\right]_N = \left(\left[e^{\frac{s}{2}\mathcal{D}}Q\right]_N\right)^2. \end{equation}
If $Q$ has real coefficients, then $Q^2 = |Q|^2$, and so (\ref{e.4.2*}) applied to $Q^2$ and (\ref{e.limintro2}) evaluated at $\mx{1}$ show that
\begin{equation*}
\mathrm{Var}_{\rho^N_{s}}(Q_{N}) = \int_{\U_N}
|Q_{N}(U)|^{2}\,\rho^N_{s}(dU) - \left| \int_{\U_N} Q_{N}(U)\,\rho^N
_{s}(dU)\right| ^{2} = O\left( \frac{1}{N^{2}}\right).
\end{equation*}
Thus, the random variables $Q_N$ concentrate on their limit mean (which is $\pi_s Q$ by Theorem \ref{t.new1.9}), summably fast.  Section \ref{s.concentration} fleshes out this argument in the general case (where $Q$ need not have real coefficients, and is more generally in $\C[u,u^{-1};\mx{v}]$).  Sections \ref{section Main Theorem} and \ref{section Limit Norms} then use these ideas to prove Theorems \ref{Main Theorem} and \ref{thm inverse is inverse}.

\subsection{Concentration of Measures\label{s.concentration}}

We begin with an abstract result that will be the gist of all our concentration of measure theorems.

\begin{lemma} \label{l.findim} Let $V$ be a finite dimensional normed $\C$-space and supposed that $D$ and $L$ are two operators on $V$. Then there exists a constant $C=C(D,L,\|\cdot\|_V)<\infty$ such that
\begin{equation} \label{e.findim1}\left\Vert e^{D+\epsilon L}-e^{D}\right\Vert _{\mathrm{End}(V)} \leq C\left\vert \epsilon\right\vert \text{ for all }\left\vert \epsilon
\right\vert \leq1, \end{equation}
where $\|\cdot\|_{\mathrm{End}(V)}$ is the operator norm on $V$. It follows that, if $\varphi\in V^\ast$ is a linear functional, then
\begin{equation} \label{e.findim2} \left|\varphi(e^{D+\epsilon L}x)-\varphi(e^Dx)\right| \le C\|\varphi\|_{V^\ast} \|x\|_V|\epsilon|, \quad x\in V, \; |\epsilon|\le 1, \end{equation}
where $\|\cdot\|_{V^\ast}$ is the dual norm on $V^\ast$.
\end{lemma}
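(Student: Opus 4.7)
The plan is to use Duhamel's formula to express $e^{D+\epsilon L} - e^{D}$ as an integral that is manifestly linear in $\epsilon$, and then bound the integrand uniformly for $|\epsilon|\le 1$. Since $V$ is finite-dimensional, $\mathrm{End}(V)$ is a finite-dimensional Banach algebra under the operator norm $\|\cdot\|_{\mathrm{End}(V)}$ induced by $\|\cdot\|_V$; in particular every operator has finite norm, the exponential series converges absolutely, and the norm is submultiplicative. These facts mean the analytic subtleties are trivial in this setting.

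First, I would record the Duhamel identity
\[ e^{A} - e^{B} = \int_0^1 \frac{d}{ds}\left[e^{sA}e^{(1-s)B}\right] ds = \int_0^1 e^{sA}(A-B)e^{(1-s)B}\,ds, \qquad A,B\in\mathrm{End}(V), \]
which follows from differentiating the analytic one-parameter family $s\mapsto e^{sA}e^{(1-s)B}$ term by term. Taking $A = D+\epsilon L$ and $B = D$ so that $A - B = \epsilon L$, I get
\[ e^{D+\epsilon L} - e^{D} \;=\; \epsilon \int_0^1 e^{s(D+\epsilon L)}\, L\, e^{(1-s)D}\, ds. \]

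Next, I would estimate the integrand. For $|\epsilon|\le 1$ we have $\|D+\epsilon L\|_{\mathrm{End}(V)} \le \|D\|+\|L\|$, whence $\|e^{s(D+\epsilon L)}\|_{\mathrm{End}(V)}\le e^{\|D\|+\|L\|}$ and $\|e^{(1-s)D}\|_{\mathrm{End}(V)}\le e^{\|D\|}$ uniformly for $s\in[0,1]$. By submultiplicativity,
\[ \|e^{D+\epsilon L}-e^{D}\|_{\mathrm{End}(V)} \;\le\; |\epsilon|\,\|L\|_{\mathrm{End}(V)}\,e^{2\|D\|+\|L\|}, \]
which establishes (\ref{e.findim1}) with $C \equiv \|L\|_{\mathrm{End}(V)}\,e^{2\|D\|+\|L\|}$. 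For the dual statement (\ref{e.findim2}), I would just apply $\varphi$ to $(e^{D+\epsilon L}-e^{D})x$ and invoke the definitions of the dual norm and the operator norm, together with (\ref{e.findim1}), to obtain
\[ |\varphi(e^{D+\epsilon L}x)-\varphi(e^{D}x)| \le \|\varphi\|_{V^\ast}\,\|e^{D+\epsilon L}-e^{D}\|_{\mathrm{End}(V)}\,\|x\|_V \le C\|\varphi\|_{V^\ast}\|x\|_V|\epsilon|. \]

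There is no real obstacle: finite-dimensionality collapses the usual analytic concerns (convergence, differentiability, uniform bounds on the exponential), and the whole argument amounts to bookkeeping around Duhamel's identity. The only minor care needed is to ensure the operator-norm bound on $e^{s(D+\epsilon L)}$ is uniform in both $s\in[0,1]$ and $|\epsilon|\le 1$, which is immediate from the triangle inequality on $\|D+\epsilon L\|$.
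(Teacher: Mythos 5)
Your proof is correct, and it is essentially the same argument as the paper's: both express $e^{D+\epsilon L}-e^{D}$ as an integral carrying an explicit factor of $\epsilon$ and then apply crude operator-norm bounds that are uniform for $|\epsilon|\le 1$, with (\ref{e.findim2}) following trivially. The only difference is cosmetic: the paper integrates $\frac{d}{ds}e^{D+sL}$ over $s\in[0,\epsilon]$ using the derivative-of-the-exponential-map formula, whereas you use the one-step Duhamel interpolation $s\mapsto e^{s(D+\epsilon L)}e^{(1-s)D}$, which needs only the product rule and even produces an explicit constant $C=\|L\|e^{2\|D\|+\|L\|}$.
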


\begin{proof}
Using the well known differential of the exponential map (see for 
example \cite[Theorem 1.5.3, p.\ 23]{Duistermaat2000}, \cite[Theorem 3.5, p.\ 70]{HallLieBook}, or \cite[Lemma 3.4, p.\ 35]{Sattinger1993}),
\begin{align*}
\frac{d}{ds}e^{D+sL}  &  =e^{D+sL}\int_{0}^{1}e^{-t\left(  D+sL\right)
}Le^{t\left(  D+sL\right)  }dt\\
&  =\int_{0}^{1}e^{\left(  1-t\right)  \left(  D+sL\right)  }Le^{t\left(
D+sL\right)  }dt,
\end{align*}
we may write
\[
e^{D+\epsilon L}-e^{D}=\int_{0}^{\epsilon}\frac{d}{ds}e^{D+sL}%
ds=\int_{0}^{\epsilon}\left[  \int_{0}^{1}e^{\left(  1-t\right)  \left(
D+sL\right)  }Le^{t\left(  D+sL\right)  }dt\right]  ds.
\]
Crude bounds now show
\[
\left\Vert e^{D+\epsilon L}-e^{D}\right\Vert _{\mathrm{End}\left(
V\right)  }\leq\int_{0}^{|\epsilon|}\left[  \int_{0}^{1}\left\Vert e^{\left(
1-t\right)  \left(  D+sL\right)  }Le^{t\left(  D+sL\right)  }\right\Vert
_{\mathrm{End}\left(  V\right)  }dt\right]  ds\leq C(D,L,\|\cdot\|_V)|\epsilon|, \]
proving (\ref{e.findim1}); (\ref{e.findim2}) follows immediately.
\end{proof}

Theorem \ref{t.new1.9} and Lemma \ref{l.findim} now allow us to give a useful alternate characterization of the evaluations maps $\pi_s$.

%We view $\P\subset \C[u,u^{-1};\mx{v}]$ in the usual way: $p\in\P$ is identified with $p\cdot 1\in\C[u,u^{-1};\mx{v}]$ where $1\in\C[u,u^{-1}]$ is the constant Laurent polynomial.  We may therefore restrict the operator $\D$ to $\P$.  This allows for a succinct and computationally useful description of the evaluation map $\pi_s$ of Theorem \ref{t.concentration}.

\begin{lemma} \label{t.intertwine.1.5.new} For $P\in\C[u,u^{-1};\mx{v}]$ and $s\in\R$, the evaluation map $\pi_s$ can be written in the form
\begin{equation} \label{e.pi_s.new2} (\pi_s P)(u) = \left(e^{-\frac{s}{2}(\mathcal{N}_0+2\mathcal{Z})}P\right)(u;\mx{1}) \end{equation}
where, for $Q\in\C[u,u^{-1};\mx{v}]$, $\left.Q(u;\mx{1}) = Q(u;\mx{v})\right|_{v_k=1, k\ne 0}$.
\end{lemma}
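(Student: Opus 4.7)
The strategy is to show that both sides of the claimed identity define algebra homomorphisms $\C[u,u^{-1};\mx{v}]\to\C[u,u^{-1}]$, reduce the verification to the generator case $P=v_k$, and then establish that case by passing to the $N\to\infty$ limit of the intertwining formula evaluated at the identity matrix.

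First, $\pi_s$ is obviously an algebra homomorphism, and so is $\widetilde{\pi}_sP := \bigl(e^{-\frac{s}{2}(\mathcal{N}_0+2\mathcal{Z})}P\bigr)(u;\mx{1})$: the operator $\mathcal{N}_0+2\mathcal{Z}$ is a first-order differential operator whose only derivatives are $\partial/\partial v_k$, hence a derivation on $\C[u,u^{-1};\mx{v}]$, so its exponential is an algebra automorphism, and composing with the substitution $\mx{v}\mapsto\mx{1}$ preserves multiplicativity. Both maps fix $u^{\pm 1}$, so it suffices to check that $\widetilde{\pi}_s(v_k)=\nu_k(s)$ for every $k\in\Z\setminus\{0\}$.

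On the subalgebra $\C[\mx{v}]$, the operators $\mathcal{N}_1$ and $\mathcal{Y}$ each contain $\partial/\partial u$ and therefore annihilate it, so $\mathcal{D}=-(\mathcal{N}_0+2\mathcal{Z})$ there; likewise $\mathcal{L}$ preserves $\C[\mx{v}]$ (its $u$-dependent term also contains $\partial/\partial u$), and by Lemma \ref{lemma degree and trace preserving} the finite-dimensional subspace $\C_{|k|}[\mx{v}]$ is invariant under $\mathcal{D}$, $\mathcal{L}$, and hence $\mathcal{D}_N$. I then apply the intertwining formula \eqref{e.int2.new} to $v_k$: since $(v_k)_N(U)=\tr(U^k)\,I_N$ and $\mx{V}(I_N)=\mx{1}$, evaluating $e^{\frac{s}{2}\Delta_{\U_N}}(v_k)_N=[e^{\frac{s}{2}\mathcal{D}_N}v_k]_N$ at $U=I_N$ yields the scalar identity
\[
\E_{\rho_s^N}[\tr(U^k)] = \bigl(e^{\frac{s}{2}\mathcal{D}_N}v_k\bigr)(\mx{1}).
\]

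To conclude, Lemma \ref{l.findim} applied to $V=\C_{|k|}[\mx{v}]$ with $D=\frac{s}{2}\mathcal{D}$ and $\epsilon L=-\frac{s}{2N^2}\mathcal{L}$ gives $\bigl(e^{\frac{s}{2}\mathcal{D}_N}v_k\bigr)(\mx{1})\to\bigl(e^{\frac{s}{2}\mathcal{D}}v_k\bigr)(\mx{1})$, while Theorem \ref{t.new1.9} gives $\E_{\rho_s^N}[\tr(U^k)]\to\nu_k(s)$; combining these and using $\mathcal{D}|_{\C[\mx{v}]}=-(\mathcal{N}_0+2\mathcal{Z})$ proves the identity for $s>0$. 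Both sides are entire in $s$---the left by Biane's explicit formula \eqref{e.new1.4}, the right because $s\mapsto e^{\frac{s}{2}\mathcal{D}}v_k$ evolves in a finite-dimensional space---so the identity extends to all $s\in\R$ by analytic continuation. The main thing to watch is the matrix/scalar bookkeeping at the evaluation step: a $\C[\mx{v}]$-polynomial $q$ evaluates under the trace polynomial calculus to $q(\mx{V}(\cdot))\,I_N$, but since $\mx{V}(I_N)=\mx{1}$, equating the matrix-valued sides at $U=I_N$ reduces cleanly to the desired scalar identity.
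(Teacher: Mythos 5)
Your proof is correct and follows essentially the same route as the paper's: identify $\left.\mathcal{D}\right|_{\C[\mx{v}]}=-(\mathcal{N}_0+2\mathcal{Z})$, reduce multiplicatively to the generators $v_k$, and compute $\bigl(e^{\frac{s}{2}\mathcal{D}}v_k\bigr)(\mx{1})=\nu_k(s)$ as the $N\to\infty$ limit of $\bigl(e^{\frac{s}{2}\mathcal{D}_N}v_k\bigr)(\mx{1})=\E_{\rho_s^N}[\tr(U^k)]$ via the intertwining formula (\ref{e.int2.new}), Lemma \ref{l.findim}, and Theorem \ref{t.new1.9}. Your only departures are cosmetic: you phrase the reduction as equality of two algebra homomorphisms (using that $\mathcal{N}_0+2\mathcal{Z}$ is a derivation) rather than expanding $P=\sum_k u^k p_k(\mx{v})$ and invoking the partial product rule on $\C[\mx{v}]$, and you make explicit the analytic continuation in $s$ from $s>0$ to all of $\R$, a point the paper leaves implicit.
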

%\noindent Theorem \ref{t.intertwine.1.5.new} is proved on page \pageref{proof of t.intertwine.1.5.new}.
% Theorem \ref{t.intertwine.1.5.new}, effectively demonstrating the  way in which Lemma \ref{l.findim} will be used throughout this paper.

\begin{proof} \label{proof of t.intertwine.1.5.new} First, note from Definition \ref{Def LN L} that, for $p\in\C[\mx{v}]$, $\mathcal{N}_1 p = \mathcal{Y}p = 0$; thus, from (\ref{def DN}), we have
\[ \left.\D\right|_{\C[\mx{v}]} = \left.\left(-\mathcal{N}_0-2\mathcal{Z}\right)\right|_{\C[\mx{v}]}. \]
If $P(u;\mx{v}) = \sum_k u^kp_k(\mx{v})$ with $p_k\in\C[\mx{v}]$, then $\big((-\mathcal{N}_0-2\mathcal{Z})P\big)(u;\mx{v}) = \sum_k u^k (\D p_k)(\mx{v})$; hence, to prove (\ref{e.pi_s.new2}), it suffices to show that
\begin{equation} \label{e.pi_s.p.proof} \pi_s(p) = \left(e^{\frac{s}{2}\D}p\right)(\mx{1}), \qquad p\in\C[\mx{v}]. \end{equation}
By Theorem \ref{cor product rule and homom}, $e^{\frac{s}{2}\D}$ is a homomorphism of $\C[\mx{v}]$.  Hence, to prove \ref{e.pi_s.p.proof}, it suffices to show that
\begin{equation} \label{e.pi_s.p.proof2} \left(e^{\frac{s}{2}\D}v_k\right)(\mx{1}) = \pi_s(v_k) = \nu_k(s), \qquad k\in\Z\setminus\{0\}. \end{equation}
Theorem \ref{t.new1.9}, together with (\ref{eq rho t def}) and (\ref{e.int2.new}), shows that
\begin{equation} \label{e.pi_s.p.proof3} \nu_k(s) = \lim_{N\to\infty} \left(e^{\frac{s}{2}\Delta_{\U_N}}\tr[(\cdot)^k]\right)(I_N) = \lim_{N\to\infty}\left(e^{\frac{s}{2}\D_N}v_k\right)(\mx{1}). \end{equation}
On the other hand, $\varphi(p) = p(\mx{1})$ is a linear functional on $\C[\mx{v}]$, and $v_k\in\C_k[\mx{v}]$ which is finite-dimensional.  Since $\D_N = \D-\frac{1}{N^2}\L$ and both $\D$ and $\L$ leave $\C_k[\mx{v}]$ invariant, Lemma \ref{l.findim} shows that
\begin{equation} \label{e.pi_s.p.proof4} \left|\left(e^{\frac{s}{2}\D_N}v_k\right)(\mx{1})- \left(e^{\frac{s}{2}\D}v_k\right)(\mx{1})\right| = O\left(\frac{1}{N^2}\right). \end{equation}
Equations (\ref{e.pi_s.p.proof3}) and (\ref{e.pi_s.p.proof4}) imply (\ref{e.pi_s.p.proof2}), concluding the proof.
\end{proof}

The next lemma relates $\widetilde{\mathcal{D}}_{s,t}$ to the evaluation map $\pi_{s-t}$, which will lead to the proof of Theorem \ref{t.new1.10}.  Recall the inclusion maps $\iota,\iota^\ast\colon \C[\mx{v}]\hookrightarrow \mathscr{W}$ of Notation \ref{notation iota}.

\begin{lemma} \label{lemma Linfty nu(s)} Let $s,t>0$ with $s>t/2$.  Let $\widetilde{\mathcal{D}}_{s,t}$ be given as in (\ref{e.zl}).  Then, for any $Q\in\C[\mx{v}]$,
\begin{equation} \label{eq Zst nu(s-t)} [e^{\widetilde{\mathcal{D}}_{s,t}}\iota(Q)](\mx{1}) = \pi_{s-t}Q. \end{equation} %\qquad\text{and}\qquad 
%[e^{\widetilde{\mathcal{D}}_{s,t}}\iota^\ast(Q)](\mx{1})  = \pi_{s-t}\overline{Q} \end{equation}
%where $\overline{Q}\in\P^0$ is the polynomial whose coefficients are the complex conjugates of those in $Q$.
\end{lemma}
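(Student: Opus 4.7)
The plan is to exploit that, for $Q\in\C[\mx{v}]$, the function $Q_N$ is holomorphic on $\GL_N$ (being a polynomial in the functions $Z\mapsto\tr(Z^k)$, $k\in\Z$, each of which is holomorphic on $\GL_N$). This will let me collapse the two-parameter operator $A^N_{s,t}$ to a rescaled Laplacian on $Q_N$, match the resulting scalar intertwining of Theorem \ref{t.intertwine.new} against the word-intertwining of Theorem \ref{t.inter2}, and obtain the claimed identity by letting $N\to\infty$.

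First I would observe that for any holomorphic $F$ on $\GL_N$ and $X\in\u_N$, complex linearity along the curve $t\mapsto Ze^{tX}$ gives $\partial_{iX}F = i\,\partial_X F$, hence $\partial_{iX}^2 F = -\partial_X^2 F$. Substituting into (\ref{eq Ast def}) yields
\[
A^N_{s,t}Q_N = (s-t)\sum_{X\in\beta_N}\partial_X^2 Q_N.
\]
The derivative identities of Theorem \ref{t.derivatives} (and thus the proof of Theorem \ref{t.intertwine.new}) are pointwise identities valid on all of $\GL_N$, so viewing $Q$ as an element of $\C[u,u^{-1};\mx{v}]$ I obtain $\sum_{X\in\beta_N}\partial_X^2 Q_N = [\mathcal{D}_N Q]_N$. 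Both $\mathcal{D}$ and $\mathcal{L}$ preserve the finite-dimensional subspace $\C_n[u,u^{-1};\mx{v}]$ (where $n=\deg Q$) by Corollary \ref{c.Pn-inv}, so exponentiating on this invariant subspace and applying $[\cdot]_N$ gives
\[
e^{\frac{1}{2}A^N_{s,t}}Q_N = \bigl[e^{\frac{s-t}{2}\mathcal{D}_N}Q\bigr]_N \qquad\text{on }\GL_N.
\]

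Next, evaluating at $Z=I_N$---where $\tr(I_N^k)=1$ for every $k$, so $[\cdot]_N(I_N)$ coincides with evaluation at $\mx{v}=\mx{1}$---and using (\ref{eq must def}) produces
\[
\int_{\GL_N}Q_N(Z)\,\mu^N_{s,t}(dZ) = \bigl(e^{\frac{s-t}{2}\mathcal{D}_N}Q\bigr)(\mx{1}).
\]
Independently, Theorem \ref{t.inter2} applied to $P=\iota(Q)$, exponentiated on the finite-dimensional $(\widetilde{\mathcal{D}}_{s,t},\widetilde{\mathcal{L}}_{s,t})$-invariant subspace $\mathscr{W}_n$ and evaluated at $I_N$, gives the companion identity
\[
\int_{\GL_N}Q_N(Z)\,\mu^N_{s,t}(dZ) = \bigl[e^{\widetilde{\mathcal{D}}_{s,t}+\frac{1}{N^2}\widetilde{\mathcal{L}}_{s,t}}\iota(Q)\bigr](\mx{1}).
\]
Equating these two expressions yields a scalar identity valid for every $N$.

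To conclude, I would let $N\to\infty$. Applying Lemma \ref{l.findim} to each side---on the left with $V=\C_n[u,u^{-1};\mx{v}]$, $D=\tfrac{s-t}{2}\mathcal{D}$, and perturbation $-\tfrac{s-t}{2N^2}\mathcal{L}$; on the right with $V=\mathscr{W}_n$, $D=\widetilde{\mathcal{D}}_{s,t}$, and perturbation $\tfrac{1}{N^2}\widetilde{\mathcal{L}}_{s,t}$---kills the $1/N^2$ perturbations in the limit, producing $(e^{\frac{s-t}{2}\mathcal{D}}Q)(\mx{1}) = [e^{\widetilde{\mathcal{D}}_{s,t}}\iota(Q)](\mx{1})$. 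The left-hand side equals $\pi_{s-t}Q$ by Lemma \ref{t.intertwine.1.5.new} (which identifies $\mathcal{D}|_{\C[\mx{v}]}$ with $-(\mathcal{N}_0+2\mathcal{Z})$), completing the proof. The main obstacle I anticipate is the holomorphic reduction in the first step: once the two parameters $(s,t)$ have been collapsed onto the single parameter $s-t$, everything else is the routine assembly of two intertwining identities via the $1/N^2$-perturbation bound.
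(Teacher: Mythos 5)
Your proof is correct and follows essentially the same route as the paper's: holomorphicity of $Q_N$ collapses $A^N_{s,t}$ to $(s-t)\sum_{X\in\beta_N}\partial_X^2$, the two intertwining formulas (Theorems \ref{t.intertwine.new} and \ref{t.inter2}) are compared after evaluation at the identity, Lemma \ref{l.findim} removes the $1/N^2$ perturbations in the limit, and Lemma \ref{t.intertwine.1.5.new} identifies the limit with $\pi_{s-t}Q$. The only deviation is your detour through $\int_{\GL_N}Q_N\,d\mu^N_{s,t}$, which the paper avoids (it equates the two exponentials directly as functions and evaluates at $I_N$) and which tacitly requires Remark \ref{r.Robinson}/Theorem \ref{t.heat} to justify applying (\ref{eq must def}) to the unbounded trace polynomial $Q_N$ and to identify the semigroup with its power series — facts the paper does establish, so this is a citation point rather than a gap.
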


\begin{proof} If $f\colon \GL_N\to \M_N$ is holomorphic, then $\partial_{iX}f= i\partial_{X}f$ for all $X\in\u_N$, which then implies
\[
\left.A_{s,t}^{N}f\right|_{\U_N}=\left(  s-\frac{t}{2}\right)  \sum_{X\in\beta_{N}}\partial
_{X}^{2}f-\frac{t}{2}\sum_{X\in\beta_{N}}\partial_{X}^{2}f=\left(  s-t\right)
\Delta_{\U_N}f. \]
Since the scalar trace polynomial $Q_N$ is holomorphic, it follows that
\begin{equation} \label{eq A to Delta} e^{\frac12 A^N_{s,t}}Q_N = e^{\frac12(s-t)\Delta_{\U_N}}Q_N. \end{equation}
(Note: when $s<t$ the expression $e^{\frac12(s-t)\Delta_{\U_N}}$ is not meaningful in general, but makes perfect sense as a power series when applied to a polynomial function such as $Q_N$.)
  %\quad \text{and} \quad
%e^{\frac12 A^N_{s,t}}Q_N^\ast = e^{\frac12(s-t)\Delta_{\U_N}}Q_N^\ast. \end{equation}
Using intertwining formulas (\ref{eq iota intertwiner}) and (\ref{e.apn}) on the left-hand-side of (\ref{eq A to Delta}) and intertwining formula  (\ref{eq intertwining formula 1}) on the right-hand-side, we have
\[ \left[e^{\widetilde{\mathcal{D}}_{s,t} + \frac{1}{N^2}\widetilde{\mathcal{L}}_{s,t}} \iota(Q)\right]_N = e^{\frac12 A^N_{s,t}}Q_N = e^{\frac12(s-t)\Delta_{\U_N}}Q_N = \left[e^{\frac12(s-t)\mathcal{D}_N}Q\right]_N,  \]
and evaluating both sides at $I_N$ and using $\mathcal{D}_N = \mathcal{D}-\frac{1}{N^2}\mathcal{L}$, we have
\begin{equation} \label{e.int2wice} \left(e^{\widetilde{\mathcal{D}}_{s,t} + \frac{1}{N^2}\widetilde{\mathcal{L}}_{s,t}} \iota(Q)\right)(\mx{1}) =  \left(e^{\frac12(s-t)(\mathcal{D}-\frac{1}{N^2}\mathcal{L})}Q\right)(\mx{1}). \end{equation}
Let $n=\mathrm{deg}(Q)$.  Using the linear functional $\varphi(R) = R(\mx{1})$ on the finite-dimensional spaces $\C_n[\mx{v}]$ and $\mathscr{W}_n$, Lemma \ref{l.findim} allows us to take the limit as $N\to\infty$ in (\ref{e.int2wice}), yielding
\begin{equation}
\label{eq Delta to A new 3} \left(e^{\widetilde{\mathcal{D}}_{s,t}} \iota(Q)\right)(\mx{1})
 = \left(e^{\frac12(s-t)\mathcal{D}}Q\right)(\mx{1}). \end{equation}
Finally, since $Q\in \C[\mx{v}]$, Lemma \ref{t.intertwine.1.5.new} shows that the right-hand-side of (\ref{eq Delta to A new 3}) is $\pi_{s-t}Q$.  This concludes the proof. \end{proof}

\begin{remark} A similar calculation shows that $ \left(e^{\widetilde{\mathcal{D}}_{s,t}} \iota^\ast(Q)\right)(\mx{1}) = \pi_{s-t}(\overline{Q})$. \end{remark}

Theorem \ref{t.new1.10} was really proved in the above proof.

\begin{proof}[Proof of Theorem \ref{t.new1.10}] \label{proof of t.new1.10} From (\ref{eq must def}) and Remark \ref{r.Robinson}, together with intertwining formulas (\ref{eq iota intertwiner}) and (\ref{e.apn}), we have
\[ \int_{\GL_N} \tr(Z^k)\,\mu^N_{s,t}(dZ) = \left(e^{\widetilde{\mathcal{D}}_{s,t} + \frac{1}{N^2}\widetilde{\mathcal{L}}_{s,t}} \iota(v_k)\right)(\mx{1}).   \]
The result now follows as in the justification of (\ref{eq Delta to A new 3}) from (\ref{e.int2wice}). \end{proof}

We now proceed with the proof of Theorem \ref{t.concentration}.

\begin{proof}[Proof of Theorem \ref{t.concentration}] \label{proof of t.concentration}  We begin with the proof of (\ref{e.conc2}).  By the triangle inequality, it suffices to prove the theorem for polynomials of the form $P(u;\mx{v}) = u^kQ(\mx{v})$ for $k\in\Z$ and $Q\in\C[\mx{v}]$.  Therefore
\[ P(u;\mx{v})-\pi_{s-t} P(u;\mx{v}) = u^k[Q(\mx{v})-\pi_{s-t} Q] = u^k R_{s-t}(\mx{v})  \]
where $R_{s-t} = Q-\pi_{s-t} Q$.  Note that $\pi_{s-t}R_{s-t}=0$.  Now, for $Z\in \GL_N$,
\begin{align} \nonumber \|P_N(Z)-(\pi_sP)_N(Z)\|_{\M_N}^2 &= \tr(Z^k [R_{s-t}]_N(Z) [R_{s-t}]_N(Z)^\ast Z^{\ast k} ) \\
&= \tr(Z^kZ^{\ast k})[R_{s-t}]_N(Z)[R_{s-t}]_N(Z)^\ast. \label{eq proof trace concentration 0} \end{align}
Thus
\begin{equation} \label{eq proof trace concentration 0.5} \|[P]_N(Z)-(\pi_{s-t}P)_N(Z)\|_{\M_N}^2 = [v_{\varepsilon(k,k)} \iota(R_{s-t})\iota^\ast(R_{s-t})]_N(Z) \end{equation}
where, in the case $k=0$, we interpret $v_{\varepsilon(0,0)}=1$.  We calculate the $L^2(\mu_{s,t}^N)$-norm of the function $[P-\pi_{s-t}P]_N = [u^kR_{s-t}]_N$ using (\ref{eq rho t def}).  Thus, using the intertwining formula (\ref{e.apn}) together with (\ref{eq proof trace concentration 0.5}), we have
\begin{align} \nonumber \|P_N-(\pi_s P)_N\|_{L^2(\mu^N_{s,t})}^2 &= e^{\frac12A^N_{s,t}}\left(\|P_N-(\pi_{s-t} P)_N\|_{\M_N}^2\right)(I_N) \\
 \label{eq proof trace concentration 1} &= \left(e^{\widetilde{\mathcal{D}}_{s,t}+\frac{1}{N^2}\widetilde{\mathcal{L}}_{s,t}}\left(v_{\varepsilon(k,k)}\iota(R_{s-t})\iota^\ast(R_{s-t})\right)\right)(\mx{1}).
\end{align}
Now, let $n=\deg Q = \deg R_{s-t}$.  Using the linear functional $\varphi(R) = R(\mx{1})$ on $\mathscr{W}_{2n}$, Lemma  \ref{l.findim} then yields
\begin{align}\nonumber &\left|\left(e^{\widetilde{\mathcal{D}}_{s,t}+\frac{1}{N^2}\widetilde{\mathcal{L}}_{s,t}}\left(v_{\varepsilon(k,k)}\iota(R_{s-t})\iota^\ast(R_{s-t})\right)\right)(\mx{1})-\left(e^{\widetilde{\mathcal{D}}_{s,t}}\left(v_{\varepsilon(k,k)}\iota(R_{s-t})\iota^\ast(R_{s-t})\right)\right)(\mx{1})\right| \\ \label{eq proof trace concentration 2}
& \hspace{3.5in} =O\left(\frac{1}{N^2}\right). \end{align}
But, since $\widetilde{\mathcal{D}}_{s,t}$ is a first-order differential operator acting on $\mathscr{W}_{2n}$,  $e^{\widetilde{\mathcal{D}}_{s,t}}$ is an algebra homomorphism, and we have
\begin{equation} e^{\widetilde{\mathcal{D}}_{s,t}}\left(v_{\varepsilon(k,k)}\iota(R_{s-t})\iota^\ast(R_{s-t})\right) = e^{\widetilde{\mathcal{D}}_{s,t}}v_{\varepsilon(k,k)}\cdot e^{\widetilde{\mathcal{D}}_{s,t}}\iota(R_{s-t})\cdot  e^{\widetilde{\mathcal{D}}_{s,t}}\iota^\ast(R_{s-t}) = 0  \label{e.piR=0} \end{equation}
since  $e^{\widetilde{\mathcal{D}}_{s,t}}\iota(R_{s-t}) = \pi_{s-t}R_{s-t} =0$ by Lemma \ref{lemma Linfty nu(s)}. Thus, (\ref{eq proof trace concentration 1}) -- (\ref{e.piR=0}) prove (\ref{e.conc2}).

Note that $\frac{s}{2}\Delta_{\U_N} = \frac12A^N_{s,0}$; thus taking $t=0$ in (\ref{eq proof trace concentration 2}) and restricting the function to $\U_N$ also proves (\ref{e.conc1}), concluding the proof.
\end{proof}

\subsection{Proof of Main Limit Theorem \ref{Main Theorem}
\label{section Main Theorem}}

\begin{proof} [Proof of Theorem \ref{Main Theorem}] \label{proof of Main Theorem} We define $\G_{s,t}$ and $\H_{s,t}$ by (\ref{e.SBdef.new}); evidently, these are linear maps on $\C[u,u^{-1}]$. Let $f\in\C[u,u^{-1}]$; then by the
intertwining formula (\ref{eq intertwining formula 1}),
\[ e^{\frac{t}{2}\Delta_{\U_N}}f_{N}=[e^{\frac{t}{2}\mathcal{D}_N}f]_{N}, \]
where $\mathcal{D}_N$ is defined in (\ref{def DN}).

The function on the right is a trace polynomial function of $U\in \U_N$ (with
no $U^{\ast}$s), and therefore its analytic continuation to $\GL_N$
is given by \emph{the same} trace polynomial function in $Z\in \GL_N$. Thus
\[
\lbrack\mathbf{B}_{s,t}^{N}f_{N}](Z)=[e^{\frac{t}{2}\mathcal{D}_N}%
f]_{N}(Z),\qquad Z\in \GL_N.
\]
Hence
\[ \Vert\mathbf{B}_{s,t}^{N}f_{N}-[\G_{s,t}f]_{N}\Vert_{L^2(\mu_{s,t}^N)}=\Vert\lbrack e^{\frac
{t}{2}\mathcal{D}_N}f]_{N}-[\pi_{s-t}\circ e^{\frac{t}{2}\mathcal{D}}f]_{N}\Vert_{L^2(\mu_{s,t}^N)}.
\]
By the triangle inequality, the
last quantity is
\begin{equation}
\leq\Vert\lbrack e^{\frac{t}{2}\mathcal{D}_N}f]_{N}-[e^{\frac{t}%
{2}\mathcal{D}}f]_{N}\Vert_{L^{2}(\mu_{s,t}^{N})}+\Vert\lbrack
e^{\frac{t}{2}\mathcal{D}}f]_{N}-[\pi_{s-t}\circ e^{\frac{t}%
{2}\mathcal{D}}f]_{N}\Vert_{L^{2}(\mu_{s,t}^{N})}%
.\label{eq SB limit proof 0}%
\end{equation}
The second term in (\ref{eq SB limit proof 0}) is $O(1/N)$ by
(\ref{e.conc2}) (Theorem \ref{t.concentration}). Thus, to complete the (existence) proof of
(\ref{eq SB limit 1}), it suffices to show that
\begin{equation}
\Vert\lbrack e^{\frac{t}{2}\mathcal{D}_N}f]_{N}-[e^{\frac{t}{2}%
\mathcal{D}}f]_{N}\Vert^2_{L^{2}(\mu_{s,t}^{N})}=O\left(  \frac
{1}{N^{2}}\right) \label{eq SB limit proof 0.1}%
\end{equation}
for each $f\in\C[u,u^{-1}]$. Let $n= \deg f$, let $\mathcal{B}$ be the
sesquilinear form in Lemma \ref{l.sesqui}, and let $R^{(N)} = e^{\frac{t}{2}\mathcal{D}_N}f-e^{\frac{t}{2}\mathcal{D}}f$. Then by (\ref{eq must def}) and (\ref{e.apn}), the left side of (\ref{eq SB limit proof 0.1}) is given by
\begin{equation}
\Vert\lbrack R^{\left(  N\right)  }]_{N}\Vert_{L^{2}(\mu_{s,t}^{N})}
^{2}=e^{\frac{1}{2}A_{s,t}^{N}}\left(\|[R^{(N)}]_N\|_{\M_N}^2\right)
= \left(e^{\widetilde{\mathcal{D}}_{s,t}+\frac{1}{N^2}
\widetilde{\mathcal{L}}_{s,t}}\mathcal{B}(R^{(N)},R^{(N)})\right)(\mx{1}).\label{e.brq}%
\end{equation}
Using the linear functional $\varphi(P) = P(\mx{1})$ on $\mathscr{W}_{2n}$ and any norm $\|\cdot\|_{\mathscr{W}_{2n}}$, Lemma \ref{l.findim} ensures there is a constant $C$ (depending on $n,s,t$ but {\em not} on $N$) such that
\begin{equation} \label{e.phiW1} \left| \left(e^{\widetilde{\mathcal{D}}_{s,t}+\frac{1}{N^2}
\widetilde{\mathcal{L}}_{s,t}}\mathcal{B}(R^{(N)},R^{(N)})\right)(\mx{1}) -  \left(e^{\widetilde{\mathcal{D}}_{s,t}}\mathcal{B}(R^{(N)},R^{(N)})\right)(\mx{1})\right| \le \frac{C}{N^2}\|\mathcal{B}(R^{(N)},R^{(N)})\|_{\mathscr{W}_{2n}}. \end{equation}
Let $\psi(P) = \left(e^{\widetilde{\mathcal{D}}_{s,t}} P\right)(\mx{1})$, another linear functional on $\mathscr{W}_{2n}$; then
\[ \left|\left(e^{\widetilde{\mathcal{D}}_{s,t}}\mathcal{B}(R^{(N)},R^{(N)})\right)(\mx{1})\right| \le \|\psi\|_{2n}^\ast \|\mathcal{B}(R^{(N)},R^{(N)})\|_{\mathscr{W}_{2n}}. \]
This, in conjunction with (\ref{e.brq}) and (\ref{e.phiW1}), shows that
\begin{equation} \label{eq Main Theorem 1} \|[R^{(N)}]_N\|^2_{L^2(\mu_{s,t}^N)} \le \left(\|\psi\|_{2n}^\ast +\frac{C}{N^2}\right)\|\mathcal{B}(R^{(N)},R^{(N)})\|_{\mathscr{W}_{2n}}. \end{equation}
Since $\mathcal{B}:\C_n[u,u^{-1};\mx{v}]\times\C_n[u,u^{-1};\mx{v}]\rightarrow\mathscr{W}_{2n}$ is sesquilinear with finite dimensional domain and range, it is bounded with any choice of norms; in particular, given any norm $\|\cdot\|_{\C_n[u,u^{-1};\mx{v}]}$ on $\C_n[u,u^{-1};\mx{v}]$, there is a constant $C'$ (depending on $n$ but not $N$) so that
\[ \|\mathcal{B}(P,Q)\|_{\mathscr{W}_{2n}} \le C'\|P\|_{\C_n[u,u^{-1};\mx{v}]}\|Q\|_{\C_n[u,u^{-1};\mx{v}]}  \qquad\text{for all }P,Q\in\C_n[u,u^{-1};\mx{v}].  \]
Together with (\ref{eq Main Theorem 1}), this yields
\begin{equation} \label{eq Main Theorem 2}  \|[R^{(N)}]_N\|^2_{L^2(\mu_{s,t}^N)} \le C'\left(\|\psi\|_{2n}^\ast +\frac{C}{N^2}\right)\|R^{(N)}\|_{\C_n[u,u^{-1};\mx{v}]}^2. \end{equation}
Finally, Lemma \ref{l.findim} gives
\[ \|R^{(N)}\|_{\C_n[u,u^{-1};\mx{v}]} = \|e^{\frac{t}{2}[\mathcal{D}-\frac{1}{N^2}\mathcal{L}]}f-e^{\frac{t}{2}\mathcal{D}}f \|_{\C_n[u,u^{-1};\mx{v}]} = O\left(\frac{1}{N^2}\right) \]
which proves (\ref{eq SB limit proof 0.1}).  (In fact it shows this term is $O(1/N^4)$; however, since the square of the second term in (\ref{eq SB limit proof 0}) is $O(1/N^2)$, this faster convergence doesn't improve matters.)

The proof of (\ref{eq SB limit 2}) is similar: the restriction of $(\mx{B}_{s,t}^N)^{-1}f_N$ to $\U_N$ is simply $e^{-\frac{t}{2}\Delta_{\U_N}}f_N$, and a similar triangle inequality argument now using (\ref{e.conc1}) shows that it suffices to prove
\begin{equation}
\Vert\lbrack e^{-\frac{t}{2}\mathcal{D}_N}f]_{N}-[e^{-\frac{t}{2}%
\mathcal{D}}f]_{N}\Vert^2_{L^{2}(\rho_s^{N})}=O\left(  \frac{1}{N^{2}}\right) \label{eq SB limit proof 0.2}.
\end{equation}
The argument now proceeds identically to above, by redefining $R^{(N)}$ with the substitution $t\mapsto -t$, and taking all norms with the substitution $(s,t)\mapsto(s,0)$ in all formulas from (\ref{e.brq}) onward.

Thus, we have shown that, with $g_{s,t}$ and $h_{s,t}$ defined as in (\ref{e.SBdef.new}), (\ref{eq SB limit 1}) and (\ref{eq SB limit 2}) hold.  We reserve the proof of uniqueness until Corollary \ref{cor.SBunique} below.

\end{proof}

\subsection{Limit Norms and the Proof of Theorem \ref{thm inverse is inverse} \label{section Limit Norms}}

We begin by proving that the transforms $\G_{s,t}$ and $\H_{s,t}$ are invertible on $\C[u,u^{-1}]$.  (This will be subsumed by Theorem \ref{thm inverse is inverse}, but it will be useful to have this fact in the proof.)

\begin{lemma} \label{lemma B H 1-1} $\G_{s,t}$ and $\H_{s,t}$ are invertible operators on $\C_n[u,u^{-1}]$ for each $n>0$, and hence on $\C[u,u^{-1}]$.
\end{lemma}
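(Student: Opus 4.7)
The plan is to exploit the factorizations $\G_{s,t}=\pi_{s-t}\circ e^{\frac{t}{2}\mathcal{D}}$ and $\H_{s,t}=\pi_s\circ e^{-\frac{t}{2}\mathcal{D}}$ from (\ref{e.SBdef.new}) and show that, in the basis of $\C_n[u,u^{-1}]$ ordered as $\{1,u,u^{-1},u^2,u^{-2},\dots,u^n,u^{-n}\}$, both operators are upper-triangular with nonzero diagonal. Since $\mathcal{D}$ preserves trace degree (Corollary \ref{c.Pn-inv}) and $\pi_s$ merely substitutes constants for the $v_j$ (which can only lower trace degree), the maps $\G_{s,t}$ and $\H_{s,t}$ send $\C_n[u,u^{-1}]$ into itself. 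On this finite-dimensional space, invertibility reduces to injectivity, which will follow once the triangular structure is established.

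The key computation concerns the action of $\mathcal{D}$ on $u^k$. Since $\mathcal{N}_0$, $\mathcal{Z}$, and the ``wrong-sign'' branch of $\mathcal{Y}$ all annihilate $u^k$, Example \ref{example Q} and the decomposition (\ref{def DN}) give
\[ \mathcal{D} u^k = -|k|u^k - 2\mathcal{Y}(u^k), \]
where $\mathcal{Y}(u^k)$ is a linear combination of monomials $v_j u^{k-j}$ with $\mathrm{sgn}(k-j)=\mathrm{sgn}(k)$ and $|k-j|<|k|$. More generally, inspection of the definitions of $\mathcal{N}_0$, $\mathcal{N}_1$, $\mathcal{Z}_\pm$, and $\mathcal{Y}_\pm$ shows that each of these operators, applied to a monomial $u^\ell p(\mx{v})$, produces only monomials whose $u$-exponent $\ell'$ satisfies either $\ell'=\ell$ or $\mathrm{sgn}(\ell')=\mathrm{sgn}(\ell)$ and $|\ell'|<|\ell|$. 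Thus, for each $k\in\Z\setminus\{0\}$, the finite-dimensional subspace
\[ W_k^\circ = \mathrm{span}_{\C}\bigl\{u^\ell p(\mx{v})\in\C_n[u,u^{-1};\mx{v}]\colon 0<\mathrm{sgn}(k)\cdot\ell<|k|\bigr\} \]
is $\mathcal{D}$-invariant, and an easy induction on $m$ yields $\mathcal{D}^m u^k\equiv (-|k|)^m u^k\pmod{W_k^\circ}$. Summing over $m$ gives $e^{\frac{t}{2}\mathcal{D}}u^k = e^{-\frac{t}{2}|k|}u^k+r_k$ for some $r_k\in W_k^\circ$.

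Applying $\pi_{s-t}$ (which acts only on the $v_j$ variables and thus introduces no new $u$-exponents), we obtain $\G_{s,t}(u^k) = e^{-\frac{t}{2}|k|}u^k+\tilde r_k$, where $\tilde r_k\in\C_n[u,u^{-1}]$ is a Laurent polynomial involving only monomials $u^\ell$ with $\mathrm{sgn}(\ell)=\mathrm{sgn}(k)$ and $|\ell|<|k|$; moreover $\G_{s,t}(1)=1$ since $\mathcal{D}(1)=0$. In the ordered basis above, the matrix of $\G_{s,t}$ is therefore upper-triangular with diagonal entries $\{1,e^{-t/2},e^{-t/2},e^{-t},e^{-t},\dots\}$, all nonzero, and hence invertible on $\C_n[u,u^{-1}]$. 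The identical argument with $e^{-\frac{t}{2}\mathcal{D}}$ in place of $e^{\frac{t}{2}\mathcal{D}}$ shows that $\H_{s,t}$ is upper-triangular with nonzero diagonal $\{1,e^{t/2},e^{t/2},e^t,e^t,\dots\}$. Since $\C[u,u^{-1}]=\bigcup_n\C_n[u,u^{-1}]$ and both transforms preserve each $\C_n[u,u^{-1}]$, invertibility extends to all of $\C[u,u^{-1}]$. The main obstacle is verifying the triangular-structure claim for the pseudodifferential piece $\mathcal{Y}$; but since $\mathcal{Y}$ is built from $\mathcal{A}_\pm$, multiplication operators, and $\partial/\partial u$, its action on pure monomials is explicit (Example \ref{example Q}), and on a product $u^\ell p(\mx{v})$ the factor $p$ is inert under these operations, reducing the check to the monomial case.
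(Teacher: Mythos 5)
Your proof is correct and follows essentially the same route as the paper's: both arguments use the factorizations $\G_{s,t}=\pi_{s-t}\circ e^{\frac{t}{2}\mathcal{D}}$, $\H_{s,t}=\pi_s\circ e^{-\frac{t}{2}\mathcal{D}}$ and the fact that $\mathcal{D}u^k=-|k|u^k$ plus same-signed terms of strictly smaller exponent, yielding an upper-triangular matrix with nonzero diagonal entries $e^{\mp\frac{|k|}{2}t}$ on each $\C_n[u,u^{-1}]$. Your explicit invariant subspace $W_k^\circ$ is just a slightly more detailed bookkeeping of the same induction the paper invokes, so there is nothing further to add.
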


\begin{proof} Consider $e^{\pm \frac{t}{2}\mathcal{D}}$ restricted to $\C_n[u,u^{-1};\mx{v}]$.  Expanding as power-series, a straightforward induction using the forms of the composite operators $\mathcal{N}$, $\mathcal{Z}$, and $\mathcal{Y}$ shows that there exist $q_k^{\pm t}\in\C[\mx{v}]$ with
\begin{align*} e^{\pm \frac{t}{2}\mathcal{D}} u^n &= e^{\mp \frac{n}{2} t}u^n + \sum_{k=0}^{n-1} q_k^{\pm t}(\mx{v})u^k, \\
 e^{\pm \frac{t}{2}\mathcal{D}} u^{-n} &= e^{\pm \frac{n}{2} t}u^{-n} + \sum_{k=-n+1}^{0} q_k^{\pm t}(\mx{v})u^k.
\end{align*}
This shows that $e^{\pm\frac{t}{2}\mathcal{D}}$ preserves $\C_n[u]$ and $\C_n[u^{-1}]$.
Incorporating the evaluation maps $\pi_s$ or $\pi_{s-t}$, we find that
\[ \G_{s,t}(u^{\pm n}), \H_{s,t}(u^{\pm n}) \in e^{\pm\frac{n}{2}t}u^{\pm n} + \C_{n-1}[u,u^{-1}] \]
Consider, then, the standard basis $\{1,u^1,\ldots,u^n\}$ of $\C_n[u]$; it follows that, in this basis, $\G_{s,t}|_{\C_n[u]}$ and $\H_{s,t}|_{\C_n[u]}$ are upper-triangular, with diagonal entries $e^{\mp \frac{k}{2}t}$ for $0\le k\le n$.  Thus the restrictions of $\G_{s,t}$ and $\H_{s,t}$ to $\C_n[u]$ are invertible.  A similar argument shows the invertibility on $\C_n[u^{-1}]$, thus yielding the result on $\C_n[u,u^{-1}]$. Since $\C[u,u^{-1}] = \bigcup_n \C_n[u,u^{-1}]$, the proof is complete.  \end{proof}

We now introduce two seminorms on $\C[u,u^{-1};\mx{v}]$.

\begin{definition} \label{def seminorms} Let $s,t>0$ with $s>t/2$.  For each $N$, define the seminorms $\|\cdot\|_{s,N}$ and $\|\cdot\|^{s,t,N}$ on $\C[u,u^{-1};\mx{v}]$ by
\begin{align} \|P\|_{s,N} &= \|P_N\|_{L^2(\U_N,\rho^N_s;\M_N)}  \label{eq norm s N} \\
\|P\|^{s,t,N} &= \|P_N\|_{L^2(\GL_N,\mu^N_{s,t};\M_N)}. \label{eq norm s t N}
\end{align}
\end{definition}
\noindent In fact, for any $n>0$ and sufficiently large $N$, seminorms (\ref{eq norm s N}) and (\ref{eq norm s t N}) are actually {\em norms} when restricted to $\C_n[u,u^{-1};\mx{v}]$.  Indeed, if $\|P\|_{s,N}=0$ then $P_N = 0$ in $L^2(\U_N,\rho^N_s;\M_N)$, and since $P_N$ is a smooth function and $\rho_s^N$ has a strictly positive density, this means $P_N$ is identically $0$.  By Proposition \ref{prop P-->P_N unique}, when $N$ is sufficiently large (relative to $n$) it follows that $P=0$.

For $P\in\C[u,u^{-1};\mx{v}]$, define
\begin{align} \|P\|_{s} &= \lim_{N\to\infty} \|P\|_{s,N}  \label{eq norm s} \\
\|P\|^{s,t} &= \lim_{N\to\infty} \|P\|^{s,t,N}  \label{eq norm s t}.
\end{align}
These are also seminorms on $\C[u,u^{-1};\mx{v}]$, but they are {\em not} norms on all of $\C[u,u^{-1};\mx{v}]$, or even on $\C_n[u,u^{-1};\mx{v}]$ for any $n>1$.  However, restricted to $\C[u,u^{-1}]$, they are in fact norms.  To prove this, we look to the measure $\nu_s$ described following Theorem \ref{t.new1.9}: the law of the free unitary Brownian motion at time $s>0$.  The measure $\nu_s$ is the weak limit of $\nu_s^N$ of (\ref{eq nu_s^N}) (which exists by the L\'evy continuity theorem).  In \cite[Proposition 10]{Biane1997b}, it is shown that $\nu_s$ is absolutely continuous with respect to Lebesgue measure on $\U$, with a continuous density that is strictly positive in a neighborhood of $1\in\U$; we will need this result (in particular the fact that $\mathrm{supp}(\nu_s)$ is not a finite set) in the following.

\begin{lemma} \label{lemma limit norms} The seminorms (\ref{eq norm s}) and (\ref{eq norm s t}) are norms on $\C[u,u^{-1}]$. \end{lemma}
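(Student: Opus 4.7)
The plan is to first evaluate $\|\cdot\|_s$ explicitly on Laurent polynomials by a direct moment computation, and then to transfer the norm property to $\|\cdot\|^{s,t}$ using the unitarity of the boosted Segal--Bargmann transform together with the Main Theorem.

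For any $f(u)=\sum_k a_k u^k\in\C[u,u^{-1}]$ and $U\in\U_N$, I will expand
$$\|f_N(U)\|_{\M_N}^2 = \tr\bigl(f_N(U)f_N(U)^\ast\bigr) = \sum_{k,\ell} a_k\bar a_\ell\,\tr(U^{k-\ell}),$$
using $(U^\ell)^\ast=U^{-\ell}$ on $\U_N$. Integrating against $\rho_s^N$ and sending $N\to\infty$, Theorem \ref{t.new1.9} yields $\|f\|_s^2=\sum_{k,\ell}a_k\bar a_\ell\,\nu_{k-\ell}(s)$. Since $\nu_k(s)=\int_\U \xi^k\,\nu_s(d\xi)$ and $\xi^{-\ell}=\bar\xi^{\,\ell}$ on $\U$, this double sum collapses to $\int_\U|f(\xi)|^2\,\nu_s(d\xi)$, so $\|f\|_s=\|f\|_{L^2(\U,\nu_s)}$. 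The discussion preceding the lemma records that $\nu_s$ admits a continuous density on $\U$ which is strictly positive on a neighborhood of $1$; hence $\mathrm{supp}(\nu_s)$ is infinite. Since a nonzero Laurent polynomial has only finitely many zeros on $\U$, no such $f$ can vanish $\nu_s$-almost everywhere, and consequently $\|\cdot\|_s$ is a norm on $\C[u,u^{-1}]$.

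To handle $\|\cdot\|^{s,t}$ I will invoke the unitarity of $\mx{B}^N_{s,t}\colon L^2(\U_N,\rho_s^N;\M_N)\to \mathcal{H}L^2(\GL_N,\mu_{s,t}^N;\M_N)$, which gives
$$\|f\|_{s,N} = \|\mx{B}^N_{s,t}f_N\|_{L^2(\GL_N,\mu_{s,t}^N;\M_N)} \qquad\text{for every } f\in\C[u,u^{-1}].$$
Set $g=\G_{s,t}f$. By Theorem \ref{Main Theorem}, $\mx{B}^N_{s,t}f_N = g_N + r_N$ with $\|r_N\|_{L^2(\mu_{s,t}^N;\M_N)} = O(1/N)$; the reverse triangle inequality therefore forces $\|g\|^{s,t,N}$ to have the same limit as $\|\mx{B}^N_{s,t}f_N\|_{L^2(\mu_{s,t}^N;\M_N)}$, and passing to the limit yields the isometry
$$\|\G_{s,t}f\|^{s,t} = \|f\|_s.$$
This both establishes existence of the limit defining $\|g\|^{s,t}$ for every $g$ in the range of $\G_{s,t}$, and identifies $\|\cdot\|^{s,t}$ on that range with the pullback of $\|\cdot\|_s$ under $\G_{s,t}^{-1}$. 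Since Lemma \ref{lemma B H 1-1} tells us $\G_{s,t}$ is a bijection of $\C[u,u^{-1}]$, this range is all of $\C[u,u^{-1}]$, and $\|\cdot\|^{s,t}$ inherits the norm property from $\|\cdot\|_s$.

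The only substantive step is the moment identification of $\|\cdot\|_s$ with the $L^2(\nu_s)$-norm; the remainder is formal manipulation using results already available. I do not anticipate a genuine obstacle, though I will want to be careful to cite the positivity of the density of $\nu_s$ accurately and to confirm that the $O(1/N)$ bound from Theorem \ref{Main Theorem} is sufficient to convert convergence of $\|\mx{B}^N_{s,t}f_N\|$ into convergence of $\|g\|^{s,t,N}$.
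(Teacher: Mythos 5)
Your proposal is correct and follows essentially the same route as the paper: identify $\|\cdot\|_s$ with the $L^2(\U,\nu_s)$-norm (the paper cites (\ref{eq L^2 nu_s^N}) and weak convergence $\nu_s^N\rightharpoonup\nu_s$, while you expand the moments directly via Theorem \ref{t.new1.9}; same substance), and then transfer to $\|\cdot\|^{s,t}$ using the isometry of $\mx{B}^N_{s,t}$, Theorem \ref{Main Theorem}, and the invertibility of $\G_{s,t}$ from Lemma \ref{lemma B H 1-1}. No gaps.
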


\begin{proof} We begin with norm (\ref{eq norm s}). Identify the Laurent polynomial $P\in\C[u,u^{-1}]$ as a trigonometric polynomial function $P_1$ on the unit circle $\U$.  Then (\ref{eq L^2 nu_s^N}) shows that
\[ \|P\|_{s,N} = \|P_N\|_{L^2(\U_N,\rho^N_s;\M_N)}  = \|P_1\|_{L^2(\U,\nu_s^N)}. \]
Thus, since $\nu^N_s \rightharpoonup \nu_s$,
\begin{equation} \label{eq norm s nu_s} \|P\|_s = \lim_{N\to\infty} \|P_1\|_{L^2(\U,\nu_s^N)} = \|P_1\|_{L^2(\U,\nu_s)}. \end{equation}
Since the support of $\nu_s$ is infinite, (\ref{eq norm s nu_s}) shows that seminorm (\ref{eq norm s}) is indeed a norm on $\C[u,u^{-1}]$.

For seminorm (\ref{eq norm s t}), we will utilize the isometry property of the finite dimensional Segal--Bargmann transform $\mx{B}_{s,t}^N$.  Fix $Q\in\C[u,u^{-1}]$, and let $\deg Q = n$. By Lemma \ref{lemma B H 1-1}, there is a unique Laurent polynomial $P\in\C_n[u,u^{-1}]$ so that $\G_{s,t}(P)=Q$.  Thus
\[ \|Q\|^{s,t} = \| \G_{s,t}P\|^{s,t} = \lim_{N\to\infty} \| \G_{s,t}P\|^{s,t,N}. \]
By Theorem \ref{Main Theorem} and (\ref{eq norm s t N}) we have
\[ \lim_{N\to\infty}  \| \G_{s,t}P\|^{s,t,N} = \lim_{N\to\infty}  \|[\mx{B}^N_{s,t}P]_N\|_{L^2(\GL_N,\mu^N_{s,t};\M_N)} \]
and by the isometry property of the Segal--Bargmann transform, we therefore have
\[ \|Q\|^{s,t} = \lim_{N\to\infty} \|P_N\|_{L^2(\U_N),\rho^N_s;\M_N)} = \|P\|_s. \]
Thus, if $\|Q\|^{s,t}=0$ then $\|P\|_s=0$, so $Q=\G_{s,t}(0) = 0$.  This concludes the proof.
\end{proof}

\begin{remark} \label{remark Biane's norm} Eq.\ (\ref{eq norm s nu_s}) shows that norm (\ref{eq norm s}) is just an $L^2$-norm, with respect to a well-understood measure.  Norm (\ref{eq norm s t}) is, at present, much more mysterious.  In \cite{Biane1997b}, a great deal of work is spent trying to understand this norm in the case $s=t$.  It can, in that case, be identified as the norm of a certain reproducing kernel Hilbert space, built out of holomorphic functions on a bounded region $\Sigma_t\subset\C^\ast$ which has few obvious symmetries, and which becomes non-simply-connected when $t\ge 4$.  Understanding the norm (\ref{eq norm s t}) in general is a goal for future research of the present authors.
\end{remark}

\begin{corollary} \label{cor.SBunique} For $s,t>0$ with $s>t/2$ and $f\in\C[u,u^{-1}]$, the only Laurent polynomials  $g_{s,t}$ and $h_{s,t}$ satisfying (\ref{eq SB limit 1}) and (\ref{eq SB limit 2}) are $g_{s,t} = \G_{s,t}f$ and $h_{s,t}=\H_{s,t}f$ as defined in (\ref{e.SBdef.new}).
\end{corollary}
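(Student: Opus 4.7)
The plan is to derive uniqueness directly from Lemma \ref{lemma limit norms}, which establishes that the limiting seminorms $\|\cdot\|_s$ and $\|\cdot\|^{s,t}$ are actual norms when restricted to $\C[u,u^{-1}]$. Suppose $\tilde g_{s,t}\in\C[u,u^{-1}]$ is any Laurent polynomial satisfying (\ref{eq SB limit 1}), and let $g_{s,t}=\G_{s,t}f$ as defined in (\ref{e.SBdef.new}); by Theorem \ref{Main Theorem}, $g_{s,t}$ also satisfies (\ref{eq SB limit 1}). Applying the triangle inequality in $L^2(\GL_N,\mu_{s,t}^N;\M_N)$ gives
\[
\|[g_{s,t}-\tilde g_{s,t}]_N\|_{L^2(\GL_N,\mu_{s,t}^N;\M_N)}^2 = O\left(\tfrac{1}{N^2}\right),
\]
i.e.\ $\|g_{s,t}-\tilde g_{s,t}\|^{s,t,N}\to 0$ as $N\to\infty$. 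By the definition (\ref{eq norm s t}) of the limit seminorm, this means $\|g_{s,t}-\tilde g_{s,t}\|^{s,t}=0$. Since $g_{s,t}-\tilde g_{s,t}\in\C[u,u^{-1}]$, Lemma \ref{lemma limit norms} forces $g_{s,t}=\tilde g_{s,t}$, establishing uniqueness of $g_{s,t}$.

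The argument for $h_{s,t}$ is entirely analogous: if $\tilde h_{s,t}\in\C[u,u^{-1}]$ also satisfies (\ref{eq SB limit 2}), then the triangle inequality together with the fact that $h_{s,t}=\H_{s,t}f$ satisfies (\ref{eq SB limit 2}) (again by Theorem \ref{Main Theorem}) yields
\[
\|h_{s,t}-\tilde h_{s,t}\|_{s,N}\to 0,
\]
hence $\|h_{s,t}-\tilde h_{s,t}\|_s=0$. Since $\|\cdot\|_s$ is a norm on $\C[u,u^{-1}]$ by Lemma \ref{lemma limit norms}, we conclude $h_{s,t}=\tilde h_{s,t}$.

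There is no real obstacle here: the entire argument rests on Lemma \ref{lemma limit norms}, which was already established via the identification $\|P\|_s=\|P_1\|_{L^2(\U,\nu_s)}$ and the infinite support of $\nu_s$ (and, for the $\mu_{s,t}^N$-side, via the isometry property of the Segal--Bargmann transform together with Theorem \ref{Main Theorem}). The only subtle point worth flagging is circularity: the proof of Lemma \ref{lemma limit norms} for $\|\cdot\|^{s,t}$ already uses Theorem \ref{Main Theorem}, but only the existence statement, not the uniqueness one, so the logical order is consistent.
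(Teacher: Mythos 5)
Your proof is correct and follows essentially the same route as the paper's: apply the triangle inequality to two candidate Laurent polynomials, pass to the limit seminorm $\|\cdot\|^{s,t}$ (resp.\ $\|\cdot\|_s$), and invoke Lemma \ref{lemma limit norms} to conclude equality in $\C[u,u^{-1}]$. Your remark on the (non-)circularity — that Lemma \ref{lemma limit norms} relies only on the existence half of Theorem \ref{Main Theorem} — matches the logical structure the paper itself uses.
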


\begin{proof} Suppose that $g_{s,t},g_{s,t}'\in\C[u,u^{-1}]$ both satisfy
\[ \| \mx{B}^N_{s,t} f_N - [g_{s,t}]_N \|^2_{L^2(\GL_N,\mu_{s,t}^N;\M_N)} = O\left(\frac{1}{N^2}\right) = \| \mx{B}^N_{s,t} f_N - [g_{s,t}']_N \|^2_{L^2(\GL_N,\mu_{s,t}^N;\M_N)}. \]
Then, by the triangle inequality, it follows that $\|g_{s,t}-g_{s,t}'\|_{L^2(\GL_N,\mu^N_{s,t};\M_N)} = O(1/N^2)$.  Taking limits as $N\to\infty$, it follows that $\|g_{s,t}-g_{s,t}'\|^{s,t} = 0$, and it follows from Lemma \ref{lemma limit norms} that $g_{s,t}=g_{s,t}'$.  A similar argument shows uniqueness of $h_{s,t}$.  The result now follows from the proof of Theorem \ref{Main Theorem} on page \pageref{proof of Main Theorem}.
\end{proof}

This leads us to the proof of Theorem \ref{thm inverse is inverse}.

\begin{proof}[Proof of Theorem \ref{thm inverse is inverse}] \label{proof of thm inverse is inverse} Fix $P\in\C[u,u^{-1}]$, and consider the Laurent polynomial $\G_{s,t}\H_{s,t}P \in \C[u,u^{-1}]$.  By definition
\begin{align} \nonumber \|\G_{s,t}\H_{s,t}P-P\|^{s,t} &= \lim_{N\to\infty} \| \G_{s,t}\H_{s,t}P-P \|^{s,t,N} \\
&= \lim_{N\to\infty} \|[\G_{s,t}\H_{s,t} P]_N - P_N\|_{L^2(\mu^N_{s,t})}. \label{proof inverse 1} \end{align}
The triangle inequality yields
\begin{align*} &\|[\G_{s,t}\H_{s,t} P]_N-P_N\|_{L^2(\mu^N_{s,t})} \\
&\qquad\qquad \le \|[\G_{s,t}\H_{s,t} P]_N - \mx{B}^N_{s,t}[\H_{s,t}P]_N\|_{L^2(\mu^N_{s,t})} + \|\mx{B}_{s,t}^N[\H_{s,t}P]_N - P_N\|_{L^2(\mu^N_{s,t})}.  \end{align*}
Applying (\ref{eq SB limit 1}) with $f=\H_{s,t}P$ shows that the first term is $O(1/N)$.  For the second term, we use the isometry property of the Segal--Bargmann transform. The trace polynomial $P_N$ is in the range of $\mx{B}^N_{s,t}$, by Corollary \ref{c.Brange}, and so
\begin{align*}  \|\mx{B}_{s,t}^N[\H_{s,t}P]_N - P_N\|_{L^2(\mu^N_{s,t})} &=  \|\mx{B}_{s,t}^N\left([\H_{s,t}P]_N - (\mx{B}_{s,t}^N)^{-1}P_N\right)\|_{L^2(\mu^N_{s,t})} \\
&= \|[\H_{s,t} P]_N - (\mx{B}_{s,t}^N)^{-1} P_N\|_{L^2(\rho^N_s)} = O\left(\frac{1}{N}\right), \end{align*}
by (\ref{eq SB limit 2}).  Hence, the quantity in the limit on the right-hand-side of (\ref{proof inverse 1}) is $O(1/N)$, so its limit is $0$.  We therefore have $\|\G_{s,t}\H_{s,t}P-P\|^{s,t}=0$.  Lemma \ref{lemma limit norms} shows that $\|\cdot\|^{s,t}$ is a norm on $\C[u,u^{-1}]$, and so it follows that $\G_{s,t}\H_{s,t}P-P=0$.  Hence, since $\G_{s,t}$ and $\H_{s,t}$ are known to be invertible (Lemma \ref{lemma B H 1-1}), it follows that $\H_{s,t} = \G_{s,t}^{-1}$ as desired. \end{proof}

\section{The Free Unitary Segal--Bargmann Transform \label{section Free SB Transform}}

In this final section, we identify the limit Segal--Bargmann transform $\G_{s,t}$, which has been constructed as a linear operator on the space $\C[u,u^{-1}]$ of single-variable Laurent polynomials.  We will characterize the {\em Biane polynomials} for $\G_{s,t}$:
\begin{equation} \label{eq Biane polynomials} p^{s,t}_k = \H_{s,t}((\,\cdot\,)^k) = \pi_s\circ e^{-\frac{t}{2}\mathcal{D}}(\,\cdot\,)^k, \qquad k\in\Z \end{equation}
defined so that
\[ \G_{s,t}(p^{s,t}_k)(z) = z^k \]
when $s,t>0$ and $s>t/2$.  We call them Biane polynomials since, as we will prove, they match the polynomials that Biane introduced in \cite[Lemma 18]{Biane1997b} to characterize the free Hall transform $\mathscr{G}^t$, in the special case $s=t$.  There is classical motivation to understand these polynomials.  Consider the classical Segal--Bargmann transform $S_t$ acting on $L^2(\R,\gamma_t^1)$.  Since polynomials are dense in this Gaussian $L^2$-space, $S_t$ is completely determined by the polynomials $H_k(t,\cdot)$ satisfying $S_t(H_k(t,\cdot))(z) = z^k$.  In this case, since the measure $\gamma^2_{t/2}$ is rotationally-invariant, the monomials $z\mapsto z^k$ are orthogonal, and since $S^1_t$ is an isometry, it follows that $H_k(t,\cdot)$ are the orthogonal polynomials of the Gaussian measure $\gamma^1_t$: the {\em Hermite polynomials} (of variance $t$).  Hence, the Biane polynomials are the unitary version of the Hermite polynomials.  We will determine the generating function $\Pi$ of these polynomials; cf.\ (\ref{eq formula for gen fn}).  In the case $s=t$, this precisely matches the generating function in \cite[Lemma 18]{Biane1997b}, modulo a small correction; in this way, we verify that our limit Segal--Bargmann transform {\em is} the aforementioned free unitary Segal--Bargmann transform $\mathscr{G}^t$.

Before proceeding, we make an observation.  It is immediate from the form of the operators $\mathcal{N}$, $\mathcal{Z}$, and $\mathcal{Y}$ in Definition \ref{Def LN L} that $\mathcal{D} = -\mathcal{N}-2\mathcal{Z}-2\mathcal{Y}$ satisfies
\[ \mathcal{D}(u^{-k}) = \left(\mathcal{D}(\,\cdot\,)^k\right)(u^{-1}), \qquad k\in\Z. \]
Expanding $e^{-\frac{t}{2}\mathcal{D}}$ as a power series shows that the semigroup also commutes with the reciprocal map, and applying the algebra homomorphism $\pi_s$ then shows that
\begin{equation} \label{eq pk p-k} p_{-k}^{s,t}(u) = p_k^{s,t}(u^{-1}), \qquad k\in\Z. \end{equation}
Note also that $\mathcal{D}$ preserves the subspaces $\C[u;\mx{v}]$ and $\C[u^{-1};\mx{v}]$, and hence $p_k^{s,t}(u)$ is a polynomial in $u$ for $k\ge 0$, while $p_k^{s,t}(u)$ is a polynomial in $u^{-1}$ for $k<0$. Hence, since $p^{s,t}_0=1$, it will suffice to identify $p^{s,t}_k$ only for $k\ge 1$.

\subsection{Biane Polynomials and Differential Recursion \label{section Recursion}}

It will be convenient to look at the related family of ``unevaluated'' polynomials.

\begin{definition} \label{def bk ck} For $t\in\R$ and $k\in\N$, define $B_k^t\in\C[u,u^{-1};\mx{v}]$ and $C_k^t\in \C[\mx{v}]$ by
\begin{equation} \label{eq B C} B_k^t(u;\mx{v}) = e^{-\frac{k}{2}t} e^{-\frac{t}{2}\mathcal{D}}u^k \qquad \text{and} \qquad C^t_k(\mx{v}) = \mathcal{T}(B^t_k)(\mx{v}), \end{equation}
where $\mathcal{T}$ is the tracing map of (\ref{eq tracing map}).  For $s\in\R$, define $b_k(s,t,\,\cdot\,)\in\C[u,u^{-1}]$ and $c_k(s,t)\in\C$ by
\begin{equation} \label{eq b c} b_k(s,t,u) = \pi_s(B^t_k)(u) \qquad \text{and} \qquad c_k(s,t) = \pi_s(C^t_k). \end{equation}
Note, by (\ref{eq Biane polynomials}) and the linearity of $\pi_s$, that
\begin{equation}  \label{eq b vs p}  b_k(s,t,u) = e^{-\frac{k}{2}t}p^{s,t}_k(u). \end{equation}
\end{definition}
\noindent It is useful to note the following alternative expression for $c_k(s,t)$.  From (\ref{eq B C}),
\begin{equation} \label{eq C alt 1} C^t_k(\mx{v}) = e^{-\frac{k}{2}t} \mathcal{T}(e^{-\frac{t}{2}\mathcal{D}} u^k) =  e^{-\frac{k}{2}t} e^{-\frac{t}{2}\mathcal{D}} v_k \end{equation}
since, by Lemma \ref{lemma degree and trace preserving}, $\mathcal{T}$ commutes with $\mathcal{D}$.  Thus, from Theorem \ref{t.intertwine.1.5.new}, we have
\begin{equation} \label{eq c alt 0} c_k(s,t) = e^{-\frac{k}{2}t}\pi_s(e^{-\frac{t}{2}\mathcal{D}}v_k) = e^{-\frac{k}{2}t}\left.\left(e^{\frac12(s-t)\mathcal{D}}v_k\right)\right|_{\mx{v}=\mx{1}} = e^{-\frac{k}{2}t}\nu_k(s-t). \end{equation}

%\begin{remark} In terms of the trace polynomials they induce on $\U_N$, $(B^t_k)_N$ is close to $e^{-\frac{k}{2}t}e^{-\frac{t}{2}\Delta_{\U_N}} U^k$; precisely
%\[ e^{\frac{k}{2}t}(B^t_k)_N = e^{-\frac{t}{2}\Delta_{\U_N}}U^k + O\left(\frac{1}{N^2}\right). \]
%In this language, we then have $C_k^t$ is close to the trace of this:
%\[ e^{\frac{k}{2}t}(C^t_k)_N = \tr\left(e^{-\frac{t}{2}\Delta_{\U_N}}U^k\right) + O\left(\frac{1}{N^2}\right). \]
%While the backward heat-flow does not make sense in general, on polynomials it is perfectly well-defined (by power-series expansion).  For example, as calculated in Example \ref{example heat U^2} (making the substitution $t\mapsto -t$),  we have
%\[ e^{-t}e^{-\frac{t}{2}\Delta_{\U_N}} U^2 = \cosh(t/N)U^2 +N\sinh(t/N)tr_N(U)U = U^2 + t\tr(U)U + O\left(\frac{1}{N^2}\right), \]
%%and
%%\[ e^{-\frac{3}{2}t} e^{-\frac{t}{2}\Delta_{\U_N}} U^3 = U^3 +2t\tr(U)U^2+t\tr(U^2)U +\frac32t^2(\tr U)^2U +O\left(\frac{1}{N^2}\right). \]
%and so
%\[ e^{-t}\tr(e^{-\frac{t}{2}\Delta_{\U_N}} U^2) = \tr(U^2) + t(\tr U)^2 + O\left(\frac{1}{N^2}\right). \]
%In fact, we have
%\[ B^t_2(u;v_1,v_2,\ldots) = u^2 + tuv_1 \qquad \text{and} \qquad C^t_2(u;v_1,v_2,\ldots) = v_2 + tv_1^2. \]
%\end{remark}

The main computational tool that will lead to the identification of the Biane polynomials $p^{s,t}_k$ is the following recursion.

\begin{proposition} \label{prop Recursions} Let $s,t\in\R$, $u\in\C$, and $k\ge 1$.  Let $c_k(s,t)$ and $b_k(s,t,u)$ be given as in Definition \ref{def bk ck}.  Then
\begin{equation} \label{eq ck recursion} c_k(s,t) = \nu_k(s) + \sum_{m=1}^{k-1} \int_0^t mc_{k-m}(s,\tau)c_m(s,\tau)\,d\tau, \qquad k\ge 2 \end{equation}
with $c_1(s,t)=\nu_1(s)$; and
\begin{equation} \label{eq bk recursion} b_k(s,t,u) = u^k+\sum_{m=1}^{k-1}\int_0^t mc_{k-m}(s,\tau)b_m(s,\tau,u)\,d\tau, \qquad k\ge 2 \end{equation}
with $b_1(s,t,u) = u$.
\end{proposition}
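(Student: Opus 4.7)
The plan is to derive both recursions by establishing an ODE in $t$ for $B_k^t$ (at the level of $\C[u,u^{-1};\mx{v}]$), then projecting down to $c_k$ and $b_k$ by applying the tracing map $\mathcal{T}$ and the evaluation map $\pi_s$, and finally integrating from $0$ to $t$. The initial conditions are immediate from the definitions: $B_k^0 = u^k$ gives $b_k(s,0,u) = u^k$, and $C_k^0 = v_k$ gives $c_k(s,0) = \pi_s(v_k) = \nu_k(s)$. The $k=1$ boundary cases $b_1(s,t,u) \equiv u$ and $c_1(s,t) \equiv \nu_1(s)$ follow from the direct computations $\mathcal{D}u = -u$ and $\mathcal{D}v_1 = -v_1$ (so that the factors of $e^{-t/2}$ in Definition \ref{def bk ck} cancel against $e^{-\frac{t}{2}\mathcal{D}}$).

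The central computation is the $t$-derivative of $B_k^t$. From the decomposition $\mathcal{D} = -\mathcal{N} - 2\mathcal{Z} - 2\mathcal{Y}$ together with Example \ref{example Q} and the fact that $\mathcal{Z}$ annihilates $u^k$, one gets
\[
\mathcal{D}(u^k) = -ku^k - 2\sum_{j=1}^{k-1}(k-j)\,v_j\, u^{k-j}, \qquad k\ge 1.
\]
Differentiating $B_k^t = e^{-\frac{k}{2}t} e^{-\frac{t}{2}\mathcal{D}}u^k$ in $t$ and substituting this formula gives
\[
\frac{\partial}{\partial t} B_k^t = -\tfrac{k}{2}B_k^t - \tfrac{1}{2}e^{-\frac{k}{2}t} e^{-\frac{t}{2}\mathcal{D}}(\mathcal{D}u^k).
\]
At this point I would invoke the partial product rule (Theorem \ref{cor product rule and homom}) to split $e^{-\frac{t}{2}\mathcal{D}}(v_j u^{k-j}) = e^{-\frac{t}{2}\mathcal{D}}(v_j) \cdot e^{-\frac{t}{2}\mathcal{D}}(u^{k-j})$, since $v_j \in \C[\mx{v}]$. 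By (\ref{eq C alt 1}) and the definition of $B_{k-j}^t$, the two factors are $e^{\frac{j}{2}t} C_j^t$ and $e^{\frac{k-j}{2}t} B_{k-j}^t$, whose product is $e^{\frac{k}{2}t}C_j^t B_{k-j}^t$. The $-ku^k$ contribution to $\mathcal{D}u^k$ produces $-\tfrac{k}{2}\cdot(-2B_k^t)\cdot\tfrac{1}{2} = kB_k^t/2$ wait---more precisely, it combines with the $-\tfrac{k}{2}B_k^t$ term to cancel, leaving the clean identity
\[
\frac{\partial}{\partial t} B_k^t = \sum_{j=1}^{k-1}(k-j)\,C_j^t B_{k-j}^t = \sum_{m=1}^{k-1} m\,C_{k-m}^t B_m^t
\]
after reindexing $m = k-j$.

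To obtain the recursion for $c_k$, I apply the tracing map $\mathcal{T}$: by Lemma \ref{lemma degree and trace preserving}, $\mathcal{T}$ is $\C[\mx{v}]$-linear, so $\mathcal{T}(C_{k-m}^t B_m^t) = C_{k-m}^t \mathcal{T}(B_m^t) = C_{k-m}^t C_m^t$, yielding $\frac{\partial}{\partial t}C_k^t = \sum_{m=1}^{k-1} m\,C_{k-m}^t C_m^t$. Then $\pi_s$, being evaluation at $v_k = \nu_k(s)$, is an algebra homomorphism on $\C[u,u^{-1};\mx{v}]$, so applying $\pi_s$ to both identities gives
\[
\frac{\partial}{\partial t} c_k(s,t) = \sum_{m=1}^{k-1} m\, c_{k-m}(s,t)\, c_m(s,t), \qquad \frac{\partial}{\partial t} b_k(s,t,u) = \sum_{m=1}^{k-1} m\, c_{k-m}(s,t)\, b_m(s,t,u).
\]
Integrating from $0$ to $t$ and inserting the initial values $c_k(s,0) = \nu_k(s)$ and $b_k(s,0,u) = u^k$ gives exactly (\ref{eq ck recursion}) and (\ref{eq bk recursion}).

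I don't foresee a serious obstacle: the whole argument is an application of the partial product rule plus bookkeeping. The one point that requires care is verifying that the prefactor $e^{-\frac{k}{2}t}$ in Definition \ref{def bk ck} is chosen precisely so as to absorb the diagonal $-ku^k$ piece of $\mathcal{D}u^k$, producing a recursion whose nonlinear right-hand side involves only strictly lower-index terms $B_m^t$ and $C_{k-m}^t$ with $1 \le m \le k-1$. This is exactly what makes the inductive/triangular structure of the Biane polynomials (cf.\ Lemma \ref{lemma B H 1-1}) visible at the level of generating functions, setting the stage for Theorem \ref{thm Biane transform}.
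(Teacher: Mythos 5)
Your argument is correct and is essentially the paper's own proof: you differentiate $B_k^t$ in $t$, use $\mathcal{N}u^k=ku^k$, $\mathcal{Z}u^k=0$, and $\mathcal{Y}u^k=\sum_{j=1}^{k-1}(k-j)v_ju^{k-j}$ so that the diagonal piece of $\mathcal{D}u^k$ cancels against the $e^{-\frac{k}{2}t}$ prefactor, split $e^{-\frac{t}{2}\mathcal{D}}(v_j u^{k-j})$ via the partial product rule (Theorem \ref{cor product rule and homom}), recognize the factors as $e^{\frac{j}{2}t}C_j^t$ and $e^{\frac{k-j}{2}t}B_{k-j}^t$, and then push the resulting ODE through $\mathcal{T}$ and $\pi_s$ and integrate — exactly the paper's route. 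The only nit is attribution: the $\C[\mathbf{v}]$-linearity of $\mathcal{T}$ is immediate from its definition (\ref{eq tracing map}), whereas Lemma \ref{lemma degree and trace preserving} is what gives $[\mathcal{T},\mathcal{D}]=0$, which is the fact actually needed (and used, via (\ref{eq C alt 1})) to identify $e^{-\frac{t}{2}\mathcal{D}}v_j$ with $e^{\frac{j}{2}t}C_j^t$.
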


\begin{proof} First note that $B^0_k(u;\mx{v}) = u^k$ and $C^0_k(\mx{v}) = v_k$ by definition, and thus $b_k(s,0,u) = \pi_s(u^k) = u^k$, while $c_k(s,0) = \pi_s(v_k) = \nu_k(s)$.  For $k=1$, we have
\[ B^t_1(u) = e^{-\frac{t}{2}}e^{-\frac{t}{2}\mathcal{D}} u = u \]
because $\mathcal{D} u = -u$.  For $k\ge 2$,
\[ \frac{d}{dt} B^t_k = \frac{d}{dt} e^{-\frac{t}{2}(k+\mathcal{D})}u^k = -\frac12e^{-\frac{t}{2}(k+\mathcal{D})}(k+\mathcal{D})u^k. \]
Recall (\ref{def D}) that $\mathcal{D} = -\mathcal{N}-2\mathcal{Z}-2\mathcal{Y}$.  Eq.\ (\ref{eq N1}) shows that $\mathcal{N}(u^k) = ku^k$; (\ref{eq Z}) shows that $\mathcal{Z}$ annihilates $u^k$; and Example \ref{example Q} works out that $\mathcal{Y}(u^k) = \sum_{j=1}^{k-1}(k-j)v_j u^{k-j} = \sum_{m=1}^{k-1} mu^mv_{k-m}$.  Thus
\[ (k+\mathcal{D})(u^k) = ku^k  -ku^k - 2\sum_{m=1}^{k-1} mu^mv_{k-m} = -2\sum_{m=1}^{k-1}mu^mv_{k-m}. \]
Hence
\begin{equation} \label{eq recursion proof 1} \frac{d}{dt}B^t_k = e^{-\frac{k}{2}t}e^{-\frac{t}{2}\mathcal{D}}\left(\sum_{m=1}^{k-1}mu^mv_{k-m}\right)  = e^{-\frac{k}{2}t}\sum_{m=1}^{k-1} m e^{-\frac{t}{2}\mathcal{D}}(u^m v_{k-m}). \end{equation}
We now use the partial homomorphism property of (\ref{eq partial product 2}) at time $-t$, which yields\begin{equation} \label{eq recursion proof 1.5} e^{-\frac{t}{2}\mathcal{D}}(u^m v_{k-m}) = (e^{-\frac{t}{2}\mathcal{D}} u^m)(e^{-\frac{t}{2}\mathcal{D}} v_{k-m}). \end{equation}
Now, $v_{k-m} = \mathcal{T}(u^{k-m})$, and, by Lemma \ref{lemma degree and trace preserving}, $\mathcal{T}$ and $\mathcal{D}$ commute.  We may therefore rewrite (\ref{eq recursion proof 1.5}) as
\begin{equation} \label{eq recursion proof 2}
e^{-\frac{t}{2}\mathcal{D}}(u^m v_{k-m}) = (e^{-\frac{t}{2}\mathcal{D}} u^m)\mathcal{T}(e^{-\frac{t}{2}\mathcal{D}} u^{k-m})
\end{equation}
Eq.\ (\ref{eq B C}) gives
\[ e^{-\frac{t}{2}\mathcal{D}} (\,\cdot\,)^m = e^{\frac{m}{2}t}B_m^t \qquad \text{and} \qquad \mathcal{T}[e^{-\frac{t}{2}\mathcal{D}} (\,\cdot\,)^{k-m}] = e^{\frac{k-m}{2}t}C_{k-m}^t. \] 
Thus, (\ref{eq recursion proof 1}) and (\ref{eq recursion proof 2}) combine to give
\begin{equation} \label{eq recursion proof 3} \frac{d}{dt}B_k^t = e^{-\frac{k}{2}t}\sum_{m=1}^{k-1} me^{\frac{m}{2}t}B^t_me^{\frac{k-m}{2}t}C^t_{k-m} = \sum_{m=1}^{k-1} mC^t_{k-m}B^t_m. \end{equation}

Integrating both sides of (\ref{eq recursion proof 3}) from $0$ to $t$, and using the initial condition $B^t_k(u;\mx{v}) = u^k$, gives
\begin{equation} \label{eq recursion proof 4} B_k^t = u^k + \sum_{m=1}^{k-1} m\int_0^t C^{\tau}_{k-m}B^\tau_m\,d\tau. \end{equation}
The tracing map $\mathcal{T}$ is linear, and commutes with the integral (easily verified since all terms are polynomials); moreover, if $C\in\C[\mx{v}]$, then $\mathcal{T}(CB) = C\mathcal{T}(B)$.  Thus
\begin{equation} \label{eq recursion proof 5} C_k^t = \mathcal{T}(B_k^t) = \mathcal{T}(u^k) + \sum_{m=1}^{k-1} m\int_0^t \mathcal{T}[C^\tau_{k-m}B^\tau_m]\,d\tau = v_k + \sum_{m=1}^{k-1} m\int_0^t C^\tau_{k-m}C^\tau_m\,d\tau. \end{equation}
Finally, the evaluation map $\pi_s$ is an algebra homomorphism, and (as with $\mathcal{T}$) commutes with the integral; applying $\pi_s$ to (\ref{eq recursion proof 4}) and (\ref{eq recursion proof 5}) yields the desired equations (\ref{eq ck recursion}) and (\ref{eq bk recursion}), concluding the proof.
\end{proof}

\begin{remark} By changing the index $m\mapsto k-m$ in (\ref{eq ck recursion}) and averaging the results, we may alternatively state the recursion for $c_k$ as
%\[ \sum_{m=1}^{k-1} \int_0^t mc_m(s,\tau)c_{k-m}(s,\tau)\,d\tau = \sum_{m=1}^{k-1} \int_0^t (k-m)c_m(s,\tau)c_{k-m}(s,\tau)\,d\tau \]
%and adding we then have
\begin{equation} \label{eq ck recursion 2}c_k(s,t) = \nu_k(s) + \frac{k}{2}\sum_{m=1}^{k-1} \int_0^t c_{k-m}(s,\tau)c_m(s,\tau)\,d\tau. \end{equation}
A transformation of this form is not possible for the $b_k(s,t,u)$ recursion, however.
\end{remark}

\subsection{Exponential Growth Bounds \label{section Exponential Growth Bounds}}

In Section \ref{section PDE}, we will study the generating functions of the quantities $\nu_k(s)$, $c_k(s,t)$, and $b_k(s,t,u)$.  As such, we will need a priori exponential growth bounds.

\begin{lemma} \label{lemma estimate nu k} For $s,t\in\R$ and $k\ge 2$,
\begin{align} \label{eq nu k growth bound} |\nu_k(t)| &\le C_{k-1}(1+|t|)^{k-1} e^{-\frac{k}{2}t}, \qquad \text{and} \\
\label{eq ck growth bound} |c_k(s,t)| &\le C_{k-1}(1+|s-t|)^{k-1}e^{-\frac{k}{2}s},
\end{align}
where $C_k = \frac{1}{k+1}\binom{2k}{k}$ are the Catalan numbers.
\end{lemma}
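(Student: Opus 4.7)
The plan is to prove (\ref{eq ck growth bound}) as an immediate consequence of (\ref{eq nu k growth bound}), and then focus the real work on the $\nu_k$ bound. From (\ref{eq c alt 0}) we already have $c_k(s,t)=e^{-\frac{k}{2}t}\nu_k(s-t)$, so assuming (\ref{eq nu k growth bound}) with argument $s-t$ in place of $t$ yields
\[
|c_k(s,t)| \le e^{-\frac{k}{2}t}\cdot C_{k-1}(1+|s-t|)^{k-1}e^{-\frac{k}{2}(s-t)} = C_{k-1}(1+|s-t|)^{k-1}e^{-\frac{k}{2}s},
\]
which is (\ref{eq ck growth bound}). So the whole problem reduces to proving (\ref{eq nu k growth bound}).

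For (\ref{eq nu k growth bound}), my approach is the direct coefficient comparison using the explicit formula (\ref{e.new1.4}). Factoring out the exponential and taking absolute values termwise,
\[
|\nu_k(t)|\le e^{-\frac{k}{2}t}\sum_{j=0}^{k-1}\frac{|t|^j}{j!}\,k^{j-1}\binom{k}{j+1}.
\]
Since $(1+|t|)^{k-1}=\sum_{j=0}^{k-1}\binom{k-1}{j}|t|^j$, it suffices to show the coefficientwise inequality
\[
\frac{k^{j-1}}{j!}\binom{k}{j+1}\le C_{k-1}\binom{k-1}{j}, \qquad 0\le j\le k-1.
\]
A direct simplification (multiplying out factorials) reduces this to
\[
\frac{k^{j}}{(j+1)!} \le C_{k-1}, \qquad 0\le j\le k-1.
\]

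The key step, and the main (though mild) obstacle, is to control the left-hand side. The ratio of consecutive terms is $\frac{k}{j+2}$, which is $\ge 1$ exactly when $j\le k-2$; hence the sequence $j\mapsto k^j/(j+1)!$ is nondecreasing on $[0,k-2]$, equal at $j=k-2$ and $j=k-1$, so its maximum on $\{0,\dots,k-1\}$ is $\frac{k^{k-1}}{k!}$. It therefore remains to check the single inequality
\[
\frac{k^{k-1}}{k!}\le C_{k-1}=\frac{(2k-2)!}{k!\,(k-1)!},
\]
which is equivalent to $k^{k-1}\le \frac{(2k-2)!}{(k-1)!}=\prod_{j=0}^{k-2}(k+j)$. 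The product on the right has exactly $k-1$ factors, each at least $k$, so the bound is immediate. This completes the proof of (\ref{eq nu k growth bound}) and hence of the lemma.
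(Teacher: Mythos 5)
Your proposal is correct, but it proves the key bound (\ref{eq nu k growth bound}) by a genuinely different route than the paper. The paper never touches the explicit formula (\ref{e.new1.4}) at this point: it specializes the recursion (\ref{eq ck recursion 2}) to $s=0$, proves $|c_k(0,t)|\le C_{k-1}(1+|t|)^{k-1}$ by induction on $k$ using Segner's recurrence $C_{k-1}=\sum_{m=1}^{k-1}C_{m-1}C_{k-m-1}$ together with an elementary integral estimate, and then converts this into the $\nu_k$ bound via $\nu_k(t)=e^{-\frac{k}{2}t}c_k(0,-t)$, exactly the identity (\ref{eq c alt 0}) you also use for the final step (\ref{eq ck growth bound}), which you handle the same way as the paper. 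Your argument instead takes absolute values termwise in Biane's formula (\ref{e.new1.4}) (legitimate for all real $t$, since that formula is the entire function defining the analytic continuation of $\nu_k$), compares coefficientwise with the binomial expansion of $(1+|t|)^{k-1}$, and reduces correctly to $k^j/(j+1)!\le C_{k-1}$ for $0\le j\le k-1$; your monotonicity analysis of $j\mapsto k^j/(j+1)!$ and the final check $k^{k-1}\le\prod_{j=0}^{k-2}(k+j)$ are both right. What each approach buys: yours is a direct, non-inductive verification that bypasses the recursion machinery entirely (using Proposition \ref{prop Recursions} only implicitly through (\ref{eq c alt 0})), at the cost of relying on the closed-form moments; the paper's inductive argument is self-contained within its recursion framework and, as a byproduct, produces the $s=0$ recursion (\ref{eq ck(0,t) recursion}) and hence the recursion (\ref{eq varrho recursion}) for $\varrho_k$ that is reused in Remark \ref{remark varrho recursion} and the PDE analysis of Section \ref{section PDE}.
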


\begin{remark} When $t>0$, $\nu_k(t)$ is the $k$th moment of the probability measure $\nu_t$ on the unit circle $\U$, and we therefore have the much better bound $|\nu_k(t)|\le 1$; similarly, if $s\ge t$, $|c_k(s,t)|\le e^{-\frac{k}{2}t}$.  It is necessary to have a priori bounds for negative $t$ and $s-t$ as well, however.  While (\ref{eq nu k growth bound}) is by no means sharp, the known exact formula (\ref{e.new1.4}) for $\nu_k(t)$ shows that, when $t<0$, $\nu_k(t)$ does grown exponentially with $k$ (at least for small $|t|$).
\end{remark}

In the proof of Lemma \ref{lemma estimate nu k}, we will use the well-known fact that the Catalan numbers satisfy Segner's recurrence relation
\[ C_k = \sum_{m=1}^k C_{m-1}C_{k-m}, \qquad k\ge 1. \]

\begin{proof} Taking $s=0$ in (\ref{eq ck recursion 2}), and noting that $\nu_k(0)=1$ for all $k$, we have
\begin{equation} \label{eq ck(0,t) recursion} c_k(0,t) = 1 + \frac{k}{2}\sum_{m=1}^{k-1}\int_0^t c_m(0,\tau)c_{k-m}(0,\tau)\,d\tau, \qquad k\ge 2. \end{equation}
We claim that
\begin{equation} \label{ck Catalan bound} |c_k(0,t)| \le C_{k-1}(1+|t|)^{k-1}, \qquad k\ge 1. \end{equation}
Since $c_1(0,t)=1=C_1$, we proceed by induction.   Let $k\ge 2$, and assume that (\ref{ck Catalan bound}) holds below level $k$; then (\ref{eq ck(0,t) recursion}) yields
\begin{align}
|c_{k}(0,t)|    & \leq1+\frac{k}{2}\int_{0}^{|t|}\sum_{m=1}^{k-1}%
C_{m-1}C_{k-m-1}\left(  1+\tau\right)  ^{k-2}d\tau\nonumber\\
& =1+\frac{k}{2\left(  k-1\right)  }\left(  \left(  1+|t|\right)  ^{k-1}%
-1\right)  \sum_{m=1}^{k-1}C_{m-1}C_{k-m-1}\nonumber\\
& =1-\frac{k}{2\left(  k-1\right)  }C_{k-1}+\left(  1+|t|\right)^{k-1}C_{k-1}\leq
C_{k-1}\left(1+|t|\right)  ^{k-1}\label{e.Yk}%
\end{align}
wherein we have used $\frac{k}{2\left(  k-1\right)  }C_{k-1}\ge 1$ for all $k\ge 2$.  This completes the induction argument, proving (\ref{ck Catalan bound}) holds.  Now, taking $s=0$ in (\ref{eq c alt 0}) yields
\begin{equation} \label{eq ck(0,t) 1} c_k(0,t) = e^{-\frac{k}{2}t}\nu_k(-t) \end{equation}
meaning that $\nu_k(t) = e^{-\frac{k}{2}t}c_k(0,-t)$, and this together with (\ref{ck Catalan bound}) proves (\ref{eq nu k growth bound}).  Then, using (\ref{eq c alt 0}) once more, (\ref{eq nu k growth bound}) implies that
\[ |c_k(s,t)| = e^{-\frac{k}{2}t}|\nu_k(s-t)| \le e^{-\frac{k}{2}t} e^{-\frac{k}{2}(s-t)}\cdot C_{k-1}(1+|s-t|)^{k-1} \]
which prove (\ref{eq ck growth bound}).
\end{proof}

\begin{remark} \label{remark varrho recursion} Equations (\ref{eq ck(0,t) recursion}) and  (\ref{eq ck(0,t) 1})  together yield a recursion for the coefficients $\varrho_k(t) = e^{\frac{k}{2}t}\nu_k(t) = c_k(0,-t)$:
\begin{equation} \label{eq varrho recursion} \varrho_k(t) = 1-\frac{k}{2}\sum_{m=1}^{k-1}\int_0^t \varrho_m(\tau)\varrho_{k-m}(\tau)\,d\tau. \end{equation}
This same recursion was derived in \cite[Lemma 11]{Biane1997b}, using free stochastic calculus, with $\nu_k(s)$ being identified as the limit moments of the free unitary Brownian motion distribution.  It is interesting that we can derive it directly from derivative formulas on the unitary group.
\end{remark}

\begin{lemma} \label{lemma b k bound} Let $s,t\in\R$ and $u\in\C$.  For $k\ge 2$, the $b_k(s,t,u)$ of (\ref{eq bk recursion}) satisfy
\begin{equation} \label{eq growth estimate 3} |b_k(s,t,u)| \le [5(1+|s|)(1+|t|)]^{k-1}|u|^k. \end{equation}
\end{lemma}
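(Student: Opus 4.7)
The plan is strong induction on $k$, in the style of the proof of Lemma \ref{lemma estimate nu k}. The base case $k=1$ is immediate since $b_1(s,t,u)=u$, consistent with the empty-product convention $[\,\cdot\,]^0=1$. For $k\ge 2$, the recursion (\ref{eq bk recursion}) gives
\begin{equation*}
|b_k(s,t,u)| \le |u|^k + \sum_{m=1}^{k-1} m \int_0^{|t|} |c_{k-m}(s,\tau)|\,|b_m(s,\tau,u)|\,d\tau,
\end{equation*}
and I would then bound each $|c_{k-m}(s,\tau)|$ using (\ref{eq ck growth bound}) of Lemma \ref{lemma estimate nu k} and each $|b_m(s,\tau,u)|$ by the inductive hypothesis (applied at $\tau$ in place of $t$).

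The bookkeeping that remains splits the Lemma \ref{lemma estimate nu k} bound via the elementary inequality $1+|s-\tau|\le (1+|s|)(1+|\tau|)$, producing factors of $(1+|s|)^{k-m-1}$ and $(1+|\tau|)^{k-m-1}$ that combine cleanly with the $[5(1+|s|)(1+|\tau|)]^{m-1}|u|^m$ factor from the inductive hypothesis into a common power $[(1+|s|)(1+|\tau|)]^{k-2}$. Integrating $\int_0^{|t|}(1+\tau)^{k-2}\,d\tau\le (1+|t|)^{k-1}/(k-1)$ then absorbs the factor $m\le k-1$ in the sum, so it suffices to bound the residual combinatorial sum $\sum_{m=1}^{k-1} C_{k-m-1}\cdot 5^{m-1}$ by $5^{k-1}$. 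Substituting $j=k-m-1$ and using the crude Catalan bound $C_j\le 4^j$ reduces this to a geometric series in $(4/5)^j$, which is summable. The choice of base constant $5$ (strictly exceeding the Catalan growth rate $4$) is precisely what makes the geometric sum absorbable into $5^{k-1}$.

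The principal obstacle is to verify that all of the numerical constants combine to fit inside the claimed bound $[5(1+|s|)(1+|t|)]^{k-1}|u|^k$ with this specific constant $5$: the Catalan estimate, the combinatorial factor $m$, the $1/(k-1)$ from the $\tau$-integration, and the initial $|u|^k$ term must all slot into the geometric-series margin. A secondary technical point concerns the exponential $e^{-(k-m)s/2}$ in (\ref{eq ck growth bound}) when $s<0$; this is harmless when $s\ge 0$ (the regime used in Theorem \ref{thm Biane transform}), and for general real $s$ would be handled by exploiting the identity $c_{k-m}(s,\tau)=e^{-(k-m)s/2}c_{k-m}(0,\tau-s)$ (implicit in the proof of Lemma \ref{lemma estimate nu k}) together with the normalization $b_k(s,t,u)=e^{-kt/2}p^{s,t}_k(u)$, so that the troublesome exponentials cancel upon passing to $b_k$.
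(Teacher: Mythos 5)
Your outline follows the paper's proof essentially step for step: induction through the recursion (\ref{eq bk recursion}), the bound (\ref{eq ck growth bound}) with $C_k\le 4^k$ and $1+|s-\tau|\le(1+|s|)(1+|\tau|)$, the integral of $(1+\tau)^{k-2}$, and a geometric sum with ratio $4/5$. The one step you defer---fitting the inhomogeneous $|u|^k$ term inside the constants---is exactly where the paper spends its care, and it does close, in either of two ways. Your margin idea works, but only if you keep the sharper bound $\sum_{m=1}^{k-1}C_{k-m-1}5^{m-1}\le\sum_{m=1}^{k-1}4^{k-m-1}5^{m-1}=5^{k-1}-4^{k-1}$; your sentence ``it suffices to bound the sum by $5^{k-1}$'' is not literally sufficient, since then the extra $|u|^k$ has nowhere to go. With the strict form one gets $|b_k|\le |u|^k+(5^{k-1}-4^{k-1})(1+|s|)^{k-2}(1+|t|)^{k-1}|u|^k$, and since $4^{k-1}(1+|s|)^{k-2}(1+|t|)^{k-1}\ge 4>1$ for $k\ge 2$, the leading $|u|^k$ is absorbed and the claim follows. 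The paper takes a slightly different bookkeeping route to the same end: it keeps the exact integral $\int_0^{|t|}(1+\tau)^{k-2}\,d\tau=\frac{(1+|t|)^{k-1}-1}{k-1}$ and uses the two-sided estimate $5^{k-1}-4^{k-1}\le\sum_{m=1}^{k-1}m4^{k-m-1}5^{m-1}\le(k-1)5^{k-1}$ together with $\frac{5^{k-1}-4^{k-1}}{k-1}\ge 1$, so the ``$-1$'' produces a negative term that cancels the initial $|u|^k$---the same device used in the proof of Lemma \ref{lemma estimate nu k}.

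On your two flagged caveats, the comparison with the paper is as follows. For the factor $e^{-\frac{k-m}{2}s}$, the paper simply bounds it by $1$, i.e.\ its argument (like your main line) really runs for $s\ge 0$, which covers every later use ($s>t/2>0$). Your proposed cancellation via $c_{k-m}(s,\tau)=e^{-\frac{k-m}{2}s}c_{k-m}(0,\tau-s)$ cannot rescue general $s<0$: already $b_2(s,t,u)=u^2+t\,e^{-s/2}u$ grows exponentially in $-s$, so the right-hand side of (\ref{eq growth estimate 3}), which grows only polynomially in $|s|$, cannot dominate it for $s$ very negative; this is a looseness in the statement itself, not something your proof (or the paper's) could repair. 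Similarly, replacing $|u|^m$ by $|u|^k$ when invoking the inductive hypothesis needs $|u|\ge 1$ (or a $\max(1,|u|)^k$ formulation); the paper's proof makes the same silent replacement, and it is harmless for the intended application, which is a radius-of-convergence estimate for the generating function.
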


\begin{proof} Since $b_1(s,t,u)=u$, (\ref{eq growth estimate 3}) holds for $k=1$.  We proceed by induction, assuming (\ref{eq growth estimate 3}) holds below level $k$.  Then (\ref{eq bk recursion}) gives us
\begin{equation} \label{eq bk estimate 1} |b_k(s,t,u)| \le |u|^k + \sum_{m=1}^{k-1} \int_0^{|t|} m|c_{k-m}(s,\tau)||b_m(s,\tau,u)|\,d\tau. \end{equation}
The Catalan number $C_k$ is $\le 4^k$ (in fact it is $\sim 4^k/k^{3/2}\sqrt{\pi}$). Using the estimate $1+|s-t| \le (1+|s|)(1+|t|)$, (\ref{eq ck growth bound}) implies that $|c_k(s,t)| \le [4(1+|s|)(1+|t|)]^{k-1}$.  Thus (\ref{eq bk estimate 1}) and the inductive hypothesis give us, for $k\ge 2$,
\begin{align} \nonumber |b_k(s,t,u)|  &\le |u|^k + \sum_{m=1}^{k-1} \int_0^{|t|} m [4(1+|s|)(1+\tau)]^{k-m-1}\cdot [5(1+|s|)(1+\tau)]^{m-1}|u|^k\,d\tau \\
&=|u|^k + |u|^k\cdot(1+|s|)^{k-2}\int_0^{|t|} (1+\tau)^{k-2}\,d\tau\cdot \sum_{m=1}^{k-1} m4^{k-m-1}5^{m-1}. \label{eq bk estimate 2}
\end{align}
Summing the geometric series, we may estimate
%\[ \sum_{m=1}^{k-1} 4^{k-m-1}5^{m-1} = 4^{k-2}\sum_{m=1}^{k-1} \left(\frac{5}{4}\right)^{m-1} = 5^{k-1}-4^{k-1}. \]
%Hence
\[ 5^{k-1}-4^{k-1} \le \sum_{m=1}^{k-1} m4^{k-m-1}5^{m-1} \le (k-1)5^{k-1}. \]
Substituting this into (\ref{eq bk estimate 2}) we have
\begin{align*} |b_k(s,t,u)| &\le |u|^k + |u|^k(1+|s|)^{k-2}[(1+|t|)^{k-1}-1]\frac{1}{k-1}\sum_{m=1}^{k-1}m4^{k-m-1}5^{m-1} \\
&\le |u|^k\left(1-(1+|s|)^{k-2}\frac{5^{k-1}-4^{k-1}}{k-1}\right) + 5^{k-1}(1+|s|)^{k-2}(1+|t|)^{k-1}|u|^k \\
&\le [5(1+|s|)(1+|t|)]^{k-1}|u|^k
\end{align*}
where we have used that $1+|s|\ge 1$ and $\frac{5^{k-1}-4^{k-1}}{k-1}\ge 1$ for $k\ge 2$.  This concludes the inductive  proof.
\end{proof}

\subsection{Holomorphic PDE \label{section PDE}}

The double recursion of Proposition \ref{prop Recursions} can be written in the form of coupled holomorphic PDEs for the generating functions of $c_k(s,t)$ and $b_k(s,t,u)$.

\begin{definition} \label{def psi phi} Let $s,t\in\R$.  For $z\in\C$, define
\[ \psi^s(t,z) = \sum_{k=1}^\infty c_k(s,t)z^k. \]
Additionally, for $u\in\C$ define
\[ \phi^{s,u}(t,z) = \sum_{k=1}^\infty b_k(s,t,u)z^k. \]
\end{definition}
\noindent By (\ref{eq ck growth bound}) and the Catalan bound $C_k\le 4^k$, the power series $z\mapsto \psi^s(t,z)$ is convergent whenever $|z|<e^{s/2}/4(1+|s-t|)$; similarly, by (\ref{eq growth estimate 3}), the power series $z\mapsto \phi^{s,u}(t,z)$ is convergent whenever $|z|<[5(1+|s|)(1+|t|)|u|]^{-1}$.  Hence, $\psi^s(t,\,\cdot\,)$ and $\phi^{s,u}(t,\,\cdot\,)$ are holomorphic on a nontrivial disk with radius that depends continuously on $s,t$.  Note that, by (\ref{eq b vs p}),
\begin{equation} \label{eq gen fn p vs b} \Pi(s,t,u,z) = \sum_{k\ge 1} p_k^{s,t}(u)z^k = \sum_{k\ge 1} e^{\frac{k}{2}t}b_k(s,t,u)z^k = \phi^{s,u}(t,e^{\frac{t}{2}} z). \end{equation}
So, identifying $\phi^{s,u}(t,z)$ will also identify the sought-after generating function $\Pi(s,t,u,z)$.

\begin{proposition} \label{prop diff t} For fixed $s\in\R$, the functions $\R\ni t\mapsto \psi^s(t,z)$ and $\R\ni t\mapsto \phi^{s,u}(t,z)$ are differentiable for all sufficiently small $|z|$ and $|u|$.  Their derivatives are given by
\[ \frac{\del}{\del t}\psi^s(t,z) = \sum_{k=1}^\infty \frac{\del}{\del t}c_k(s,t) z^k \qquad \text{and} \qquad \frac{\del}{\del t}\phi^{s,u}(t,z) = \sum_{k=1}^\infty \frac{\del }{\del t}b_k(s,t,u) z^k. \]
\end{proposition}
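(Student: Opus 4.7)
The plan is to reduce the question of termwise differentiability to a standard uniform-convergence argument, leveraging the integral recursions from Proposition \ref{prop Recursions} and the exponential growth bounds from Lemmas \ref{lemma estimate nu k} and \ref{lemma b k bound}. First, I would apply the fundamental theorem of calculus directly to the recursions (\ref{eq ck recursion}) and (\ref{eq bk recursion}) to obtain
\[
\frac{\partial}{\partial t}c_k(s,t)=\sum_{m=1}^{k-1} m\,c_{k-m}(s,t)c_m(s,t),\qquad \frac{\partial}{\partial t}b_k(s,t,u)=\sum_{m=1}^{k-1} m\,c_{k-m}(s,t)b_m(s,t,u),
\]
valid for each fixed $s,u\in\C$ and $t\in\R$ (the $k=1$ derivatives vanish since $c_1$ and $b_1$ are independent of $t$). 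These are finite sums, so no convergence issue arises at the level of individual $k$.

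Next, I would substitute the a priori estimates from Lemmas \ref{lemma estimate nu k} and \ref{lemma b k bound} into these identities. Using $C_{k-1}\le 4^{k-1}$, a direct estimate gives
\[
\Bigl|\tfrac{\partial}{\partial t}c_k(s,t)\Bigr|\le \tfrac{k(k-1)}{2}\,4^{k-2}(1+|s-t|)^{k-2}e^{-\frac{k}{2}s},
\]
and a similar polynomial-times-exponential bound for $|\tfrac{\partial}{\partial t}b_k(s,t,u)|$ of the form $k\cdot [C(s,t)]^{k-1}|u|^k$ for a constant $C(s,t)$ continuous in $(s,t)$. These bounds introduce only polynomial-in-$k$ correction factors relative to the bounds on $c_k$ and $b_k$ themselves, so the termwise-differentiated series
\[
\sum_{k=1}^\infty \frac{\partial}{\partial t}c_k(s,t)z^k,\qquad \sum_{k=1}^\infty \frac{\partial}{\partial t}b_k(s,t,u)z^k
\]
converge absolutely on the same (strictly smaller, but still nontrivial) disks as the original series, uniformly for $t$ in any compact subinterval of $\R$.

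Having uniform convergence of the differentiated series on a compact $t$-interval, together with pointwise convergence of the original series at (say) $t=0$, the classical theorem on termwise differentiation of series of functions (see any standard real analysis text) yields that $t\mapsto \psi^s(t,z)$ and $t\mapsto \phi^{s,u}(t,z)$ are differentiable with derivatives given by the termwise-differentiated series, as claimed. The main technical point — and the only place where care is needed — is choosing the radii in $z$ and $u$ small enough (in a manner depending continuously on $s$ and on the compact $t$-interval) to absorb the extra polynomial factor in $k$ coming from $\tfrac{k(k-1)}{2}$; this is routine since the original radius estimates already have slack, and uniformity over compact $t$-intervals is immediate from the continuity of the bounding constants $(1+|s-t|)$ and $(1+|t|)$ in $t$.
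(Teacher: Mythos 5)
Your proposal is correct and takes essentially the same route as the paper: differentiate the integral recursions via the fundamental theorem of calculus, bound $\partial_t c_k$ and $\partial_t b_k$ using the Catalan-type growth estimates, and deduce termwise differentiability from (locally uniform) absolute convergence of the differentiated series — the paper phrases this last step as integrating the differentiated series term-by-term back over $[0,t]$, which is the same mechanism. One cosmetic slip: since $|b_m(s,t,u)|\le[5(1+|s|)(1+|t|)]^{m-1}|u|^m$ with $1\le m\le k-1$, your bound on $\bigl|\partial_t b_k(s,t,u)\bigr|$ should involve the powers $|u|,\dots,|u|^{k-1}$ rather than $|u|^k$, but this does not affect the convergence conclusion for sufficiently small $|z|$ and $|u|$.
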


\begin{proof} From (\ref{eq ck recursion}), $\frac{\del}{\del t}c_1(s,t)=0$, while for $k\ge 2$ we have
\[ \frac{\del }{\del t}c_k(s,t) = k\sum_{m=1}^{k-1} c_{k-m}(s,t)c_m(s,t). \]
Hence, from (\ref{eq ck growth bound}) and the Catalan bound $C_k\le \frac{4^k}{k}$,
\[ \left|\frac{\del }{\del t}c_k(s,t)\right| \le \sum_{m=1}^{k-1} m|c_{k-m}(s,t)||c_m(s,t)| \le (k-1)4^ke^{-\frac{k}{2}s}(1+|s-t|)^k \]
for $k\ge 2$.  It follows that $\sum_{k=1}^\infty \frac{\del}{\del t}c_k(s,t)z^k$ converges to an analytic function of $z$ on the domain $|z|<e^{s/2}/4(1+|s-t|)$.  Integrating this series term-by-term over the interval $[0,t]$ shows that it is the derivative of $\psi^s(t,z)$, as claimed.  A completely analogous argument applies to $\phi^{s,u}(t,z)$.
\end{proof}

We will shortly write down coupled PDEs satisfies by $\psi^s$ and $\phi^{s,u}$.  First, we remark on their initial conditions.  From Proposition \ref{prop Recursions}, we have
\[ c_k(s,0) = \nu_k(s) \qquad \text{and} \qquad b_k(s,0,u) = u^k. \]
Thus
\begin{align} \psi^s(0,z) &= \sum_{k\ge 1} \nu_k(s)z^k, \label{eq psi initial 1}\\
\phi^{s,u}(0,z) &= \sum_{k\ge 1} u^kz^k = \frac{uz}{1-uz}. \label{eq phi initial}
\end{align}
It will be convenient to express $\psi^s(0,z)$ in terms of the shifted coefficients $\varrho_k(s) = e^{\frac{k}{2}s}\nu_k(s)$ considered in Remark \ref{remark varrho recursion}.  Define
\begin{equation} \label{eq def varrho(s,z)} \varrho(s,z) = \sum_{k\ge 1} \varrho_k(s)z^k = \psi^s(0,e^{\frac{s}{2}}z). \end{equation}
Note that, since $\nu_k(0)=1$ for all $k$, $\varrho(0,z) = \frac{z}{1-z}$.

\begin{proposition} \label{prop PDEs} For $s,t\in\R$ and $|z|$ and $|u|$ sufficiently small, the functions $\varrho$, $\psi^s$, and $\phi^{s,u}$ satisfy the following holomorphic PDEs:
\begin{align}  \label{eq PDE rho} \frac{\del\varrho}{\del s} &= - z\varrho\frac{\del\varrho}{\del z}, \qquad\qquad\quad \;\;\; \varrho(0,z) = \frac{z}{1-z}, \\
 \label{eq PDE 1} \frac{\partial\psi^s}{\partial t} &=z\psi^s\frac{\partial\psi^s}{\partial z}, \qquad\qquad \;\;\;   \psi^s(0,z)= \varrho(s,e^{-\frac{s}{2}}z), \\
\label{eq PDE 2} \frac{\partial\phi^{s,u}}{\partial t} &= z\psi^s\frac{\partial\phi^{s,u}}{\partial z},  \qquad\quad\;\;\; \phi^{s,u}(0,z) = \frac{uz}{1-uz}. \end{align}
\end{proposition}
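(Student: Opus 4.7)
The plan is to derive each of the three PDEs directly from the integral recursions already established (the double recursion of Proposition \ref{prop Recursions} and the Biane recursion (\ref{eq varrho recursion})), by differentiating in the appropriate variable and then recognizing the resulting double series as a Cauchy product. Since Proposition \ref{prop diff t} already justifies termwise differentiation in $t$ within the joint radius of convergence, and the exponential growth bounds of Section \ref{section Exponential Growth Bounds} guarantee absolute convergence on a nondegenerate disk depending continuously on $s$ and $t$, all the necessary analytic manipulations are legal, and the proof reduces to a purely combinatorial identity.

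For the PDE (\ref{eq PDE 1}) for $\psi^s$, differentiating (\ref{eq ck recursion}) in $t$ gives $\partial_t c_k(s,t) = \sum_{m=1}^{k-1} m\,c_{k-m}(s,t)c_m(s,t)$ for $k\geq 2$ (and $0$ for $k=1$), so
\[
\frac{\partial\psi^s}{\partial t}(t,z) = \sum_{k\geq 2}z^k\sum_{m=1}^{k-1}m\,c_{k-m}(s,t)c_m(s,t).
\]
On the other hand, expanding the Cauchy product yields
\[
z\,\psi^s\cdot \frac{\partial\psi^s}{\partial z} = \sum_{k,j\geq 1}j\,c_k(s,t)c_j(s,t)\,z^{k+j} = \sum_{n\geq 2}z^n\sum_{j=1}^{n-1}j\,c_{n-j}(s,t)c_j(s,t),
\]
matching term by term. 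The same argument applied to (\ref{eq bk recursion}) yields (\ref{eq PDE 2}), with the Cauchy product $z\,\psi^s\cdot\partial_z\phi^{s,u}$ expanding to $\sum_{n\geq 2}z^n\sum_{j=1}^{n-1}j\,c_{n-j}(s,t)b_j(s,t,u)$. The initial conditions $\psi^s(0,z)=\sum_k\nu_k(s)z^k$ and $\phi^{s,u}(0,z)=\sum_k u^k z^k = uz/(1-uz)$ follow from $c_k(s,0)=\nu_k(s)$ and $b_k(s,0,u)=u^k$, and the identification $\psi^s(0,z)=\varrho(s,e^{-s/2}z)$ is immediate from the definition $\varrho_k(s)=e^{ks/2}\nu_k(s)$.

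For the PDE (\ref{eq PDE rho}) for $\varrho$, the cleanest path is to observe that $\varrho(s,z)=\psi^0(-s,z)$: indeed, specializing (\ref{eq c alt 0}) to $s=0$ gives $c_k(0,t)=e^{-kt/2}\nu_k(-t)=\varrho_k(-t)$, whence $\psi^0(t,z)=\varrho(-t,z)$. Given (\ref{eq PDE 1}) for $\psi^0$, the chain rule then yields
\[
\frac{\partial\varrho}{\partial s}(s,z) = -\frac{\partial\psi^0}{\partial t}(-s,z) = -z\,\psi^0(-s,z)\frac{\partial\psi^0}{\partial z}(-s,z) = -z\,\varrho(s,z)\frac{\partial\varrho}{\partial z}(s,z),
\]
with initial condition $\varrho(0,z)=\sum_{k\geq 1}\nu_k(0)z^k=z/(1-z)$. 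Alternatively, one may derive (\ref{eq PDE rho}) directly from (\ref{eq varrho recursion}) by the same Cauchy product manipulation, using the symmetrization $\sum_{m=1}^{n-1}m\,\varrho_{n-m}\varrho_m = \tfrac{n}{2}\sum_{m=1}^{n-1}\varrho_{n-m}\varrho_m$. The only mild obstacle is justifying the interchange of $\partial_s$ with the infinite sum defining $\varrho$, but either route---via $\psi^0$ (where termwise $t$-differentiation is already provided by Proposition \ref{prop diff t}) or directly, invoking the bound $|\varrho_k(s)|\leq C_{k-1}(1+|s|)^{k-1}$ from Lemma \ref{lemma estimate nu k}---resolves this straightforwardly.
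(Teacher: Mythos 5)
Your proposal is correct and follows essentially the same route as the paper: termwise $t$-differentiation of the recursions (justified by Proposition \ref{prop diff t} and the growth bounds), recognition of the resulting double sums as the Cauchy products $z\psi^s\partial_z\psi^s$ and $z\psi^s\partial_z\phi^{s,u}$, and derivation of (\ref{eq PDE rho}) from (\ref{eq PDE 1}) via the identity $\varrho(s,z)=\psi^0(-s,z)$. The initial conditions are read off exactly as in the paper, so there is nothing to add.
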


\begin{remark} \begin{itemize} \item[(1)] PDE (\ref{eq PDE rho}) was proved in \cite[Lemma 1]{Biane1997c}, using the recursion (\ref{eq varrho recursion}).  We reprove it here, as a special case of (\ref{eq PDE 1}).
\item[(2)] It is unusual that nonlinear PDEs with given ``initial'' conditions should have well-defined solutions for time flowing forwards or backwards.  In fact, this is the case presently.  In terms of (\ref{eq PDE rho}), this is indicative of the fact that the measure $\nu_s$ exists for all $s\in\R$; although it becomes singular at $s=0$, it is well-behaved for $s>0$ and $s<0$; see \cite[Proposition 2.24]{Kemp2013} for a summary of known results about $\nu_s$.
\end{itemize} \end{remark}

\begin{proof} First, Remark \ref{remark varrho recursion} and (\ref{eq ck(0,t) 1}) show that $\varrho_k(t) = c_k(0,-t)$, and hence $\varrho(t,z) = \psi^0(-t,z)$.  Hence, (\ref{eq PDE rho}) follows immediately from (\ref{eq PDE 1}).  Now, Proposition \ref{prop diff t} yields that $\psi^s(t,z)$ is differentiable in $t$, and so by Proposition  \ref{prop Recursions}
\begin{equation} \label{eq del t psi} \frac{\partial}{\partial t}\psi^s(t,z) = \sum_{k=2}^\infty \frac{\partial}{\partial t} c_k(s,t)\,z^k = \sum_{k=2}^\infty \sum_{m=1}^{k-1}mc_m(s,t)c_{k-m}(s,t)\,z^k. \end{equation}
On the other hand, $\psi^s(t,z)$ is analytic in $z$, and
\[ z\frac{\partial}{\partial z}\psi^s(t,z) = \sum_{k=1}^\infty c_k(s,t)\cdot z\frac{\partial}{\partial z}z^k = \sum_{k=1}^\infty kc_k(s,t)z^k, \]
and so
\begin{align*} z\psi^s(t,z)\frac{\partial}{\partial z}\psi^s(t,z) &= \sum_{k_1=1}^\infty c_{k_1}(s,t)z^{k_1}\cdot\sum_{k_2=1}^\infty  k_2c_{k_2}(s,t)z^{k_2} \\
&= \sum_{k=2}^\infty z^k \sum_{k_1+k_2=k\atop k_1,k_2\ge 1} k_2 c_{k_1}(s,t)c_{k_2}(s,t).
\end{align*}
Reindexing the internal sum and comparing with (\ref{eq del t psi}) proves (\ref{eq PDE 1}).  The proof of (\ref{eq PDE 2}) is entirely analogous.

\end{proof}

\subsection{Generating Function\label{section Generating Function}}

We now proceed to prove the implicit formula (\ref{eq formula for gen fn}), by solving the coupled PDEs (\ref{eq PDE rho}) -- (\ref{eq PDE 2}).  We do this essentially by the method of characteristics.  These quasilinear PDEs have a fairly simple form; as a result, the characteristic curves are the same as the level curves in this case.  As we will see, all three equations have the same level curves.

\begin{lemma} \label{lemma char rho} Fix $s_0\ge 0$ and $w_0\in\C$ with $|w_0|<[4(1+s_0)]^{-1}$.  Consider the exponential curve
\[ \mx{w}(s) = w_0\,e^{\varrho(0,w_0)s}. \]
Then $s\mapsto \varrho(s,\mx{w}(s))$ is constant.  In particular, $\varrho(s,\mx{w}(s)) = \varrho(0,w_0)$ for all $s\in[0,s_0)$.
\end{lemma}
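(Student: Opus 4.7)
The plan is to apply the method of characteristics to the quasilinear PDE (\ref{eq PDE rho}). Rewriting it as $\partial_s\varrho + z\varrho\,\partial_z\varrho = 0$, the associated characteristic ODE is $\dot z(s) = z(s)\varrho(s,z(s))$, along which $\varrho$ is constant. If $\varrho(s,z(s))\equiv c$, then $\dot z(s) = cz(s)$ gives $z(s) = w_0 e^{cs}$ with $c = \varrho(0,w_0)$, which is precisely the curve $\mathbf{w}(s)$. So the claim is essentially that $\mathbf{w}(s)$ is the characteristic of the PDE through $(0,w_0)$, and the proof amounts to verifying this directly using the PDE.

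Concretely, set $F(s) = \varrho(s,\mathbf{w}(s))$. Proposition \ref{prop diff t} and the chain rule (together with holomorphy of $\varrho(s,\cdot)$ on its disk of convergence) yield
\[ F'(s) \;=\; \frac{\partial \varrho}{\partial s}(s,\mathbf{w}(s)) \;+\; \frac{\partial \varrho}{\partial z}(s,\mathbf{w}(s))\cdot\mathbf{w}'(s). \]
Since $\mathbf{w}'(s) = \varrho(0,w_0)\mathbf{w}(s)$, substituting the PDE (\ref{eq PDE rho}) in the first term gives
\[ F'(s) \;=\; \big[\varrho(0,w_0) - F(s)\big]\,G(s), \qquad G(s) := \mathbf{w}(s)\frac{\partial \varrho}{\partial z}(s,\mathbf{w}(s)). \]
This is a linear first-order ODE in $F$ with initial condition $F(0) = \varrho(0,w_0)$. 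The constant function $F \equiv \varrho(0,w_0)$ is evidently a solution, and by uniqueness for linear ODEs, $F(s) \equiv \varrho(0,w_0)$ on any interval on which the right-hand side is defined. Equivalently, integrating the ODE and applying Gr\"onwall's inequality to $F(s)-\varrho(0,w_0)$ forces this difference to vanish.

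The only real obstacle is ensuring that the chain-rule computation is legitimate on all of $[0,s_0)$, i.e.\ that $\mathbf{w}(s)$ stays inside the disk of convergence of $z\mapsto\varrho(s,z)$ so that both $\varrho(s,\mathbf{w}(s))$ and $\partial_z\varrho(s,\mathbf{w}(s))$ make sense. The hypothesis $|w_0|<[4(1+s_0)]^{-1}$ is tailored to this purpose: it forces $|\varrho(0,w_0)|\le|w_0|/(1-|w_0|)$ to be small (in particular at most $4|w_0|/3$), and combined with the radius-of-convergence estimate for $\varrho(s,\cdot)$ that follows from Lemma \ref{lemma estimate nu k} (and, for $s\ge 0$, the bound $|\nu_k(s)|\le 1$ noted after Theorem \ref{t.new1.9}, which gives radius $\ge e^{-s/2}$), one checks that $|\mathbf{w}(s)| = |w_0|e^{\operatorname{Re}\varrho(0,w_0)s}$ lies strictly inside the disk of convergence of $\varrho(s,\cdot)$ for every $s\in[0,s_0)$. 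A standard continuity/bootstrap argument extends the identity $F(s) = \varrho(0,w_0)$, first established on a maximal open subinterval on which $\mathbf{w}(s)$ remains in the domain, to all of $[0,s_0)$.
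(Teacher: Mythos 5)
Your proof is correct and follows essentially the same route as the paper: differentiate $F(s)=\varrho(s,\mathbf{w}(s))$, substitute the PDE (\ref{eq PDE rho}) together with $\dot{\mathbf{w}}(s)=\varrho(0,w_0)\mathbf{w}(s)$ to obtain $F'(s)=\left[\varrho(0,w_0)-F(s)\right]\mathbf{w}(s)\,\partial_z\varrho(s,\mathbf{w}(s))$ with $F(0)=\varrho(0,w_0)$, and conclude by uniqueness for this ODE that $F$ is constant. The only divergence is your closing paragraph claiming $\mathbf{w}(s)$ stays in the disk of convergence for every $s\in[0,s_0)$, which is asserted rather than checked; the paper is no more careful on this point (it only records joint regularity of $\varrho$ on $|w|<[4(1+s_0)]^{-1}$ for $0\le s<s_0$), and since Corollary \ref{cor char upsilon} uses the identity only for sufficiently small $s$ and $w$, this does not affect the argument.
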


%\begin{remark} Eq.\ (\ref{eq PDE rho with initial}) gives an exact formula for the initial condition $\varrho(0,w_0)$, but that is unnecessary for the proof of the lemma, so we leave it in more general form. \end{remark}

\begin{proof} Lemma \ref{lemma estimate nu k} shows that $e^{\frac{k}{2}s}|\nu_k(s)|\le [4(1+s)]^k$; thus
\[ \varrho(s,w) = \psi_{\nu_s}(e^{\frac{s}{2}}w) = \sum_{k\ge 1} e^{\frac{k}{2}s}\nu_k(s)w^k \]
converges to an analytic function of $w$ for $|w|<[4(1+s)]^{-1}$. Thus, since $s\mapsto[4(1+s)]^{-1}$ is decreasing, $\varrho(s,w)$ is differentiable in $s$ and analytic in $w$ for $|w|<[4(1+s_0)]^{-1}$ and $0\le s<s_0$.  Since $4(1+s_0)>1$, the initial condition $\varrho(0,w)=\frac{w}{1-w}$ is also analytic on this domain.  Thus, subject to these constraints, we can simply differentiate. To avoid confusion, we denote $\dot{\varrho}(s,w) = \frac{\del\varrho}{\del s}(s,w)$ and $\varrho'(s,w) = \frac{\del\varrho}{\del w}(s,w)$.  Thus
\begin{equation} \label{eq rho diff 1} \frac{d}{ds} \varrho(s,\mx{w}(s)) = \dot\varrho(s,\mx{w}(s)) + \varrho'(s,\mx{w}(s))\dot{\mx{w}}(s). \end{equation}
We now use (\ref{eq PDE rho}), which asserts that $\dot\varrho(s,w) = -w\varrho(s,w)\varrho'(s,w)$; hence
\[ \dot\varrho(s,\mx{w}(s)) = -\mx{w}(s)\varrho(s,\mx{w}(s))\,\varrho'(s,\mx{w}(s)). \]
Plugging this into (\ref{eq rho diff 1}) yields
\begin{equation} \label{eq rho diff 2}  \frac{d}{ds} \varrho(s,\mx{w}(s)) = \varrho'(s,\mx{w}(s))\left[-\mx{w}(s)\varrho(s,\mx{w}(s)) + \dot{\mx{w}}(s)\right]. \end{equation}
Note that $\mx{w}$ satisfies the ODE
\[
\dot{\mathbf{w}}(s)=\frac{d}{ds}w_{0}\,e^{\varrho(0,w_{0})s}=\varrho(0,w_{0}%
)w_{0}\,e^{\varrho(0,w_{0})s}=\varrho(0,w_{0})\mathbf{w}(s).
\]
Substituting this into (\ref{eq rho diff 1}) yields
\begin{align}
\frac{d}{ds}\varrho(s,\mathbf{w}(s))  &  =\varrho^{\prime}(s,\mathbf{w}%
(s))\mathbf{w}(s)  \left[  \varrho(0,w_{0})-\varrho(s,\mathbf{w}%
(s))\right], \label{eq rho diff 3} \\
\varrho\left(  s,\mathbf{w}(s)\right)  |_{s=0}  &  =\varrho
(0,w_{0}).\nonumber
\end{align}
We now easily see that $\varrho\left(s,\mx{w}(s)\right)  \equiv
\varrho(0,w_{0})=\frac{w_0}{1-w_0}$ is indeed the (unique) solution to this this ODE.  \end{proof}

\begin{corollary} \label{cor char upsilon} Subject to the constraints on $s,w$ in Lemma \ref{lemma char rho}, the function $\psi^s(0,w) = \varrho(s,e^{-\frac{s}{2}}w)$ is constant along the curves $s\mapsto e^{\frac{s}{2}}\mx{w}(s) = w_0 e^{[\varrho(0,w_0)+\frac12]s}$.  Note that
\[ \textstyle{\varrho(0,w_0)+\frac12 = \frac{w_0}{1-w_0}+\frac12 = \frac12\frac{1+w_0}{1-w_0}}. \]
Thus, for all sufficiently small $w$ and $s$,
\begin{equation} \label{eq implicit upsilon} \psi^s(0,w\,e^{\frac{s}{2}\frac{1+w}{1-w}}) = \upsilon(0,w) = \varrho(0,w) = \frac{w}{1-w}. \end{equation}
\end{corollary}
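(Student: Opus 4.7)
The plan is to read off the corollary directly from Lemma \ref{lemma char rho} by composing it with the initial condition identity $\psi^s(0,z) = \varrho(s, e^{-s/2} z)$ recorded in \eqref{eq PDE 1}. The content of Lemma \ref{lemma char rho} is that the characteristic curve $\mathbf{w}(s) = w_0 e^{\varrho(0, w_0) s}$ is in fact a \emph{level curve} of $\varrho$, with constant value $\varrho(0, w_0) = w_0/(1 - w_0)$. So the only real work is a change of variables and an algebraic simplification of the exponent.

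First, I would define the rescaled curve $z(s) = e^{s/2}\mathbf{w}(s)$ and observe that
\[
z(s) = w_0 \exp\!\left(\left[\varrho(0, w_0) + \tfrac{1}{2}\right] s\right).
\]
Using the identity $\psi^s(0, z) = \varrho(s, e^{-s/2} z)$, evaluation at $z = z(s)$ gives
\[
\psi^s(0, z(s)) = \varrho(s, e^{-s/2} z(s)) = \varrho(s, \mathbf{w}(s)) = \varrho(0, w_0),
\]
where the last equality is exactly Lemma \ref{lemma char rho}. This establishes the constancy assertion.

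For the explicit implicit formula, I would substitute $w_0 = w$ and simplify the exponent:
\[
\varrho(0, w) + \tfrac{1}{2} = \frac{w}{1-w} + \frac{1}{2} = \frac{1}{2}\cdot\frac{1+w}{1-w},
\]
so the characteristic curve takes the form $z(s) = w \exp\!\bigl(\tfrac{s}{2}\tfrac{1+w}{1-w}\bigr)$. Combining with the constancy statement yields
\[
\psi^s\!\left(0,\, w\, e^{\frac{s}{2}\frac{1+w}{1-w}}\right) = \varrho(0, w) = \frac{w}{1-w},
\]
which is \eqref{eq implicit upsilon}.

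The only subtlety, and not really an obstacle, is verifying that the formal manipulation is valid on a nonempty domain: one must check that for sufficiently small $|w|$ and $|s|$, the argument $w\, e^{\frac{s}{2}\frac{1+w}{1-w}}$ remains inside the disk of convergence $|z| < e^{s/2}/[4(1+|s|)]$ where $\psi^s(0,\cdot)$ is analytic (and correspondingly the input to $\varrho$ stays inside its own disk of convergence). This is immediate from continuity at $(s, w) = (0, 0)$, where the argument vanishes. I would note this at the end of the proof, since the statement is only asserted ``for all sufficiently small $w$ and $s$''.
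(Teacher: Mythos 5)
Your proof is correct and is essentially the argument the paper intends: the corollary is an immediate consequence of Lemma \ref{lemma char rho} combined with the identification $\psi^s(0,w)=\varrho(s,e^{-s/2}w)$, plus the algebraic simplification of the exponent. Your closing remark on the domain of convergence matches the bounds established after Definition \ref{def psi phi} and in Lemma \ref{lemma char rho}, so nothing is missing.
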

\noindent Differentiation shows that the function $w\mapsto we^{\frac{s}{2}\frac{1+w}{1-w}}$ is strictly increasing for all $w\in\R$ (provided $s<4$); and in general for all $w>0$ for all $s$; hence, (\ref{eq implicit upsilon}) actually uniquely determines $\psi^s(0,z)$ for $z$ (by analytic continuation) when $s<4$; moreover, by the inverse function theorem, it is analytic in $z$.  %It can be expressed locally in terms of the Lambert W function.

Following the idea of Lemma \ref{lemma char rho}, we now show that the level-curves of the functions $\psi^s$ and $\phi^{s,u}$ are also exponentials.

\begin{lemma} \label{lemma phi char} For $z_0\in\C$, consider the exponential curve
\[ \mx{z}(t) = z_0\,e^{-\psi^s(0,z_0)t}. \]
Then for $z_0$ and $t$ sufficiently small, $t\mapsto \psi^s(t,\mx{z}(t))$ and $t\mapsto \phi^{s,u}(t,\mx{z}(t))$ are constant.  In particular,
\[ \psi^s(t,\mx{z}(t)) = \psi^s(0,z_0), \quad\text{and}\quad \phi^{s,u}(t,\mx{z}(t)) = \phi^{s,u}(0,z_0). \]
\end{lemma}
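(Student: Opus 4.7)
The plan is to mimic exactly the method used in Lemma \ref{lemma char rho}: both PDEs (\ref{eq PDE 1}) and (\ref{eq PDE 2}) are quasilinear of transport type with transport velocity $-z\psi^s(t,z)$, and the curve $\mx{z}(t)=z_0e^{-\psi^s(0,z_0)t}$ is the exponential whose logarithmic derivative equals $-\psi^s(0,z_0)$. So the strategy is to differentiate along $\mx{z}(t)$, substitute the PDE, and obtain an ODE whose constant function is the obvious solution.

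First, I would fix the domain of validity. By the growth bound (\ref{eq ck growth bound}) of Lemma \ref{lemma estimate nu k}, for every compact interval $I\ni 0$ there is a radius $r_I>0$ such that $\psi^s(t,z)$ is analytic in $z$ for $|z|<r_I$, uniformly for $t\in I$; combined with Proposition \ref{prop diff t}, on this set $\psi^s$ is jointly $C^1$ in $(t,z)$. A parallel statement for $\phi^{s,u}$ follows from Lemma \ref{lemma b k bound}, provided $|u|$ and $|z|$ are sufficiently small. Choose $z_0$ and a time interval $[0,t_0)$ small enough that the curve $\mx{z}(t)=z_0e^{-\psi^s(0,z_0)t}$ stays in this common analyticity disk; this is possible because $\psi^s(0,z_0)$ is bounded as $z_0\to 0$ and $\mx{z}(t)\to z_0$ as $t\to 0$.

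Next I would differentiate $\Psi(t)\equiv\psi^s(t,\mx{z}(t))$ by the chain rule, writing $\dot\psi^s=\partial_t\psi^s$ and $\psi^{s\prime}=\partial_z\psi^s$. Using PDE (\ref{eq PDE 1}),
\begin{align*}
\dot\Psi(t) &= \dot\psi^s(t,\mx{z}(t)) + \psi^{s\prime}(t,\mx{z}(t))\,\dot{\mx{z}}(t) \\
&= \mx{z}(t)\psi^s(t,\mx{z}(t))\,\psi^{s\prime}(t,\mx{z}(t)) - \psi^s(0,z_0)\,\mx{z}(t)\,\psi^{s\prime}(t,\mx{z}(t)) \\
&= \mx{z}(t)\psi^{s\prime}(t,\mx{z}(t))\bigl[\Psi(t) - \psi^s(0,z_0)\bigr],
\end{align*}
with $\Psi(0)=\psi^s(0,z_0)$. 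The constant function $t\mapsto\psi^s(0,z_0)$ is manifestly a solution, and since the coefficient $\mx{z}(t)\psi^{s\prime}(t,\mx{z}(t))$ is continuous, standard uniqueness for scalar linear ODEs forces $\Psi(t)\equiv\psi^s(0,z_0)$, proving the first claim.

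Finally, to handle $\Phi^u(t)\equiv\phi^{s,u}(t,\mx{z}(t))$, I would differentiate in exactly the same way, now using PDE (\ref{eq PDE 2}) and the just-proved identity $\psi^s(t,\mx{z}(t))=\psi^s(0,z_0)$:
\begin{align*}
\dot\Phi^u(t) &= \mx{z}(t)\psi^s(t,\mx{z}(t))\,\phi^{s,u\prime}(t,\mx{z}(t)) - \psi^s(0,z_0)\,\mx{z}(t)\,\phi^{s,u\prime}(t,\mx{z}(t)) \\
&= \mx{z}(t)\phi^{s,u\prime}(t,\mx{z}(t))\bigl[\psi^s(t,\mx{z}(t))-\psi^s(0,z_0)\bigr] \equiv 0.
\end{align*}
Hence $\Phi^u(t)=\Phi^u(0)=\phi^{s,u}(0,z_0)$, giving the second claim. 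The only nontrivial bookkeeping is keeping $\mx{z}(t)$ inside the joint analyticity domain for $\psi^s$ and $\phi^{s,u}$, but that is guaranteed by the uniform growth estimates of Section \ref{section Exponential Growth Bounds} once $|z_0|$ and $|t|$ are chosen small enough.
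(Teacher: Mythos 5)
Your proposal is correct and follows essentially the same route as the paper: differentiate along the exponential curve, use the transport-type PDEs (\ref{eq PDE 1}) and (\ref{eq PDE 2}) together with the ODE $\dot{\mx{z}}(t)=-\psi^s(0,z_0)\mx{z}(t)$, and invoke uniqueness for the resulting linear ODE to conclude first that $\psi^s$ is constant along the curve and then that $\phi^{s,u}$ is as well. Your extra care about the joint domain of analyticity via the growth bounds is a slightly more explicit version of the paper's appeal to the discussion following Definition \ref{def psi phi}, but the argument is the same.
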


\begin{proof} To improve readability, through this proof we suppress the parameters $s,u$ and simply write $\phi^{s,u}(t,z) = \phi(t,z)$ and $\psi^s(t,z) = \psi(t,z)$.  As per the discussion following Definition \ref{def psi phi}, these functions are differentiable in $t$ and analytic in $z$ for sufficiently small $z$.  As in the proof of Lemma \ref{lemma char rho}, we set $\dot{\psi}(t,z) =\frac{\del}{\del t}\psi(t,z)$, and $\psi'(t,z) = \frac{\del}{\del z}\psi(t,z)$, and similarly with $\dot\phi$ and $\phi'$.  Differentiating, we have
\begin{align*} \frac{d}{dt}\psi(t,\mx{z}(t)) &= \dot\psi(t,\mx{z}(t)) + \psi'(t,\mx{z}(t))\dot{\mx{z}}(t), \\
\frac{d}{dt}\phi(t,\mx{z}(t)) &= \dot\phi(t,\mx{z}(t)) + \phi'(t,\mx{z}(t))\dot{\mx{z}}(t). \end{align*}
PDEs (\ref{eq PDE 1}) and (\ref{eq PDE 2}) say $\dot\psi(t,z) = z\psi(t,z)\psi'(t,z)$ and $\dot\phi(t,z) = z\psi(t,z)\psi'(t,z)$, and so
\begin{align} \label{eq psi' 1} \frac{d}{dt}\psi(t,\mx{z}(t)) &= \left[\mx{z}(t)\psi(t,\mx{z}(t))+\dot{\mx{z}}(t)\right]\psi'(t,\mx{z}(t)), \\
\label{eq phi' 1} \frac{d}{dt}\phi(t,\mx{z}(t)) &= \left[\mx{z}(t)\psi(t,\mx{z}(t))+\dot{\mx{z}}(t)\right]\phi'(t,\mx{z}(t)).
\end{align}
As in the proof of Lemma \ref{lemma char rho}, we note that $\mx{z}$ satisfies the ODE
\[ \dot{\mx{z}}(t) -z_0\psi(0,z_0)e^{-\psi(0,z_0)t} = -\psi(0,z_0)\mx{z}(t). \]
Substituting this into (\ref{eq psi' 1}) and (\ref{eq phi' 1}) yields
\begin{align} \label{eq psi'} \frac{d}{dt}\psi(t,\mx{z}(t)) &= \left[\psi(t,\mx{z}(t))-\psi(0,z_0)\right]\mx{z}(t)\psi'(t,\mx{z}(t)), \\
\label{eq phi'} \frac{d}{dt}\phi(t,\mx{z}(t)) &= \left[\psi(t,\mx{z}(t))-\psi(0,z_0)\right]\mx{z}(t)\phi'(t,\mx{z}(t)).
\end{align}
The initial condition for (\ref{eq psi'}) is $\left.\psi(t,\mx{z}(t))\right|_{t=0} = \psi(0,z_0)$, and it follows immediately that $\psi(t,\mx{z}(t)) = \psi(0,z_0)$ is the unique solution of this ODE.  Hence, (\ref{eq phi'}) reduces to the equation $\frac{d}{dt}\phi(t,\mx{z}(t))=0$, and since its initial condition is $\left.\phi(t,\mx{z}(t))\right|_{t=0}=\phi(0,z_0)$, it follows that $\phi(t,\mx{z}(t)) = \phi(0,z_0)$ as well. \end{proof}

This brings us to the proof of (\ref{eq formula for gen fn}).  First, Lemma \ref{lemma phi char}, together with the initial condition in (\ref{eq PDE 2}), yields
\begin{equation} \label{final 1} \phi^{s,u}(t,ze^{-\psi^s(0,z)t}) = \phi^{s,u}(0,z) = \frac{uz}{1-uz} = \frac{1}{1-uz}-1. \end{equation}
Next, Corollary \ref{cor char upsilon} describes $(s,z)\mapsto \psi^s(0,z)$ in terms of its level curves; (\ref{eq implicit upsilon}) states that
\begin{equation} \label{final 2}  \psi^s(0,w\,e^{\frac{s}{2}\frac{1+w}{1-w}}) = \varrho(0,w) = \frac{w}{1-w}. \end{equation}
So set $z=we^{\frac{s}{2}\frac{1+w}{1-w}}$; then (\ref{final 1}) and (\ref{final 2}) say
\begin{equation} \label{final 3} \phi^{s,u}(t,e^{-\frac{w}{1-w}t}we^{\frac{s}{2}\frac{1+w}{1-w}}) = \phi^{s,u}(t,e^{-\psi^s(0,z)t}z) = \left(1-uwe^{\frac{s}{2}\frac{1+w}{1-w}}\right)^{-1}-1. \end{equation}
Finally, note that
\[ -\textstyle{\frac{w}{1-w} = -\frac12\frac{1+w}{1-w}+\frac12} \]
and so (\ref{final 3}) may be written in the form
\begin{equation} \label{final 4} \phi^{s,u}(t,e^{\frac{t}{2}}we^{\frac12(s-t)\frac{1+w}{1-w}}) = \left(1-uwe^{\frac{s}{2}\frac{1+w}{1-w}}\right)^{-1}-1. \end{equation}
Finally, recall (\ref{eq gen fn p vs b}), which says that 
\begin{equation} \label{final 4.5} \Pi(s,t,u,\zeta) = \phi^{s,u}(t,e^{\frac{t}{2}}\zeta). \end{equation}
Setting $\zeta = we^{\frac12(s-t)\frac{1+w}{1-w}}$, (\ref{final 4}) and (\ref{final 4.5}) combine to yield
\[  \left(1-uwe^{\frac{s}{2}\frac{1+w}{1-w}}\right)^{-1}-1 =  \phi^{s,u}(t,e^{\frac{t}{2}}we^{\frac12(s-t)\frac{1+w}{1-w}}) =  \phi^{s,u}(t,e^{\frac{t}{2}}\zeta) = \Pi(s,t,u,\zeta) \]
which is precisely the statement of (\ref{eq formula for gen fn}).

\subsection{Proof of Theorem \ref{thm Biane transform} ($\G_{t,t} = \mathscr{G}^t$) \label{section Proof Biane}}

We are now in a position to complete the proof of Theorem \ref{thm Biane transform}, modulo a small error in \cite{Biane1997b}.

\begin{remark} \label{remark Biane's error} In \cite[Lemma 18]{Biane1997b}, there is a typographical error that is propagated through the remainder of that paper.  In the second line of the proof of that lemma, the function $\iota(t,\cdot)$ should be the inverse of $z\mapsto ze^{\frac{t}{2}\frac{1+z}{1-z}}$ rather than the inverse of $z\mapsto \frac{z}{1+z}e^{\frac{t}{2}(1+2z)}$ as stated.  That $\iota(t,\cdot)$ has this different form follows from \cite[Lemma 11]{Biane1997b}, which defines the kernel function $\kappa(t,z)$ (formula 4.2.2.a) implicitly by $\frac{\kappa(t,z)-1}{\kappa(t,z)+1}e^{\frac{t}{2}\kappa(t,z)}=z$; then $\iota(t,z)=\frac{\kappa(t,1/z)+1}{\kappa(t,1/z)-1}$ yields the result.  Hence, the correct generating function for the Biane polynomials in \cite{Biane1997b} is the one in (\ref{eq formula for gen fn}) above, in the special case $s=t$.  The third author of the present paper discovered this error as the result of the present work: early versions of the calculations in this section suggested the generating function should have the form in (\ref{eq formula for gen fn}).  When Philippe Biane was consulted about this discrepancy, he confirmed the error, and tracked its source in \cite{Biane1997b}, in a private communication with the third author on October 27, 2011.
\end{remark}

\begin{proof}[Proof of Theorem \ref{thm Biane transform}] \label{proof of thm Biane transform} By the density of trigonometric polynomials in $L^2(\U,\nu_t)$ for any measure $\nu_t$, the transform $\mathscr{G}^t$ is determined by its action on Laurent polynomial functions.  Hence, to verify that $\G_{t,t}=\mathscr{G}^t$, it suffices to verify that $(\mathscr{G}^t)^{-1}$ agrees with $\H_{t,t}$ on monomials $z\mapsto z^k$ for $k\in\Z$.  Eq. (\ref{eq pk p-k}) is consistent with \cite[Lemma 18]{Biane1997b}, and so it suffices to prove this result for $k\ge 1$.  Eq.\ (\ref{eq formula for gen fn}) verifies that the Biane polynomials $p^{t,t}_k$ for $\H_{t,t}$ have the same generating function as the Biane polynomials of $\mathscr{G}^t$ (cf.\ Remark \ref{remark Biane's error}), and this concludes the proof.
\end{proof}

\appendix

\section{Heat Kernel Measures on Lie Groups \label{section heat kernels}}

Suppose that $G$ is a connected Lie group and $\beta$ is a basis for
$\operatorname*{Lie}\left(  G\right)$. Then $A=\sum_{X\in\beta}\partial
_{X}^{2}$ is a left-invariant non-positive elliptic differential operator
which is essentially self adjoint on $C_{c}^{\infty}\left(  G\right)  $ as an
operator on $L^{2}\left(G,dg\right)$ where $dg$ is a right Haar measure on
$G$. Associated to the contraction semigroup $\left\{e^{tA/2}\right\}
_{t>0}$ is a convolution semigroup of probability (heat kernel) densities 
$\left\{h_{t}\right\} _{t>0}$. In more detail, $\mathbb{R}_{+}\times
G\ni(t,g)  \rightarrow h_{t}(g)  \in\mathbb{R}_{+}$
is a smooth function such that
\[
\partial_{t} h_{t}(g)  =\frac{1}{2}A h_{t}(g)
\text{ for }t>0
\]
and
\[
\lim_{t\downarrow0}\int_{G}f(g)  h_{t}(g)\,dg=f(e)  \text{ for all }f\in C_{c}(G).
\]
(Throughout, $e=1_G$.) Basic properties of these heat kernels are summarized in \cite[Proposition
3.1]{Driver1997} and \cite[Section 3]{Driver2009a}. For an exhaustive
treatment of heat kernels on Lie groups see \cite{Robinson1991} and \cite{Varopoulos1992}. For our present purposes, we need to know that, if $G=\U_N$ or $G=\GL_N$ (and so $h_t$ is the density of $\rho_t^N$ or $\mu_{s,t}^N$, respectively), then
\begin{equation}
\int_{G}f(g) h_{t}(g)\,dg  =\sum_{n=0}^{\infty}\frac{1}{n!}\left(\frac{t}{2}\right)^{n}\left(A^{n}f\right)\left(I\right)  \text{ for all }t\geq0 \label{e.fgng}%
\end{equation}
whenever $f$ is a trace polynomial. This result can be seen as a consequence
of Langland's theorem; see, for example, \cite[Theorem 2.1 (p.\ 152)]
{Robinson1991}. As it is a bit heavy to get to Langland's theorem in Robinson
we will, for the reader's convenience, sketch a proof of (\ref{e.fgng});
see Theorem \ref{t.heat} below. For the rest of this section let $d$ denote
the left-invariant metric on $G$ such that $\left\{  \partial_{X}\right\}
_{X\in\beta}$ is an orthonormal frame on $G$ and set $\left\vert g\right\vert
=d(e,g)$. Also let us use the abbreviation $ h_{t}(f)  $ for $\int_{G}f(g)h_{t}(g)\,dg$.

\begin{lemma}
\label{l.difflem}Suppose $f\colon [0,T]\times G\rightarrow\mathbb{C}$ is a $C^{2}$ function such that $|k(t,g)| \leq Ce^{C|g|}$ for some $C<\infty$, where $k$ is any of the functions $f$, $\partial_{t}f$, or $\partial_{X}f$ for any $X\in\operatorname*{Lie}(A)$, or $Af$. Then%
\begin{equation}
\partial_{t} h_{t}\left(f(t,\cdot)\right)  = h_{t}\left(\partial_{t}f(t,\cdot)  +\frac{1}{2}Af(t,\cdot)\right)  \text{ for }t\in(0,T] \label{e.diff1}%
\end{equation}
and
\begin{equation}
\lim_{t\downarrow0} h_{t}\left(f(t,\cdot)\right)  =f(0,\cdot). \label{e.lim1}%
\end{equation}

\end{lemma}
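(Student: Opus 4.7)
The plan is to establish both identities by classical differentiation/passage-to-the-limit under the integral sign, with all interchanges justified by the Gaussian upper bounds
\[
h_t(g) \le c_1 t^{-d/2} \exp\!\bigl(-c_2 |g|^2/t\bigr), \qquad |\partial_X h_t(g)| \le c_1' t^{-(d+1)/2} \exp\!\bigl(-c_2|g|^2/t\bigr), \qquad 0<t\le T,
\]
for the heat kernel on a Lie group (cf.\ \cite[Ch.~IV]{Varopoulos1992} or \cite{Robinson1991}). On the compact group $\U_N$ these bounds are trivial by smoothness; on the non-compact $\GL_N$ they are the main tool, and they dominate any integrand of the form $e^{C|g|}$ appearing via the hypothesis on $f$, uniformly for $t$ in a compact subinterval of $(0,T]$. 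Together with the estimate $|k(t,g)|\le Ce^{C|g|}$, this guarantees absolute convergence of every integral that appears and makes every interchange below legal by dominated convergence.

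For \eqref{e.diff1}, I would fix $t\in(0,T]$ and, for $|\epsilon|$ small, write the difference quotient as
\[
\frac{h_{t+\epsilon}\!\bigl(f(t+\epsilon,\cdot)\bigr) - h_t\!\bigl(f(t,\cdot)\bigr)}{\epsilon}
= \int_G \frac{h_{t+\epsilon}(g)-h_t(g)}{\epsilon}\,f(t+\epsilon,g)\,dg
+ \int_G h_t(g)\,\frac{f(t+\epsilon,g)-f(t,g)}{\epsilon}\,dg.
\]
The second integrand is dominated by $h_t(g)\sup_{|s-t|\le\epsilon_0}|\partial_t f(s,g)|\le C h_t(g)e^{C|g|}$, integrable by the Gaussian bound, so dominated convergence yields $h_t(\partial_t f(t,\cdot))$. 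For the first integral, invoke $\partial_t h_t=\tfrac12 Ah_t$ and rewrite it as $\tfrac1{2\epsilon}\int_0^\epsilon\!\int_G (Ah_{t+s})(g)f(t+\epsilon,g)\,dg\,ds$; the Gaussian bound on $h_{t+s}$ and $\partial_X h_{t+s}$ together with the exponential bound on $f$ and $\partial_X f$ kill any boundary terms at infinity, so two integrations by parts (using the formal self-adjointness of each $\partial_X$ with respect to right Haar measure, modulo the modular function, which is constant since $A$ is left-invariant and $dg$ is right-invariant) convert this into $\tfrac1{2\epsilon}\int_0^\epsilon h_{t+s}(Af(t+\epsilon,\cdot))\,ds$. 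Continuity in $s$ and $\epsilon\downarrow 0$ then give the limit $\tfrac12 h_t(Af(t,\cdot))$, and summing the two pieces proves \eqref{e.diff1}.

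For \eqref{e.lim1} (interpreted as $\lim_{t\downarrow 0} h_t(f(t,\cdot))=f(0,e)$, as is standard for a convolution heat semigroup with approximate identity $\{h_t\}\to\delta_e$), decompose
\[
h_t(f(t,\cdot)) - f(0,e) = h_t\!\bigl(f(t,\cdot)-f(0,\cdot)\bigr) + \bigl[h_t(f(0,\cdot)) - f(0,e)\bigr].
\]
The bracketed term tends to $0$ by the defining property of $\{h_t\}$ extended from $C_c(G)$ to continuous functions of exponential growth using the Gaussian bound. For the first term, split the integral over $\{|g|\le R\}$ and $\{|g|>R\}$; on the ball, joint continuity of $f$ makes $|f(t,g)-f(0,g)|$ uniformly small as $t\downarrow 0$, and on the complement the Gaussian bound on $h_t$ against the $e^{C|g|}$ bound on $f$ produces a contribution bounded by $C\int_{|g|>R}t^{-d/2}e^{-c|g|^2/t}e^{C|g|}\,dg$, which can be made arbitrarily small uniformly in $t\in(0,t_0]$ by choosing $R$ large (the Gaussian dominates the exponential). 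Letting first $t\downarrow 0$ and then $R\to\infty$ gives the claim.

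The main obstacle is purely technical: marshalling the Gaussian bounds on $h_t$ and $\partial_X h_t$ for $G=\GL_N$ uniformly on $(0,T]$ to cover the case when $f$ has genuine (not merely compactly supported) exponential growth. On $\U_N$ none of this is needed; the proof collapses to elementary differentiation under a smooth integral on a compact manifold. Once \eqref{e.diff1} and \eqref{e.lim1} are established, the identity \eqref{e.fgng} follows by applying them to $f(t,g) = \sum_{n=0}^{M}\tfrac{1}{n!}(\tfrac{t}{2})^n (A^n f_0)(g)$ and sending $M\to\infty$ with standard analytic estimates on the trace polynomial $f_0$ and its derivatives $A^n f_0$.
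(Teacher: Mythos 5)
Your argument is essentially correct and reaches both \eqref{e.diff1} and \eqref{e.lim1}, but it takes a different technical route than the paper. You differentiate directly under the integral and integrate by parts ``at infinity,'' justifying Fubini and the vanishing of boundary terms by pointwise Gaussian upper bounds on $h_t$, $\partial_X h_t$ (and, implicitly, on $Ah_t$ for your rewriting of the first difference-quotient term). The paper instead multiplies $f$ by smooth compactly supported cutoffs $h_n$, differentiates and integrates by parts for $f_n=h_nf$ where everything is trivial, and then removes the cutoff using only \emph{integrated} exponential-moment bounds $\sup_{\epsilon\le t\le T}\int_G e^{C|g|}h_t(g)\,dg<\infty$ together with the small-time tail estimate $\limsup_{t\downarrow0}\int_{|y|>\epsilon}e^{c|y|}h_t(y)\,dy=0$ from \cite{Driver1997}. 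The trade-off: your route needs pointwise derivative bounds for the heat kernel on the exponential-volume-growth group $\GL_N$ (they do exist in \cite{Varopoulos1992,Robinson1991}, though not quite in the clean form $c_1t^{-d/2}e^{-c_2|g|^2/t}$ you quote --- on such groups there are extra time-dependent factors, harmless for $t\in(0,T]$), whereas the cutoff route needs only the weaker integrated bounds; in exchange you avoid the approximation step $n\to\infty$. For \eqref{e.lim1} your ball/tail decomposition is the same argument as the paper's, with $f(0,\cdot)$ correctly read as $f(0,e)$.

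Two points where you should tighten the write-up. First, the sentence ``kill any boundary terms at infinity'' is exactly the place where a rigorous proof must do work: either integrate by parts over an exhaustion by metric balls and bound the sphere integrals using your pointwise Gaussian bounds on $h_t,\partial_X h_t$ together with the (at most exponential) growth of the spheres, or insert a cutoff function --- which is the paper's device. Second, each left-invariant field $\partial_X$ is formally \emph{skew}-adjoint on $L^2(G,dg)$ for $dg$ right Haar measure (right Haar measure is invariant under the right translations that $\partial_X$ generates); it is $\partial_X^2$, hence $A$, that is formally self-adjoint, and this has nothing to do with the modular function (which is $\equiv 1$ here anyway since $\U_N$ and $\GL_N$ are unimodular). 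Neither issue is a fatal gap, but as written they are assertions rather than proofs.
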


\begin{proof}
Let $\left\{  h_{n}\right\}  \subset C_{c}^{\infty}\left(G,[0,1]\right)$ be smooth cutoff functions as in
\cite[Lemma 3.6]{Driver1997} and set $f_{n}(t,g)\equiv h_{n}(g)  f(t,g)$.
Then it is easy to verify that it is now permissible to differentiate past the
integrals and perform the required integration by parts in order to show that
\[
\frac{d}{dt}\left[h_{t}(f_{n}(t,\cdot))\right]  = h_{t}\left(\partial_{t}f(t,\cdot)  +\frac{1}{2}Af(t,\cdot)\right).
\]
Let $F(t,\cdot)  =\partial_{t}f(t,\cdot)  +\frac{1}{2}Af(t,\cdot)$ and
\begin{align*}
F_{n}(t,\cdot)   &=\partial_{t}f_{n}(t,\cdot)
+\frac{1}{2}Af_{n}(t,\cdot) \\
&  =F(t,\cdot)  h_{n}+\frac{1}{2}f(t,\cdot)
Ah_{n}+\sum_{X\in\beta}\partial_{X}f(t,\cdot)  \partial_{X}h_{n}.
\end{align*}
From the properties of $h_{n}$ and the assumed bounds on $f$, given
$\epsilon\in(0,T)$ there exist $C<\infty$ independent of $n$
such that
\[
\sup_{\epsilon\leq t\leq T}\left\vert F_{n}(t,g)-F(t,g)\right\vert \leq \1_{\left\vert g\right\vert \geq n}Ce^{C\left\vert g\right\vert}.
\]

It then follows by the standard heat kernel bounds (see for example \cite{Varopoulos1992} or \cite[page 286]{Robinson1991}) that
\[
\sup_{\epsilon\leq t\leq T}\left\vert h_{t}\left(F_{n}(t,\cdot)\right)  - h_{t}\left(F(t,\cdot)\right) \right\vert \rightarrow 0\text{ as }n\rightarrow\infty.
\]
Hence we may conclude that $\frac{d}{dt}\left[h_{t}(f(t,\cdot))\right]$ exists and%
\begin{align*}
\frac{d}{dt}\left[h_{t}\left(f(t,\cdot)\right)\right]
&  =\lim_{n\rightarrow\infty}\frac{d}{dt}\left[h_{t}\left(f_{n}(t,\cdot)\right)\right] \\
&  = h_{t}\left(  \partial_{t}f_{n}(t,\cdot)
+\frac{1}{2}Af_{n}(t,\cdot)\right)  \text{ for }\epsilon<t\leq T
\end{align*}
which proves (\ref{e.diff1}).  To prove (\ref{e.lim1}) we start with the estimate
\begin{align*}
\left\vert  h_{t}\left(  f(t,\cdot)\right) - f(0,e) \right\vert  &  =\left\vert \int_{G}\left[f(t,y)-f(0,e)\right]  h_{t}(y)\,dy\right\vert \\
&  \leq\int_{G}\left\vert f(t,y) - f(0,e)\right\vert  h_{t}(y)\,dy\\
&  \leq\delta(\epsilon,t)  +C\int_{\left\vert y\right\vert
>\epsilon}e^{C\left\vert y\right\vert } h_{t}(y)\,dy
\end{align*}
where
\[
\delta(\epsilon,t)  =\int_{\left\vert y\right\vert
\leq\varepsilon}\left\vert f(t,y) - f(0,e)\right\vert  h_{t}(y)\,dy\leq\sup_{\left\vert y\right\vert\leq\epsilon}\left\vert f(t,y) - f(0,e)\right\vert.
\]
From \cite[Lemma 4.3]{Driver1997} modified in a trivial way from its original
form where $\epsilon$ was take to be $1$, we know that%
\[
\limsup_{t\downarrow0}\int_{\left\vert y\right\vert >\epsilon
}e^{c\left\vert y\right\vert } h_{t}(y)\,dy=0\text{ for all }\epsilon>0\text{ and }c<\infty.
\]
Therefore, we conclude that
\[
\limsup_{t\downarrow0}\left\vert  h_{t}\left(  f(t,\cdot)\right) - f(0,e)\right\vert \leq\limsup_{t\downarrow0}%
\delta(\epsilon,t)  \rightarrow0\text{ as }\epsilon\downarrow 0
\]
as claimed. \end{proof}

\begin{theorem}
\label{t.heat}Suppose now that $G=\U_N$ or $G=\GL_N$ and $P_{N}$ is a trace polynomial function on $G$. Then for $T>0$,%
\begin{equation}
 h_{T}(P_{N})  =\left(\sum_{n=0}^{\infty}\frac{1}{n!}\left(\frac{T}{2}\right)^{n}A^{n}P_{N}\right)(I_N). \label{e.heat}%
\end{equation}

\end{theorem}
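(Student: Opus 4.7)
The plan is to rephrase (\ref{e.heat}) as the identity $h_T(P_N) = \bigl(e^{\frac{T}{2}A}P_N\bigr)(I_N)$, where the exponential is made sense of in a finite-dimensional invariant subspace of trace polynomials, and then obtain it by running Lemma \ref{l.difflem} on the backward heat flow terminating at $P_N$.

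First I would observe that, by the intertwining formulas (Theorem \ref{t.intertwine.new} when $G=\U_N$, or the holomorphic reduction (\ref{eq A to Delta}) combined with Theorem \ref{t.intertwine.new} when $G=\GL_N$), $A$ maps trace polynomials to trace polynomials and preserves the finite-dimensional subspace of $\C_n[u,u^{-1};\mx{v}]$ containing $P$ for any fixed $n \geq \deg(P)$ (Corollary \ref{c.Pn-inv}). Consequently, $\{A^k P_N\}_{k=0}^\infty$ all lie in a common finite-dimensional subspace of trace polynomials, and the series
\[
Q_t^P := \sum_{n=0}^\infty \frac{1}{n!}\left(\frac{t}{2}\right)^n A^n P_N
\]
converges absolutely for every $t \in \R$, depends analytically on $t$, and is a trace polynomial in $g$ for each $t$. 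Setting $F(t,g) := Q_{T-t}^P(g)$ for $(t,g) \in [0,T] \times G$, term-by-term differentiation inside the finite-dimensional space yields $F(T,\cdot) = P_N$ together with the backward heat equation $\partial_t F + \tfrac{1}{2}\, A F \equiv 0$.

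Next I would verify the pointwise exponential-growth bound $|k(t,g)| \leq C e^{C|g|}$ required by Lemma \ref{l.difflem}, uniformly in $t \in [0,T]$, for each of $k \in \{F,\,\partial_t F,\,\partial_X F,\,A F\}$. When $G = \U_N$ this is immediate from compactness. When $G = \GL_N$, a trace polynomial is polynomial in the matrix entries of $Z$, of $Z^{-1}$, and in the traces $\tr(Z^{\pm k})$, each of which is controlled by $\max(\|Z\|_{\mathrm{op}}, \|Z^{-1}\|_{\mathrm{op}})^{\deg}$. For the left-invariant Riemannian distance $d$ induced by $A$, there is a universal linear bound $\log \max(\|Z\|_{\mathrm{op}}, \|Z^{-1}\|_{\mathrm{op}}) \leq C\, d(I_N,Z)$ obtained by following a length-minimizing geodesic and estimating the operator norm along the way. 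This yields the desired bound on $F$; its derivatives $\partial_X F$ and $A F$ are likewise polynomial in the entries of $Z$, $Z^{-1}$, and $X$, and hence obey the same bound with coefficients depending continuously (hence uniformly boundedly) on $t \in [0,T]$. With these hypotheses verified (applied componentwise to the matrix-valued $F$), Lemma \ref{l.difflem} delivers $\partial_t h_t(F(t,\cdot)) = h_t\bigl(\partial_t F + \tfrac{1}{2} A F\bigr) = 0$ for $t \in (0,T]$, so $t \mapsto h_t(F(t,\cdot))$ is constant on $(0,T]$. Evaluating at $t=T$ gives $h_T(F(T,\cdot)) = h_T(P_N)$, while (\ref{e.lim1}) yields $\lim_{t\downarrow 0} h_t(F(t,\cdot)) = F(0,e) = Q_T^P(I_N)$, which is exactly the right-hand side of (\ref{e.heat}).

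The main obstacle is the exponential distance-norm comparison on $\GL_N$: one must establish that the left-invariant Riemannian distance dominates $\log\|Z\|_{\mathrm{op}}$ and $\log\|Z^{-1}\|_{\mathrm{op}}$ linearly. This is the non-compactness cost of the argument, but it is a standard fact about left-invariant Riemannian structures on linear Lie groups. Once it is in hand, everything else is finite-dimensional bookkeeping inside an $A$-invariant subspace of trace polynomials, together with a routine application of the heat-kernel differentiation lemma.
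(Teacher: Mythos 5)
Your proposal is correct and takes essentially the same route as the paper: form the backward-in-time series $f(t,\cdot)=\sum_{n\ge 0}\frac{1}{n!}\left(\frac{T-t}{2}\right)^n A^n P_N$, which stays in a finite-dimensional $A$-invariant space of trace polynomials and solves $\partial_t f+\frac12 Af=0$ with $f(T,\cdot)=P_N$, then apply Lemma \ref{l.difflem} to see that $t\mapsto h_t(f(t,\cdot))$ is constant and let $t\downarrow 0$. Your explicit verification of the exponential-growth hypothesis on $\GL_N$ via the left-invariant distance versus $\log\max(\|Z\|_{\mathrm{op}},\|Z^{-1}\|_{\mathrm{op}})$ comparison is a detail the paper leaves implicit, and it is correct.
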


\begin{proof}
Fix $T>0$, and for $0<t<T$ let
\[
f(t,\cdot)  =\sum_{n=0}^{\infty}\frac{1}{n!}\left(  \frac
{T-t}{2}\right)^{n}A^{n}P_{N}%
\]
where the sum is convergent as $A$ is a bounded operator on the finite
dimensional subspace of trace polynomials of trace degree $\deg P$ or less. Moreover,
$f(t,\cdot)$ is again a trace polynomial with time dependent
coefficients and $f$ satisfies
\[
\partial_{t}f(t,\cdot)  +\frac{1}{2}Af(t,\cdot)
=0\text{ with }f(T,\cdot)  =P_{N}.
\]
From Lemma \ref{l.difflem} we may now conclude,%
\[
\frac{d}{dt}\left[h_{t}\left(f(t,\cdot)\right)\right]
= h_{t}\left(\partial_{t}f(t,\cdot)  +\frac{1}{2}Af(t,\cdot)\right)  =0.
\]
Therefore $t\rightarrow h_{t}\left(f(t,\cdot)\right)$ is
constant for $t>0$ and hence, using Lemma \ref{l.difflem} again,
\[ \displaystyle{h_{T}(P_{N})   = h_{T}\left(f(T,\cdot)\right)  =\lim_{t\downarrow0} h_{t}\left(f(t,\cdot)\right)
  =f(0,I_N)  =\left(\sum_{n=0}^{\infty}\frac{1}{n!}\left(\frac{T}{2}\right)^{n}A^{n}P_{N}\right)(I_N)}.\]
This concludes the proof.  \end{proof}

\subsection*{Acknowledgments} The authors wish to thank Matt Dyer and Brendon Rhoades, who provided very useful algebraic insights to the authors for the proofs of Propositions \ref{LaurentDense.prop } and \ref{prop P-->P_N unique}.  We also express our gratitude to the referee, whose suggestions greatly helped us to improve the exposition, particularly in the introduction.

\bibliographystyle{acm}
\bibliography{DHK-2013}

\end{document}